\providecommand{\tabularnewline}{\\}
\def\RSsubtxt{section~}\newref{sub}{name = \RSsubtxt}}
\def\RSthmtxt{theorem~}\newref{thm}{name = \RSthmtxt}}
\def\RSlemtxt{lemma~}\newref{lem}{name = \RSlemtxt}}
\numberwithin{equation}{section}
\numberwithin{figure}{section}
\numberwithin{table}{section}
\theoremstyle{plain}
\newtheorem{thm}{\protect\theoremname}[section]
  \theoremstyle{definition}
  \newtheorem{defn}[thm]{\protect\definitionname}
  \theoremstyle{remark}
  \newtheorem{rem}[thm]{\protect\remarkname}
  \theoremstyle{plain}
  \newtheorem{lem}[thm]{\protect\lemmaname}
  \theoremstyle{plain}
  \newtheorem{cor}[thm]{\protect\corollaryname}
  \theoremstyle{plain}
  \newtheorem{prop}[thm]{\protect\propositionname}
  \theoremstyle{remark}
  \newtheorem*{claim*}{\protect\claimname}
  \theoremstyle{definition}
  \newtheorem{example}[thm]{\protect\examplename}
  \theoremstyle{remark}
  \newtheorem*{acknowledgement*}{\protect\acknowledgementname}
\providecommand{\MR}[1]{}
\renewcommand{\section}{%
\@startsection{section}{1}%
  \z@{.7\linespacing\@plus\linespacing}{.5\linespacing}%
  {\normalfont\scshape\centering\bfseries}}
\renewcommand{\subsection}{%
\@startsection{subsection}{2}%
  \z@{.5\linespacing\@plus.7\linespacing}{.5\linespacing}%
  {\normalfont\bfseries}}
\renewcommand{\subsubsection}{%
\@startsection{subsubsection}{2}%
  \z@{.5\linespacing\@plus.7\linespacing}{.5\linespacing}%
  {\normalfont\bfseries}}
\theoremstyle{definition}
\newtheorem{app}{Application}[section]
  \providecommand{\acknowledgementname}{Acknowledgement}
  \providecommand{\claimname}{Claim}
  \providecommand{\corollaryname}{Corollary}
  \providecommand{\definitionname}{Definition}
  \providecommand{\examplename}{Example}
  \providecommand{\lemmaname}{Lemma}
  \providecommand{\propositionname}{Proposition}
  \providecommand{\remarkname}{Remark}
\providecommand{\theoremname}{Theorem}
\begin{document}

\title{von Neumann indices and classes of positive definite functions}

\author{Palle Jorgensen and Feng Tian}

\address{(Palle E.T. Jorgensen) Department of Mathematics, The University
of Iowa, Iowa City, IA 52242-1419, U.S.A. }

\email{palle-jorgensen@uiowa.edu}

\urladdr{http://www.math.uiowa.edu/\textasciitilde{}jorgen/}

\address{(Feng Tian) Department of Mathematics, Wright State University, Dayton,
OH 45435, U.S.A.}

\email{feng.tian@wright.edu}

\urladdr{http://www.wright.edu/\textasciitilde{}feng.tian/}

\subjclass[2000]{Primary 47L60, 46N30, 46N50, 42C15, 65R10; Secondary 46N20, 22E70,
31A15, 58J65, 81S25}

\keywords{Unbounded operators, deficiency-indices, Hilbert space, reproducing
kernels, boundary values, unitary one-parameter group, convex, harmonic
decompositions, stochastic processes,  potentials, quantum measurement,
renormalization, partial differential operators, Volterra operator,
rank-one perturbation, elliptic differential operator, Greens function,
generating function.}

\maketitle
\pagestyle{myheadings}
\markright{VON NEUMANN INDICES AND POSITIVE DEFINITE FUNCTIONS}
\begin{abstract}
With view to applications, we establish a correspondence between two
problems: (i) the problem of finding continuous positive definite
extensions of functions $F$ which are defined on open bounded domains
$\Omega$ in $\mathbb{R}$, on the one hand; and (ii) spectral theory
for elliptic differential operators acting on $\Omega$, (constant
coefficients.) A novelty in our approach is the use of a reproducing
kernel Hilbert space $\mathscr{H}_{F}$ computed directly from $\left(\Omega,F\right)$,
as well as algorithms for computing relevant orthonormal bases in
$\mathscr{H}_{F}$.
\end{abstract}
\tableofcontents{}

\section{Introduction}

From the postulates of quantum physics, we know that measurements
of observables are computed from associated selfadjoint operators---observables.
From the corresponding spectral resolutions, we get probability measures,
and of course uncertainty. There are many philosophical issues (which
we bypass here), and we do not yet fully understand quantum reality.
See for example, \cite{Sla03,CJK+12}.

The axioms are as follows: An observable is a Hermitian (self-adjoint)
linear operator mapping a Hilbert space, the space of states, into
itself. The values obtained in a physical measurement are, in general,
described by a probability distribution; and the distribution represents
a suitable \textquotedblleft average\textquotedblright{} (or \textquotedblleft expectation\textquotedblright )
in a measurement of values of some quantum observable in a state of
some prepared system. The states are (up to phase) unit vectors in
the Hilbert space, and a measurement corresponds to a probability
distribution (derived from a projection-valued spectral measure).
The spectral type may be continuous (such as position and momentum)
or discrete (such as spin). Information about the measures $\mu$
are computed with the use of \emph{generating functions} (on $\mathbb{R}$),
i.e., spectral (Bochner/Fourier) transforms of the corresponding measure.
Generating functions are positive definite continuous functions $F\left(=F_{\mu}\right)$
on $\mathbb{R}$. One then tries to recover $\mu$ from information
about $F$. In our paper we explore the cases when information about
$F\left(x\right)$ is only available for $x$ in a bounded interval.

We prove a number of results for positive definite (p.d.) continuous
functions $F$ defined only locally, i.e., defined on some fixed bounded
connected subset in $\mathbb{R}^{d}$. We show that such partially
defined positive definite functions have a number of intriguing features,
apart from the properties they inherit from possibly being restrictions
of continuous positive definite functions defined on all of $\mathbb{R}^{d}$.

Our emphasis will be on explicit formulas, which is why we specialize
to the case $d=1$, and the case when $F$ is defined initially on
a symmetric interval $\left(-a,a\right)$. (In this case, $F$ automatically
has positive definite continuous extensions to all of the real line
$\mathbb{R}$.) Our applications include boundary value problems,
differential equations, positive definite integral operators, reproducing
kernel Hilbert spaces, orthogonal bases, interpolation, spectral theory,
and harmonic analysis.

In probability theory, normalized continuous positive definite functions
$F$, i.e., $F(0)=1$, arise as \emph{generating functions} for probability
measures, and one passes from information about one to the other;
-- from generating function to probability measure is called \textquotedblleft the
inverse problem\textquotedblright , see e.g., \cite{DM85}. Hence
the study of partially defined p.d. functions addresses the inverse
question: ambiguity of measures when only partial information for
a possible generating function is available.

The study of locally defined positive definite (p.d.) continuous functions
$F$ is important in pure and applied mathematics, and it entails
both analysis and geometry. The study of domains in $\mathbb{R}^{d}$,
for $d>1$, and domains in Lie groups combine the geometric and analytic
issues, see e.g., \cite{MR1069255,Jor91,JPT14}. In this paper, we
wish to stress such analytic questions as spectral theory, stochastic
analysis \cite{Ito06}, reproducing kernels $\mathscr{H}_{F}$, explicit
orthonormal bases in $\mathscr{H}_{F}$, and boundary-value problems.
For this reason, we have restricted here the setting to domains in
$\mathbb{R}$, so one dimension, $d=1$. And we have narrowed our
focus on specific cases. Even in this more narrow setting, the applications
include scattering theory (in physics) \cite{OH13}, sampling theory
(in engineering) \cite{MMJ12,ADK13}, and statistical learning theory,
see e.g., \cite{SZ05,SZ07}.

Another important difference between the cases $d=1$, and $d>1$,
is that for $d=1$, every positive definite continuous function defined
on an interval, has continuous p.d. extensions to $\mathbb{R}$. The
analogous result is known to be false when $d>1$; see \cite{Ru70}.

\section{The Reproducing Kernel Hilbert Space $\mathscr{H}_{F}$}

Associated to a pair $\left(\Omega,F\right)$, where $F$ is a prescribed
continuous positive definite function defined on $\Omega$, we outline
a reproducing kernel Hilbert space $\mathscr{H}_{F}$ which will serve
as a key tool in our analysis. The particular RKHSs we need here will
have additional properties (as compared to a general framework); which
allow us to give explicit formulas for our solutions. 

In a general setup, reproducing kernel Hilbert spaces were pioneered
by Aronszajn in the 1950s \cite{Aro50}; and subsequently they have
been used in a host of applications; e.g., \cite{SZ09,SZ07}. 

As for positive definite functions, their use and applications are
extensive and includes such areas as stochastic processes, see e.g.,
\cite{JorPea13,AJSV13,JP12,AJ12}; harmonic analysis (see \cite{JO00})
, and the references there); potential theory \cite{Fu74b,KL14};
operators in Hilbert space \cite{Al92,AD86}; and spectral theory
\cite{AH13,Nus75,Dev72,Dev59}. We stress that the literature is vast,
and the above list is only a small sample.
\begin{defn}
Let $G$ be a Lie group. Fix $\Omega\subset G$, non-empty, open and
connected. A continuous function 
\begin{equation}
F:\Omega^{-1}\cdot\Omega\rightarrow\mathbb{C}\label{eq:li-1}
\end{equation}
is \emph{positive definite }(p.d.) if
\begin{equation}
\sum_{i}\sum_{j}\overline{c_{i}}c_{j}F\left(x_{i}^{-1}x_{j}\right)\geq0,\label{eq:li-2}
\end{equation}
for all finite systems $\left\{ c_{i}\right\} \subset\mathbb{C}$,
and points $\left\{ x_{i}\right\} \subset\Omega$. 

Equivalently,
\begin{equation}
\int_{\Omega}\int_{\Omega}\overline{\varphi\left(x\right)}\varphi\left(y\right)F\left(x^{-1}y\right)dxdy\geq0,\label{eq:li-3}
\end{equation}
for all $\varphi\in C_{c}\left(\Omega\right)$; where $dx$ denotes
a choice of \uline{left}-invariant Haar measure on $G$.
\end{defn}
For simplicity we focus on the case $G=\mathbb{R},$ indicating the
changes needed for general Lie groups.
\begin{defn}
Fix $0<a<\infty$, set $\Omega:=\left(0,a\right)$. Let $F:\Omega-\Omega\rightarrow\mathbb{C}$
be a continuous p.d. function. The \emph{reproducing kernel Hilbert
space (RKHS),} $\mathscr{H}_{F}$, is the completion of
\begin{equation}
\sum_{\text{finite}}c_{j}F\left(\cdot-x_{j}\right):c_{j}\in\mathbb{C}\label{eq:H1}
\end{equation}
with respect to the inner product
\[
\left\langle F\left(\cdot-x\right),F\left(\cdot-y\right)\right\rangle _{\mathscr{H}_{F}}=F\left(x-y\right),\;\forall x,y\in\Omega,\;\mbox{and}
\]
\begin{equation}
\big\langle\sum_{i}c_{i}F\left(\cdot-x_{i}\right),\sum_{j}c_{j}F\left(\cdot-x_{j}\right)\big\rangle_{\mathscr{H}_{F}}=\sum_{i}\sum_{j}\overline{c_{i}}c_{j}F\left(x_{i}-x_{j}\right),\label{eq:ip-discrete}
\end{equation}
\end{defn}
\begin{rem}
Throughout, we use the convention that the inner product is conjugate
linear in the first variable, and linear in the second variable. When
more than one inner product is used, subscripts will make reference
to the Hilbert space. 

\textbf{Notation.} Inner product and norms will be denoted $\left\langle \cdot,\cdot\right\rangle $,
and $\left\Vert \cdot\right\Vert $ respectively. Often more than
one inner product is involved, and subscripts are used for identification.\end{rem}
\begin{lem}
\label{lem:RKHS-def-by-integral}The reproducing kernel Hilbert space
(RKHS), $\mathscr{H}_{F}$, is the Hilbert completion of the functions
\begin{equation}
F_{\varphi}\left(x\right)=\int_{\Omega}\varphi\left(y\right)F\left(x-y\right)dy,\;\forall\varphi\in C_{c}^{\infty}\left(\Omega\right),x\in\Omega\label{eq:H2}
\end{equation}
with respect to the inner product
\begin{equation}
\left\langle F_{\varphi},F_{\psi}\right\rangle _{\mathscr{H}_{F}}=\int_{\Omega}\int_{\Omega}\overline{\varphi\left(x\right)}\psi\left(y\right)F\left(x-y\right)dxdy,\;\forall\varphi,\psi\in C_{c}^{\infty}\left(\Omega\right).\label{eq:hi2}
\end{equation}
In particular, 
\begin{equation}
\left\Vert F_{\varphi}\right\Vert _{\mathscr{H}_{F}}^{2}=\int_{\Omega}\int_{\Omega}\overline{\varphi\left(x\right)}\varphi\left(y\right)F\left(x-y\right)dxdy,\;\forall\varphi\in C_{c}^{\infty}\left(\Omega\right)\label{eq:hn2}
\end{equation}
and 
\begin{equation}
\left\langle F_{\varphi},F_{\psi}\right\rangle _{\mathscr{H}_{F}}=\int_{\Omega}\overline{\varphi\left(x\right)}F_{\psi}\left(x\right)dx,\;\forall\phi,\psi\in C_{c}^{\infty}(\Omega).
\end{equation}
\end{lem}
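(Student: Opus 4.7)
The plan is to realize each integrated function $F_\varphi$ as a norm-limit in $\mathscr{H}_F$ of Riemann-sum approximations drawn from the pre-Hilbert space $\operatorname{span}\{F(\,\cdot\,-x):x\in\Omega\}$ of the preceding definition, then to pass to the limit in the quadratic form, and finally to verify that the $F_\varphi$'s are dense by an approximate-identity argument.

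First, I would fix $\varphi\in C_{c}^{\infty}(\Omega)$ with compact support $K\subset\Omega$. For each partition $\pi=\{y_j\}$ of $K$ with mesh $\delta(\pi)$, the Riemann sum
\[
S_\pi:=\sum_j\varphi(y_j)\,F(\,\cdot\,-y_j)\,\Delta y_j
\]
lies in the generating span (\ref{eq:H1}). Using the discrete inner-product formula (\ref{eq:ip-discrete}), the quantity $\|S_\pi-S_{\pi'}\|_{\mathscr{H}_F}^{2}$ is itself a Riemann sum for a continuous double integral over $\overline{K}\times\overline{K}$, and uniform continuity of $F$ on the compact set $\overline{K}-\overline{K}$ makes $\{S_\pi\}$ Cauchy in $\mathscr{H}_F$ as $\delta(\pi)\to 0$. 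The pointwise value of the limit is read off from the reproducing identity $\langle F(\,\cdot\,-x),S_\pi\rangle_{\mathscr{H}_F}=\sum_j\varphi(y_j)F(x-y_j)\Delta y_j$, which tends to $F_\varphi(x)$; hence the $\mathscr{H}_F$-limit of $S_\pi$ is precisely the pointwise function $F_\varphi$ defined in (\ref{eq:H2}). Passing to the limit in $\langle S_\pi,T_{\pi'}\rangle_{\mathscr{H}_F}$, where $T_{\pi'}$ is the analogous Riemann sum for $\psi$, then yields both (\ref{eq:hi2}) and (\ref{eq:hn2}).

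Next, to identify the completion with all of $\mathscr{H}_F$, I would show that each generator $F(\,\cdot\,-x_0)$, $x_0\in\Omega$, is approximated in $\mathscr{H}_F$-norm by $F_{\varphi_n}$, where $\varphi_n\ge 0$ is a standard mollifier of total mass one supported in a shrinking neighborhood of $x_0$ inside $\Omega$. Using the already-established identity (\ref{eq:hi2}), expansion gives
\[
\|F(\,\cdot\,-x_0)-F_{\varphi_n}\|_{\mathscr{H}_F}^{2}=F(0)-2\,\mathrm{Re}\!\int_\Omega\varphi_n(y)F(x_0-y)\,dy+\int_\Omega\!\int_\Omega\overline{\varphi_n(x)}\varphi_n(y)F(x-y)\,dx\,dy,
\]
and each of the three terms tends to $F(0)$ by continuity of $F$ at the origin, so the right side vanishes. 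This establishes density, whence the completion of $\{F_\varphi:\varphi\in C_{c}^{\infty}(\Omega)\}$ under (\ref{eq:hi2}) coincides with $\mathscr{H}_F$. The final unlabeled identity of the lemma is then an immediate application of Fubini to (\ref{eq:hi2}), legitimate because $F$ is bounded on $\overline{K}-\overline{K}$.

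The main obstacle is the density step: one must ensure that the mollifier approximation converges in the $\mathscr{H}_F$-norm rather than merely pointwise, and this rests on first having (\ref{eq:hi2}) available for smooth $\varphi$ together with uniform continuity of $F$ near $0$. The rest is standard Fubini and passage to the limit in Riemann sums.
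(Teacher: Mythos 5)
Your proposal is correct and follows essentially the same route as the paper, which disposes of this lemma by citing ``standard approximation'' via the mollifier Lemma \ref{lem:dense}; your Riemann-sum realization of $F_\varphi$ and the mollifier computation $\|F(\cdot-x_0)-F_{\varphi_n}\|_{\mathscr{H}_F}^2\to F(0)-2F(0)+F(0)=0$ are exactly the details the paper leaves implicit. No gaps.
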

\begin{proof}
Apply standard approximation, see \lemref{dense} below.
\end{proof}
Among the cases of partially defined positive definite functions,
the following example $F\left(x\right)=e^{-\left|x\right|}$, in the
symmetric interval $\left(-1,1\right)$, will play a special role.

There are many reasons for this:
\begin{enumerate}[leftmargin=*,label=(\roman{enumi})]
\item It is of independent interest, and its type 1 extensions (see \secref{types})
can be written down explicitly.
\item Its applications include stochastic analysis \cite{Ito06} as follows.
Given a random variable $X$ in a process; if $\mu$ is its distribution,
then there are two measures of concentration for $\mu$, one called
\textquotedblleft degree of concentration,\textquotedblright{} and
the other \textquotedblleft dispersion,\textquotedblright{} both computed
directly from $F\left(x\right)=e^{-\left|x\right|}$ applied to $\mu$,
see \ref{app1} Application, below.
\item In addition, there are analogous relative notions for comparing different
samples in a fixed stochastic process. These notions are defined with
the use of example $F\left(x\right)=e^{-\left|x\right|}$, and it
will frequently be useful to localize the $x$-variable in a compact
interval.
\item Additional reasons for special attention to example $F\left(x\right)=e^{-\left|x\right|}$,
for $x\in\left(-1,1\right)$ is its use in sampling theory, and analysis
of de Branges spaces \cite{DM85}, as well as its role as a Greens
function for an important boundary value problem. 
\item Related to this, the reproducing kernel Hilbert space $\mathscr{H}_{F}$
associated to this p.d. function $F$ has a number of properties that
also hold for wider families of locally defined positive definite
function of a single variable. In particular, $\mathscr{H}_{F}$ has
Fourier bases: The RKHS $\mathscr{H}_{F}$ has orthogonal bases of
complex exponentials $e_{\lambda}$ with aperiodic frequency distributions,
i.e., frequency points $\left\{ e_{\lambda}\right\} $ on the real
line which do not lie on any arithmetic progression, see Fig \ref{fig:expExt}.
For details on this last point, see Corollaries \ref{cor:elambda},
\ref{cor:HFinner2}, \ref{cor:elambda1}, and \ref{cor:Fext}. 
\end{enumerate}
The remaining of this section is devoted to a number of technical
lemmas which will be used throughout the paper. Given a locally defined
continuous positive definite function $F$, the issues addressed below
are: approximation (\lemref{dense}), a reproducing kernel Hilbert
space (RKHS) $\mathscr{H}_{F}$ built from $F$, an integral transform,
and a certain derivative operator $D^{\left(F\right)}$, generally
unbounded in the RKHS $\mathscr{H}_{F}$. We will be concerned with
boundary value problems for $D^{\left(F\right)}$, and in order to
produce suitable orthonormal bases in $\mathscr{H}_{F}$, we be concerned
with an explicit family of skew-adjoint extensions of $D^{\left(F\right)}$,
as well as the associated spectra, see Corollaries \ref{cor:defg}
and \ref{cor:ptspec}.
\begin{lem}
\label{lem:dense}Let $\varphi$ be a function s.t.
\begin{enumerate}
\item $\mathrm{supp}\left(\varphi\right)\subset\left(0,a\right)$;
\item $\varphi\in C_{c}^{\infty}\left(0,a\right)$, $\varphi\geq0$;
\item $\int_{0}^{a}\varphi\left(t\right)dt=1$. 
\end{enumerate}

Fix $x\in\left(0,a\right)$, and set $\varphi_{n,x}\left(t\right):=n\varphi\left(n\left(t-x\right)\right)$.
Then $\lim_{n\rightarrow\infty}\varphi_{n,x}=\delta_{x}$, i.e., the
Dirac measure at $x$; and 
\begin{equation}
\left\Vert F_{\varphi_{n,x}}-F\left(\cdot-x\right)\right\Vert _{\mathscr{H}_{F}}\rightarrow0,\;\mbox{as }n\rightarrow\infty.\label{eq:approx}
\end{equation}
Hence $\left\{ F_{\varphi}:\varphi\in C_{c}^{\infty}\left(0,a\right)\right\} $
spans a dense subspace in $\mathscr{H}_{F}$. See \figref{approx}.

\end{lem}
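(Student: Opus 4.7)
The plan is to reduce everything to the continuity of $F$ at the origin, via the explicit formulas for the $\mathscr{H}_{F}$-inner product on the generating vectors $F(\cdot - x)$ and $F_{\varphi}$.

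First I would verify that $\varphi_{n,x}\to\delta_{x}$ weakly: by the substitution $s=n(t-x)$ together with hypothesis (3),
\[
\int_{0}^{a}\varphi_{n,x}(t)\,g(t)\,dt=\int\varphi(s)\,g\!\left(x+\tfrac{s}{n}\right)ds\longrightarrow g(x)\int\varphi(s)\,ds=g(x)
\]
for any $g\in C([0,a])$, by dominated convergence (valid for all $n$ large enough that $\mathrm{supp}(\varphi_{n,x})\subset(0,a)$, which follows from $x\in(0,a)$ and hypothesis (1)).

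Next I would expand the squared norm in \eqref{eq:approx}. Using the inner product \eqref{eq:ip-discrete} extended to the integrated vectors $F_{\varphi}=\int\varphi(y)F(\cdot-y)\,dy$ (so that the reproducing property $\langle F(\cdot-x),F_{\psi}\rangle_{\mathscr{H}_F}=F_{\psi}(x)$ holds),
\[
\bigl\Vert F_{\varphi_{n,x}}-F(\cdot-x)\bigr\Vert_{\mathscr{H}_{F}}^{2}=\bigl\Vert F_{\varphi_{n,x}}\bigr\Vert_{\mathscr{H}_{F}}^{2}-2\operatorname{Re}\langle F_{\varphi_{n,x}},F(\cdot-x)\rangle_{\mathscr{H}_{F}}+F(0).
\]
The middle term equals $\overline{F_{\varphi_{n,x}}(x)}=\overline{\int\varphi_{n,x}(t)F(x-t)\,dt}$, which by the first step converges to $\overline{F(0)}=F(0)$ as $n\to\infty$ (since $\varphi_{n,x}$ is real-valued and $F(0)\geq 0$). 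For the first term, by \eqref{eq:hn2},
\[
\bigl\Vert F_{\varphi_{n,x}}\bigr\Vert_{\mathscr{H}_{F}}^{2}=\int_{0}^{a}\!\!\int_{0}^{a}\varphi_{n,x}(s)\,\varphi_{n,x}(t)\,F(s-t)\,ds\,dt;
\]
substituting $s=x+u/n$, $t=x+v/n$ yields $\iint\varphi(u)\varphi(v)F\bigl((u-v)/n\bigr)du\,dv$, which tends to $F(0)\bigl(\int\varphi\bigr)^{2}=F(0)$ by continuity of $F$ at $0$ and dominated convergence.

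Assembling these three limits gives $\Vert F_{\varphi_{n,x}}-F(\cdot-x)\Vert_{\mathscr{H}_{F}}^{2}\to F(0)-2F(0)+F(0)=0$, establishing \eqref{eq:approx}. The density claim is then automatic: the span of $\{F(\cdot-x):x\in(0,a)\}$ is dense in $\mathscr{H}_{F}$ by the very definition of $\mathscr{H}_{F}$ as the completion of \eqref{eq:H1}, and each such generator is a $\Vert\cdot\Vert_{\mathscr{H}_{F}}$-limit of $F_{\varphi}$'s with $\varphi\in C_{c}^{\infty}(0,a)$.

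The only place any care is needed is the passage to the limit in the double integral; this is routine given that $F$ is continuous on the compact set $\overline{\Omega}-\overline{\Omega}$, hence uniformly continuous there, so the mollifier argument applies without fuss. Thus there is no substantive obstacle beyond bookkeeping of the three terms.
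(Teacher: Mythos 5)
Your proof is correct, and it is exactly the ``standard approximation'' argument the paper has in mind: the authors state Lemma \ref{lem:dense} without proof (it is invoked in the proof of Lemma \ref{lem:RKHS-def-by-integral} as a standard fact), and your three-term expansion of $\left\Vert F_{\varphi_{n,x}}-F\left(\cdot-x\right)\right\Vert _{\mathscr{H}_{F}}^{2}$ via \eqref{eq:hn2}, the reproducing property, and continuity of $F$ at $0$ is precisely the computation being left to the reader. The only points needing care --- that $\mathrm{supp}(\varphi_{n,x})\subset(0,a)$ for large $n$, that the arguments $(u-v)/n$ stay in $(-a,a)$, and the conjugate-linearity convention in the cross term --- are all handled correctly.
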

\begin{figure}[H]
\includegraphics[scale=0.4]{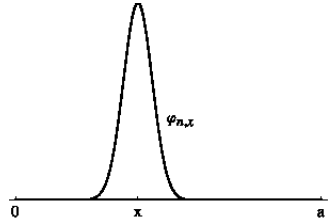}

\protect\caption{\label{fig:approx}The approximate identity $\varphi_{n,x}$}
\end{figure}

Recall, the following facts about $\mathscr{H}_{F},$ which follow
from the general theory \cite{Aro50} of RKHS:
\begin{itemize}
\item $F(0)>0,$ so we can always arrange $F(0)=1.$
\item $F(-x)=\overline{F(x)}$
\item $\mathscr{H}_{F}$ consists of continuous functions $\xi:\Omega-\Omega\rightarrow\mathbb{C}.$
\item The reproducing property:
\[
\left\langle F\left(\cdot-x\right),\xi\right\rangle _{\mathscr{H}_{F}}=\xi\left(x\right),\;\forall\xi\in\mathscr{H}_{F},\forall x\in\Omega,
\]
is a direct consequence of (\ref{eq:ip-discrete}).\end{itemize}
\begin{rem}
It follows from the reproducing property that if $F_{\phi_{n}}\to\xi$
in $\mathscr{H}_{F},$ then $F_{\phi_{n}}$ converges uniformly to
$\xi$ in $\Omega.$ In fact 
\begin{align*}
\left|F_{\phi_{n}}\left(x\right)-\xi\left(x\right)\right| & =\left|\left\langle F\left(\cdot-x\right),F_{\phi_{n}}-\xi\right\rangle _{\mathscr{H}_{F}}\right|\\
 & \leq\left\Vert F\left(\cdot-x\right)\right\Vert _{\mathscr{H}_{F}}\left\Vert F_{\phi_{n}}-\xi\right\Vert _{\mathscr{H}_{F}}\\
 & =F\left(0\right)\left\Vert F_{\phi_{n}}-\xi\right\Vert _{\mathscr{H}_{F}}.
\end{align*}
\end{rem}
\begin{lem}
\label{lem:TFvarphi}Let $F:\left(-a,a\right)\rightarrow\mathbb{C}$
be a continuous and p.d. function, and let $\mathscr{H}_{F}$ be the
corresponding RKHS. Then:
\begin{enumerate}
\item \label{enu:F1-1}the integral $F_{\varphi}:=\int_{0}^{a}\varphi\left(y\right)F\left(\cdot-y\right)dy$
is convergent in $\mathscr{H}_{F}$ for all $\varphi\in C_{c}\left(0,a\right)$;
and 
\item \label{enu:F1-2}for all $\xi\in\mathscr{H}_{F}$, we have:
\begin{equation}
\left\langle F_{\varphi},\xi\right\rangle _{\mathscr{H}_{F}}=\int_{0}^{a}\overline{\varphi\left(x\right)}\xi\left(x\right)dx.\label{eq:F1-1}
\end{equation}

\end{enumerate}
\end{lem}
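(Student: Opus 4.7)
The plan is to realize $F_{\varphi}$ as a Bochner integral in $\mathscr{H}_F$ via Riemann-sum approximation, and then obtain (\ref{enu:F1-2}) from continuity of the inner product combined with the reproducing property.

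For part (\ref{enu:F1-1}), I fix a sequence of partitions $\pi_n=\{0=y_0^{(n)}<y_1^{(n)}<\cdots<y_{k_n}^{(n)}=a\}$ with mesh tending to zero, and form the Riemann sums
\begin{equation*}
S_n:=\sum_j \varphi\!\left(y_j^{(n)}\right) F\!\left(\,\cdot\,-y_j^{(n)}\right)\Delta y_j^{(n)},
\end{equation*}
each of which is a finite linear combination of kernel sections and so lies in $\mathscr{H}_F$. By (\ref{eq:ip-discrete}),
\begin{equation*}
\langle S_n,S_m\rangle_{\mathscr{H}_F}=\sum_{i,j}\overline{\varphi\!\left(y_i^{(n)}\right)}\varphi\!\left(y_j^{(m)}\right)F\!\left(y_i^{(n)}-y_j^{(m)}\right)\Delta y_i^{(n)}\Delta y_j^{(m)},
\end{equation*}
which is itself a Riemann sum for $\iint_{[0,a]^2}\overline{\varphi(x)}\varphi(y)F(x-y)\,dx\,dy$. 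Because $\varphi\in C_c(0,a)$ and $F$ is uniformly continuous and bounded (indeed $|F|\le F(0)$, a standard consequence of positive definiteness) on $[-a,a]$, all three of $\|S_n\|^2$, $\|S_m\|^2$, $\operatorname{Re}\langle S_n,S_m\rangle$ converge to the same double integral; hence $\|S_n-S_m\|_{\mathscr{H}_F}^2\to 0$, $(S_n)$ is Cauchy, and I define $F_\varphi$ to be its limit. Independence from the partition sequence follows by interleaving two sequences and re-running the estimate, and the pointwise value is recovered from $S_n(x)=\langle F(\cdot-x),S_n\rangle_{\mathscr{H}_F}\to\langle F(\cdot-x),F_\varphi\rangle_{\mathscr{H}_F}$ versus the direct limit $S_n(x)\to\int_0^a\varphi(y)F(x-y)\,dy$, so the notation is consistent with (\ref{eq:H2}).

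For part (\ref{enu:F1-2}), I combine continuity of the inner product with the reproducing identity $\langle F(\cdot-y),\xi\rangle_{\mathscr{H}_F}=\xi(y)$:
\begin{equation*}
\overline{\langle F_\varphi,\xi\rangle_{\mathscr{H}_F}}=\langle \xi,F_\varphi\rangle_{\mathscr{H}_F}=\lim_n\langle\xi,S_n\rangle_{\mathscr{H}_F}=\lim_n\sum_j\varphi\!\left(y_j^{(n)}\right)\overline{\xi\!\left(y_j^{(n)}\right)}\Delta y_j^{(n)}.
\end{equation*}
Since $\xi$ is continuous on $\Omega-\Omega$, these Riemann sums converge to $\int_0^a\varphi(y)\overline{\xi(y)}\,dy$; conjugating yields (\ref{eq:F1-1}).

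The main technical obstacle is the Cauchy step in (\ref{enu:F1-1}): one must upgrade what is automatic pointwise (uniform) convergence of Riemann sums to convergence in the $\mathscr{H}_F$-norm, and $\mathscr{H}_F$ is not a straightforward $L^2$-pairing. This is unlocked exactly by the explicit formula (\ref{eq:ip-discrete}) together with $|F|\le F(0)$; everything after that is routine. A slightly shorter alternative worth mentioning is to approximate $\varphi\in C_c(0,a)$ in $L^1$ by mollifications $\varphi_n\in C_c^\infty(0,a)$, invoke \lemref{RKHS-def-by-integral} to place each $F_{\varphi_n}$ in $\mathscr{H}_F$, and then use the pointwise bound $\|F_{\varphi_n}-F_{\varphi_m}\|_{\mathscr{H}_F}^2\le F(0)\,\|\varphi_n-\varphi_m\|_{L^1}^2$ to conclude Cauchyness directly, with (\ref{enu:F1-2}) following from the corresponding formula in \lemref{RKHS-def-by-integral} by a limit argument.
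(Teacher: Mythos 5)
Your proposal is correct and follows essentially the same route as the paper: the paper also realizes $F_{\varphi}$ as a norm-convergent $\mathscr{H}_{F}$-valued integral (justifying convergence via the continuity estimate $\left\Vert F\left(\cdot-y_{1}\right)-F\left(\cdot-y_{2}\right)\right\Vert _{\mathscr{H}_{F}}^{2}=2\left(1-\Re F\left(y_{1}-y_{2}\right)\right)$) and then passes $\left\langle \cdot,\xi\right\rangle _{\mathscr{H}_{F}}$ under the integral and applies the reproducing property. Your explicit Riemann-sum Cauchy computation via (\ref{eq:ip-discrete}) simply spells out the convergence step the paper leaves implicit.
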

\begin{proof}
For simplicity, we assume the following normalization $F\left(0\right)=1$;
then for all $y_{1},y_{2}\in\left(0,1\right)$, we have 
\begin{equation}
\left\Vert F\left(\cdot-y_{1}\right)-F\left(\cdot-y_{2}\right)\right\Vert _{\mathscr{H}_{F}}^{2}=2\left(1-\Re\left\{ F\left(y_{1}-y_{2}\right)\right\} \right).\label{eq:F1-2}
\end{equation}
Now, view the integral in (\ref{enu:F1-1}) as a $\mathscr{H}_{F}$-vector
valued integral. If $\varphi\in C_{c}\left(0,a\right)$, this integral
$\int_{0}^{a}\varphi\left(y\right)F\left(\cdot-y\right)dy$ is the
$\mathscr{H}_{F}$-norm convergent. Since $\mathscr{H}_{F}$ is a
RKHS, $\left\langle \cdot,\xi\right\rangle _{\mathscr{H}_{F}}$ is
continuous on $\mathscr{H}_{F}$, and it passes under the integral
in (\ref{enu:F1-1}). Using 
\begin{equation}
\left\langle F\left(y-\cdot\right),\xi\right\rangle _{\mathscr{H}_{F}}=\xi\left(y\right)\label{eq:F1-3}
\end{equation}
the desired conclusion (\ref{eq:F1-1}) follows.
\end{proof}

\begin{cor}
Let $F:\left(-a,a\right)\rightarrow\mathbb{C}$ be as above, and let
$\mathscr{H}_{F}$ be the corresponding RKHS. For $\varphi\in C_{c}^{1}\left(0,a\right)$,
set 
\begin{equation}
F_{\varphi}\left(x\right)=\left(T_{F}\varphi\right)\left(x\right)=\int_{0}^{a}\varphi\left(y\right)F\left(x-y\right)dy;\label{eq:F1-4}
\end{equation}
then $F_{\varphi}\in C^{1}\left(0,a\right)$, and 
\begin{equation}
\frac{d}{dx}F_{\varphi}\left(x\right)=\left(T_{F}\left(\varphi'\right)\right)\left(x\right),\;\forall x\in\left(0,a\right).\label{eq:F1-5}
\end{equation}
\end{cor}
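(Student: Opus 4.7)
The plan is to prove the identity by pushing the $x$-derivative over onto $\varphi$, since the hypotheses only give continuity (not differentiability) of $F$. Concretely, I would compute the difference quotient of $F_\varphi$ at a point $x\in(0,a)$ and, by a translation in the integration variable, transfer the increment from the $F$-factor onto the $\varphi$-factor.

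First I would fix $x\in(0,a)$ and choose $\epsilon>0$ with $\mathrm{supp}(\varphi)\subset[\epsilon,a-\epsilon]\subset(0,a)$, and also so that $x+h\in(0,a)$ for $|h|<\epsilon$. For such $h$, substituting $y\mapsto y+h$ in $F_\varphi(x+h)=\int_0^a\varphi(y)F(x+h-y)\,dy$ yields
\begin{equation*}
F_\varphi(x+h)=\int_{-h}^{a-h}\varphi(y+h)F(x-y)\,dy=\int_0^a\varphi(y+h)F(x-y)\,dy,
\end{equation*}
where the second equality uses that $\varphi(\cdot+h)$ still has support in $(0,a)$ when $|h|<\epsilon$. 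Then
\begin{equation*}
\frac{F_\varphi(x+h)-F_\varphi(x)}{h}=\int_0^a\frac{\varphi(y+h)-\varphi(y)}{h}\,F(x-y)\,dy.
\end{equation*}

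Next I would pass to the limit $h\to 0$ inside the integral. Because $\varphi\in C_c^1(0,a)$, the difference quotient $\bigl(\varphi(y+h)-\varphi(y)\bigr)/h$ converges to $\varphi'(y)$ uniformly in $y$ (this is the classical fact that a $C^1$ function with compact support has a uniformly continuous derivative), and the quotient is uniformly bounded for small $h$ by $\|\varphi'\|_\infty$, with support contained in a fixed compact subinterval. Since $F$ is continuous on the compact set $[-a,a]$, the integrand is dominated by $\|\varphi'\|_\infty\|F\|_\infty\cdot\mathbf{1}_{[0,a]}$, so dominated convergence gives
\begin{equation*}
\lim_{h\to 0}\frac{F_\varphi(x+h)-F_\varphi(x)}{h}=\int_0^a\varphi'(y)F(x-y)\,dy=(T_F\varphi')(x),
\end{equation*}
which is \eqref{eq:F1-5}.

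It remains to check that $x\mapsto(T_F\varphi')(x)$ is continuous on $(0,a)$, so that $F_\varphi\in C^1(0,a)$; this follows from a second, easier dominated-convergence argument, using continuity of $F$ on $[-a,a]$ and the fact that $\varphi'\in C_c(0,a)$. The only mildly technical point---and the reason we cannot simply differentiate under the integral sign---is the translation trick that swaps the increment between the two factors; everything after that is routine.
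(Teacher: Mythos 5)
Your proof is correct, and it takes the route the paper intends: since $F$ is only continuous, the derivative must be transferred onto $\varphi$ by the change of variables $y\mapsto y+h$, after which uniform convergence of the difference quotient $\bigl(\varphi(\cdot+h)-\varphi\bigr)/h$ to $\varphi'$ finishes the argument. The paper itself only says the claim ``follows directly from the arguments in the proof'' of the preceding lemma, so your write-up supplies exactly the details being gestured at; the one point worth stating explicitly is that for complex-valued $\varphi$ the uniform convergence of the difference quotient is cleanest via $\frac{\varphi(y+h)-\varphi(y)}{h}=\frac{1}{h}\int_{y}^{y+h}\varphi'(t)\,dt$ together with uniform continuity of $\varphi'$, rather than the real mean value theorem.
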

\begin{proof}
Since $F_{\varphi}\left(x\right)=\int_{0}^{a}\varphi\left(y\right)F\left(x-y\right)dy$,
$x\in\left(0,a\right)$; the desired assertion (\ref{eq:F1-5}) follows
directly from the arguments in the proof of \lemref{TFvarphi}.\end{proof}
\begin{thm}
\label{thm:HF}Fix $0<a<\infty$. A continuous function $\xi:\left(0,a\right)\rightarrow\mathbb{C}$
is in $\mathscr{H}_{F}$ if and only if there exists a finite constant
$A>0$, such that
\begin{equation}
\sum_{i}\sum_{j}\overline{c_{i}}c_{j}\overline{\xi\left(x_{i}\right)}\xi\left(x_{j}\right)\leq A\sum_{i}\sum_{j}\overline{c_{i}}c_{j}F\left(x_{i}-x_{j}\right)\label{eq:bdd}
\end{equation}
for all finite system $\left\{ c_{i}\right\} \subset\mathbb{C}$ and
$\left\{ x_{i}\right\} \subset\left(0,a\right)$. Equivalently, for
all $\varphi\in C_{c}^{\infty}\left(\Omega\right)$, 
\begin{align}
\left|\int_{0}^{a}\varphi\left(y\right)\xi\left(y\right)dy\right|^{2} & \leq A\int_{0}^{a}\int_{0}^{a}\overline{\varphi\left(x\right)}\varphi\left(y\right)F\left(x-y\right)dxdy\label{eq:bdd2}
\end{align}

\end{thm}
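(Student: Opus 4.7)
The plan is the standard Aronszajn characterization, adapted to this setting using the reproducing property already derived for $\mathscr{H}_F$. The two inequalities~\eqref{eq:bdd} and~\eqref{eq:bdd2} are the ``discrete'' and ``integral'' forms of the same boundedness condition, so I would first prove the discrete version in both directions, and then obtain the integral form by approximation.

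For the forward direction, suppose $\xi\in\mathscr{H}_F$, and fix a finite system $\{c_i\}\subset\mathbb{C}$, $\{x_i\}\subset(0,a)$. Set $\eta:=\sum_j \overline{c_j}\,F(\cdot-x_j)\in\mathscr{H}_F$. Using the reproducing property, the conjugate-linear-first convention, and the definition of the inner product~\eqref{eq:ip-discrete}, one computes
\[
\langle\eta,\xi\rangle_{\mathscr{H}_F}=\sum_j c_j\,\xi(x_j),\qquad \|\eta\|_{\mathscr{H}_F}^2=\sum_{i,j}\overline{c_i}c_j\,F(x_i-x_j).
\]
The Cauchy--Schwarz inequality applied to $\langle\eta,\xi\rangle_{\mathscr{H}_F}$ then yields~\eqref{eq:bdd} with the explicit constant $A=\|\xi\|_{\mathscr{H}_F}^2$, since the left side of~\eqref{eq:bdd} equals $\bigl|\sum_j c_j\xi(x_j)\bigr|^2$.

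For the converse, assume~\eqref{eq:bdd} holds. On the dense subspace $\mathscr{D}:=\operatorname{span}\{F(\cdot-x):x\in(0,a)\}\subset\mathscr{H}_F$, define the linear functional
\[
L\Bigl(\sum_j c_j F(\cdot-x_j)\Bigr):=\sum_j c_j\,\xi(x_j).
\]
The key point, which is also the main technical checkpoint, is that $L$ is \emph{well-defined}: if $\eta=\sum_j c_j F(\cdot-x_j)=0$ in $\mathscr{H}_F$, then $\|\eta\|_{\mathscr{H}_F}^2=0$, so~\eqref{eq:bdd} forces $|\sum_j c_j\xi(x_j)|^2=0$. The same inequality gives the bound $|L(\eta)|\le\sqrt{A}\,\|\eta\|_{\mathscr{H}_F}$, so $L$ extends continuously to all of $\mathscr{H}_F$. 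By Riesz's theorem (in the form adapted to our convention, where functionals linear in the second slot are represented as $\langle \cdot,h\rangle_{\mathscr{H}_F}$), there exists $h\in\mathscr{H}_F$ with $L(\eta)=\langle\eta,h\rangle_{\mathscr{H}_F}$. Testing against $\eta=F(\cdot-x)$ and invoking the reproducing property gives $\xi(x)=L(F(\cdot-x))=\langle F(\cdot-x),h\rangle_{\mathscr{H}_F}=h(x)$ for every $x\in(0,a)$. Hence $\xi=h\in\mathscr{H}_F$, with $\|\xi\|_{\mathscr{H}_F}^2$ equal to the smallest admissible $A$.

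Finally, for the equivalence of~\eqref{eq:bdd} with the integral form~\eqref{eq:bdd2}: the implication $\eqref{eq:bdd}\Rightarrow\eqref{eq:bdd2}$ is obtained by applying~\eqref{eq:bdd} to Riemann sums approximating $\varphi\in C_c^\infty(\Omega)$ and passing to the limit, using uniform continuity of $F$ on $\overline{\Omega-\Omega}$. Conversely, $\eqref{eq:bdd2}\Rightarrow\eqref{eq:bdd}$ follows by replacing each Dirac mass $c_i\delta_{x_i}$ by the mollifier $c_i\varphi_{n,x_i}$ from \lemref{dense} and letting $n\to\infty$; \lemref{dense} together with \lemref{TFvarphi} ensures that both sides of the inequality converge to the corresponding expressions in~\eqref{eq:bdd}. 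No further obstacle arises beyond the well-definedness step flagged above.
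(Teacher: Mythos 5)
Your argument is correct and is essentially the paper's proof: Cauchy--Schwarz in $\mathscr{H}_{F}$ gives necessity, and sufficiency comes from extending a bounded linear functional by density, applying Riesz's theorem, and identifying the representing vector with $\xi$ via the reproducing property; the only difference is that you run this on the discrete span of the kernels $F\left(\cdot-x\right)$ and then transfer to the integral form \eqref{eq:bdd2} by Riemann sums and the mollifiers of \lemref{dense}, whereas the paper works directly with the functions $F_{\varphi}$ and leaves the equivalence of \eqref{eq:bdd} and \eqref{eq:bdd2} implicit. (One small bookkeeping point, at the same level of looseness as the paper itself: with $\eta=\sum_{j}\overline{c_{j}}F\left(\cdot-x_{j}\right)$, formula \eqref{eq:ip-discrete} gives $\left\Vert \eta\right\Vert _{\mathscr{H}_{F}}^{2}=\sum_{i,j}c_{i}\overline{c_{j}}F\left(x_{i}-x_{j}\right)$ rather than $\sum_{i,j}\overline{c_{i}}c_{j}F\left(x_{i}-x_{j}\right)$; the two coincide for real-valued $F$, and in general the discrepancy is absorbed because \eqref{eq:bdd} is quantified over all finite coefficient systems.)
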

We will use these two conditions (\ref{eq:bdd})($\Leftrightarrow$(\ref{eq:bdd2}))
when considering for example the von Neumann deficiency-subspaces
for skew Hermitian operators with dense domain in $\mathscr{H}_{F}$.
\begin{proof}[Proof of \thmref{HF}]
Note, if $\xi\in\mathscr{H}_{F}$, the LHS in (\ref{eq:bdd2}) is
$\big|\left\langle F_{\varphi},\xi\right\rangle _{\mathscr{H}_{F}}\big|^{2}$,
and so (\ref{eq:bdd2}) holds, since $\left\langle \cdot,\xi\right\rangle _{\mathscr{H}_{F}}$
is continuous on $\mathscr{H}_{F}$.

If $\xi$ is continuous on $\left[0,a\right]$, and if (\ref{eq:bdd2})
holds, then 
\[
\mathscr{H}_{F}\ni F_{\varphi}\longmapsto\int_{0}^{a}\varphi\left(y\right)\xi\left(y\right)dy
\]
is well-defined, continuous, linear; and extends to $\mathscr{H}_{F}$
by density (see \lemref{dense}). Hence, by Riesz's theorem, $\exists!$
$k_{\xi}\in\mathscr{H}_{F}$ s.t. 
\[
\int_{0}^{a}\varphi\left(y\right)\xi\left(y\right)dy=\left\langle F_{\varphi},k_{\xi}\right\rangle _{\mathscr{H}_{F}}.
\]
But using the reproducing property in $\mathscr{H}_{F}$, and $F_{\varphi}\left(x\right)=\int_{0}^{a}\varphi\left(x\right)F\left(x-y\right)dy$,
we get 
\[
\int_{0}^{a}\overline{\varphi\left(x\right)}\xi\left(x\right)dx=\int_{0}^{a}\overline{\varphi\left(x\right)}k_{\xi}\left(x\right)dx,\;\forall\varphi\in C_{c}\left(0,a\right)
\]
so 
\[
\int_{0}^{a}\varphi\left(x\right)\left(\xi\left(x\right)-k_{\xi}\left(x\right)\right)dx=0,\;\forall\varphi\in C_{c}\left(0,a\right);
\]
it follows that $\xi-k_{\xi}=0$ on $\left(0,a\right)$ $\Longrightarrow$
$\xi-k_{x}=0$ on $\left[0,a\right]$. \end{proof}
\begin{cor}
\label{cor:Fmu}Let $F:\left(-a,a\right)\rightarrow\mathbb{C}$ be
positive definite and continuous, and let $\mathscr{H}_{F}$ be the
RKHS introduced in \lemref{TFvarphi}. Let $\mu$ be a complex measure
of bounded variation defined on the Borel sigma-algebra of subsets
in $\left[0,a\right]$, then 
\begin{equation}
F_{\mu}\left(x\right)=\int_{0}^{a}F\left(x-y\right)d\mu\left(y\right)\label{eq:mu-1}
\end{equation}
is a well-defined continuous function on $\left[0,a\right]$, and
$F_{\mu}\in\mathscr{H}_{F}$.\end{cor}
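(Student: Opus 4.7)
The plan is to first establish the continuity of $F_\mu$ directly, and then realize $F_\mu$ as a Hilbert-space-valued (Bochner) integral in $\mathscr{H}_F$, with identification of the two via the reproducing property.

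For continuity, since $F$ is continuous on the compact interval $[-a,a]$, it is uniformly continuous there. Thus for any $x_n\to x$ in $[0,a]$, one has $F(x_n-y)\to F(x-y)$ uniformly in $y\in[0,a]$. Because $|\mu|\left([0,a]\right)<\infty$, dominated convergence (or simply uniform convergence) yields $F_\mu(x_n)\to F_\mu(x)$, so $F_\mu\in C[0,a]$.

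For membership in $\mathscr{H}_F$, I would interpret (\ref{eq:mu-1}) as a Bochner integral of the $\mathscr{H}_F$-valued function $y\mapsto F(\cdot-y)$ against $\mu$. Two facts are needed: this map is continuous from $[0,a]$ into $\mathscr{H}_F$, which follows from identity (\ref{eq:F1-2}) combined with the (uniform) continuity of $F$; and it is uniformly bounded, since by the reproducing property $\|F(\cdot-y)\|_{\mathscr{H}_F}^2 = F(0)$ is constant in $y$. Compactness of $[0,a]$ and bounded variation of $\mu$ then guarantee that
\[
\xi \;:=\; \int_0^a F(\cdot-y)\,d\mu(y) \;\in\;\mathscr{H}_F,
\qquad \|\xi\|_{\mathscr{H}_F} \;\leq\; \sqrt{F(0)}\,|\mu|([0,a]).
\]

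It remains to identify the abstract vector $\xi\in\mathscr{H}_F$ with the concrete function $F_\mu$ on $[0,a]$. Here I would apply the continuous linear functional $\langle F(\cdot-x),\cdot\rangle_{\mathscr{H}_F}$ (for $x\in(0,a)$) to both sides; since bounded linear functionals commute with Bochner integrals,
\[
\xi(x) \;=\; \bigl\langle F(\cdot-x),\xi\bigr\rangle_{\mathscr{H}_F}
\;=\; \int_0^a \bigl\langle F(\cdot-x),F(\cdot-y)\bigr\rangle_{\mathscr{H}_F}\,d\mu(y)
\;=\; \int_0^a F(x-y)\,d\mu(y)
\;=\; F_\mu(x).
\]
Since $\xi$ (as an element of the RKHS of continuous functions) and $F_\mu$ are both continuous on $[0,a]$ and agree on the dense subset $(0,a)$, they agree on $[0,a]$, completing the proof.

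The main obstacle, though modest, lies in the Bochner-integral step: one must verify that $y\mapsto F(\cdot-y)$ is strongly measurable and Pettis-integrable against $\mu$, which is handled once its norm-continuity and uniform boundedness are in hand. An alternative that avoids vector-valued integration would be to apply \thmref{HF}: estimate $\bigl|\int_0^a\varphi(y)F_\mu(y)\,dy\bigr|$ by Fubini and Cauchy–Schwarz against the quadratic form $\int\!\!\int\overline{\varphi}\varphi F\,dxdy$, with admissible constant $A=|\mu|([0,a])^2$. Either route works; the Bochner-integral approach is cleaner and makes the norm bound on $F_\mu$ explicit.
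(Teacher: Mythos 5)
Your proof is correct, but it takes a genuinely different route from the paper's. The paper first reduces to positive measures via the Jordan decomposition, proves continuity of $F_{\mu}$, and then verifies the membership criterion of \thmref{HF}: applying the Cauchy--Schwarz inequality for the positive semidefinite form $(\mu,\nu)\mapsto\int_{0}^{a}\int_{0}^{a}F(x-y)\,d\mu(x)\,d\nu(y)$, specialized to $d\nu=\varphi\,dx$, yields exactly the a priori bound (\ref{eq:bdd2}) with constant $A=\int_{0}^{a}\int_{0}^{a}F(x-y)\,d\mu(x)\,d\mu(y)$, and \thmref{HF} then gives $F_{\mu}\in\mathscr{H}_{F}$. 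You instead realize $F_{\mu}$ as a Bochner integral of the norm-continuous, uniformly bounded map $y\mapsto F(\cdot-y)$ and identify the resulting vector pointwise through the reproducing property; this is in effect the extension of the vector-valued-integral argument of \lemref{TFvarphi} from $d\mu=\varphi\,dy$ to general complex measures. Your route avoids the Jordan decomposition entirely and produces the explicit bound $\Vert F_{\mu}\Vert_{\mathscr{H}_{F}}\leq\sqrt{F(0)}\,|\mu|([0,a])$, at the cost of invoking vector-valued integration (strong measurability here is automatic from norm-continuity on a compact interval, as you note); the paper's route stays within the scalar characterization of $\mathscr{H}_{F}$ and its constant $A$ is the actual norm $\Vert F_{\mu}\Vert_{\mathscr{H}_{F}}^{2}$, hence sharper. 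The alternative you sketch in your last paragraph is, in substance, the paper's proof. One small point common to both arguments: the integrand $F(x-y)$ can involve the values $F(\pm a)$ (for instance when $\mu$ has atoms at the endpoints), so one must first invoke the unique continuous extension of $F$ from $(-a,a)$ to $[-a,a]$; the paper states this explicitly, while you assume it tacitly when asserting uniform continuity of $F$ on $[-a,a]$.
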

\begin{rem}
For the converse of the above result, i.e., whether every $h\in\mathscr{H}_{F}$
takes the form $h=F_{\mu}$ in (\ref{eq:mu-1}), we consider the special
case where $F\left(x\right)=e^{-\left|x\right|}$, $\left|x\right|<1$.
See \corref{expmu} for details. \end{rem}
\begin{proof}
Let $F$ and $\mu$ be as in the statement of the corollary. Then
by the Jordan-decomposition of $\mu$ there are finite positive measures
$\mu_{i}$, $i=1,2,3,4$ such that
\begin{equation}
\mu=\mu_{1}-\mu_{2}+i\left(\mu_{3}-\mu_{4}\right).\label{eq:mu-2}
\end{equation}
As a result, we can assume in the proof that $\mu$ in (\ref{eq:mu-1})
is positive and finite.

Since $F$ has a unique continuous and bounded extension to $\left[-a,a\right]$,
note that then $F\left(x-\cdot\right)\in L^{1}\left(\mu\right)$ for
all $x\in\left(0,a\right)$. Since translation in $L^{1}\left(\mu\right)$
is continuous, it follows that $F_{\mu}\left(\cdot\right)$ in (\ref{eq:mu-1})
is continuous in $x$. (We used that $\mu$ is positive here.) 

Now for $\mu,\nu$ as in the corollary set 
\begin{equation}
\left\langle F_{\mu},F_{\nu}\right\rangle _{\mathscr{H}_{F}}=\int_{0}^{a}\int_{0}^{a}F\left(x-y\right)d\mu\left(x\right)d\nu\left(y\right).\label{eq:mu-3}
\end{equation}
By the Schwarz-inequality, we have 
\begin{equation}
\left|\left\langle F_{\mu},F_{\nu}\right\rangle _{\mathscr{H}_{F}}\right|^{2}\leq\left\langle F_{\mu},F_{\mu}\right\rangle _{\mathscr{H}_{F}}\left\langle F_{\nu},F_{\nu}\right\rangle _{\mathscr{H}_{F}}.\label{eq:mu-4}
\end{equation}
Applying this to $d\nu=\varphi\left(x\right)dx$, $\varphi\in C_{c}\left(0,a\right)$
we get (an application of (\ref{eq:mu-4})):
\begin{equation}
\left|\int_{0}^{a}\overline{F_{\mu}\left(x\right)}\varphi\left(x\right)dx\right|^{2}\leq\left\Vert F_{\mu}\right\Vert _{\mathscr{H}_{F}}^{2}\int_{0}^{a}\int_{0}^{a}\overline{\varphi\left(x\right)}\varphi\left(y\right)F\left(x-y\right)dxdy\label{eq:mu-5}
\end{equation}
where we have 
\begin{equation}
\int_{0}^{a}\overline{F_{\mu}\left(x\right)}\varphi\left(x\right)dx=\left\langle F_{\mu},F_{\varphi}\right\rangle _{\mathscr{H}_{F}}\label{eq:mu-6}
\end{equation}
from an application of \lemref{dense}.

Now set $\xi\left(x\right):=F_{\mu}\left(x\right)$ for $x\in\left[0,a\right]$
where $F_{\mu}$ is defined in (\ref{eq:mu-1}), and is continuous
in $x$. In the \emph{a priori} estimate (\ref{eq:mu-5}), we verified
the property (\ref{eq:bdd2}) in \thmref{HF}; and the conclusion
in \thmref{HF} then yields $\xi=F_{\mu}\in\mathscr{H}_{F}$ as claimed
in the corollary. 
\end{proof}
\begin{app}\label{app1}

In Ito calculus (\cite{Ito06}) the corollary is used for the case
where $F\left(x\right)=e^{-\left|x\right|}$, $x\in\mathbb{R}$; or
the restriction of $F$ to $-1<x<1$. 

If $\mu$ is a probability measure on an interval $J\subset\mathbb{R}$,
Ito defines 
\[
q\left(\mu\right):=\int_{J}\int_{J}e^{-\left|x-y\right|}d\mu\left(x\right)d\mu\left(y\right),\;\mbox{and}
\]
it is called the \textbf{\emph{\uline{degree of concentration}}}.
The number 
\[
\delta\left(\mu\right):=-\log q\left(\mu\right)
\]
is called the \textbf{\emph{\uline{dispersion}}} of $\mu$. 

Note the following:
\[
q\left(\mu\right)=1\Longleftrightarrow\delta\left(\mu\right)=0\Longleftrightarrow\left[\mu=\delta_{x},\;\mbox{for some \ensuremath{x}}\right].
\]

\end{app}
\begin{defn}[The operator $D_{F}$]
\label{def:DF} Let $D_{F}\left(F_{\varphi}\right)=F_{\varphi'}$,
for all $\varphi\in C_{c}^{\infty}\left(0,a\right)$, where $\varphi'=\frac{d\varphi}{dt}$
and $F_{\varphi}$ is as in (\ref{eq:H2}). \end{defn}
\begin{lem}
\label{lem:DF}The operator $D_{F}$ defines a skew-Hermitian operator
with dense domain in $\mathscr{H}_{F}$.\end{lem}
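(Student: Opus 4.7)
The plan is to verify three things: that the domain of $D_F$ is dense in $\mathscr{H}_F$; that the prescription $F_\varphi \mapsto F_{\varphi'}$ is well-defined (i.e.\ independent of the representative $\varphi$); and that the resulting operator is skew-symmetric on its domain. Density is immediate from \lemref{dense}, so the work is in the latter two points.

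For well-definedness, I would use the fact that $\mathscr{H}_F$ embeds as continuous functions on $\Omega$: by the reproducing property, pointwise evaluation at each $x\in\Omega$ is continuous on $\mathscr{H}_F$, so the embedding $\mathscr{H}_F\hookrightarrow C(\Omega)$ is injective. Consequently, if $F_\varphi = 0$ in $\mathscr{H}_F$ for some $\varphi\in C_c^\infty(0,a)$, then $F_\varphi(x)=0$ for every $x\in(0,a)$. The corollary to \lemref{TFvarphi} gives $\frac{d}{dx}F_\varphi(x) = F_{\varphi'}(x)$ pointwise on $(0,a)$; hence $F_{\varphi'}\equiv 0$ as a continuous function, and therefore as an element of $\mathscr{H}_F$. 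By linearity, $D_F$ is well-defined on the dense span.

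Skew-symmetry is then a clean integration-by-parts computation. For $\varphi,\psi\in C_c^\infty(0,a)$, \lemref{TFvarphi}(2) gives
\[
\langle D_F F_\varphi, F_\psi\rangle_{\mathscr{H}_F}
= \langle F_{\varphi'}, F_\psi\rangle_{\mathscr{H}_F}
= \int_0^a \overline{\varphi'(x)}\,F_\psi(x)\,dx.
\]
Since $\varphi\in C_c^\infty(0,a)$, integration by parts in $x$ produces no boundary contribution and yields
\[
-\int_0^a \overline{\varphi(x)}\,\tfrac{d}{dx}F_\psi(x)\,dx
= -\int_0^a \overline{\varphi(x)}\,F_{\psi'}(x)\,dx,
\]
where in the last step we apply the corollary $\frac{d}{dx}F_\psi = F_{\psi'}$. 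Invoking \lemref{TFvarphi}(2) once more identifies this with $-\langle F_\varphi, F_{\psi'}\rangle_{\mathscr{H}_F} = -\langle F_\varphi, D_F F_\psi\rangle_{\mathscr{H}_F}$, which is precisely the skew-Hermitian identity on the dense domain.

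The main obstacle is the well-definedness step: one must argue that if the \emph{element} $F_\varphi$ of $\mathscr{H}_F$ vanishes then the \emph{element} $F_{\varphi'}$ vanishes, not merely the scalar pairing. This is why I route the argument through the continuous-function representation of $\mathscr{H}_F$ plus the pointwise differentiation identity; the skew-symmetry computation itself is a one-line integration by parts once both ingredients are available.
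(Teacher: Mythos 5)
Your proof is correct, but it is not the route the paper takes. The paper's proof obtains skew-symmetry from translation invariance: for $\psi\in C_{c}^{\infty}(0,a)$ and small $t$, $\Vert F_{\psi(\cdot+t)}\Vert_{\mathscr{H}_{F}}^{2}=\Vert F_{\psi}\Vert_{\mathscr{H}_{F}}^{2}$ because the double integral in (\ref{eq:hn2}) is invariant under shifting both variables; differentiating at $t=0$ gives $\langle D_{F}F_{\psi},F_{\psi}\rangle_{\mathscr{H}_{F}}+\langle F_{\psi},D_{F}F_{\psi}\rangle_{\mathscr{H}_{F}}=0$, i.e.\ the quadratic-form version of skew-symmetry, from which the bilinear identity follows by polarization; well-definedness is then asserted as a consequence. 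You instead compute the bilinear identity directly by integration by parts through the pairing formula of \lemref{TFvarphi}\eqref{enu:F1-2}, and you handle well-definedness separately via the injective embedding $\mathscr{H}_{F}\hookrightarrow C(\Omega)$ together with the pointwise identity $\frac{d}{dx}F_{\varphi}=F_{\varphi'}$. The trade-off: the paper's argument is shorter and highlights the group-theoretic mechanism (translations) that later produces the unitary one-parameter groups $U(t)=e^{tA}$, but it leaves the differentiation under the norm and the well-definedness claim somewhat terse; your version is more elementary given \lemref{TFvarphi}, yields the full off-diagonal identity $\langle D_{F}F_{\varphi},F_{\psi}\rangle_{\mathscr{H}_{F}}=-\langle F_{\varphi},D_{F}F_{\psi}\rangle_{\mathscr{H}_{F}}$ in one step, and makes explicit the point (that $F_{\varphi}=0$ in $\mathscr{H}_{F}$ forces $F_{\varphi'}=0$ in $\mathscr{H}_{F}$) that the paper glosses over. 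Both are valid proofs of the lemma.
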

\begin{proof}
By \lemref{dense}, $dom\left(D_{F}\right)$ is dense in $\mathscr{H}_{F}.$
If $\psi\in C_{c}^{\infty}\left(0,a\right)$ and $\left|t\right|<\mathrm{dist}\left(\mathrm{supp}\left(\psi\right),\mbox{endpoints}\right)$,
then
\begin{equation}
\left\Vert F_{\psi\left(\cdot+t\right)}\right\Vert _{\mathscr{H}_{F}}^{2}=\left\Vert F_{\psi}\right\Vert _{\mathscr{H}_{F}}^{2}=\int_{0}^{a}\int_{0}^{a}\overline{\psi\left(x\right)}\psi\left(y\right)F\left(x-y\right)dxdy
\end{equation}
see (\ref{eq:hn2}), so 
\[
\frac{d}{dt}\left\Vert F_{\psi\left(\cdot+t\right)}\right\Vert _{\mathscr{H}_{F}}^{2}=0
\]
which is equivalent to 
\begin{equation}
\left\langle D_{F}F_{\psi},F_{\psi}\right\rangle _{\mathscr{H}_{F}}+\left\langle F_{\psi},D_{F}F_{\psi}\right\rangle _{\mathscr{H}_{F}}=0.
\end{equation}
It follows that $D_{F}$ is well-defined and skew-Hermitian in $\mathscr{H}_{F}$. \end{proof}
\begin{lem}
\label{lem:Dadj}Let $F$ be a positive definite function on $\left(-a,a\right)$,
$0<a<\infty$ fixed. Let $D_{F}$ be as in \defref{DF}, so that $D_{F}\subset D_{F}^{*}$
(\lemref{DF}), where $D_{F}^{*}$ is the adjoint relative to the
$\mathscr{H}_{F}$ inner product. 

Then $\xi\in\mathscr{H}_{F}$ (as a continuous function on $\left[0,a\right]$)
is in $dom\left(D_{F}^{*}\right)$ iff 
\begin{gather}
\xi'\in\mathscr{H}_{F}\;\mbox{where }\xi'=\mbox{distribution derivative, and}\label{eq:Dadj1}\\
D_{F}^{*}\xi=-\xi'\label{eq:Dadj2}
\end{gather}
\end{lem}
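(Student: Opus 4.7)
The plan is to use the definition of the Hilbert-space adjoint together with \lemref{TFvarphi}, which converts each $\mathscr{H}_F$-inner product of the form $\langle F_\varphi,\cdot\rangle_{\mathscr{H}_F}$ into an ordinary $L^2(0,a)$-pairing against $\varphi$. That identity will let distribution theory carry the rest of the argument.

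First, I would unwind the definition of $D_F^*$: $\xi\in\mathrm{dom}(D_F^*)$ iff the functional $F_\varphi\mapsto\langle F_{\varphi'},\xi\rangle_{\mathscr{H}_F}$ is bounded on the dense subspace $\{F_\varphi:\varphi\in C_c^\infty(0,a)\}$ (dense by \lemref{dense}), and then $D_F^*\xi$ is its Riesz representer. Applying \lemref{TFvarphi} to $\xi$, this pairing equals $\int_0^a\overline{\varphi'(x)}\,\xi(x)\,dx$; integration by parts (boundary terms vanish because $\varphi$ is compactly supported in $(0,a)$) rewrites it as $-\int_0^a\overline{\varphi(x)}\,\xi'(x)\,dx$, with $\xi'$ interpreted distributionally on $(0,a)$.

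For the ``if'' direction, assume $\xi'\in\mathscr{H}_F$. A second application of \lemref{TFvarphi}, now to the vector $\xi'$, gives $\int_0^a\overline{\varphi(x)}\,\xi'(x)\,dx=\langle F_\varphi,\xi'\rangle_{\mathscr{H}_F}$, so $\langle F_{\varphi'},\xi\rangle_{\mathscr{H}_F}=\langle F_\varphi,-\xi'\rangle_{\mathscr{H}_F}$ for every $\varphi\in C_c^\infty(0,a)$. This exhibits $-\xi'$ as the Riesz representer of the functional above, giving $\xi\in\mathrm{dom}(D_F^*)$ and $D_F^*\xi=-\xi'$. For the ``only if'' direction, set $\eta:=D_F^*\xi\in\mathscr{H}_F$. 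Then $\langle F_{\varphi'},\xi\rangle_{\mathscr{H}_F}=\langle F_\varphi,\eta\rangle_{\mathscr{H}_F}=\int_0^a\overline{\varphi(x)}\,\eta(x)\,dx$, again by \lemref{TFvarphi}. Comparing with the integration-by-parts identity yields $\int_0^a\overline{\varphi(x)}\bigl(\xi'(x)+\eta(x)\bigr)\,dx=0$ for all $\varphi\in C_c^\infty(0,a)$, so the du~Bois-Reymond lemma forces $\xi'=-\eta$ as distributions. Since $\eta$ is a continuous function lying in $\mathscr{H}_F$, we conclude $\xi'\in\mathscr{H}_F$ with $D_F^*\xi=-\xi'$.

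I do not expect a genuine obstacle: the only subtle bookkeeping point is keeping $\xi'$ distributional during the derivation and then, once $\eta$ is identified as its representer, recognizing it a posteriori as an honest element of $\mathscr{H}_F$. The engine throughout is the Parseval-type duality furnished by \lemref{TFvarphi}, used once in each direction.
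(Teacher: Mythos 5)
Your proposal is correct and follows essentially the same route as the paper: both directions rest on the identity $\left\langle F_{\varphi'},\xi\right\rangle _{\mathscr{H}_{F}}=\int_{0}^{a}\overline{\varphi'\left(x\right)}\xi\left(x\right)dx$ from \lemref{TFvarphi} together with distributional integration by parts. The only divergence is in the converse, where you identify the Riesz representer $\eta=D_{F}^{*}\xi$ with $-\xi'$ via du Bois--Reymond, while the paper instead applies the boundedness characterization of membership in $\mathscr{H}_{F}$ from \thmref{HF} directly to the distribution $\xi'$; your packaging is marginally tidier on this point, since $\eta$ is automatically a continuous element of $\mathscr{H}_{F}$ and no separate argument is needed that the distribution $\xi'$ is represented by a function.
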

\begin{proof}
By \thmref{HF}, a fixed $\xi\in\mathscr{H}_{F}$, i.e., $x\mapsto\xi\left(x\right)$
is a continuous function on $\left[0,a\right]$ s.t. $\exists C$,
$\left|\int_{0}^{a}\varphi\left(x\right)\xi\left(x\right)dx\right|^{2}\leq C\left\Vert F_{\varphi}\right\Vert _{\mathscr{H}_{F}}^{2}$. 

$\xi$ is in $dom\left(D_{F}^{*}\right)$ $\Longleftrightarrow$ $\exists C=C_{\xi}<\infty$
s.t. 
\begin{equation}
\left|\left\langle D_{F}\left(F_{\varphi}\right),\xi\right\rangle _{\mathscr{H}_{F}}\right|^{2}\leq C\left\Vert F_{\varphi}\right\Vert _{\mathscr{H}_{F}}^{2}=C\int_{0}^{a}\int_{0}^{a}\overline{\varphi\left(x\right)}\varphi\left(y\right)F\left(x-y\right)dxdy\label{eq:Dadj3}
\end{equation}
But LHS of (\ref{eq:Dadj3}) under $\left|\left\langle \cdot,\cdot\right\rangle \right|^{2}$
is:
\begin{equation}
\left|\left\langle D_{F}\left(F_{\varphi}\right),\xi\right\rangle _{\mathscr{H}_{F}}\right|^{2}=\left\langle F_{\varphi'},\xi\right\rangle _{\mathscr{H}_{F}}\overset{\left(\ref{eq:F1-1}\right)}{=}\int_{0}^{a}\overline{\varphi'\left(x\right)}\xi\left(x\right)dx,\;\forall\varphi\in C_{c}^{\infty}\left(0,a\right)\label{eq:Dadj4}
\end{equation}
So (\ref{eq:Dadj3}) holds $\Longleftrightarrow$ 
\[
\left|\int_{0}^{a}\overline{\varphi'\left(x\right)}\xi\left(x\right)dx\right|^{2}\leq C\left\Vert F_{\varphi}\right\Vert _{\mathscr{H}_{F}}^{2},\;\forall\varphi\in C_{c}^{\infty}\left(0,a\right)
\]
i.e., 
\[
\left|\int_{0}^{a}\overline{\varphi\left(x\right)}\xi'\left(x\right)dx\right|^{2}\leq C\left\Vert F_{\varphi}\right\Vert _{\mathscr{H}_{F}}^{2},\;\forall\varphi\in C_{c}^{\infty}\left(0,a\right),\;\mbox{and}
\]
$\xi'$ as a distribution is in $\mathscr{H}_{F}$, and 
\[
\int_{0}^{a}\overline{\varphi\left(x\right)}\xi'\left(x\right)dx=\left\langle F_{\varphi},\xi'\right\rangle _{\mathscr{H}_{F}}
\]
where we use the characterization of $\mathscr{H}_{F}$ in (\ref{eq:bdd2}),
i.e., a function $\eta:\left[0,a\right]\rightarrow\mathbb{C}$ is
in $\mathscr{H}_{F}$ $\Longleftrightarrow$ $\exists C<\infty$,
$\left|\int_{0}^{a}\overline{\varphi\left(x\right)}\eta\left(x\right)dx\right|\leq C\left\Vert F_{\varphi}\right\Vert _{\mathscr{H}_{F}}$,
$\forall\varphi\in C_{c}^{\infty}\left(0,a\right)$, and then $\int_{0}^{a}\overline{\varphi\left(x\right)}\eta\left(x\right)dx=\left\langle F_{\varphi},\eta\right\rangle _{\mathscr{H}_{F}}$,
$\forall\varphi\in C_{c}^{\infty}\left(0,a\right)$. See \thmref{HF}.\end{proof}
\begin{cor}
$h\in\mathscr{H}_{F}$ is in $dom\left(\left(D_{F}^{2}\right)^{*}\right)$
iff $h''\in\mathscr{H}_{F}$ ($h''$ distribution derivative) and
$\left(D_{F}^{*}\right)^{2}h=\left(D_{F}^{2}\right)^{*}h=h''$. \end{cor}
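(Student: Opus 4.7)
\smallskip
\noindent\textbf{Proof plan.} The strategy is to mimic the proof of \lemref{Dadj} but with two integration-by-parts passes instead of one. First, I would unwind the definition of the Hilbert-space adjoint: $h\in\mathrm{dom}\bigl((D_{F}^{2})^{*}\bigr)$ precisely when there is a constant $C=C_{h}<\infty$ with
\begin{equation*}
\bigl|\bigl\langle D_{F}^{2}F_{\varphi},h\bigr\rangle_{\mathscr{H}_{F}}\bigr|^{2}\le C\,\|F_{\varphi}\|_{\mathscr{H}_{F}}^{2},\qquad\forall\varphi\in C_{c}^{\infty}(0,a),
\end{equation*}
since the span of $\{F_{\varphi}\}$ is dense in $\mathscr{H}_{F}$ by \lemref{dense}. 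By \defref{DF} applied twice, $D_{F}^{2}F_{\varphi}=F_{\varphi''}$, and then \lemref{TFvarphi}\,(\ref{eq:F1-1}) converts the inner product to an honest integral:
\begin{equation*}
\bigl\langle D_{F}^{2}F_{\varphi},h\bigr\rangle_{\mathscr{H}_{F}}=\bigl\langle F_{\varphi''},h\bigr\rangle_{\mathscr{H}_{F}}=\int_{0}^{a}\overline{\varphi''(x)}\,h(x)\,dx.
\end{equation*}

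\smallskip
\noindent Next, since $\varphi\in C_{c}^{\infty}(0,a)$ vanishes in a neighbourhood of the endpoints, two integrations by parts (or equivalently the definition of the distributional derivative $h''$) give
\begin{equation*}
\int_{0}^{a}\overline{\varphi''(x)}\,h(x)\,dx=\int_{0}^{a}\overline{\varphi(x)}\,h''(x)\,dx.
\end{equation*}
Now I would invoke \thmref{HF}: the requirement
\begin{equation*}
\Bigl|\int_{0}^{a}\overline{\varphi(x)}\,h''(x)\,dx\Bigr|^{2}\le C\int_{0}^{a}\!\!\int_{0}^{a}\overline{\varphi(x)}\varphi(y)F(x-y)\,dx\,dy
\end{equation*}
for all $\varphi\in C_{c}^{\infty}(0,a)$ is exactly criterion \eqref{eq:bdd2} for the continuous function $h''$ to belong to $\mathscr{H}_{F}$. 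This gives the desired equivalence $h\in\mathrm{dom}\bigl((D_{F}^{2})^{*}\bigr)\Longleftrightarrow h''\in\mathscr{H}_{F}$. Moreover, once $h''\in\mathscr{H}_{F}$, a second appeal to \lemref{TFvarphi}\,(\ref{eq:F1-1}) (applied to $h''$ in place of $\xi$) yields $\int_{0}^{a}\overline{\varphi(x)}h''(x)\,dx=\langle F_{\varphi},h''\rangle_{\mathscr{H}_{F}}$, which identifies $(D_{F}^{2})^{*}h=h''$.

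\smallskip
\noindent For the second identity $(D_{F}^{*})^{2}h=h''$, I would simply iterate \lemref{Dadj}. From $h''\in\mathscr{H}_{F}$ and $h\in\mathscr{H}_{F}$ one checks that $h$ and $-h'$ each satisfy the distributional hypothesis of \lemref{Dadj}; consequently $h\in\mathrm{dom}(D_{F}^{*})$ with $D_{F}^{*}h=-h'$, and then $-h'\in\mathrm{dom}(D_{F}^{*})$ with $D_{F}^{*}(-h')=-(-h')'=h''$, so $(D_{F}^{*})^{2}h=h''$. The general operator-theoretic inclusion $(D_{F}^{*})^{2}\subseteq(D_{F}^{2})^{*}$ then guarantees consistency of the two values. \emph{The main technical point} — where care is needed — is precisely the regularity bootstrap at the end: having derived only that the \emph{second} distributional derivative lies in $\mathscr{H}_{F}$, one must justify that $h'$ already lies in $\mathscr{H}_{F}$ so that $(D_{F}^{*})^{2}h$ is itself well-defined. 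I would handle this by a short interpolation argument based on \eqref{eq:bdd2} applied to $\int\overline{\varphi}h'\,dx=-\int\overline{\varphi'}h\,dx$, estimating via Cauchy--Schwarz in $\mathscr{H}_{F}$ using $\|F_{\varphi'}\|_{\mathscr{H}_{F}}=\|D_{F}F_{\varphi}\|_{\mathscr{H}_{F}}$ together with the $h''$-bound, which is the cleanest route to closing the equivalence.
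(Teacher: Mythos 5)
Your main computation is exactly the paper's: apply $D_{F}$ twice to get $D_{F}^{2}F_{\varphi}=F_{\varphi''}$, convert $\left\langle F_{\varphi''},h\right\rangle _{\mathscr{H}_{F}}$ to $\int_{0}^{a}\overline{\varphi''}h$ via (\ref{eq:F1-1}), integrate by parts twice, and read off $(D_{F}^{2})^{*}h=h''$ from the characterization in \thmref{HF}. That part is fine and is essentially the published proof, only written out in more detail.

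Where you go beyond the paper is in trying to justify the second identity $(D_{F}^{*})^{2}h=h''$, and you have correctly isolated the real issue: the general inclusion runs the wrong way ($(D_{F}^{*})^{2}\subseteq(D_{F}^{2})^{*}$ always holds, but the corollary's ``iff'' needs the converse), so one must show that $h,h''\in\mathscr{H}_{F}$ forces $h'\in\mathscr{H}_{F}$ before $(D_{F}^{*})^{2}h$ is even defined. The paper's one-line proof does not address this at all. However, your sketched fix does not close as written. Estimating $\left|\int_{0}^{a}\overline{\varphi'}h\right|=\left|\left\langle F_{\varphi'},h\right\rangle _{\mathscr{H}_{F}}\right|$ by Cauchy--Schwarz gives a bound of the form $\left\Vert F_{\varphi'}\right\Vert _{\mathscr{H}_{F}}\left\Vert h\right\Vert _{\mathscr{H}_{F}}=\left\Vert D_{F}F_{\varphi}\right\Vert _{\mathscr{H}_{F}}\left\Vert h\right\Vert _{\mathscr{H}_{F}}$, and $\left\Vert D_{F}F_{\varphi}\right\Vert _{\mathscr{H}_{F}}$ is \emph{not} dominated by $C\left\Vert F_{\varphi}\right\Vert _{\mathscr{H}_{F}}$, since $D_{F}$ is unbounded; so this does not produce the bound $\left|\int_{0}^{a}\overline{\varphi}h'\right|^{2}\leq C\left\Vert F_{\varphi}\right\Vert _{\mathscr{H}_{F}}^{2}$ required by (\ref{eq:bdd2}). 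A genuine interpolation input is needed, e.g.\ a Landau--Kolmogorov-type inequality $\left\Vert D_{F}u\right\Vert ^{2}=-\left\langle D_{F}^{2}u,u\right\rangle \leq\left\Vert D_{F}^{2}u\right\Vert \left\Vert u\right\Vert$ transferred from $dom(D_{F}^{2})$ to $h$ (for instance by passing through a skew-adjoint extension $A\supseteq D_{F}$ and its spectral resolution). As it stands, this step is a gap in your argument --- one that the paper shares, but which your write-up promises to fill and then does not.
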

\begin{proof}
Application of (\ref{eq:Dadj4}) to $D_{F}\left(F_{\varphi}\right)=F_{\varphi'}$,
we have $D_{F}^{2}\left(F_{\varphi}\right)=F_{\varphi''}=\left(\frac{d}{dx}\right)^{2}F_{\varphi}$,
$\forall\varphi\in C_{c}^{\infty}\left(0,a\right)$, and 
\begin{align*}
\left\langle D_{F}^{2}\left(F_{\varphi}\right),h\right\rangle _{\mathscr{H}_{F}} & =\left\langle F_{\varphi''},h\right\rangle _{\mathscr{H}_{F}}=\int_{0}^{a}\overline{\varphi''\left(x\right)}h\left(x\right)dx\\
 & =\int_{0}^{a}\overline{\varphi\left(x\right)}h''\left(x\right)dx=\left\langle F_{\varphi},\left(D_{F}^{2}\right)^{*}h\right\rangle _{\mathscr{H}_{F}}.
\end{align*}
\end{proof}
\begin{defn}
\cite{DS88b}Let $D_{F}^{*}$ be the adjoint of $D_{F}$ relative
to $\mathscr{H}_{F}$ inner product. The deficiency spaces $DEF^{\pm}$
consists of $\xi_{\pm}\in dom\left(D_{F}^{*}\right)$, such that $D_{F}^{*}\xi_{\pm}=\pm\xi_{\pm}$,
i.e., 
\[
DEF^{\pm}=\left\{ \xi_{\pm}\in\mathscr{H}_{F}:\left\langle F_{\psi'},\xi_{\pm}\right\rangle _{\mathscr{H}_{F}}=\left\langle F_{\psi},\pm\xi_{\pm}\right\rangle _{\mathscr{H}_{F}},\forall\psi\in C_{c}^{\infty}\left(\Omega\right)\right\} .
\]
\end{defn}
\begin{cor}
\label{cor:defg}If $\xi\in DEF^{\pm}$ then $\xi(x)=\mathrm{constant}\, e^{\mp x}.$\end{cor}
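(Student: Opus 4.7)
The plan is to reduce the eigenvalue equation $D_F^*\xi_\pm = \pm \xi_\pm$ to a first-order distributional ODE on $(0,a)$ and then apply the classical integrating-factor argument. First I apply \lemref{Dadj}: an element $\xi \in \mathscr{H}_F$ belongs to $dom(D_F^*)$ precisely when its distributional derivative $\xi'$ is again in $\mathscr{H}_F$, and in that case $D_F^*\xi = -\xi'$. Substituting into $D_F^*\xi_\pm = \pm \xi_\pm$ shows that membership of $\xi_\pm$ in $DEF^\pm$ is equivalent to
\[
\xi_\pm' = \mp\, \xi_\pm \qquad \text{on } (0,a),
\]
read as an identity of distributions, i.e.\ $\int_0^a \varphi'(x)\,\xi_\pm(x)\,dx = \pm \int_0^a \varphi(x)\,\xi_\pm(x)\,dx$ for all $\varphi \in C_c^\infty(0,a)$.

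Next I solve this distributional ODE by the classical integrating-factor trick. Every element of $\mathscr{H}_F$ is continuous on $[0,a]$, so in particular $\xi_\pm$ is continuous. Since $e^{\pm x}$ is smooth, the Leibniz rule for distributions applies and, using the ODE, yields
\[
\frac{d}{dx}\bigl(e^{\pm x}\, \xi_\pm(x)\bigr) = e^{\pm x}\,\xi_\pm'(x) \pm e^{\pm x}\,\xi_\pm(x) = e^{\pm x}\bigl(\mp \xi_\pm \pm \xi_\pm\bigr) = 0.
\]
Because a distribution with vanishing derivative on the connected open interval $(0,a)$ is a constant, this forces $e^{\pm x}\xi_\pm(x) \equiv C$, hence $\xi_\pm(x) = C\, e^{\mp x}$, which is the claim (and this formula extends by continuity to $[0,a]$).

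The main (and essentially only) point to watch is the sign bookkeeping: the sign $\pm$ in the definition of $DEF^\pm$, combined with the identity $D_F^* = -d/dx$ from \lemref{Dadj}, is what produces the opposite sign in the exponential $e^{\mp x}$. There is no serious analytic obstacle, because $\xi_\pm$ is already continuous and one factor in the product $e^{\pm x}\xi_\pm$ is smooth, so the distributional product rule is immediate (equivalently, one integrates by parts inside the weak identity above after testing against $\varphi\in C_c^\infty(0,a)$). No further regularity, and no specific information about $F$, is required for this implication.
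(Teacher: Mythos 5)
Your proof is correct and is essentially the paper's intended argument: the paper's own proof is the single line ``Immediate from \lemref{Dadj},'' and you have simply supplied the routine details, namely converting $D_F^*\xi_\pm=\pm\xi_\pm$ via $D_F^*=-\,d/dx$ into the distributional ODE $\xi_\pm'=\mp\xi_\pm$ and solving it with the integrating factor. The sign bookkeeping and the use of the fact that a distribution with vanishing derivative on an interval is constant are both handled correctly.
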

\begin{proof}
Immediate from \lemref{Dadj}.
\end{proof}
The role of deficiency indices for the canonical skew-Hermitian operator
$D_{F}$ (\defref{DF}) in the RKHS $\mathscr{H}_{F}$ is as follows:
using von Neumann's conjugation trick \cite{DS88b}, we see that the
deficiency indices can be only $\left(0,0\right)$ or $\left(1,1\right)$.

We conclude that there exists proper skew-adjoint extensions $A\supset D_{F}$
in $\mathscr{H}_{F}$ (in case $D_{F}$ has indices $\left(1,1\right)$).
Then
\begin{equation}
D_{F}\subseteq A=-A^{*}\subseteq-D_{F}^{*}\label{eq:Dadj7}
\end{equation}
(If the indices are $\left(0,0\right)$ then $\overline{D_{F}}=-D_{F}^{*}$;
see \cite{DS88b}.)

Hence, set $U\left(t\right)=e^{tA}:\mathscr{H}_{F}\rightarrow\mathscr{H}_{F}$,
and get the strongly continuous unitary one-parameter group 
\[
\left\{ U\left(t\right):t\in\mathbb{R}\right\} ,\; U\left(s+t\right)=U\left(s\right)U\left(t\right),\:\forall s,t\in\mathbb{R};
\]
and if 
\[
\xi\in dom\left(A\right)=\left\{ \xi\in\mathscr{H}_{F}:\:\mbox{s.t.}\lim_{t\rightarrow0}\frac{U\left(t\right)\xi-\xi}{t}\:\mbox{exists}\right\} 
\]
then 
\begin{equation}
A\xi=\mbox{s.t.}\lim_{t\rightarrow0}\frac{U\left(t\right)\xi-\xi}{t}.
\end{equation}

Now use $F_{x}(\cdot)=F\left(x-\cdot\right)$ defined in $\left(0,a\right)$;
and set 
\begin{equation}
F_{A}\left(t\right):=\left\langle F_{0},U\left(t\right)F_{0}\right\rangle _{\mathscr{H}_{F}},\;\forall t\in\mathbb{R}\label{eq:Fext}
\end{equation}
then using (\ref{eq:approx}), we see that $F_{A}$ is a continuous
positive definite extension of $F$ on $\left(-a,a\right)$. This
extension is in $Ext_{1}\left(F\right)$. 
\begin{cor}
\label{cor:ptspec}Assume $\lambda\in\mathbb{R}$ is in the point
spectrum of $A$, i.e., $\exists\xi_{\lambda}\in dom\left(A\right)$,
$\xi_{\lambda}\neq0$, s.t. $A\xi_{\lambda}=i\lambda\xi_{\lambda}$
holds in $\mathscr{H}_{F}$, then $\xi_{\lambda}=\mbox{const}\cdot e_{\lambda}$,
i.e., 
\begin{equation}
\xi_{\lambda}\left(x\right)=\mbox{const}\cdot e^{i\lambda x},\;\forall x\in\left[0,a\right].\label{eq:Dadj8}
\end{equation}
\end{cor}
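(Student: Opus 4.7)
The plan is to reduce the eigenvalue equation for $A$ to an elementary first-order ODE on $(0,a)$ by passing through $D_F^*$ and invoking \lemref{Dadj}. Since $A$ is a skew-adjoint extension of $D_F$, the inclusion chain \eqref{eq:Dadj7} gives $A \subseteq -D_F^*$, so any $\xi_\lambda \in \mathrm{dom}(A)$ automatically lies in $\mathrm{dom}(D_F^*)$, and the eigenvalue equation $A\xi_\lambda = i\lambda \xi_\lambda$ translates into
\[
D_F^* \xi_\lambda = -i\lambda\, \xi_\lambda \quad \text{in } \mathscr{H}_F.
\]

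Next I would apply \lemref{Dadj}: membership in $\mathrm{dom}(D_F^*)$ means the distribution derivative $\xi_\lambda'$ belongs to $\mathscr{H}_F$ and $D_F^* \xi_\lambda = -\xi_\lambda'$. Combined with the previous step, this yields
\[
\xi_\lambda' = i\lambda\, \xi_\lambda
\]
as an identity of distributions on $(0,a)$. Because $\xi_\lambda$ is (by the RKHS structure) a continuous function on $[0,a]$, the right-hand side is continuous, hence $\xi_\lambda \in C^1(0,a)$ and the ODE holds classically. Elementary ODE theory then forces $\xi_\lambda(x) = c\, e^{i\lambda x}$ for some constant $c$, and continuity at the endpoints extends this to all of $[0,a]$, which is exactly \eqref{eq:Dadj8}.

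There is essentially no hard step; the entire argument is a direct translation of the spectral equation for $A$ into the distributional differential equation characterized by \lemref{Dadj}. The only point that requires a brief comment is the bootstrap from distribution derivative to classical derivative, but this is immediate once one observes that the distributional equation forces $\xi_\lambda'$ to be a continuous function (namely $i\lambda \xi_\lambda$), so $\xi_\lambda$ is actually $C^\infty$ on the open interval. I would not attempt to determine whether such a $\lambda$ actually lies in the point spectrum here (that is a boundary-condition question determined by the particular extension $A$); the corollary only claims the \emph{shape} of the eigenfunction whenever one exists.
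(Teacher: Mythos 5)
Your argument is correct and follows the paper's own proof essentially verbatim: both pass from $A\subseteq -D_F^*$ to $\xi_\lambda\in\mathrm{dom}(D_F^*)$, use \lemref{Dadj} to rewrite the eigenvalue equation as the distributional ODE $\xi_\lambda'=i\lambda\xi_\lambda$, and conclude $\xi_\lambda=\mathrm{const}\cdot e^{i\lambda x}$. The only cosmetic difference is that the paper cites Schwartz's theory for the distributional ODE where you spell out the continuity bootstrap explicitly; either is fine.
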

\begin{proof}
Assume $\lambda$ is in $spec_{pt}\left(A\right)$, and $\xi_{\lambda}\in dom\left(A\right)$
satisfying 
\begin{equation}
\left(A\xi_{\lambda}\right)\left(x\right)=i\lambda\xi_{\lambda}\left(x\right)\;\mbox{in }\mathscr{H}_{F},\label{eq:Dadj9}
\end{equation}
then since $A\subset-D_{F}^{*}$, we get $\xi\in dom\left(D_{F}^{*}\right)$
by \lemref{Dadj} and (\ref{eq:Dadj7}), and $D_{F}^{*}\xi_{\lambda}=-\xi_{\lambda}'$
where $\xi'$ is the distribution derivative (see (\ref{eq:Dadj2}));
and by (\ref{eq:Dadj7})
\begin{equation}
\left(A\xi_{\lambda}\right)\left(x\right)=-\left(D_{F}^{*}\xi_{\lambda}\right)\left(x\right)=\xi'_{\lambda}\left(x\right)\overset{\left(\ref{eq:Dadj9}\right)}{=}i\lambda\xi_{\lambda}\left(x\right),\;\forall x\in\left(0,a\right)\label{eq:Dadj10}
\end{equation}
so $\xi_{\lambda}$ is the distribution derivative solution to 
\begin{eqnarray}
\xi'_{\lambda}\left(x\right) & = & i\lambda\xi_{\lambda}\left(x\right)\label{eq:Dadj11}\\
 & \Updownarrow\nonumber \\
-\int_{0}^{a}\overline{\varphi'\left(x\right)}\xi_{\lambda}\left(x\right)dx & = & i\lambda\int_{0}^{a}\overline{\varphi\left(x\right)}\xi_{\lambda}\left(x\right)dx,\;\forall\varphi\in C_{c}^{\infty}\left(0,a\right)\nonumber \\
 & \Updownarrow\nonumber \\
-\left\langle D_{F}\left(F_{\varphi}\right),\xi_{\lambda}\right\rangle _{\mathscr{H}_{F}} & = & i\lambda\left\langle F_{\varphi},\xi_{\lambda}\right\rangle _{\mathscr{H}_{F}},\;\forall\varphi\in C_{c}^{\infty}\left(0,a\right).\nonumber 
\end{eqnarray}
But by Schwartz, the distribution solutions to (\ref{eq:Dadj11})
are $\xi_{\lambda}\left(x\right)=\mbox{const}\cdot e_{\lambda}\left(x\right)=\mbox{const}\cdot e^{i\lambda x}$. 
\end{proof}

\section{\label{sec:types}Type I v.s. Type II Extensions}

When a pair $\left(\Omega,F\right)$ is given, where $F$ is a prescribed
continuous positive definite function defined on $\Omega$, we consider
the possible continuous positive definite extensions to all of $\mathbb{R}^{n}$.
The reproducing kernel Hilbert space $\mathscr{H}_{F}$ will play
a key role in our analysis. In constructing various classes of continuous
positive definite extensions to $\mathbb{R}^{n}$, we introduce operators
in $\mathscr{H}_{F}$, and their \emph{dilation} to operators, possibly
acting in an enlargement Hilbert space \cite{JPT14,KL14}. Following
techniques from dilation theory we note that every dilation contains
a minimal one. If a continuous positive definite extensions to $\mathbb{R}^{n}$
has its minimal dilation Hilbert space equal to $\mathscr{H}_{F}$,
we say it is type 1, otherwise we say it is type 2.
\begin{defn}
Let $G$ be a locally compact group, and let $\Omega$ be an open
connected subset of $G$. Let $F:\Omega^{-1}\cdot\Omega\rightarrow\mathbb{C}$
be a continuous positive definite function. 

Consider a strongly continuous unitary representation $U$ of $G$
acting in some Hilbert space $\mathscr{K}$, containing the RKHS $\mathscr{H}_{F}$.
We say that $\left(U,\mathscr{K}\right)\in Ext\left(F\right)$ iff
there is a vector $k_{0}\in\mathscr{K}$ such that
\begin{equation}
F\left(g\right)=\left\langle k_{0},U\left(g\right)k_{0}\right\rangle _{\mathscr{K}},\;\forall g\in\Omega^{-1}\cdot\Omega.\label{eq:ext-1-1}
\end{equation}

\begin{enumerate}[leftmargin=*,label=\arabic{enumi}.]
\item The subset of $Ext\left(F\right)$ consisting of $\left(U,\mathscr{H}_{F},k_{0}=F_{e}\right)$
with 
\begin{equation}
F\left(g\right)=\left\langle F_{e},U\left(g\right)F_{e}\right\rangle _{\mathscr{H}_{F}},\;\forall g\in\Omega^{-1}\cdot\Omega\label{eq:ext-1-2}
\end{equation}
is denoted $Ext_{1}\left(F\right)$; and we set 
\[
Ext_{2}\left(F\right):=Ext\left(F\right)\backslash Ext_{1}\left(F\right);
\]
i.e., $Ext_{2}\left(F\right)$, consists of the solutions to problem
(\ref{eq:ext-1-1}) for which $\mathscr{K}\supsetneqq\mathscr{H}_{F}$,
i.e., unitary representations realized in an enlargement Hilbert space.
\\
(We write $F_{e}\in\mathscr{H}_{F}$ for the vector satisfying $\left\langle F_{e},\xi\right\rangle _{\mathscr{H}_{F}}=\xi\left(e\right)$,
$\forall\xi\in\mathscr{H}_{F}$, where $e$ is the neutral (unit)
element in $G$, i.e., $e\, g=g$, $\forall g\in G$.)
\item In the special case, where $G=\mathbb{R}^{n}$, and $\Omega\subset\mathbb{R}^{n}$
is open and connected, we consider 
\[
F:\Omega-\Omega\rightarrow\mathbb{C}
\]
continuous and positive definite. In this case,
\begin{align}
Ext\left(F\right)= & \Bigl\{\mu\in\mathscr{M}_{+}\left(\mathbb{R}^{n}\right)\:\big|\:\widehat{\mu}\left(x\right)=\int_{\mathbb{R}^{n}}e^{i\lambda\cdot x}d\mu\left(\lambda\right)\label{eq:ext-1-4}\\
 & \mbox{ is a p.d. extensiont of \ensuremath{F}}\Bigr\}.\nonumber 
\end{align}

\end{enumerate}
\end{defn}
\begin{rem}
Note that (\ref{eq:ext-1-4}) is consistent with (\ref{eq:ext-1-1}):
For if $\left(U,\mathscr{K},k_{0}\right)$ is a unitary representation
of $G=\mathbb{R}^{n}$, such that (\ref{eq:ext-1-1}) holds; then,
by a theorem of Stone, there is a projection-valued measure (PVM)
$P_{U}\left(\cdot\right)$, defined on the Borel subsets of $\mathbb{R}^{n}$
s.t. 
\begin{equation}
U\left(x\right)=\int_{\mathbb{R}^{n}}e^{i\lambda\cdot x}P_{U}\left(d\lambda\right),\; x\in\mathbb{R}^{n}.\label{eq:ex-1-5}
\end{equation}
Setting 
\begin{equation}
d\mu\left(\lambda\right):=\left\Vert P_{U}\left(d\lambda\right)k_{0}\right\Vert _{\mathscr{K}}^{2},\label{eq:ext-1-6-7}
\end{equation}
it is then immediate that we have: $\mu\in\mathscr{M}_{+}\left(\mathbb{R}^{n}\right)$,
and that the finite measure $\mu$ satisfies 
\begin{equation}
\widehat{\mu}\left(x\right)=F\left(x\right),\;\forall x\in\Omega-\Omega.\label{eq:ext-1-6}
\end{equation}

\end{rem}
Set $n=1$: Start with a local p.d. continuous function $F$, and
let $\mathscr{H}_{F}$ be the corresponding RKHS. Let $Ext(F)$ be
the compact convex set of probability measures on $\mathbb{R}$ defining
extensions of $F$.

We now divide $Ext(F)$ into two parts, say $Ext_{1}\left(F\right)$
and $Ext_{2}\left(F\right)$. 

All continuous p.d. extensions of $F$ come from strongly continuous
unitary representations. So in the case of 1D, from unitary one-parameter
groups of course, say $U(t)$.

Let $Ext_{1}\left(F\right)$ be the subset of $Ext(F)$ corresponding
to extensions when the unitary representation $U(t)$ acts in $\mathscr{H}_{F}$
(internal extensions), and $Ext_{2}\left(F\right)$ denote the part
of $Ext(F)$ associated to unitary representations $U(t)$ acting
in a proper enlargement Hilbert space $\mathscr{K}$ (if any), i.e.,
acting in a Hilbert space $\mathscr{K}$ corresponding to a proper
dilation of $\mathscr{H}_{F}$. 

In \secref{exp} below, we explore the example $F\left(x\right)=e^{-\left|x\right|}$,
initially defined only in the symmetric interval $\left(-1,1\right)$;
and we characterize the corresponding continuous p.d. extensions in
$\mathscr{H}_{F}$ (i.e. type 1.) Section \ref{sub:type2ext} is devoted
to some examples of type 2 extensions.

\section{\label{sec:mercer}Mercer Operators}

In the considerations below, we shall be primarily concerned with
the case when a fixed continuous p.d. function $F$ is defined on
a finite interval $\left(-a,a\right)\subset\mathbb{R}$. In this case,
by a Mercer operator, we mean an operator $T_{F}$ in $L^{2}\left(0,a\right)$
where $L^{2}\left(0,a\right)$ is defined from Lebesgue measure on
$\left(0,a\right)$, given by
\begin{equation}
\left(T_{F}\varphi\right)\left(x\right):=\int_{0}^{a}\varphi\left(y\right)F\left(x-y\right)dy,\;\forall\varphi\in L^{2}\left(0,a\right),\forall x\in\left(0,a\right).\label{eq:mer-1}
\end{equation}

\begin{lem}
\label{lem:mer-1}Under the assumptions stated above, the Mercer operator
$T_{F}$ is trace class in $L^{2}\left(0,a\right)$; and if $F\left(0\right)=1$,
then 
\begin{equation}
trace\left(T_{F}\right)=a.\label{eq:mer-2}
\end{equation}
\end{lem}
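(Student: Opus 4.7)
The plan is to recognize $T_{F}$ as a classical positive, self-adjoint, Hilbert--Schmidt integral operator on $L^{2}(0,a)$ with continuous positive definite kernel $K(x,y)=F(x-y)$ on the compact square $[0,a]\times[0,a]$, and then to apply Mercer's theorem.

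First I would establish the three structural properties of $T_{F}$ that are needed. Boundedness and the Hilbert--Schmidt property follow from continuity of $F$ on the compact interval $[-a,a]$, since $\int_{0}^{a}\!\!\int_{0}^{a}|F(x-y)|^{2}dxdy<\infty$. Self-adjointness follows from $F(-x)=\overline{F(x)}$, which makes the kernel Hermitian: $K(x,y)=\overline{K(y,x)}$. Positivity of $T_{F}$ as an operator follows from the defining p.d.\ condition (\ref{eq:li-3}): for $\varphi\in C_{c}(0,a)\subset L^{2}(0,a)$,
\begin{equation*}
\langle T_{F}\varphi,\varphi\rangle_{L^{2}}=\int_{0}^{a}\!\!\int_{0}^{a}\overline{\varphi(x)}\varphi(y)F(x-y)dxdy\geq 0,
\end{equation*}
and this extends to all of $L^{2}(0,a)$ by density and continuity.

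Second, by the spectral theorem for compact, self-adjoint, positive operators, there is an orthonormal basis $\{\psi_{n}\}\subset L^{2}(0,a)$ consisting of eigenvectors, $T_{F}\psi_{n}=\lambda_{n}\psi_{n}$ with $\lambda_{n}\geq 0$. Because $T_{F}$ has a continuous kernel, each eigenfunction $\psi_{n}$ with $\lambda_{n}>0$ is a continuous function on $[0,a]$ (it is $\lambda_{n}^{-1}T_{F}\psi_{n}$, and $T_{F}$ maps $L^{2}$ into $C[0,a]$). Now I invoke Mercer's theorem: for a continuous positive definite kernel on a compact interval, the series
\begin{equation*}
F(x-y)=\sum_{n}\lambda_{n}\psi_{n}(x)\overline{\psi_{n}(y)}
\end{equation*}
converges absolutely and uniformly on $[0,a]\times[0,a]$. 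Setting $x=y$, integrating, and using $\int_{0}^{a}|\psi_{n}(x)|^{2}dx=1$ gives
\begin{equation*}
\sum_{n}\lambda_{n}=\int_{0}^{a}F(0)dx=aF(0),
\end{equation*}
which is finite. Hence $T_{F}$ is trace class, and under the normalization $F(0)=1$ we obtain $\mathrm{trace}(T_{F})=a$.

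The one step that requires genuine care rather than bookkeeping is the application of Mercer's theorem, since the statement relies on the joint continuity \emph{and} positive semidefiniteness of the kernel on the full square $[0,a]^{2}$; continuity is immediate from continuity of $F$ on $[-a,a]$, and the required integral positivity is exactly the hypothesis (\ref{eq:li-3}) of the paper. Once Mercer applies, uniform convergence of the diagonal series justifies the termwise integration that yields (\ref{eq:mer-2}); no further obstacle remains.
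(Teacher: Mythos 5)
Your proof is correct and follows the same route as the paper, which simply cites Mercer's theorem (after noting that $F$ extends by continuity to the closed interval $\left[-a,a\right]$, the point you also address); you have merely written out the standard verification of the hypotheses and the termwise integration of the diagonal series. No gap.
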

\begin{proof}
This is an application of Mercer's theorem \cite{LP89,FR42,FM13}
to the integral operator $T_{F}$ in (\ref{eq:mer-1}). But we must
check that $F$, on $\left(-a,a\right)$, extends uniquely by limit
to a continuous p.d. function $F_{ex}$ on $\left[-a,a\right]$, the
closed interval. This is true, and easy to verify, see e.g. \cite{JPT14}.\end{proof}
\begin{cor}
\label{cor:mer1}Let $F$ and $\left(-a,a\right)$ be as in \lemref{mer-1}.
Then there is a sequence $\left(\lambda_{n}\right)_{n\in\mathbb{N}}$,
$\lambda_{n}>0$, s.t. $\sum_{n\in\mathbb{N}}\lambda_{n}=a$, and
a system of orthogonal functions $\left\{ \xi_{n}\right\} \subset L^{2}\left(0,a\right)\cap\mathscr{H}_{F}$
such that
\begin{equation}
F\left(x-y\right)=\sum_{n\in\mathbb{N}}\lambda_{n}\xi_{n}\left(x\right)\overline{\xi_{n}\left(y\right)},\mbox{ and}\label{eq:mer-3}
\end{equation}
\begin{equation}
\int_{0}^{a}\overline{\xi_{n}\left(x\right)}\xi_{m}\left(x\right)dx=\delta_{n,m},\; n,m\in\mathbb{N}.\label{eq:mer-4}
\end{equation}
\end{cor}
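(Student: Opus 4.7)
The plan is to deduce Corollary \ref{cor:mer1} as a direct application of Mercer's theorem to the Mercer operator $T_F$ of \secref{mercer}, combined with the intrinsic description of $\mathscr{H}_F$ from \lemref{RKHS-def-by-integral} and \corref{Fmu}. Throughout we normalize $F(0)=1$.

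First, I would observe that $T_F$, as defined in (\ref{eq:mer-1}), is self-adjoint on $L^2(0,a)$ because the kernel $F(x-y)$ satisfies $F(y-x) = \overline{F(x-y)}$, and that $T_F$ is positive since the positive-definiteness condition (\ref{eq:li-3}) gives $\langle \varphi, T_F \varphi\rangle_{L^2(0,a)} \geq 0$ for all $\varphi \in C_c(0,a)$, hence for all $\varphi \in L^2(0,a)$ by density. By \lemref{mer-1}, $T_F$ is trace class, hence compact. The spectral theorem for compact self-adjoint positive operators then produces an at most countable set of strictly positive eigenvalues $\{\lambda_n\}$ (discarding the kernel of $T_F$) with a corresponding $L^2(0,a)$-orthonormal system $\{\xi_n\}$ of eigenfunctions, giving (\ref{eq:mer-4}); and $\sum_{n}\lambda_n = \mathrm{trace}(T_F) = a$ by (\ref{eq:mer-2}).

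Next I would invoke Mercer's theorem itself (see \cite{LP89,FR42,FM13}) applied to the continuous positive definite kernel $K(x,y):=F(x-y)$ on the compact square $[0,a]\times [0,a]$ — here one uses the extension of $F$ to $[-a,a]$ noted in the proof of \lemref{mer-1}. Mercer's theorem yields the expansion
\[
F(x-y) = \sum_{n \in \mathbb{N}} \lambda_n \xi_n(x)\overline{\xi_n(y)},
\]
with the series converging absolutely and uniformly on $[0,a]\times[0,a]$, which is the identity (\ref{eq:mer-3}).

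It remains to verify that each $\xi_n$ lies in $\mathscr{H}_F$. Since $\lambda_n > 0$, the eigenvalue equation $T_F \xi_n = \lambda_n \xi_n$ gives
\[
\xi_n(x) = \lambda_n^{-1} \int_0^a \xi_n(y) F(x-y)\,dy = \lambda_n^{-1} F_{\xi_n}(x),
\]
and $\xi_n \in L^2(0,a) \subset L^1(0,a)$ defines a complex measure $d\mu_n := \xi_n(y)\,dy$ of bounded variation on $[0,a]$; applying \corref{Fmu} we conclude $F_{\xi_n} \in \mathscr{H}_F$, and therefore $\xi_n \in \mathscr{H}_F$ as well. This completes all assertions of the corollary.

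The only genuinely delicate step is the invocation of Mercer's theorem: one must ensure that $F$, originally defined only on the \emph{open} interval $(-a,a)$, extends continuously to the closed interval $[-a,a]$ so that the kernel $K(x,y)=F(x-y)$ is continuous on the compact square $[0,a]^2$. This is exactly the point flagged in the proof of \lemref{mer-1} and is straightforward from the uniform continuity of p.d. functions implied by the Cauchy-Schwarz-type estimate (\ref{eq:F1-2}); once this continuous extension is in hand, Mercer's theorem applies verbatim and the rest of the argument is mechanical.
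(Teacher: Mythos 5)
Your proof is correct and follows essentially the same route as the paper, which simply cites Mercer's theorem; you additionally spell out the spectral-theoretic preliminaries and supply the missing verification that each $\xi_{n}$ lies in $\mathscr{H}_{F}$ via the eigenvalue identity $\xi_{n}=\lambda_{n}^{-1}F_{\xi_{n}}$ together with \corref{Fmu}. (The paper defers that membership to a later corollary, where it is attributed to \thmref{HF}; your route through \corref{Fmu} is equivalent, since that corollary is itself proved from \thmref{HF}.)
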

\begin{proof}
An application of Mercer's theorem \cite{LP89,FR42,FM13}.\end{proof}
\begin{cor}
\label{cor:merinn}For all $\psi,\varphi\in C_{c}^{\infty}\left(0,a\right)$,
we have 
\begin{equation}
\left\langle F_{\psi},F_{\varphi}\right\rangle _{\mathscr{H}_{F}}=\left\langle F_{\psi},T_{F}^{-1}F_{\varphi}\right\rangle _{2}.\label{eq:mnorm1}
\end{equation}
Consequently, 
\begin{equation}
\left\Vert h\right\Vert _{\mathscr{H}_{F}}=\Vert T_{F}^{-1/2}h\Vert_{2},\;\forall h\in\mathscr{H}_{F}.\label{eq:mnorm2}
\end{equation}
\end{cor}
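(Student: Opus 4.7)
The plan is to build (\ref{eq:mnorm1}) directly out of the definitions, then obtain (\ref{eq:mnorm2}) by polarization and a density argument based on \lemref{dense}. First I observe that, by the very definition of $T_F$ in (\ref{eq:mer-1}) and of $F_\varphi$ in (\ref{eq:H2}), we have $T_F\varphi=F_\varphi$ for every $\varphi\in C_c^\infty(0,a)$; hence $T_F^{-1}F_\varphi=\varphi$, where $T_F^{-1}$ denotes the inverse on the range of $T_F$ (well-defined because, by \corref{mer1}, $T_F$ has the expansion $T_F=\sum_n\lambda_n\,\xi_n\otimes\overline{\xi_n}$ with strictly positive eigenvalues $\lambda_n$, the strict positivity being forced by the positive definiteness of $F$).

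With this in hand, the right-hand side of (\ref{eq:mnorm1}) becomes
\begin{equation*}
\langle F_\psi,T_F^{-1}F_\varphi\rangle_2=\langle F_\psi,\varphi\rangle_2=\int_0^a\overline{F_\psi(x)}\,\varphi(x)\,dx,
\end{equation*}
and the final displayed identity at the end of \lemref{RKHS-def-by-integral}, applied with the roles of $\varphi,\psi$ swapped and then conjugated, identifies this with $\langle F_\psi,F_\varphi\rangle_{\mathscr{H}_F}$. That proves (\ref{eq:mnorm1}).

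For (\ref{eq:mnorm2}), specialize $\psi=\varphi$ in (\ref{eq:mnorm1}) and rewrite the right-hand side using selfadjointness of $T_F^{-1/2}$ on the range of $T_F^{1/2}$:
\begin{equation*}
\|F_\varphi\|_{\mathscr{H}_F}^{2}=\langle F_\varphi,T_F^{-1}F_\varphi\rangle_2=\langle T_F^{-1/2}F_\varphi,T_F^{-1/2}F_\varphi\rangle_2=\|T_F^{-1/2}F_\varphi\|_2^{2}.
\end{equation*}
This establishes the isometric identity on the dense subspace $\{F_\varphi:\varphi\in C_c^\infty(0,a)\}\subset\mathscr{H}_F$ (dense by \lemref{dense}). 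For a general $h\in\mathscr{H}_F$, choose $\varphi_n$ with $F_{\varphi_n}\to h$ in $\mathscr{H}_F$; the identity just proved shows that $\{T_F^{-1/2}F_{\varphi_n}\}$ is Cauchy in $L^2(0,a)$, so it converges to some $k\in L^2(0,a)$. One then checks, using the Mercer expansion of \corref{mer1} (so that $T_F^{-1/2}=\sum_n\lambda_n^{-1/2}\xi_n\otimes\overline{\xi_n}$ as an unbounded selfadjoint operator on $L^2(0,a)$), that $k$ coincides with $T_F^{-1/2}h$ and that $h\in\mathrm{dom}(T_F^{-1/2})$.

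The only subtle point is this last identification, i.e.\ showing that every $h\in\mathscr{H}_F$ lies in $\mathrm{dom}(T_F^{-1/2})\subset L^2(0,a)$; this is where the Mercer expansion is essential, and it is in effect the statement that the RKHS norm agrees with $\sum_n\lambda_n^{-1}|\langle\xi_n,h\rangle_2|^2$. Everything else is a routine verification once (\ref{eq:mnorm1}) has been established and \lemref{dense} is invoked to extend by density.
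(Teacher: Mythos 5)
Your proof of (\ref{eq:mnorm1}) is correct and is essentially the paper's own argument: both rest on the observation that $F_{\varphi}=T_{F}\varphi$, so that $\left\langle F_{\psi},T_{F}^{-1}F_{\varphi}\right\rangle _{2}=\left\langle F_{\psi},\varphi\right\rangle _{2}$, which is then identified with the double-integral form of the $\mathscr{H}_{F}$ inner product. For (\ref{eq:mnorm2}) the paper offers no argument beyond the word ``consequently,'' whereas you supply the density/closedness step (including the genuinely needed point that every $h\in\mathscr{H}_{F}$ lies in $dom\bigl(T_{F}^{-1/2}\bigr)$, which does check out via the Mercer expansion), so your write-up is, if anything, more complete than the original.
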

\begin{proof}
Note 
\begin{align*}
\left\langle F_{\psi},T_{F}^{-1}F_{\varphi}\right\rangle _{2} & =\left\langle F_{\psi},T_{F}^{-1}T_{F}\varphi\right\rangle _{2}=\left\langle F_{\psi},\varphi\right\rangle _{2}\\
 & =\int_{0}^{a}\overline{\left(\int_{0}^{a}\psi\left(x\right)F\left(y-x\right)dx\right)}\,\varphi\left(y\right)dy\\
 & =\int_{0}^{a}\int_{0}^{a}\overline{\psi\left(x\right)}\varphi\left(y\right)F\left(x-y\right)dxdy=\left\langle F_{\psi},F_{\varphi}\right\rangle _{\mathscr{H}_{F}}.
\end{align*}
\end{proof}
\begin{cor}
Let $\left\{ \xi_{n}\right\} $ be the ONB in $L^{2}\left(0,a\right)$
as in \corref{mer1}; then $\left\{ \sqrt{\lambda_{n}}\xi_{n}\right\} $
is an ONB in $\mathscr{H}_{F}$. \end{cor}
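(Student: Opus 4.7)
The plan is to combine the Mercer spectral data of \corref{mer1} with the norm identity of \corref{merinn}. The eigenvalue equation $T_F\xi_n=\lambda_n\xi_n$ in $L^2(0,a)$ is, by the very definition (\ref{eq:mer-1}) of $T_F$, the identity $F_{\xi_n}=\lambda_n\xi_n$; in particular each $\xi_n=\lambda_n^{-1}F_{\xi_n}$ already belongs to $\mathscr{H}_F$, so the proposed system $\{\sqrt{\lambda_n}\xi_n\}$ genuinely lives in $\mathscr{H}_F$.

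First I would extend \lemref{TFvarphi}(\ref{enu:F1-2}) from test functions $\varphi\in C_c^\infty(0,a)$ to all $\varphi\in L^2(0,a)$ by density. This is legitimate because $\varphi\mapsto F_\varphi$ is $L^2$-bounded into $\mathscr{H}_F$:
\begin{equation*}
\|F_\varphi\|_{\mathscr{H}_F}^2=\langle\varphi,T_F\varphi\rangle_2\leq\|T_F\|\,\|\varphi\|_2^2,
\end{equation*}
so both sides of (\ref{eq:F1-1}) depend $L^2$-continuously on $\varphi$. With this upgrade, $\langle F_{\xi_n},\xi_m\rangle_{\mathscr{H}_F}=\int_0^a\overline{\xi_n(x)}\xi_m(x)\,dx=\delta_{nm}$. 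Combining with $\xi_n=\lambda_n^{-1}F_{\xi_n}$ and the conjugate-linear-in-the-first convention,
\begin{equation*}
\big\langle\sqrt{\lambda_n}\xi_n,\sqrt{\lambda_m}\xi_m\big\rangle_{\mathscr{H}_F}=\frac{\sqrt{\lambda_n\lambda_m}}{\lambda_n}\langle F_{\xi_n},\xi_m\rangle_{\mathscr{H}_F}=\delta_{nm},
\end{equation*}
which gives orthonormality.

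For completeness, suppose $h\in\mathscr{H}_F$ satisfies $\langle\sqrt{\lambda_n}\xi_n,h\rangle_{\mathscr{H}_F}=0$ for every $n$. The same extended identity gives $\langle F_{\xi_n},h\rangle_{\mathscr{H}_F}=\langle\xi_n,h\rangle_2$, so the hypothesis turns into $\langle\xi_n,h\rangle_2=0$ for all $n$. By \lemref{dense}, $h$ is a uniform (hence $L^2$) limit on $[0,a]$ of functions $F_\varphi=T_F\varphi$, each of which lies in $\mathrm{range}(T_F)\subseteq(\ker T_F)^\perp\subset L^2(0,a)$; therefore $h$, regarded as an $L^2$-element, also lies in $(\ker T_F)^\perp$. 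Since Mercer's theorem produces $\{\xi_n\}$ as an orthonormal basis of $(\ker T_F)^\perp$, the vanishing inner products force $h=0$ in $L^2(0,a)$, and hence $h\equiv 0$ on $[0,a]$ by continuity of the elements of $\mathscr{H}_F$.

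The one delicate point I anticipate is the density extension of (\ref{eq:F1-1}) to $L^2$-arguments, which is needed because the $\xi_n$ are a priori only $L^2$-functions; but this is routine given the operator-norm bound above. Everything else is direct bookkeeping against the spectral decomposition of $T_F$.
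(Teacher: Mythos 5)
Your proof is correct, and for the orthonormality part it is essentially the paper's computation in different clothing: the paper invokes \corref{merinn}, writing $\langle\sqrt{\lambda_n}\xi_n,\sqrt{\lambda_m}\xi_m\rangle_{\mathscr{H}_F}=\sqrt{\lambda_n\lambda_m}\,\langle\xi_n,T_F^{-1}\xi_m\rangle_2=\delta_{nm}$, whereas you reach the same identity through the $L^2$-extension of (\ref{eq:F1-1}) together with $\xi_n=\lambda_n^{-1}F_{\xi_n}$; these are the same Mercer bookkeeping, and your explicit remark that (\ref{eq:F1-1}) must first be extended from $C_c^\infty(0,a)$ to $L^2(0,a)$ via the bound $\|F_\varphi\|_{\mathscr{H}_F}^2=\langle\varphi,T_F\varphi\rangle_2\le\|T_F\|\,\|\varphi\|_2^2$ is a point the paper glosses over (its \corref{merinn} is also only stated for test functions). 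The genuine difference is that the paper's proof stops at orthonormality and never verifies that the system is \emph{total} in $\mathscr{H}_F$, which is half of what ``ONB'' asserts; you supply this by showing that any $h\in\mathscr{H}_F$ orthogonal to all $\sqrt{\lambda_n}\xi_n$ satisfies $\langle\xi_n,h\rangle_2=0$ for all $n$, lies in $(\ker T_F)^\perp$ as an $L^2$-limit of elements of $\mathrm{range}(T_F)$ (using \lemref{dense} and the uniform-convergence remark), and hence vanishes. So your argument is not just correct but strictly more complete than the one printed; an equally quick alternative for totality would have been to note that each $F_\varphi=T_F\varphi=\sum_n\lambda_n\langle\xi_n,\varphi\rangle_2\,\xi_n$ already lies in the closed span of the $\xi_n$, and these $F_\varphi$ are dense in $\mathscr{H}_F$.
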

\begin{proof}
The functions $\xi_{n}$ are in $\mathscr{H}_{F}$ by \thmref{HF}.
We check directly (\corref{merinn}) that 
\begin{align*}
\left\langle \sqrt{\lambda_{n}}\xi_{n},\sqrt{\lambda_{m}}\xi_{m}\right\rangle _{\mathscr{H}_{F}} & =\sqrt{\lambda_{n}\lambda_{m}}\left\langle \xi_{n},T^{-1}\xi_{m}\right\rangle _{2}\\
 & =\sqrt{\lambda_{n}\lambda_{m}}\lambda_{m}^{-1}\left\langle \xi_{n},\xi_{m}\right\rangle _{2}=\delta_{n,m}.
\end{align*}

\end{proof}

\section{\label{sec:exp}The Case of $F\left(x\right)=e^{-\left|x\right|}$,
$\left|x\right|<1$}

Our emphasis is von Neumann indices, and explicit formulas for partially
defined positive definite functions $F$, defined initially only on
a symmetric interval $\left(-a,a\right)$. It turns out that a number
of the features we explore are common to the following example: Let
$F\left(x\right):=e^{-\left|x\right|}$, defined for $\left|x\right|<1$.
The present section is devoted to this example.

\subsection{\label{sub:saext}The Selfadjoint Extensions $A_{\theta}\supset-iD_{F}$}

The notation ``$\supseteq$'' above refers to containment of operators,
or rather of the respective graphs of the two operators; see \cite{DS88b}.

\begin{lem}
\label{lem:dev1}Let $F\left(x\right)=e^{-\left|x\right|}$, $\left|x\right|<1$.
Set $F_{x}\left(y\right):=F\left(x-y\right)$, $\forall x,y\in\left(0,1\right)$;
and $F_{\varphi}\left(x\right)=\int_{0}^{1}\varphi\left(y\right)F\left(x-y\right)dy$,
$\forall\varphi\in C_{c}^{\infty}\left(0,1\right)$. Define $D_{F}\left(F_{\varphi}\right)=F_{\varphi'}$
on the dense subset 
\begin{equation}
dom\left(D_{F}\right)=\left\{ F_{\varphi}:\varphi\in C_{c}^{\infty}\left(0,1\right)\right\} \subset\mathscr{H}_{F}.\label{eq:D}
\end{equation}
Then the skew-Hermitian operator $D_{F}$ has deficiency indices $\left(1,1\right)$
in $\mathscr{H}_{F}$, where the defect vectors are 
\begin{align}
\xi_{+}\left(x\right) & =F_{0}\left(x\right)=e^{-x}\label{eq:edev1}\\
\xi_{-}\left(x\right) & =F_{1}\left(x\right)=e^{x-1};\label{eq:edev2}
\end{align}
moreover, 
\begin{equation}
\left\Vert \xi_{+}\right\Vert _{\mathscr{H}_{F}}=\left\Vert \xi_{+}\right\Vert _{\mathscr{H}_{F}}=1.\label{eq:edev3}
\end{equation}
\end{lem}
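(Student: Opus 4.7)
The plan is to leverage the earlier general results—especially \corref{defg}, \corref{Fmu}, and \lemref{Dadj}—so that only a few explicit identifications remain.

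First, I would use \corref{defg} as the starting point: any $\xi_\pm \in DEF^\pm$ must satisfy $\xi_\pm(x) = c_\pm e^{\mp x}$ for some constant, which bounds each deficiency space by dimension $1$. So the work reduces to showing that the candidates $e^{-x}$ and $e^{x-1}$ are actually elements of $\mathscr{H}_F$ (so the indices are not $(0,0)$), that they lie in $\operatorname{dom}(D_F^*)$ with the right eigenvalue under $D_F^*$, and that they have unit norm. The identification with $F_0$ and $F_1$ in \eqref{eq:edev1}--\eqref{eq:edev2} is immediate from $F(x)=e^{-|x|}$: for $x\in(0,1)$, $F(x-0)=e^{-x}$ and $F(x-1)=e^{x-1}$.

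Second, to place $\xi_\pm$ inside $\mathscr{H}_F$, I would apply \corref{Fmu} with the Dirac measures $\mu=\delta_0$ and $\mu=\delta_1$ on $[0,1]$. This yields $F_{\delta_0}(x)=F(x)=e^{-x}$ and $F_{\delta_1}(x)=F(x-1)=e^{x-1}$, both of which belong to $\mathscr{H}_F$ by that corollary. (Equivalently, one may argue by $\mathscr{H}_F$-norm continuity, approximating the endpoints by interior points $y\to 0^+$ and $y\to 1^-$; the relation $\|F(\cdot-y_1)-F(\cdot-y_2)\|_{\mathscr{H}_F}^2 = 2(1-\Re F(y_1-y_2))$ from \eqref{eq:F1-2} shows these limits exist in $\mathscr{H}_F$.)

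Third, I would check the adjoint relation via \lemref{Dadj}: a continuous $\xi\in\mathscr{H}_F$ lies in $\operatorname{dom}(D_F^*)$ iff its distributional derivative is again in $\mathscr{H}_F$, and then $D_F^*\xi=-\xi'$. For $\xi_+(x)=e^{-x}$, the distributional (and classical) derivative is $\xi_+'=-\xi_+$, which is in $\mathscr{H}_F$ by the previous step, so $D_F^*\xi_+=-\xi_+'=\xi_+$, confirming $\xi_+\in DEF^+$. Analogously $\xi_-'=\xi_-$ gives $D_F^*\xi_-=-\xi_-$, so $\xi_-\in DEF^-$.

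Finally, the norm computation \eqref{eq:edev3} follows from the defining inner product: since $\xi_+=F(\cdot-0)$ and $\xi_-=F(\cdot-1)$ as elements of $\mathscr{H}_F$, the reproducing property gives $\|\xi_\pm\|_{\mathscr{H}_F}^2 = F(0)=1$. The only delicate point in the whole argument is the justification that boundary reproducing kernels $F_0,F_1$ genuinely belong to $\mathscr{H}_F$ and obey the expected inner product formula at the closed endpoints; this is the step I would be most careful with, and it is precisely what \corref{Fmu} with Dirac masses (or the continuity estimate \eqref{eq:F1-2}) is designed to handle.
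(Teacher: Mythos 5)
Your proof is correct and follows essentially the same route as the paper: identify the defect vectors as the boundary kernels $F_{0},F_{1}$ via \corref{defg}, and compute their norms with the reproducing property. The one place you add substance is the membership step $F_{0},F_{1}\in\mathscr{H}_{F}$, which you justify self-containedly via \corref{Fmu} applied to the Dirac masses $\delta_{0},\delta_{1}$ (or the continuity estimate \eqref{eq:F1-2}), whereas the paper simply invokes the continuous extension of the kernel to the closed interval and defers the details to an external reference.
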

\begin{proof}
(Note if $\Omega$ is any bounded, open and connected domain in $\mathbb{R}^{n}$,
then a locally defined continuous p.d. function, $F:\Omega-\Omega:\rightarrow\mathbb{C}$,
extends uniquely to the boundary $\partial\Omega:=\overline{\Omega}\backslash\Omega$
by continuity \cite{JPT14}.)

In our current settings, $\Omega=\left(0,1\right)$, and $F_{x}\left(y\right):=F\left(x-y\right)$,
$\forall x,y\in\left(0,1\right)$. Thus, $F_{x}\left(y\right)$ extends
to all $x,y\in\left[0,1\right]$. In particular, 
\[
F_{0}\left(x\right)=e^{-x},\; F_{1}\left(x\right)=e^{x-1}
\]
are the two defect vectors, as shown in \corref{defg}. Moreover,
using the reproducing property, we have 
\begin{align*}
\left\Vert F_{0}\right\Vert _{\mathscr{H}_{F}}^{2} & =\left\langle F_{0},F_{0}\right\rangle _{\mathscr{H}_{F}}=F_{0}\left(0\right)=F\left(0\right)=1\\
\left\Vert F_{1}\right\Vert _{\mathscr{H}_{F}}^{2} & =\left\langle F_{1},F_{1}\right\rangle _{\mathscr{H}_{F}}=F_{1}\left(1\right)=F\left(0\right)=1
\end{align*}
and (\ref{eq:edev3}) follows. For more details, see \cite[lemma 2.10.14]{JPT14}.\end{proof}
\begin{lem}
\label{lem:dev}Let $F$ be any continuous p.d. function on $\left(-1,1\right)$.
Set 
\[
h\left(x\right)=\int_{0}^{1}\varphi\left(y\right)F\left(x-y\right)dy,\;\forall\varphi\in C_{c}^{\infty}\left(0,1\right);
\]
then 
\begin{align}
h\left(0\right) & =\int_{0}^{1}\varphi\left(y\right)F\left(-y\right)dy,\qquad h\left(1\right)=\int_{0}^{1}\varphi\left(y\right)F\left(1-y\right)dy\label{eq:dev-1-1}\\
h'\left(0\right) & =\int_{0}^{1}\varphi\left(y\right)F'\left(-y\right)dy,\quad\,\,\, h'\left(1\right)=\int_{0}^{1}\varphi\left(y\right)F'\left(1-y\right)dy;\label{eq:dev-1-2}
\end{align}
where the derivatives $F'$ in (\ref{eq:dev-1-1})-(\ref{eq:dev-1-2})
are in the sense of distribution.\end{lem}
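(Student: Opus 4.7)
The plan is to reduce everything to direct substitution and one integration by parts, with the distribution derivative $F'$ entering only through a test-function pairing; there are no real analytic subtleties beyond keeping track of supports.

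For (\ref{eq:dev-1-1}), observe that because $\varphi\in C_{c}^{\infty}(0,1)$ is supported in a compact subset of $(0,1)$, the arguments $x-y$ with $x\in\{0,1\}$ and $y\in\mathrm{supp}(\varphi)$ remain in a compact subset of $(-1,1)$, where $F$ is continuous. Setting $x=0$ and $x=1$ directly in $h(x)=\int_{0}^{1}\varphi(y)F(x-y)dy$ produces the two identities immediately.

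For (\ref{eq:dev-1-2}), I would first establish an interior formula for $h'$: using $\partial_{x}F(x-y)=-\partial_{y}F(x-y)$ together with integration by parts in $y$, and noting that the boundary terms vanish because $\varphi(0)=\varphi(1)=0$, one obtains
\[
h'(x)=\int_{0}^{1}\varphi'(y)\,F(x-y)\,dy,\qquad x\in(0,1).
\]
The right-hand side is continuous in $x\in[0,1]$ by the same compact-support argument, so the one-sided limits $x\to 0^{+}$ and $x\to 1^{-}$ give $h'(0)=\int_{0}^{1}\varphi'(y)F(-y)dy$ and $h'(1)=\int_{0}^{1}\varphi'(y)F(1-y)dy$.

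It remains to match these with the formulas in (\ref{eq:dev-1-2}) when $F'$ is interpreted as a distribution on $(-1,1)$. By the chain rule, $F'(-y)$ as a distribution in $y\in(0,1)$ equals $-\tfrac{d}{dy}[F(-y)]$, so since $\varphi$ vanishes at both endpoints, integration by parts gives
\[
\int_{0}^{1}\varphi(y)\,F'(-y)\,dy=\int_{0}^{1}\varphi'(y)\,F(-y)\,dy=h'(0),
\]
and similarly $\int_{0}^{1}\varphi(y)F'(1-y)dy=h'(1)$ using $F'(1-y)=-\tfrac{d}{dy}[F(1-y)]$. The only point to verify is that the transported test functions $y\mapsto\varphi(-y)$ and $y\mapsto\varphi(1-y)$ have supports compactly contained in $(-1,1)$ so that pairing with the distribution $F'$ is well defined; this is ensured precisely by $\mathrm{supp}(\varphi)$ being a compact subset of $(0,1)$. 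The bookkeeping around these supports is the only thing one has to be careful about.
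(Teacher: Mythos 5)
Your proof is correct and follows essentially the same route as the paper's: both amount to differentiating the convolution $h=F_{\varphi}$ and identifying the result with the distributional pairing against $F'$. The paper does this tersely by splitting the integral at $y=x$ and differentiating under the integral sign, whereas you justify the step more carefully by first passing the derivative onto $\varphi$ (giving $h'=F_{\varphi'}$) and then transferring it back to $F$ via distributional integration by parts; this is a more rigorous rendering of the same computation, not a different argument.
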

\begin{proof}
Note that
\begin{align*}
h\left(x\right) & =\int_{0}^{x}\varphi\left(y\right)F\left(x-y\right)dy+\int_{x}^{1}\varphi\left(y\right)F\left(x-y\right)dy;\\
h'\left(x\right) & =\int_{0}^{x}\varphi\left(y\right)F'\left(x-y\right)dy+\int_{x}^{1}\varphi\left(y\right)F'\left(x-y\right)dy.
\end{align*}
and so (\ref{eq:dev-1-1})-(\ref{eq:dev-1-2}) follow.
\end{proof}
We now specialize to the function $F\left(x\right)=e^{-\left|x\right|}$
defined in $\left(-1,1\right)$.
\begin{cor}
\label{cor:dev1}For $F\left(x\right)=e^{-\left|x\right|}$, $\left|x\right|<1$,
set $h=T_{F}\varphi$, i.e., 
\[
h:=F_{\varphi}=\int_{0}^{1}\varphi\left(y\right)F\left(\cdot-y\right)dy,\;\forall\varphi\in C_{c}^{\infty}\left(0,1\right);
\]
then 
\begin{align}
h\left(0\right) & =\int_{0}^{1}\varphi\left(y\right)e^{-y}dy,\qquad h\left(1\right)=\int_{0}^{1}\varphi\left(y\right)e^{y-1}dy\label{eq:edev4}\\
h'\left(0\right) & =\int_{0}^{1}\varphi\left(y\right)e^{-y}dy,\quad\,\,\,\, h'\left(1\right)=-\int_{0}^{1}\varphi\left(y\right)e^{y-1}dy\label{eq:edev5}
\end{align}
In particular, 
\begin{align}
h\left(0\right)-h'\left(0\right) & =0\label{eq:dev6}\\
h\left(1\right)+h'\left(1\right) & =0.\label{eq:dev7}
\end{align}
\end{cor}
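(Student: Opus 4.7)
The plan is to apply the preceding \lemref{dev} to the specific function $F(x) = e^{-|x|}$ on $(-1,1)$ and simply compute the four right-hand side integrals explicitly. Since $\varphi \in C_c^\infty(0,1)$, in both integrands $y$ ranges over an interval strictly inside $(0,1)$, so in the expressions $F(-y)$ and $F(1-y)$ the arguments $-y$ and $1-y$ avoid the singular point $0$; this means the sign in $|\cdot|$ is determined unambiguously, and the distributional derivative $F'$ can be replaced by the classical derivative on each half-line.

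First I would record the elementary values, for $y \in (0,1)$:
\begin{align*}
F(-y) &= e^{-y}, & F(1-y) &= e^{-(1-y)} = e^{y-1},\\
F'(-y) &= e^{-y}, & F'(1-y) &= -e^{y-1},
\end{align*}
where I use that $F'(x) = -\operatorname{sgn}(x) e^{-|x|}$ pointwise away from $0$, hence $F'(-y) = -(-1)e^{-y} = e^{-y}$ and $F'(1-y) = -(1)e^{y-1} = -e^{y-1}$. Substituting these four values into the four formulas supplied by \lemref{dev} yields (\ref{eq:edev4}) and (\ref{eq:edev5}) directly.

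Finally, comparing the two expressions in (\ref{eq:edev4})--(\ref{eq:edev5}) gives the boundary identities: $h(0)$ and $h'(0)$ are literally the same integral $\int_0^1 \varphi(y) e^{-y} dy$, so their difference is $0$, proving (\ref{eq:dev6}); and $h(1) = \int_0^1 \varphi(y) e^{y-1} dy$ while $h'(1)$ is its negative, so their sum is $0$, proving (\ref{eq:dev7}).

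There is no real obstacle here, but if anywhere subtlety could sneak in it would be the appeal to \lemref{dev} for a function $F$ whose classical derivative has a jump at $0$. One should check that treating $F'$ distributionally in the statement of \lemref{dev} is consistent with just plugging in pointwise values of $F'$ at $-y$ and $1-y$; this is fine because the support of $\varphi$ keeps $y$ away from the endpoints, so $-y$ and $1-y$ never reach the jump point $0$, and the delta mass at $0$ carried by $F''$ contributes nothing. With that observation, the corollary reduces to the four line computation above.
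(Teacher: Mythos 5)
Your proposal is correct and follows essentially the same route as the paper: both rest on \lemref{dev}, the paper by writing out the split-integral formulas for $h(x)$ and $h'(x)$ and setting $x=0,1$, you by substituting the pointwise values $F(-y)=e^{-y}$, $F(1-y)=e^{y-1}$, $F'(-y)=e^{-y}$, $F'(1-y)=-e^{y-1}$ directly into the four endpoint formulas. Your closing remark, that the support condition on $\varphi$ keeps the arguments away from the jump of $F'$ at $0$ so the distributional and classical derivatives agree where needed, is a worthwhile point that the paper leaves implicit.
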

\begin{proof}
Immediately from \lemref{dev}. Specifically, 
\begin{align*}
h\left(x\right) & =e^{-x}\int_{0}^{x}\varphi\left(y\right)e^{y}dy+e^{x}\int_{x}^{1}\varphi\left(y\right)e^{-y}dy\\
h'\left(x\right) & =-e^{-x}\int_{0}^{x}\varphi\left(y\right)e^{y}dy+e^{x}\int_{x}^{1}\varphi\left(y\right)e^{-y}dy.
\end{align*}
Setting $x=0$ and $x=1$ gives the desired conclusions.\end{proof}
\begin{rem}
The space 
\[
\Big\{ h\in\mathscr{H}_{F}\:\big|\: h\left(0\right)-h'\left(0\right)=0,\; h\left(1\right)+h'\left(1\right)=0\Big\}
\]
is dense in $\mathscr{H}_{F}$. This is because it contains $\left\{ F_{\varphi}\:\big|\:\varphi\in C_{c}^{\infty}\left(0,1\right)\right\} $.
Note
\begin{align*}
F_{0}+F_{0}' & =-\delta_{0},\;\mbox{and}\\
F_{1}-F_{1}' & =-\delta_{1};
\end{align*}
however,  $\delta_{0},\delta_{1}\notin\mathscr{H}_{F}$.
\end{rem}
By von Neumann's theory \cite{DS88b} and \lemref{Dadj}, the family
of selfadjoint extensions of the Hermitian operator $-iD_{F}$ is
characterized by
\begin{gather}
\begin{split}A_{\theta}\left(h+c\left(e^{-x}+e^{i\theta}e^{x-1}\right)\right)=-i\, h'+i\, c\left(e^{-x}-e^{i\theta}e^{x-1}\right),\;\mbox{where}\\
dom\left(A_{\theta}\right):=\left\{ h+c\left(e^{-x}+e^{i\theta}e^{x-1}\right)\:\big|\: h\in dom\left(D_{F}\right),c\in\mathbb{C}\right\} .
\end{split}
\label{eq:saextA}
\end{gather}

\begin{rem}
In (\ref{eq:saextA}), $h\in dom\left(D_{F}\right)$ (see (\ref{eq:D})),
and by \corref{dev1}, $h$ satisfies the boundary conditions (\ref{eq:dev6})-(\ref{eq:dev7}).
Also, by \lemref{dev1}, $\xi_{+}=F_{0}=e^{-x}$, $\xi_{-}=F_{1}=e^{x-1}$,
and $\left\Vert \xi_{+}\right\Vert _{\mathscr{H}_{F}}=\left\Vert \xi_{-}\right\Vert _{\mathscr{H}_{F}}=1$.\end{rem}
\begin{prop}
\label{prop:dev1}Let $A_{\theta}$ be a selfadjoint extension of
$-iD$ as in (\ref{eq:saextA}). Then, 
\begin{equation}
\psi\left(1\right)+\psi'\left(1\right)=e^{i\theta}\left(\psi\left(0\right)-\psi'\left(0\right)\right),\;\forall\psi\in dom\left(A_{\theta}\right).\label{eq:dev-bd}
\end{equation}
\end{prop}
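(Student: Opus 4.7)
The plan is to verify the boundary identity by direct computation, using the explicit parametrization of $\operatorname{dom}(A_\theta)$ in \eqref{eq:saextA} together with the boundary conditions on $h\in\operatorname{dom}(D_F)$ already established in \corref{dev1}. Specifically, write any $\psi\in\operatorname{dom}(A_\theta)$ as
\[
\psi(x)=h(x)+c\bigl(\xi_+(x)+e^{i\theta}\xi_-(x)\bigr),\qquad \xi_+(x)=e^{-x},\;\xi_-(x)=e^{x-1},
\]
with $h\in\operatorname{dom}(D_F)$ and $c\in\mathbb{C}$. By \corref{dev1}, $h$ satisfies $h(0)-h'(0)=0$ and $h(1)+h'(1)=0$, so these terms will drop out from both sides of \eqref{eq:dev-bd}; what remains is to verify the identity on the one-dimensional deficiency part $c(\xi_++e^{i\theta}\xi_-)$.

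Next I would simply evaluate the four boundary quantities $\psi(0),\psi'(0),\psi(1),\psi'(1)$. Using $\xi_+'(x)=-e^{-x}$ and $\xi_-'(x)=e^{x-1}$, at $x=0$ one has $\xi_+(0)=1$, $\xi_+'(0)=-1$, $\xi_-(0)=e^{-1}$, $\xi_-'(0)=e^{-1}$, so the combination $\xi_+-\xi_+'+e^{i\theta}(\xi_--\xi_-')$ evaluated at $0$ equals $(1-(-1))+e^{i\theta}(e^{-1}-e^{-1})=2$. Similarly at $x=1$, $\xi_+(1)=e^{-1}$, $\xi_+'(1)=-e^{-1}$, $\xi_-(1)=1$, $\xi_-'(1)=1$, so $\xi_++\xi_+'+e^{i\theta}(\xi_-+\xi_-')$ evaluated at $1$ equals $(e^{-1}-e^{-1})+e^{i\theta}(1+1)=2e^{i\theta}$.

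Combining these with the vanishing boundary terms from $h$, I get
\[
\psi(0)-\psi'(0)=2c,\qquad \psi(1)+\psi'(1)=2ce^{i\theta},
\]
from which \eqref{eq:dev-bd} is immediate. There is essentially no obstacle here: the content of the proposition is just that the specific twist $e^{i\theta}$ encoded in the von Neumann extension \eqref{eq:saextA} produces exactly the phase relation between the two boundary combinations $\psi(0)-\psi'(0)$ and $\psi(1)+\psi'(1)$, and this phase is made visible by the elementary computation above. The only thing worth flagging is that one must remember that the boundary values of $\psi$ are well-defined because $\psi=h+c(\xi_++e^{i\theta}\xi_-)$ with $h=F_\varphi$ for some $\varphi\in C_c^\infty(0,1)$ (hence $h,h'\in C[0,1]$ by the corollary following \lemref{TFvarphi}), and $\xi_\pm$ are smooth on $[0,1]$.
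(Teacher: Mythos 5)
Your proof is correct and follows essentially the same route as the paper: decompose $\psi=h+c(e^{-x}+e^{i\theta}e^{x-1})$ per (\ref{eq:saextA}), kill the $h$-terms via the boundary conditions (\ref{eq:dev6})--(\ref{eq:dev7}) of \corref{dev1}, and evaluate the defect-vector combination at the endpoints to get $\psi(0)-\psi'(0)=2c$ and $\psi(1)+\psi'(1)=2ce^{i\theta}$. (Your citation of \corref{dev1} for the vanishing of the $h$-boundary terms is in fact the accurate reference; the paper's text points to \corref{ptspec} at that step, which appears to be a slip.)
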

\begin{proof}
Any $\psi\in dom\left(A_{\theta}\right)$ has the decomposition 
\[
\psi\left(x\right)=h\left(x\right)+c\left(e^{-x}+e^{i\theta}e^{x-1}\right)
\]
where $h\in dom\left(D_{F}\right)$, and $c\in\mathbb{C}$. An application
of \corref{ptspec} gives 
\begin{align*}
\psi\left(1\right)+\psi'\left(1\right) & =\underset{=0\:\left(\text{by }\left(\ref{eq:dev7}\right)\right)}{\underbrace{h\left(1\right)+h'\left(1\right)}}+c\left(e^{-1}+e^{i\theta}\right)+c\left(-e^{-1}+e^{i\theta}\right)=2c\, e^{i\theta}\\
\psi\left(0\right)-\psi'\left(0\right) & =\underset{=0\;\left(\text{by }\left(\ref{eq:dev6}\right)\right)}{\underbrace{h\left(0\right)-h'\left(0\right)}}+c\left(1+e^{-1}e^{i\theta}\right)-c\left(-1+e^{-1}e^{i\theta}\right)=2c
\end{align*}
which is the assertion in (\ref{eq:dev-bd}).\end{proof}
\begin{cor}
\label{cor:spext}Let $A_{\theta}$ be a selfadjoint extension of
$-iD_{F}$ as in (\ref{eq:saextA}). Fix $\lambda\in\mathbb{R}$,
then $\lambda\in spec_{pt}\left(A_{\theta}\right)$ $\Longleftrightarrow$
$e_{\lambda}\left(x\right):=e^{i\lambda x}\in dom\left(A_{\theta}\right)$,
and $\lambda$ is a solution to the following equation: 
\begin{equation}
\lambda=\theta+\tan^{-1}\left(\frac{2\lambda}{\lambda^{2}-1}\right)+2n\pi,\; n\in\mathbb{Z}.\label{eq:dev9}
\end{equation}
\end{cor}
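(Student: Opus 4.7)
The strategy is to reduce the eigenvalue problem for $A_\theta$ to the boundary condition of Proposition \ref{prop:dev1} applied to the candidate eigenfunction $e_\lambda(x)=e^{i\lambda x}$, which is forced by Corollary \ref{cor:ptspec}. The main work is a short complex-number manipulation to convert the resulting boundary equation into the form (\ref{eq:dev9}).

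First I would prove the forward implication. If $\lambda\in spec_{pt}(A_\theta)$, pick a nonzero eigenvector $\xi_\lambda\in dom(A_\theta)$ with $A_\theta\xi_\lambda=\lambda\xi_\lambda$. Since $A_\theta$ is selfadjoint and extends the symmetric operator $-iD_F$, the operator $iA_\theta$ is skew-adjoint and extends $D_F$, so the containment $A_\theta\subseteq -i\,(-D_F^*)$ gives, via \lemref{Dadj}, $-i\xi_\lambda'=A_\theta\xi_\lambda=\lambda\xi_\lambda$ as distributions on $(0,a)$. This is precisely the situation of \corref{ptspec} (applied to the skew-adjoint extension $iA_\theta$), so $\xi_\lambda=\text{const}\cdot e^{i\lambda x}$. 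In particular $e_\lambda\in dom(A_\theta)$, and Proposition \ref{prop:dev1} applies to $\psi=e_\lambda$.

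Next I would derive the equation. Substituting $\psi(x)=e^{i\lambda x}$ into the boundary condition (\ref{eq:dev-bd}) gives, with $\psi(0)=1$, $\psi'(0)=i\lambda$, $\psi(1)=e^{i\lambda}$, $\psi'(1)=i\lambda e^{i\lambda}$, the identity
\begin{equation*}
(1+i\lambda)\,e^{i\lambda}=e^{i\theta}(1-i\lambda),\qquad\text{i.e.,}\qquad e^{i\lambda}=e^{i\theta}\,\frac{1-i\lambda}{1+i\lambda}.
\end{equation*}
Since $\dfrac{1-i\lambda}{1+i\lambda}=\dfrac{(1-\lambda^{2})-2i\lambda}{1+\lambda^{2}}$ has modulus one, its argument equals $\tan^{-1}\!\bigl(\tfrac{-2\lambda}{1-\lambda^{2}}\bigr)=\tan^{-1}\!\bigl(\tfrac{2\lambda}{\lambda^{2}-1}\bigr)$, up to the usual multivaluedness of $\tan^{-1}$. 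Equating arguments of the two sides (modulo $2\pi$) yields (\ref{eq:dev9}), where the integer $n$ absorbs both the $2\pi$-ambiguity of $e^{i\lambda}$ and the branch choice of $\tan^{-1}$.

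For the converse I would argue that if $e_\lambda\in dom(A_\theta)$, then since $dom(A_\theta)\subset dom(D_F^{*})$, \lemref{Dadj} gives $D_F^{*}e_\lambda=-e_\lambda'=-i\lambda\,e_\lambda$, so $A_\theta e_\lambda=i D_F^{*}e_\lambda=\lambda\,e_\lambda$; since $e_\lambda\not\equiv 0$, this puts $\lambda$ into $spec_{pt}(A_\theta)$. The solvability of (\ref{eq:dev9}) is then automatically part of the hypothesis, but in fact, once $e_\lambda\in dom(A_\theta)$, the boundary condition of \propref{dev1} forces (\ref{eq:dev9}); so listing both conditions on the right-hand side simply records the characterization explicitly. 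The only subtle point, and the main technical obstacle, is tracking the branch of $\tan^{-1}$ across the singularity at $|\lambda|=1$; this is handled by absorbing an extra $\pi$ into the $2n\pi$ term, which is permissible because the identity has to hold for all admissible $n\in\mathbb{Z}$.
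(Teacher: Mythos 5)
Your proposal is correct and follows essentially the same route as the paper: the paper's own proof simply substitutes $\psi=e_{\lambda}$ into the boundary condition of Proposition \ref{prop:dev1} to obtain $e^{i\lambda}(1+i\lambda)=e^{i\theta}(1-i\lambda)$ and then reads off (\ref{eq:dev9}) from the argument of $\tfrac{1-i\lambda}{1+i\lambda}$, exactly as you do. Your write-up is somewhat more complete in that it spells out the forward implication (via Corollary \ref{cor:ptspec}, forcing the eigenfunction to be $e_{\lambda}$) and the converse, which the paper leaves implicit.
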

\begin{proof}
By assumption, $e^{i\lambda x}\in dom\left(A_{\theta}\right)$, so
$\exists h_{\lambda}\in dom\left(D_{F}\right)$, and $\exists c_{\lambda}\in\mathbb{C}$
s.t. 
\begin{equation}
e^{i\lambda x}=h_{\lambda}\left(x\right)+c_{\lambda}\left(e^{x}+e^{i\theta}e^{x-1}\right).\label{eq:dev8}
\end{equation}
Applying the boundary condition in \propref{dev1}, we have 
\[
e^{i\lambda}+i\lambda e^{i\lambda}=e^{i\theta}\left(1-i\lambda\right);\;\mbox{i.e.,}
\]
\begin{equation}
e^{i\lambda}=e^{i\theta}\frac{1-i\lambda}{1+i\lambda}=e^{i\theta}e^{i\arg\left(\frac{1-i\lambda}{1+i\lambda}\right)}\label{eq:dev8-1}
\end{equation}
where 
\[
\arg\left(\frac{1-i\lambda}{1+i\lambda}\right)=\tan^{-1}\left(\frac{2\lambda}{\lambda^{2}-1}\right)
\]
and (\ref{eq:dev9}) follows. For a discrete set of solutions, see
\figref{expExpSp}.
\end{proof}
\begin{figure}[H]
\includegraphics[scale=0.6]{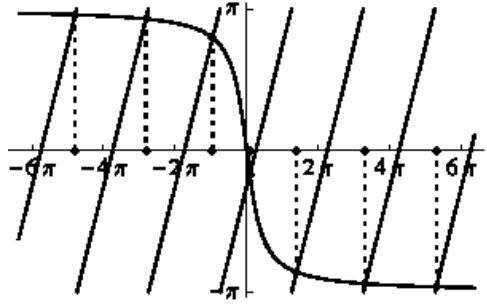}

\protect\caption{\label{fig:expExpSp}Fix $\theta=0.8$, $\Lambda_{\theta}=\left\{ \lambda_{n}\left(\theta\right)\right\} $
= intersections of two curves. }

\end{figure}

\begin{cor}
If $A_{\theta}\supset-iD_{F}$ is a selfadjoint extension in $\mathscr{H}_{F}$,
then
\begin{align*}
spect\left(A_{\theta}\right)= & \left\{ \lambda\in\mathbb{R}\:\big|\: e_{\lambda}\in\mathscr{H}_{F}\:\mbox{satisfying }\left(\ref{eq:dev-bd}\right)\right\} \\
= & \big\{\lambda\in\mathbb{R}\:\big|\: e_{\lambda}\in\mathscr{H}_{F},\: e_{\lambda}=h_{\lambda}+c_{\lambda}\left(e^{x}+e^{i\theta}e^{x-1}\right),\\
 & \quad h_{\lambda}\in dom\left(D_{F}\right),\; c_{\lambda}\in\mathbb{C}\big\}.
\end{align*}
\end{cor}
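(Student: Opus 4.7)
The main content of the corollary is to upgrade \corref{spext} from point spectrum to full spectrum, and to reconcile the two equivalent descriptions on the right-hand side. The inner machinery (eigenvector shape, domain decomposition, boundary conditions) is already available; the real work is showing that no continuous spectrum appears.

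First I would establish the equality of the two sets on the right. The second set is, by (\ref{eq:saextA}), the set of $\lambda \in \mathbb{R}$ for which $e_{\lambda} \in dom(A_{\theta})$; by \propref{dev1} any such $e_{\lambda}$ satisfies (\ref{eq:dev-bd}). Conversely, given $e_{\lambda} \in \mathscr{H}_F$ satisfying (\ref{eq:dev-bd}), choose $c_{\lambda} := (1 - i\lambda)/2$ and set $h_{\lambda} := e_{\lambda} - c_{\lambda}\bigl(e^{-x} + e^{i\theta}e^{x-1}\bigr)$. A direct boundary value check — the combination $v(0) - v'(0)$ applied to $e^{-x} + e^{i\theta}e^{x-1}$ yields $2$ and $v(1) + v'(1)$ yields $2e^{i\theta}$ — shows that $h_{\lambda}$ satisfies the homogeneous conditions (\ref{eq:dev6})-(\ref{eq:dev7}), with the second one being exactly the content of (\ref{eq:dev-bd}) for $e_{\lambda}$. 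Thus $h_{\lambda}$ lies in the closure of $dom(D_F)$ used in (\ref{eq:saextA}), and the two descriptions agree.

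Second, I would identify this common set with $spec_{pt}(A_{\theta})$ by applying \corref{spext} (or, more fundamentally, \corref{ptspec} to the skew-adjoint extension $iA_{\theta}$ of $D_F$, which forces any eigenfunction to be a constant multiple of $e_{\lambda}$). This gives the full point-spectrum characterization.

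Third, and this is the crux, I would argue $spect(A_{\theta}) = spec_{pt}(A_{\theta})$ by showing $A_{\theta}$ has compact resolvent. For $\mu \in \mathbb{C}\setminus\mathbb{R}$, the resolvent $(A_{\theta} - \mu)^{-1}$ is the solution operator $f \mapsto u$ for the two-point problem $-iu' - \mu u = f$ on $(0,1)$ subject to (\ref{eq:dev-bd}); this is an integral operator with continuous Green's function kernel on the compact square $[0,1]\times[0,1]$. Since $\mathscr{H}_F$ embeds continuously into $C([0,1])$ by the reproducing property (see the remark following \lemref{TFvarphi}), this operator is Hilbert-Schmidt on $\mathscr{H}_F$, hence compact. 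Alternatively, one can invoke the trace-class property of the Mercer operator (\lemref{mer-1}) together with the norm identity $\|h\|_{\mathscr{H}_F} = \|T_F^{-1/2}h\|_2$ of \corref{merinn} to show that the inclusion of the graph-normed $dom(A_{\theta})$ into $\mathscr{H}_F$ is compact. Either route forces the spectrum to be pure point, completing the proof. The main obstacle is this last step: constructing the Green's function explicitly (or, equivalently, pushing the Mercer decomposition through to the graph domain of $A_\theta$) is where the specific structure of $F(x) = e^{-|x|}$ on the bounded interval is genuinely used.
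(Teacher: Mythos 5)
Your proposal is correct, but it does considerably more work than the paper, which states this corollary without any proof at all: in the paper it is meant to be read off directly from the domain description (\ref{eq:saextA}) together with \corref{spext} (which already identifies $\lambda\in spec_{pt}(A_\theta)$ with $e_\lambda\in dom(A_\theta)$ and the transcendental equation (\ref{eq:dev9})), while the absence of continuous spectrum is deferred to \corref{spdiscrete}, whose one-line proof is just ``solve eq.\ (\ref{eq:dev9}).'' Your first two steps reproduce this, with the welcome extra detail of the explicit choice $c_\lambda=(1-i\lambda)/2$ and the boundary-value computation showing that $h_\lambda:=e_\lambda-c_\lambda\left(e^{-x}+e^{i\theta}e^{x-1}\right)$ satisfies (\ref{eq:dev6})--(\ref{eq:dev7}); note only that passing from ``$h_\lambda$ satisfies the homogeneous boundary conditions'' to ``$h_\lambda\in dom(\overline{D_F})$'' tacitly uses the von Neumann decomposition $dom(D_F^*)=dom(\overline{D_F})\dotplus DEF^+\dotplus DEF^-$ and the fact that the two boundary functionals separate the two-dimensional defect space, a point worth one sentence. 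Your third step (compact resolvent) is the genuinely different contribution: it supplies the justification for $spect(A_\theta)=spec_{pt}(A_\theta)$ that the paper leaves implicit. The cleaner of your two routes is the second one: since $\left\Vert h\right\Vert_{\mathscr{H}_F}=\Vert T_F^{-1/2}h\Vert_2$ (\corref{merinn}) makes $T_F^{1/2}:L^2(0,1)\rightarrow\mathscr{H}_F$ unitary, compactness of the resolvent transfers from the $L^2$ picture, where it is the standard Volterra-plus-finite-rank Green's operator of a first-order boundary value problem. Your first formulation --- that a continuous kernel plus the continuous embedding $\mathscr{H}_F\hookrightarrow C([0,1])$ makes the resolvent Hilbert--Schmidt \emph{on} $\mathscr{H}_F$ --- is not a standard implication as stated (Hilbert--Schmidt-ness is basis-dependent on the Hilbert space in question), so I would drop it in favor of the unitary-equivalence argument. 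With that adjustment the proof is complete and, in fact, more self-contained than the paper's.
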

\begin{rem}
The corollary holds for \uline{all} continuous p.d. functions $F:\left(-a,a\right)\rightarrow\mathbb{C}$.\end{rem}
\begin{cor}
\label{cor:spdiscrete}All selfadjoint extensions $A_{\theta}\supset-iD_{F}$
have purely atomic spectrum; i.e., 
\begin{equation}
\Lambda_{\theta}:=spect\left(A_{\theta}\right)=\mbox{discrete subset in }\mathbb{R}.\label{eq:Aspect1}
\end{equation}
And for all $\lambda\in\Lambda_{\theta}$, 
\begin{equation}
ker\left(A_{\theta}-\lambda I_{\mathscr{H}_{F}}\right)=\mathbb{C}e_{\lambda},\;\mbox{where }e_{\lambda}\left(x\right)=e^{i\lambda x}\label{eq:Aspect2}
\end{equation}
i.e., all eigenvalues have multiplicity $1$. (The set $\Lambda_{\theta}$
will be denoted $\left\{ \lambda_{n}\left(\theta\right)\right\} _{n\in\mathbb{Z}}$
following Fig. \ref{fig:expExpSp}. )\end{cor}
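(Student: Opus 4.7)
The plan is to establish, in order: (1) that every real eigenvalue of $A_{\theta}$ has a one-dimensional eigenspace spanned by an exponential $e_{\lambda}$; (2) that the set of such eigenvalues is a discrete sequence; and (3) that $A_{\theta}$ has no continuous spectrum at all. The first two items lean on machinery already in place, while (3) will require producing compactness of the resolvent. For (1): suppose $\lambda\in\mathbb{R}$ and $\psi\in\mathrm{dom}(A_{\theta})\setminus\{0\}$ with $A_{\theta}\psi=\lambda\psi$. Since $-iD_{F}$ is Hermitian and $A_{\theta}$ is a selfadjoint extension, $A_{\theta}\subseteq(-iD_{F})^{*}=iD_{F}^{*}$; so \lemref{Dadj} gives $-i\psi'=\lambda\psi$ in the distributional sense on $(0,1)$. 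The only distributional solutions of $\psi'=i\lambda\psi$ are scalar multiples of $e_{\lambda}(x)=e^{i\lambda x}$, so $\ker(A_{\theta}-\lambda I)\subseteq\mathbb{C}\cdot e_{\lambda}$; conversely, when $\lambda$ solves \eqref{eq:dev9}, \corref{spext} places $e_{\lambda}$ in $\mathrm{dom}(A_{\theta})$, which gives equality and (\ref{eq:Aspect2}).

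For (2), I rewrite the transcendental equation \eqref{eq:dev9} as
\[
\lambda-\theta-2n\pi \;=\; \tan^{-1}\!\Bigl(\frac{2\lambda}{\lambda^{2}-1}\Bigr).
\]
The right-hand side is continuous in $\lambda$ (the apparent poles at $\lambda=\pm1$ are harmless since $\tan^{-1}$ is bounded) and uniformly bounded in absolute value by $\pi/2$, while the left-hand side is strictly increasing and sweeps $\mathbb{R}$. The intermediate-value theorem then supplies, for each $n\in\mathbb{Z}$, a unique real solution $\lambda_{n}(\theta)$, and shows that $\lambda_{n}(\theta)-2n\pi-\theta\to0$ as $|n|\to\infty$. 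Hence the point spectrum is the discrete unbounded set $\Lambda_{\theta}=\{\lambda_{n}(\theta)\}_{n\in\mathbb{Z}}$, confirming \eqref{eq:Aspect1} at the level of point spectrum; cf.\ Figure \ref{fig:expExpSp}.

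For (3), I would show that the resolvent $R_{z}=(A_{\theta}-zI)^{-1}$ is compact on $\mathscr{H}_{F}$ for some (equivalently every) $z\in\mathbb{C}\setminus\mathbb{R}$, which forces $\mathrm{spect}(A_{\theta})=\mathrm{spec}_{pt}(A_{\theta})$ and completes the proof. Concretely, solving the first-order ODE $-i\psi'-z\psi=f$ on $(0,1)$ subject to the boundary condition from \propref{dev1} yields an explicit formula $\psi(x)=\int_{0}^{1}G_{\theta}(x,y;z)\,f(y)\,dy$ with bounded Green's kernel $G_{\theta}$. Using the Mercer identification from \corref{merinn}, which says $\|h\|_{\mathscr{H}_{F}}=\|T_{F}^{-1/2}h\|_{2}$, together with the trace-classness of $T_{F}$ from \lemref{mer-1}, one transports this $L^{2}(0,1)$ integral operator through the intertwining square root $T_{F}^{1/2}$ to a compact operator on $\mathscr{H}_{F}$; a selfadjoint operator with compact resolvent has purely discrete spectrum.

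The substantive step is (3). The technical subtleties are (i) checking that the integral operator with kernel $G_{\theta}$ actually maps $\mathscr{H}_{F}$ into $\mathrm{dom}(A_{\theta})$ — that is, its output satisfies the twisted boundary condition of \propref{dev1} — and (ii) carefully transporting the Hilbert--Schmidt property of the natural $L^{2}(0,1)$ companion of $R_{z}$, via the Mercer unitary, into compactness in the $\mathscr{H}_{F}$ norm rather than just in the $L^{2}$ norm. An equivalent reformulation would be to prove completeness of the nonharmonic Fourier system $\{e_{\lambda_{n}(\theta)}\}_{n\in\mathbb{Z}}$ in $\mathscr{H}_{F}$, but this amounts to the same compactness statement in disguise.
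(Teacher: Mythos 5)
Your steps (1) and (2) are essentially the paper's own argument made explicit: the paper's entire proof of this corollary is the single sentence that it ``follows by solving eq.\ (\ref{eq:dev9})'', which tacitly invokes \corref{ptspec} (every eigenfunction is a constant multiple of an exponential, hence multiplicity one) and \corref{spext} (the transcendental equation for the eigenvalues). Your reduction of discreteness to the behaviour of $\lambda\mapsto\lambda-\arctan\left(2\lambda/(\lambda^{2}-1)\right)$ is the right idea; note that this function has derivative $1+2/(1+\lambda^{2})\geq1$ away from $\lambda=\pm1$, which is what actually separates the solutions. One small inaccuracy: the right-hand side of your rewritten equation is \emph{not} continuous at $\lambda=\pm1$ (the composite $\arctan\left(2\lambda/(\lambda^{2}-1)\right)$ has jump discontinuities of size $\pi$ there), so the claim of a \emph{unique} solution for each $n$ needs care near those two points; discreteness of the solution set is unaffected.

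Where you genuinely go beyond the paper is step (3), and you are right that something is needed there: solving (\ref{eq:dev9}) only determines the point spectrum, and ``purely atomic'' requires excluding continuous spectrum, which the paper passes over in silence. As written, however, your step (3) is a plan rather than a proof, and the item you flag as subtlety (ii) is exactly where it can break down. By \corref{merinn}, $f\mapsto T_{F}^{1/2}f$ is an isometry of $L^{2}(0,1)$ onto $\mathscr{H}_{F}$, so compactness of $R_{z}$ on $\mathscr{H}_{F}$ is equivalent to compactness of $T_{F}^{-1/2}R_{z}T_{F}^{1/2}$ on $L^{2}(0,1)$; since $T_{F}^{-1/2}$ is unbounded, this does not follow from boundedness of the Green's kernel alone. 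One must actually prove an estimate such as boundedness of $T_{F}^{-1/2}R_{z}$ on $L^{2}(0,1)$ (plausible, because $R_{z}$ gains one derivative while $T_{F}^{-1}\supset\tfrac12\left(I-\left(\tfrac{d}{dx}\right)^{2}\right)$ costs two, so $T_{F}^{-1/2}$ costs one) and then compose with the Hilbert--Schmidt operator $T_{F}^{1/2}$ from \lemref{mer-1}. Until that estimate is supplied the argument is incomplete. An alternative more in the spirit of the surrounding text is to prove that the orthogonal system $\left\{ e_{\lambda}\right\} _{\lambda\in\Lambda_{\theta}}$ is total in $\mathscr{H}_{F}$ (cf.\ \corref{elambda1} and \corref{exponb}, and the explicit expansion of $F_{0}$ in this system): an orthogonal basis of eigenvectors diagonalizes $A_{\theta}$ and yields purely atomic spectrum with no resolvent estimates at all. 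You mention this reformulation only to dismiss it as the same compactness statement in disguise, but it is arguably the more tractable of the two routes here.
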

\begin{proof}
This follows by solving eq. (\ref{eq:dev9}).\end{proof}
\begin{cor}
Let $A$ be a selfadjoint extension of $-iD_{F}$ as before. Suppose
$\lambda_{1},\lambda_{2}\in spec\left(A\right)$, $\lambda_{1}\neq\lambda_{2}$,
then $e_{\lambda_{i}}\in\mathscr{H}_{F}$, $i=1,2$; and $\left\langle e_{\lambda_{1}},e_{\lambda_{2}}\right\rangle _{\mathscr{H}_{F}}=0$. \end{cor}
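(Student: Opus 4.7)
The plan is to reduce the statement to the standard Hilbert space fact that eigenvectors of a selfadjoint operator corresponding to distinct eigenvalues are orthogonal, and then simply identify the eigenvectors explicitly using the earlier corollaries.

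First, I would invoke \corref{spdiscrete} (combined with \corref{spext} or \corref{ptspec}) to record that for each $\lambda\in\Lambda_\theta=\mathrm{spec}(A)$, the function $e_\lambda(x)=e^{i\lambda x}$ lies in $\mathscr{H}_F$ and satisfies $A e_\lambda=\lambda e_\lambda$ with $e_\lambda\in\mathrm{dom}(A)$. This gives the first assertion immediately: $e_{\lambda_1},e_{\lambda_2}\in\mathscr{H}_F$.

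Next, for the orthogonality, I would apply selfadjointness of $A$. Since $e_{\lambda_1},e_{\lambda_2}\in\mathrm{dom}(A)=\mathrm{dom}(A^*)$, and since both $\lambda_1$ and $\lambda_2$ are real (they are eigenvalues of a selfadjoint operator), I compute
\begin{align*}
\lambda_1\,\langle e_{\lambda_1},e_{\lambda_2}\rangle_{\mathscr{H}_F}
 &= \langle \lambda_1 e_{\lambda_1},e_{\lambda_2}\rangle_{\mathscr{H}_F}
  = \langle A e_{\lambda_1},e_{\lambda_2}\rangle_{\mathscr{H}_F}\\
 &= \langle e_{\lambda_1},A e_{\lambda_2}\rangle_{\mathscr{H}_F}
  = \lambda_2\,\langle e_{\lambda_1},e_{\lambda_2}\rangle_{\mathscr{H}_F}.
\end{align*}
Hence $(\lambda_1-\lambda_2)\langle e_{\lambda_1},e_{\lambda_2}\rangle_{\mathscr{H}_F}=0$, and since $\lambda_1\neq\lambda_2$ we conclude $\langle e_{\lambda_1},e_{\lambda_2}\rangle_{\mathscr{H}_F}=0$.

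There is no real obstacle in this argument; the only subtle point to verify is that the vectors $e_{\lambda_i}$ actually sit inside $\mathrm{dom}(A)$ (and not just in $\mathrm{dom}(D_F^*)$), so that the move $\langle Ae_{\lambda_1},e_{\lambda_2}\rangle=\langle e_{\lambda_1},Ae_{\lambda_2}\rangle$ is legitimate. This is guaranteed by the hypothesis $\lambda_i\in\mathrm{spec}_{pt}(A_\theta)$ together with the decomposition (\ref{eq:saextA}) of $\mathrm{dom}(A_\theta)$, as used in the proof of \corref{spext}. Once this is in place, the Hermitian symmetry of $A$ on its domain closes the argument.
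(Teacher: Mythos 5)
Your proposal is correct and is essentially identical to the paper's own proof: the paper likewise writes $(\lambda_{1}-\lambda_{2})\left\langle e_{\lambda_{1}},e_{\lambda_{2}}\right\rangle _{\mathscr{H}_{F}}=\left\langle Ae_{\lambda_{1}},e_{\lambda_{2}}\right\rangle _{\mathscr{H}_{F}}-\left\langle e_{\lambda_{1}},Ae_{\lambda_{2}}\right\rangle _{\mathscr{H}_{F}}=0$ and divides by $\lambda_{1}-\lambda_{2}\neq0$. Your added remark about checking that the $e_{\lambda_{i}}$ lie in $dom(A)$ (via the decomposition (\ref{eq:saextA})) is a reasonable extra precaution that the paper leaves implicit.
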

\begin{proof}
Let $\lambda_{1},\lambda_{2}$ be as in the statement, then 
\[
\left(\lambda_{1}-\lambda_{2}\right)\left\langle e_{\lambda_{1}},e_{\lambda_{2}}\right\rangle _{\mathscr{H}_{F}}=\left\langle Ae_{\lambda_{1}},e_{\lambda_{2}}\right\rangle _{\mathscr{H}_{F}}-\left\langle e_{\lambda_{1}},Ae_{\lambda_{2}}\right\rangle _{\mathscr{H}_{F}}=0;
\]
so since $\lambda_{1}-\lambda_{2}\neq0$, we get $\left\langle e_{\lambda_{1}},e_{\lambda_{2}}\right\rangle _{\mathscr{H}_{F}}=0$. 
\end{proof}
For explicit computations regarding these points, see also Corollaries
\ref{cor:expinner}, \ref{cor:exporg}, and \ref{cor:Fext} below.

\subsection{The Operator $T_{F}$}

By $T_{F}$ we mean the Mercer operator (\ref{eq:mer-1}) in $L^{2}\left(0,1\right)$.
Let the initial positive definite function $F$ be as above. Below
we show that the corresponding integral operator $T_{F}$ in $L^{2}\left(0,1\right)$
is the sum of a \emph{Volterra operator} and a \emph{rank-one operator},
see (\ref{eq:sh1}).

Bringing in the Volterra operator $V$ (see Lemma \ref{lem:vot} below)
facilitates calculation of the spectrum for the rank-one perturbation
of $V$, $T_{F}=(\mbox{rank-one})+V$, as it allows us to take advantage
of the known filter of invariant subspace for $V$; see e.g., \cite{Ar85,BO05}.
\begin{lem}
\label{lem:vot}Let $F\left(x\right)=e^{-\left|x\right|}$, $\left|x\right|<1$,
and let $T_{F}:L^{2}\left(0,1\right)\rightarrow L^{2}\left(0,1\right)$
be the Mercer operator, i.e., $\left(T_{F}f\right)\left(x\right)=\int_{0}^{1}f\left(y\right)F\left(x-y\right)dy$.
Then, 
\begin{equation}
\left(T_{F}f\right)\left(x\right)=2\int_{0}^{x}\sinh\left(y-x\right)f\left(y\right)dy+e^{x}\int_{0}^{1}\varphi\left(y\right)e^{-y}dy,\;\forall f\in L^{2}\left(0,1\right);\label{eq:sh1}
\end{equation}
which is a rank-1 perturbation of the Volterra operator
\[
\left(Vf\right)\left(x\right)=2\int_{0}^{x}\sinh\left(y-x\right)f\left(y\right)dy,\;\forall f\in L^{2}\left(0,1\right).
\]
\end{lem}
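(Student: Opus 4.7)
The plan is a direct integral computation, splitting the defining integral at the singularity of $|x-y|$, followed by regrouping terms to isolate a rank-one contribution from a kernel that is lower-triangular (hence Volterra).

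First I would write out
\begin{equation*}
(T_F f)(x) = \int_0^1 f(y)\, e^{-|x-y|}\, dy
= \int_0^x f(y)\, e^{-(x-y)}\, dy + \int_x^1 f(y)\, e^{-(y-x)}\, dy,
\end{equation*}
using the definition of $F(x)=e^{-|x|}$ on $(-1,1)$ (extended continuously to $[-1,1]$ as justified in \lemref{mer-1}). The second piece on the right is the obstruction to the kernel being triangular, so next I would convert its upper limit to $1$ globally by the identity
\begin{equation*}
e^{x}\!\int_x^1 f(y)\, e^{-y}\, dy \;=\; e^{x}\!\int_0^1 f(y)\, e^{-y}\, dy \;-\; e^{x}\!\int_0^x f(y)\, e^{-y}\, dy.
\end{equation*}
This substitution turns the full expression into the sum of an integral over $[0,x]$ with kernel $e^{-(x-y)}-e^{(x-y)} = -2\sinh(x-y) = 2\sinh(y-x)$, plus the boundary term $e^{x}\int_0^1 f(y)e^{-y}dy$.

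Collecting these gives
\begin{equation*}
(T_F f)(x) = 2\int_0^x \sinh(y-x)\, f(y)\, dy + e^{x}\int_0^1 f(y)\, e^{-y}\, dy,
\end{equation*}
which is precisely (\ref{eq:sh1}). To finish, I would observe that the first term defines an integral operator with kernel $K(x,y) = 2\sinh(y-x)\,\mathbf{1}_{\{y<x\}}$, supported on the lower-triangular region $\{0\le y\le x\le 1\}$ and bounded on that compact set; this is by definition the Volterra operator $V$ as stated. The second term is the rank-one operator $f\mapsto \langle f, e^{-y}\rangle_{L^2(0,1)}\, e^{x}$, i.e., an outer product of the two $L^2$-functions $x\mapsto e^x$ and $y\mapsto e^{-y}$.

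There is no real obstacle beyond careful bookkeeping of the signs and exponential factors after the split at $y=x$; the only subtle point is choosing to eliminate $\int_x^1$ (rather than $\int_0^x$), which is what produces the clean Volterra-plus-rank-one decomposition rather than a messier identity. This choice is natural because it is the $y>x$ half of the kernel that fails to be triangular in the desired sense.
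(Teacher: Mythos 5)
Your computation is correct and coincides with the paper's own ``more direct argument'': the same split of the integral at $y=x$ and the same regrouping of the $e^{x-y}$ pieces into the rank-one term $e^{x}\int_0^1 f(y)e^{-y}\,dy$, leaving the lower-triangular kernel $2\sinh(y-x)$. (The paper additionally precedes this with an ODE/Greens-function derivation via $h''=h-2f$, but that is an alternative route to the same identity, not something your argument is missing.)
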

\begin{proof}
Recall that $T_{F}^{-1}$ is a selfadjoint extension of the elliptic
operator 
\[
\tfrac{1}{2}\left(I-\left(\tfrac{d}{dx}\right)^{2}\right)\Big|_{C_{c}^{\infty}\left(0,1\right)}
\]
in $L^{2}\left(0,1\right)$. Thus for $h:=T_{F}f$, $f\in L^{2}\left(0,1\right)$,
we have 
\begin{equation}
h''=h-2f.\label{eq:sh}
\end{equation}
Now we solve the ODE in (\ref{eq:sh}) for $h$. 

For this, write $h=h_{p}+\mbox{homogeneous soln}$. 

Let $h_{p}:=2\int_{0}^{x}\sinh\left(y-x\right)f\left(y\right)dy$
be as in (\ref{eq:sh1}). We check directly that 
\begin{align*}
h'_{p}\left(x\right) & =-2\int_{0}^{x}\sinh\left(y-x\right)f\left(y\right)dy\\
h''_{p}\left(x\right) & =2\int_{0}^{x}\sinh\left(y-x\right)f\left(y\right)-2f\left(x\right)=h\left(x\right)-2f\left(x\right)
\end{align*}
i.e., $h''_{p}=h_{p}-2f$. So $h_{p}$ is a particular solution.

Therefore,
\begin{equation}
h\left(x\right)=\left(T_{F}f\right)\left(x\right)=\underset{\text{homogeneous soln}}{\underbrace{A\cosh\left(x\right)+B\sinh\left(x\right)}}+h_{p}\left(x\right)\label{eq:sh2}
\end{equation}
Applying the boundary condition $h_{p}\left(0\right)=h'_{p}\left(0\right)=0$
yields 
\[
A=B=\int_{0}^{1}e^{-y}f\left(y\right)dy;
\]
substituting into (\ref{eq:sh2}) gives 
\[
h\left(x\right)=\left(T_{F}f\right)\left(x\right)=e^{x}\int_{0}^{1}\varphi\left(y\right)e^{-y}dy+h_{p}\left(x\right).
\]
This is the desired statement in (\ref{eq:sh1}).

Below, we offer a more direct argument:

\begin{align*}
\left(T_{F}\varphi\right)\left(x\right) & =\int_{0}^{1}\varphi\left(y\right)e^{-\left|x-y\right|}dy\\
 & =\int_{0}^{x}\varphi\left(y\right)e^{y-x}dy+\int_{x}^{1}\varphi\left(y\right)e^{-\left(y-x\right)}dx\\
 & =\int_{0}^{x}\varphi\left(y\right)(\underset{2\sinh\left(y-x\right)}{\underbrace{e^{y-x}-e^{x-y}}}+e^{x-y})dy+\int_{x}^{1}\varphi\left(y\right)e^{-\left(y-x\right)}dx\\
 & =2\int_{0}^{x}\varphi\left(y\right)\sinh\left(y-x\right)dy+e^{x}\int_{0}^{x}\varphi\left(y\right)e^{-y}dy+e^{x}\int_{x}^{1}\varphi\left(y\right)e^{-y}dx\\
 & =2\int_{0}^{x}\varphi\left(y\right)\sinh\left(y-x\right)dy+e^{x}\int_{0}^{1}\varphi\left(y\right)e^{-y}dy.
\end{align*}

\end{proof}
We now show that $T_{F}$ is a Greens function for $\frac{1}{2}\left(I-\left(\frac{d}{dx}\right)^{2}\right)$.
\begin{lem}
\label{lem:Tf}For all $\varphi\in C_{c}^{\infty}\left(0,1\right)$,
set 
\[
f\left(x\right)=F_{\varphi}\left(x\right)=\left(T_{F}\varphi\right)\left(x\right)=\int_{0}^{1}\varphi\left(y\right)e^{-\left|x-y\right|}dy,\;0\leq x\leq1.
\]
Then
\begin{equation}
\varphi=T_{F}^{-1}f=\frac{1}{2}\left(f-\left(\tfrac{d}{dx}\right)^{2}f\right)\label{eq:u-0}
\end{equation}
with boundary condition on $f$ as follows:
\begin{align*}
f\left(0\right) & =f'\left(0\right)=\int_{0}^{1}e^{-y}\varphi\left(y\right)dy\\
f\left(1\right) & =-f'\left(1\right)=e^{-1}\int_{0}^{1}e^{y}\varphi\left(y\right)dy.
\end{align*}
\end{lem}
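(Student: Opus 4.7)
The plan is a direct computation: split the defining integral at $x$, differentiate under the integral sign twice, and read off both the ODE (\ref{eq:u-0}) and the four boundary values. Nothing more delicate is needed, since $\varphi \in C_c^\infty(0,1)$ makes all boundary contributions at $0$ and $1$ vanish except through the explicit weights $e^{\pm y}$.

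First I would write
\[
f(x) = e^{-x}\int_0^x \varphi(y)\,e^{y}\,dy + e^{x}\int_x^1 \varphi(y)\,e^{-y}\,dy,
\]
exactly as in the second computation at the end of the proof of \lemref{vot}. Differentiating by the Leibniz rule, the two boundary terms produced at $y=x$ are $e^{-x}\varphi(x)e^{x}$ and $-e^{x}\varphi(x)e^{-x}$, which cancel, leaving
\[
f'(x) = -e^{-x}\int_0^x \varphi(y)\,e^{y}\,dy + e^{x}\int_x^1 \varphi(y)\,e^{-y}\,dy.
\]
Differentiating a second time, the boundary terms now add (they no longer cancel because of the sign change) and contribute $-2\varphi(x)$, while the remaining terms reproduce $f(x)$. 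This yields the distributional identity $f'' = f - 2\varphi$, which is exactly (\ref{eq:u-0}). This is the same relation already recorded in (\ref{eq:sh}) during the proof of \lemref{vot}, so one could alternatively invoke it directly; I prefer to redo the short calculation here because it simultaneously prepares the boundary values.

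The boundary conditions are then immediate by plugging $x=0$ and $x=1$ into the two expressions above: at $x=0$ the first integral in each is empty, so
\[
f(0) = \int_0^1 \varphi(y)\,e^{-y}\,dy, \qquad f'(0) = \int_0^1 \varphi(y)\,e^{-y}\,dy;
\]
at $x=1$ the second integral in each is empty, so
\[
f(1) = e^{-1}\int_0^1 \varphi(y)\,e^{y}\,dy, \qquad f'(1) = -e^{-1}\int_0^1 \varphi(y)\,e^{y}\,dy.
\]
Hence $f(0)=f'(0)$ and $f(1)=-f'(1)$, matching the claimed values exactly.

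There is really no substantive obstacle: the only place one must be careful is the bookkeeping of boundary contributions from Leibniz's rule, namely that they cancel at the first differentiation but reinforce (producing $-2\varphi(x)$) at the second. Once this is observed, the lemma follows, and it also reconfirms that $T_F^{-1}$ is the self-adjoint realization of $\tfrac{1}{2}(I-(d/dx)^2)$ specified by the boundary conditions $f'(0)=f(0)$ and $f'(1)=-f(1)$, consistent with the rank-one-plus-Volterra decomposition of $T_F$ obtained in \lemref{vot}.
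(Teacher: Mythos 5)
Your proof is correct and follows essentially the same route as the paper: the same split of the integral at $y=x$, the same Leibniz-rule computation giving $f'' = f - 2\varphi$, and the same evaluation at the endpoints (which the paper delegates to Corollary \ref{cor:dev1}, whose proof is exactly your plug-in of $x=0$ and $x=1$). No gaps.
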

\begin{proof}[Proof of \lemref{Tf}]
From the definition of $f$, we have 
\begin{align*}
f\left(x\right) & =e^{-x}\int_{0}^{x}\varphi\left(y\right)e^{y}dy+e^{x}\int_{x}^{1}e^{-y}\varphi\left(y\right)dy\\
f'\left(x\right) & =-e^{-x}\int_{0}^{x}\varphi\left(y\right)e^{y}dy+e^{x}\int_{x}^{1}\varphi\left(y\right)e^{-y}dy\\
f''\left(x\right) & =f\left(x\right)-2\varphi\left(x\right);
\end{align*}
i.e., 
\[
\varphi=T_{F}^{-1}f=\frac{1}{2}\left(f-\left(\tfrac{d}{dx}\right)^{2}f\right).
\]
The boundary conditions on $f$ follow from \corref{dev1}, eqs. (\ref{eq:dev6})-(\ref{eq:dev7}).
\end{proof}
Let $F:\left(-1,1\right)\rightarrow\mathbb{C}$ be a continuous p.d.
function. Recall the Mercer operator $T_{F}:L^{2}\left(0,1\right)\rightarrow L^{2}\left(0,1\right)$,
by 
\[
\left(T_{F}\varphi\right)\left(x\right)=\int_{0}^{1}\varphi\left(y\right)F\left(x-y\right)dy,\;\forall\varphi\in C_{c}^{\infty}\left(0,1\right);
\]
which is positive, selfadjoint, and trace class. 

In the case $F\left(x\right)=e^{-\left|x\right|}$, $\left|x\right|<1,$
we see in (\ref{eq:u-0}) that 
\begin{equation}
T_{F}^{-1}\supset\tfrac{1}{2}\left(I-\left(\tfrac{d}{dx}\right)^{2}\right).\label{eq:u-1-1}
\end{equation}

\begin{prop}
Let 
\begin{gather}
\mathscr{D}:=\tfrac{1}{2}\left(I-\left(\tfrac{d}{dx}\right)^{2}\right),\;\mbox{where}\label{eq:u-3-1}\\
dom\left(\mathscr{D}\right):=\left\{ F_{\varphi}:\varphi\in C_{c}^{\infty}\left(0,1\right)\right\} \label{eq:u-3-2}
\end{gather}
acting in $L^{2}\left(0,1\right)$. 

Then $\mathscr{D}=T_{F}^{-1}$ (see (\ref{eq:u-1-1})); in particular,
$\mathscr{D}$ is selfadjoint and having purely atomic spectrum. Moreover,
$\lambda\in spect\left(\mathscr{D}\right)$ iff it satisfies the following
equation:
\begin{equation}
\tan k=\frac{2k}{k^{2}-1},\; k^{2}>1\label{eq:Dexp}
\end{equation}
where $k^{2}:=2\lambda-1$. See \figref{TFexp}.\end{prop}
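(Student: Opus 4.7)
The strategy is to recognize $\mathscr{D}$ as a restriction of the inverse of the Mercer operator $T_F$, so that selfadjointness and pure point spectrum are inherited from compactness of $T_F$; the spectral equation is then a routine ODE eigenvalue calculation on $\mathrm{dom}(\mathscr{D})$ subject to the boundary data already recorded in Corollary \ref{cor:dev1}.

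\textbf{Step 1 (identifying $\mathscr{D} = T_F^{-1}$ on a core).} For $\varphi \in C_c^\infty(0,1)$ and $f := F_\varphi = T_F \varphi$, Lemma \ref{lem:Tf} gives $\varphi = \tfrac{1}{2}(f - f'') = \mathscr{D} f$, so $\mathscr{D}$ and $T_F^{-1}$ agree on the dense subspace $T_F(C_c^\infty(0,1)) = \mathrm{dom}(\mathscr{D})$. By Lemma \ref{lem:mer-1} and Corollary \ref{cor:mer1}, $T_F$ is a trace-class, strictly positive selfadjoint operator on $L^2(0,1)$ (strict positivity being immediate from the Bochner representation of $e^{-|x|}$), so its kernel is trivial and $T_F^{-1}$ is a densely defined, positive selfadjoint operator with purely atomic spectrum $\{\lambda_n^{-1}\}$. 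Since $T_F$ is a homeomorphism from $L^2(0,1)$ onto $\mathrm{dom}(T_F^{-1})$ endowed with the graph norm, and $C_c^\infty(0,1)$ is dense in $L^2(0,1)$, the image $\mathrm{dom}(\mathscr{D})$ is a core for $T_F^{-1}$. Hence $\mathscr{D}$ is essentially selfadjoint with closure $T_F^{-1}$, and the two operators share the same spectrum.

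\textbf{Step 2 (spectral equation).} An eigenvector $f$ with $\mathscr{D} f = \lambda f$ solves $f'' + k^2 f = 0$ with $k^2 := 2\lambda - 1$, so $f(x) = A\cos(kx) + B\sin(kx)$. Because $f$ lies in the range of $T_F$, the explicit formula (\ref{eq:sh1}) (differentiated under the integral) forces the boundary data
\[
f(0) = f'(0), \qquad f(1) = -f'(1),
\]
as already observed in Corollary \ref{cor:dev1}. The first condition yields $A = kB$; inserting into the second and simplifying gives $2k\cos k = (k^2-1)\sin k$, i.e., $\tan k = 2k/(k^2-1)$. Finally, $\sum_n \lambda_n = \mathrm{trace}(T_F) = 1$ with infinitely many strictly positive $\lambda_n$, so each $\lambda_n < 1$ and every eigenvalue $\lambda = \lambda_n^{-1}$ of $\mathscr{D}$ exceeds $1$; this gives the constraint $k^2 > 1$ stated in (\ref{eq:Dexp}).

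\textbf{Main obstacle.} The only subtle point is Step 1 — confirming that the comparatively small domain $T_F(C_c^\infty(0,1))$ really is a core for the abstract selfadjoint operator $T_F^{-1}$, so that no additional boundary behavior is silently introduced by the closure. Once this is secured, the remaining work is a short ODE computation, and the strict bound $k^2 > 1$ is immediate from the Mercer trace identity.
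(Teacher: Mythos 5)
Your proof is correct and follows essentially the same route as the paper: identify $\mathscr{D}$ with $T_{F}^{-1}$ via \lemref{Tf}, deduce purely atomic spectrum from the trace-class property of $T_{F}$, and solve the boundary-value eigenproblem coming from \corref{dev1} to obtain $\tan k=2k/(k^{2}-1)$, with $k^{2}>1$ forced by $\mathrm{trace}\left(T_{F}\right)=1$. The one point where you diverge is minor but to your credit: the paper establishes selfadjointness by a Wronskian/boundary-term computation that really only verifies symmetry, whereas your graph-norm core argument (that $T_{F}$ maps $L^{2}(0,1)$ homeomorphically onto $dom(T_{F}^{-1})$ with its graph norm, so $T_{F}(C_{c}^{\infty}(0,1))$ is a core) gives essential selfadjointness directly.
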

\begin{proof}
Suppose $f,g\in dom\left(\mathscr{D}\right)$, then $f,g$ satisfy
(see (\ref{eq:dev6})-(\ref{eq:dev7})) 
\[
h\left(0\right)-h'\left(0\right)=h\left(1\right)+h'\left(1\right)=0;\;\mbox{and so}
\]
\begin{align*}
\left\langle f,g-g''\right\rangle _{2}-\left\langle f-f'',g\right\rangle _{2}= & \left[\overline{f'}g-\overline{f}g'\right]\left(0\right)-\left[\overline{f'}g-\overline{f}g'\right]\left(1\right)\\
= & \overline{f'\left(0\right)}g\left(0\right)-\overline{f\left(0\right)}g'\left(0\right)-\overline{f'\left(1\right)}g\left(1\right)+\overline{f\left(1\right)}g'\left(1\right)\\
= & \overline{f\left(0\right)}g\left(0\right)-\overline{f\left(0\right)}g\left(0\right)+\overline{f\left(1\right)}g\left(1\right)-\overline{f\left(1\right)}g\left(1\right)=0.
\end{align*}
It follows that $\mathscr{D}$ is selfadjoint, and $T_{F}^{-1}=\mathscr{D}$
by (\ref{eq:u-1-1}). Since $T_{F}$ is trace class, then $\mathscr{D}$
has discrete spectrum. 

Now we proceed to diagonalize $\mathscr{D}$. Suppose $h\in dom\left(\mathscr{D}\right)$
s.t., $\mathscr{D}h=\lambda h$, i.e., consider the eigenvalue problem
\begin{equation}
\begin{Bmatrix}h''=-k^{2}h\\
h\left(0\right)-h'\left(0\right)=0\\
h\left(1\right)+h'\left(1\right)=0
\end{Bmatrix}\label{eq:eigenDexp}
\end{equation}
where $k^{2}:=2\lambda-1$. By (\ref{lem:mer-1}), $trace\left(T_{F}\right)=1$.
Since $\mathscr{D}=T_{F}^{-1}$, it follows that $\lambda>1$; thus
$k^{2}=2\lambda-1>1$. 

Let $h\left(x\right)=Ae^{ikx}+Be^{-ikx}$, applying the boundary conditions
in (\ref{eq:eigenDexp}) we get 
\begin{align*}
A\left(1-ik\right)+B\left(1+ik\right) & =0\\
A\left(1+ik\right)e^{ik}+B\left(1-ik\right)e^{-ik} & =0
\end{align*}
Setting the determinant of the coefficient matrix to zero yields (\ref{eq:Dexp}). 
\end{proof}
\begin{figure}[H]
\includegraphics[scale=0.6]{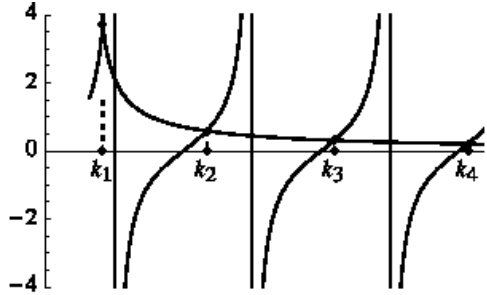}

\protect\caption{\label{fig:TFexp}Solutions to eq. (\ref{eq:Dexp}).}

\end{figure}

We now turn to a number of \emph{a priori} estimates valid for the
example $F\left(x\right)=e^{-\left|x\right|}$ in $\left|x\right|<1$. 
\begin{lem}
Let $F\left(x\right)=e^{-\left|x\right|}$, $x\in\left(-1,1\right)$.
A continuous function $h$ on $\left[0,1\right]$ is in $\mathscr{H}_{F}$
iff $h'\in L^{2}\left(0,1\right)$. \end{lem}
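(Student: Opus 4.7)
The natural approach is to identify $\mathscr{H}_{F}$ as a reproducing kernel Hilbert space with a concrete second model, rather than wrestle directly with the integral condition of \thmref{HF}. Define a candidate Hilbert space
\[
\mathscr{K}:=\bigl\{h\in C[0,1]\;:\;h'\in L^{2}(0,1)\bigr\}
\]
(with $h'$ the distribution derivative), equipped with the inner product
\[
\left\langle h,k\right\rangle _{\mathscr{K}}
=\tfrac{1}{2}\overline{h(0)}k(0)+\tfrac{1}{2}\overline{h(1)}k(1)
+\tfrac{1}{2}\int_{0}^{1}\bigl[\overline{h(y)}k(y)+\overline{h'(y)}k'(y)\bigr]\,dy.
\]
This is the Sobolev space $H^{1}(0,1)$ with a norm equivalent to the standard one (the two boundary value functionals are continuous on $H^{1}$ by the Sobolev embedding $H^{1}(0,1)\hookrightarrow C[0,1]$), so $\mathscr{K}$ is complete.

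The strategy is to show that $\mathscr{K}$ is in fact the RKHS associated with the kernel $K(x,y)=F(x-y)=e^{-|x-y|}$; uniqueness of the RKHS then forces $\mathscr{K}=\mathscr{H}_{F}$ as sets, which is precisely the stated equivalence. Two things must be checked: first, that for each $x\in[0,1]$ the function $K_{x}:=F(\cdot-x)$ belongs to $\mathscr{K}$ (immediate, since $e^{-|y-x|}$ is Lipschitz with $|K_{x}'|\le 1$), and second, the reproducing identity
\[
\left\langle K_{x},h\right\rangle _{\mathscr{K}}=h(x)\qquad\forall h\in\mathscr{K},\;x\in(0,1).
\]

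The meat of the argument is this identity, which is the main obstacle. Fixing $x\in(0,1)$, one splits $\int_{0}^{1}\overline{K_{x}'(y)}h'(y)\,dy$ at $y=x$ where $K_{x}'$ has a jump, writing it as $\int_{0}^{x}e^{y-x}h'(y)\,dy-\int_{x}^{1}e^{x-y}h'(y)\,dy$, and integrates by parts on each piece. The boundary contributions at $y=0$ and $y=1$ pick up exactly $-\tfrac{1}{2}e^{-x}h(0)-\tfrac{1}{2}e^{x-1}h(1)$, which cancels the two boundary terms $\tfrac{1}{2}\overline{K_{x}(0)}h(0)+\tfrac{1}{2}\overline{K_{x}(1)}h(1)$ in $\langle K_{x},h\rangle_{\mathscr{K}}$; the $L^{2}$ integrals $\int\overline{K_{x}}h\,dy$ coming from the two parts cancel against the one built into the inner product; the two interior boundary contributions at $y=x$ combine to give $2h(x)$, yielding $\langle K_{x},h\rangle_{\mathscr{K}}=h(x)$.

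With the reproducing identity in hand, the conclusion is a general RKHS argument. The closed linear span of $\{K_{x}:x\in(0,1)\}$ is all of $\mathscr{K}$, because any $h\in\mathscr{K}$ orthogonal to every $K_{x}$ must satisfy $h(x)=\langle K_{x},h\rangle_{\mathscr{K}}=0$ for all $x\in(0,1)$, hence $h\equiv 0$ by continuity. Since both $\mathscr{H}_{F}$ and $\mathscr{K}$ are Hilbert completions of the same pre-Hilbert space $\mathrm{span}\{F(\cdot-x)\}$ under inner products that agree on it (by (\ref{eq:ip-discrete}) and the reproducing identity just established), they coincide as Hilbert spaces of functions on $[0,1]$. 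This identification yields both directions of the asserted equivalence: $h\in\mathscr{H}_{F}$ if and only if $h$ is continuous on $[0,1]$ and $h'\in L^{2}(0,1)$. As a byproduct one also obtains the explicit formula
\[
\|h\|_{\mathscr{H}_{F}}^{2}=\tfrac{1}{2}|h(0)|^{2}+\tfrac{1}{2}|h(1)|^{2}+\tfrac{1}{2}\int_{0}^{1}\bigl(|h|^{2}+|h'|^{2}\bigr)\,dy,
\]
consistent with the $T_{F}^{-1}=\tfrac{1}{2}(I-D^{2})$ computation of \corref{dev1} applied to the dense subspace $\{F_{\varphi}:\varphi\in C_{c}^{\infty}(0,1)\}$.
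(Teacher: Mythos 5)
Your proof is correct, but it takes a genuinely different route from the paper's. The paper works through the Mercer operator: for $h$ with $h'\in L^{2}(0,1)$ it factors $\int_{0}^{1}h\,\varphi=\langle T_{F}^{-1/2}h,\,T_{F}^{1/2}\varphi\rangle_{2}$, invokes the ellipticity bound $\Vert T_{F}^{-1/2}h\Vert_{2}^{2}\leq b\,(\Vert h\Vert_{2}^{2}+\Vert h'\Vert_{2}^{2})$ (which rests on $T_{F}^{-1}\supset\tfrac{1}{2}(I-(\tfrac{d}{dx})^{2})$ from \lemref{Tf}), and concludes via the membership criterion of \thmref{HF}; as written, that argument only establishes the implication $h'\in L^{2}\Rightarrow h\in\mathscr{H}_{F}$, and the converse is left to the reader (it is addressed elsewhere, cf.\ the remark after \lemref{abs}). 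You instead exhibit the first Sobolev space with the explicit inner product $\tfrac{1}{2}\overline{h(0)}k(0)+\tfrac{1}{2}\overline{h(1)}k(1)+\tfrac{1}{2}\langle h,k\rangle_{2}+\tfrac{1}{2}\langle h',k'\rangle_{2}$ as a second RKHS model, verify by direct integration by parts (splitting at the kink $y=x$) that $K_{x}=F(\cdot-x)$ reproduces, and then appeal to Moore--Aronszajn uniqueness. Your integration-by-parts bookkeeping checks out: the endpoint terms cancel against the two boundary summands, the $L^{2}$ term cancels, and the jump at $y=x$ contributes $2h(x)$, giving $\langle K_{x},h\rangle_{\mathscr{K}}=h(x)$. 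What your approach buys is that it proves both directions of the equivalence at once, avoids the unproved \emph{a priori} estimate on $T_{F}^{-1/2}$, and delivers the explicit norm identity that the paper only records later as \corref{expinner}; what it costs is that it is special to this kernel (the paper's $T_{F}$-based template generalizes verbatim to any elliptic $F$, as the authors exploit in their Section on elliptic positive definite functions).
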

\begin{proof}
Let $\varphi\in C_{c}^{\infty}\left(0,1\right)$, then
\begin{align*}
\left|\int_{0}^{1}h\left(x\right)\varphi\left(x\right)dx\right|^{2} & =\left|\int_{0}^{1}h\left(x\right)T_{F}^{-1}T_{F}\varphi\left(x\right)dx\right|^{2}\\
 & =\Big|\int_{0}^{1}T_{F}^{-1/2}h\left(x\right)\underset{T_{F}^{1/2}}{\underbrace{T_{F}^{-1/2}T_{F}}}\varphi\left(x\right)dx\Big|^{2}\\
 & \leq\left\Vert T_{F}^{-1/2}h\right\Vert _{2}^{2}\left\Vert T_{F}^{1/2}\varphi\right\Vert _{2}^{2}\\
 & \leq\underset{=:C}{\underbrace{b\left(\left\Vert h\right\Vert _{2}^{2}+\left\Vert h'\right\Vert _{2}^{2}\right)}}\left\Vert T_{F}^{1/2}\varphi\right\Vert _{2}^{2}\\
 & =C\left\langle \varphi,T_{F}\varphi\right\rangle _{L^{2}\left(0,1\right)}=C\left\Vert F_{\varphi}\right\Vert _{\mathscr{H}_{F}}^{2}.
\end{align*}
\end{proof}
\begin{lem}
\label{lem:domD}Let $D_{F}\left(F_{\varphi}\right)=F_{\varphi'}$
as a skew-Hermitian operator in $\mathscr{H}_{F}$, with $dom\left(D_{F}\right)=\left\{ F_{\varphi}\:\big|\:\varphi\in C_{c}^{\infty}\left(0,1\right)\right\} $;
then 
\begin{equation}
\begin{split}dom\left(D_{F}^{*}\right) & =\left\{ h\in\mathscr{H}_{F}\:\big|\: h'\in\mathscr{H}_{F}\right\} \;\mbox{and}\\
D_{F}^{*}h & =-h'
\end{split}
\label{eq:u-3}
\end{equation}
where $h'$ refers to the weak derivative.\end{lem}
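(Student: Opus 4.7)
The plan is to deduce the lemma from \lemref{Dadj}, which for any continuous positive definite $F$ on $(-a,a)$ already gives $\xi \in \mathrm{dom}(D_F^*)$ iff the distribution derivative $\xi'$ lies in $\mathscr{H}_F$, with $D_F^*\xi = -\xi'$. The present statement is the specialization to $a=1$, with ``distribution derivative'' rewritten as ``weak derivative,'' so beyond invoking \lemref{Dadj} the only real task is to see that the two notions of derivative coincide for elements of $\mathscr{H}_F$ in this example.

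First I would appeal to the preceding lemma, which for $F(x)=e^{-|x|}$ identifies $\mathscr{H}_F$ with the continuous functions $h:[0,1]\to\mathbb{C}$ whose distribution derivative lies in $L^2(0,1)$. Such $h$ are absolutely continuous on $[0,1]$, so their a.e.\ pointwise derivative coincides with the distribution derivative as an element of $L^2(0,1)$. Thus the expression ``$h'\in\mathscr{H}_F$'' in \eqref{eq:u-3} is unambiguous and agrees with the distributional hypothesis in \lemref{Dadj}.

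For the direct inclusion I would compute, for $h\in\mathscr{H}_F$ with $h'\in\mathscr{H}_F$ and $\varphi\in C_c^\infty(0,1)$, using \eqref{eq:F1-1} on each side and one integration by parts (legitimate because $\varphi$ is compactly supported in $(0,1)$ and $h$ is absolutely continuous):
\begin{align*}
\langle D_F F_\varphi, h\rangle_{\mathscr{H}_F}
 = \langle F_{\varphi'}, h\rangle_{\mathscr{H}_F}
 &= \int_0^1 \overline{\varphi'(x)}\,h(x)\,dx \\
 &= -\int_0^1 \overline{\varphi(x)}\,h'(x)\,dx
 = -\langle F_\varphi, h'\rangle_{\mathscr{H}_F}.
\end{align*}
Since $\{F_\varphi:\varphi\in C_c^\infty(0,1)\}$ is dense in $\mathscr{H}_F$ by \lemref{dense}, this shows $h\in\mathrm{dom}(D_F^*)$ with $D_F^*h = -h'$.

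For the reverse inclusion I would quote \lemref{Dadj} directly: if $h\in\mathrm{dom}(D_F^*)$, then \eqref{eq:Dadj4} yields the bound $|\int_0^1 \overline{\varphi(x)}\,h'(x)\,dx|^2 \le C\|F_\varphi\|_{\mathscr{H}_F}^2$ for every $\varphi\in C_c^\infty(0,1)$, which by the characterization in \thmref{HF} forces the distribution derivative $h'$ to belong to $\mathscr{H}_F$. Combined with the identification above, this gives $D_F^*h=-h'$. There is no real obstacle beyond bookkeeping; the substantive analytic work was carried out in \lemref{Dadj} and in the preceding characterization of $\mathscr{H}_F$, and the present lemma is essentially a transcription of their combination once the weak/distributional derivatives have been identified.
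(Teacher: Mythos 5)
Your argument is correct, and it reaches the conclusion by a somewhat different route than the paper. You treat the lemma as what it essentially is --- the $a=1$ instance of \lemref{Dadj} --- and split it into a forward inclusion (a two-line computation: \eqref{eq:F1-1} applied to $F_{\varphi'}$, the defining identity of the weak derivative, and \eqref{eq:F1-1} again applied to $h'\in\mathscr{H}_{F}$, followed by density via \lemref{dense}) and a reverse inclusion obtained by quoting \lemref{Dadj} together with the characterization in \thmref{HF}. The paper instead re-derives the whole equivalence from scratch: it runs the chain $h\in dom(D_{F}^{*})$ $\Leftrightarrow$ the adjoint bound \eqref{eq:u-4} $\Leftrightarrow$ an $L^{2}$-bound obtained from the boundedness of the Mercer operator $T_{F}$, which yields the intermediate fact $h'\in L^{2}(0,1)$ before upgrading to $h'\in\mathscr{H}_{F}$ via \thmref{HF}, and only then performs the adjoint computation. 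What your version buys is economy --- no Mercer-operator estimates are needed, and the specialization to $F(x)=e^{-|x|}$ plays no essential role; what the paper's version buys is the explicit intermediate statement $h'\in L^{2}(0,1)$, which connects $dom(D_{F}^{*})$ to the first Sobolev space and is used elsewhere (e.g.\ in the discussion of elliptic $F$). One small remark: your appeal to the preceding lemma (absolute continuity of elements of $\mathscr{H}_{F}$) to justify the integration by parts is not actually needed, since the identity $\int_{0}^{1}\overline{\varphi'}h=-\int_{0}^{1}\overline{\varphi}h'$ is precisely the definition of the weak derivative assumed in the hypothesis $h'\in\mathscr{H}_{F}$; this only streamlines, and does not affect the correctness of, your argument.
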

\begin{proof}
From the definition of $D_{F}^{*}$, the following are equivalent: 
\begin{enumerate}
\item $h\in dom\left(D_{F}^{*}\right)$. 
\item $h\in\mathscr{H}_{F}$, and $\exists\, C<\infty$, s.t.
\begin{align*}
\left|\left\langle h,D_{F}\left(F_{\varphi}\right)\right\rangle _{\mathscr{H}_{F}}\right|^{2} & =\left|\left\langle h,F_{\varphi'}\right\rangle _{\mathscr{H}_{F}}\right|^{2}\leq C\left\Vert F_{\varphi}\right\Vert _{\mathscr{H}_{F}}^{2}\\
 & =C\int_{0}^{1}\int_{0}^{1}\overline{\varphi\left(y\right)}\varphi\left(x\right)F\left(x-y\right)dxdy
\end{align*}
 for all $\varphi\in C_{c}^{\infty}\left(0,1\right)$.
\item $\exists\, b<\infty$, s.t.
\begin{equation}
\left|\int_{0}^{1}\overline{h\left(x\right)}\varphi'\left(x\right)dx\right|^{2}\leq C\left\langle \varphi,T_{F}\varphi\right\rangle _{2}\leq C\, b\int_{0}^{1}\left|\varphi\left(x\right)\right|^{2}dx\label{eq:u-4}
\end{equation}
where $T_{F}=$ Mercer operator, $\left(T_{F}\varphi\right)\left(x\right)=\int_{0}^{1}\varphi\left(y\right)F\left(x-y\right)dy$.
But $T_{F}:L^{2}\left(0,1\right)\rightarrow L^{2}\left(0,1\right)$
is bounded, selfadjoint, positive, and trace class, so $\exists\, b<\infty$
s.t. 
\begin{equation}
\left|\left\langle \varphi,T_{F}\varphi\right\rangle _{2}\right|^{2}\leq b\int_{0}^{1}\left|\varphi\left(x\right)\right|^{2}dx\label{eq:u-5}
\end{equation}
and (\ref{eq:u-4}) follows. Hence $h'\in L^{2}\left(0,1\right).$
\end{enumerate}

\begin{claim*}
$h'\in\mathscr{H}_{F}$ where $\left('\right)=\frac{d}{dx}=$ the
distributional (weak) derivative.
\begin{proof}
To prove this note that 
\begin{equation}
\left|\int_{0}^{1}\overline{h'\left(x\right)}\varphi\left(x\right)dx\right|^{2}\leq C\left\Vert F_{\varphi}\right\Vert _{\mathscr{H}_{F}}^{2}\label{eq:u-6}
\end{equation}
and since $h'\in L^{2}\left(0,1\right)$, it follows that $h$ is
continuous. So (\ref{eq:u-6}) $\Longrightarrow$$h'\in\mathscr{H}_{F}$.
\end{proof}
\end{claim*}

By definition of $D_{F}^{*}$ (relative to the $\mathscr{H}_{F}$
inner product) we have
\begin{equation}
\left\langle D_{F}^{*}h,F_{\varphi}\right\rangle _{\mathscr{H}_{F}}=\left\langle h,F_{\varphi'}\right\rangle _{\mathscr{H}_{F}},\;\forall\varphi\in C_{c}^{\infty}\left(0,1\right).\label{eq:u-7}
\end{equation}
Computation of the two sides in (\ref{eq:u-7}) yields:
\[
\left(\mbox{LHS}_{\left(\ref{eq:u-7}\right)}=\right)\int_{0}^{1}\left(D_{F}^{*}h\right)\left(x\right)\varphi\left(x\right)dx=\int_{0}^{1}h\left(x\right)\varphi'\left(x\right)dx\left(=\mbox{RHS}_{\left(\ref{eq:u-7}\right)}\right)
\]
$\forall\varphi\in C_{c}^{\infty}\left(0,1\right)$; which shows that
$D_{F}^{*}h=-h'$ where $h'$ is the weak derivative (in the sense
of Schwartz distributions.) 

\end{proof}
\begin{cor}
If $h\in dom\left(D_{F}^{*}\right)$, then $h$ and $h'\in\mathscr{H}_{F}$. \end{cor}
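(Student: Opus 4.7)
The plan is essentially to read the conclusion off of \lemref{domD}. That lemma gives the exact characterization
\[
\mathrm{dom}\left(D_F^*\right) = \left\{h \in \mathscr{H}_F : h' \in \mathscr{H}_F\right\},
\]
where $h'$ denotes the distributional derivative. The corollary merely unpacks the two clauses of this set-builder definition: membership in $\mathrm{dom}(D_F^*)$ forces $h$ itself to lie in $\mathscr{H}_F$ (since $\mathrm{dom}(D_F^*) \subset \mathscr{H}_F$), and the defining condition then supplies $h' \in \mathscr{H}_F$ as well. So the proof is a one-line invocation of the lemma.

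If one preferred an argument that does not cite the full characterization, the route would be as follows. By definition of the adjoint, $h \in \mathrm{dom}(D_F^*)$ gives a constant $C<\infty$ with $|\langle h, F_{\varphi'}\rangle_{\mathscr{H}_F}|^2 \leq C\|F_\varphi\|_{\mathscr{H}_F}^2$ for all $\varphi \in C_c^\infty(0,1)$. Using \lemref{TFvarphi} to rewrite the inner product as $\int_0^1 \overline{\varphi'(x)}\,h(x)\,dx$ and integrating by parts identifies the distributional derivative $h'$, producing the estimate
\[
\left|\int_0^1 \overline{\varphi(x)}\,h'(x)\,dx\right|^2 \leq C\,\|F_\varphi\|_{\mathscr{H}_F}^2, \quad \forall \varphi \in C_c^\infty(0,1).
\]
Applying \thmref{HF} with $\xi = h'$ then places $h' \in \mathscr{H}_F$, which is the remaining claim.

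There is no real obstacle; the only point that needs care is confirming that the weak derivative exists as an honest function (not just a distribution) and can be represented as an element of the RKHS. That regularity is supplied in the proof of \lemref{domD} itself via the Mercer operator bound $|\langle \varphi, T_F\varphi\rangle_2| \leq b\|\varphi\|_2^2$, which forces $h' \in L^2(0,1)$ as an intermediate step and in particular makes $h$ continuous on $[0,1]$. The corollary is therefore essentially cosmetic, and its value lies in providing a convenient forward reference for later arguments that need simultaneous membership of $h$ and $h'$ in $\mathscr{H}_F$.
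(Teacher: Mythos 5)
Your proposal is correct and matches the paper's (implicit) argument: the corollary is stated immediately after \lemref{domD} with no separate proof, precisely because it is just the two clauses of the characterization $dom(D_F^*)=\{h\in\mathscr{H}_F : h'\in\mathscr{H}_F\}$ read off directly. Your alternative unpacking via the adjoint estimate and \thmref{HF} is a faithful restatement of how \lemref{domD} itself is proved, so nothing new is needed.
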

\begin{rem}
The result in \propref{dev1} is general, and applies to any continuous
p.d. function $F:\left(-a,a\right)\rightarrow\mathbb{C}$, s.t., $D^{\left(F\right)}$
has indices $\left(1,1\right)$. 

Note 
\begin{align}
dom\left(D_{F}\right) & \subset\left\{ h\in\mathscr{H}_{F}\:\big|\: h\left(0\right)=h'\left(0\right),\: h\left(1\right)=-h'\left(1\right)\right\} \label{eq:sat-1-1}\\
 & \Updownarrow\nonumber \\
D_{F}^{*}\left(F_{0}\right)=F_{0},\; & D_{F}^{*}\left(F_{1}\right)=-F_{1}
\end{align}
so, 
\begin{equation}
\begin{pmatrix}1 & -1\\
1 & 1
\end{pmatrix}\underset{\mbox{Wronskian}}{\underbrace{\begin{pmatrix}h\left(0\right) & h\left(1\right)\\
h'\left(0\right) & h'\left(1\right)
\end{pmatrix}}}=\begin{pmatrix}0 & h\left(1\right)-h'\left(1\right)\\
h\left(0\right)-h'\left(0\right) & 0
\end{pmatrix}\label{eq:sat-1-3}
\end{equation}
\end{rem}
\begin{thm}
Let $F:\left(-a,a\right)\rightarrow\mathbb{C}$ be any continuous
and p.d. function, and set 
\[
f\left(x\right)=F_{\varphi}\left(x\right)=\int_{0}^{a}\varphi\left(y\right)F\left(x-y\right)dy,\;\forall\varphi\in C_{c}^{\infty}\left(0,a\right);
\]
then
\begin{eqnarray*}
f\left(0\right)=f'\left(0\right),\;\mbox{for all }\varphi & \Longleftrightarrow & F_{0}=F'_{0}\\
f\left(a\right)=-f'\left(a\right),\;\mbox{for all }\varphi & \Longleftrightarrow & F_{a}=-F'_{a}
\end{eqnarray*}
where the derivatives are in the sense of distribution. \end{thm}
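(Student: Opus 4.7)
The plan is to reduce the theorem to a direct application of \lemref{dev} followed by the fundamental lemma of the calculus of variations. By \lemref{dev} (applied on $(0,a)$ instead of $(0,1)$), the four boundary values admit the integral representations
\[
f(0)=\int_{0}^{a}\varphi(y)F(-y)\,dy,\qquad f'(0)=\int_{0}^{a}\varphi(y)F'(-y)\,dy,
\]
\[
f(a)=\int_{0}^{a}\varphi(y)F(a-y)\,dy,\qquad f'(a)=\int_{0}^{a}\varphi(y)F'(a-y)\,dy,
\]
where the $F'$-integrals are interpreted distributionally. With the notation $F_{0}(y)=F(-y)$, $F_{a}(y)=F(a-y)$, and $F'_{0}(y):=F'(-y)$, $F'_{a}(y):=F'(a-y)$ (the distributional translates), the four quantities above become the distributional pairings $\langle\varphi,F_{0}\rangle$, $\langle\varphi,F'_{0}\rangle$, $\langle\varphi,F_{a}\rangle$, $\langle\varphi,F'_{a}\rangle$ on $(0,a)$.

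The first equivalence is then immediate: $f(0)=f'(0)$ for every $\varphi\in C_{c}^{\infty}(0,a)$ is the same as
\[
\int_{0}^{a}\varphi(y)\bigl(F_{0}(y)-F'_{0}(y)\bigr)\,dy=0\quad\text{for all }\varphi\in C_{c}^{\infty}(0,a),
\]
which by the fundamental lemma of the calculus of variations is equivalent to $F_{0}=F'_{0}$ as distributions on $(0,a)$. The second equivalence follows in exactly the same way from $\langle\varphi,F_{a}+F'_{a}\rangle=0$ for all test functions $\varphi$, giving $F_{a}=-F'_{a}$. The converse directions require no argument beyond inserting the hypothesized distributional identity back into the integral representation of $f(0)$, $f'(0)$, $f(a)$, $f'(a)$.

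The only point requiring care — not a real obstacle — is that $F$ need not be $C^{1}$ (even the running example $F(x)=e^{-|x|}$ fails this at the origin), so $F'(-y)$ and $F'(a-y)$ must be read as elements of $\mathscr{D}'(0,a)$ rather than as classical pointwise derivatives. This is exactly the interpretation already adopted in \lemref{dev}, and the consistency check is the model case: for $F(x)=e^{-|x|}$ on $(-1,1)$ one recovers $F_{0}=F'_{0}$ and $F_{1}=-F'_{1}$, matching the boundary conditions (\ref{eq:dev6})--(\ref{eq:dev7}) in \corref{dev1}.
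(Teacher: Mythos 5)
Your proof is correct and follows essentially the same route as the paper's: both compute the four boundary values as pairings of $\varphi$ against the translates $F(0-\cdot)$, $F'(0-\cdot)$, $F(a-\cdot)$, $F'(a-\cdot)$ (the paper rederives these rather than citing \lemref{dev}, and records the integration-by-parts identity $\int_0^a\varphi(y)F'(b-y)\,dy=\int_0^a\varphi'(y)F(b-y)\,dy$ that makes the distributional pairing explicit), and then conclude from the vanishing of the pairing for all test functions. You merely make explicit the final step (du Bois-Reymond) that the paper compresses into ``and the theorem follows,'' and you correctly read $F_0'$, $F_a'$ as the translates of the distribution $F'$ rather than as $\tfrac{d}{dy}F_0$, which is the interpretation consistent with the model case $F(x)=e^{-|x|}$.
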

\begin{proof}
By definition, 
\begin{align*}
f\left(x\right) & =\int_{0}^{x}\varphi\left(y\right)F\left(x-y\right)dy+\int_{x}^{a}\varphi\left(y\right)F\left(x-y\right)dy\\
f'\left(x\right) & =\int_{0}^{x}\varphi\left(y\right)F'\left(x-y\right)dy+\int_{x}^{a}\varphi\left(y\right)F'\left(x-y\right)dy\\
 & \quad+\underset{=0}{\underbrace{\varphi\left(x\right)F\left(0\right)-\varphi\left(x\right)F\left(0\right)}}.
\end{align*}
So 
\begin{align*}
f\left(0\right) & =\int_{0}^{a}\varphi\left(y\right)F\left(0-y\right)dy\\
f'\left(0\right) & =\int_{0}^{a}\varphi\left(y\right)F'\left(0-y\right)dy=\int_{0}^{a}\varphi'\left(y\right)F\left(0-y\right)dy\\
f\left(a\right) & =\int_{0}^{a}\varphi\left(y\right)F\left(a-y\right)dy\\
f'\left(a\right) & =\int_{0}^{a}\varphi\left(y\right)F'\left(a-y\right)dy=\int_{0}^{a}\varphi'\left(y\right)F\left(a-y\right)dy
\end{align*}
and the theorem follows.
\end{proof}
Recall that $D_{F}^{*}\left(h\right)=-h',\;\forall h\in dom\left(D_{F}^{*}\right)$,
where $D_{F}$ is a skew Hermitian operator in $\mathscr{H}_{F}$.
Let $F_{0}$ and $F_{a}$ be the kernel functions at the endpoints:
\[
\left\langle F_{0},h\right\rangle _{\mathscr{H}_{F}}=h\left(0\right),\;\left\langle F_{a},h\right\rangle _{\mathscr{H}_{F}}=h\left(a\right).
\]

\begin{cor}
Let $F:\left(-a,a\right)\rightarrow\mathbb{C}$ be a continuous, p.d.
function, and let $D_{F}:F_{\varphi}\mapsto F_{\varphi'}$ be as usual;
then $f=F_{\varphi}$ satisfies the following boundary conditions
(kernels at endpoints):
\begin{eqnarray*}
f\left(0\right)=f'\left(0\right),\;\mbox{for all }\varphi & \Longleftrightarrow & D_{F}^{*}\left(F_{0}\right)=F_{0}\\
f\left(a\right)=-f'\left(a\right),\;\mbox{for all }\varphi & \Longleftrightarrow & D_{F}^{*}\left(F_{a}\right)=-F_{a}
\end{eqnarray*}
where $F_{0}$ and $F_{a}$ are the kernel of the RKHS $\mathscr{H}_{F}$. \end{cor}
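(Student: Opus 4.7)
The plan is to derive this corollary by combining the preceding theorem with the characterization $D_F^* h = -h'$ from Lemma~\ref{lem:Dadj}. The preceding theorem reduces each boundary condition on $f = F_\varphi$ to a first-order distributional identity on $(0,a)$ for the endpoint kernel ($F_0$ or $F_a$); Lemma~\ref{lem:Dadj} then re-reads any such identity as an operator equation in $\mathscr{H}_F$. So the proof amounts to a bookkeeping step: translating the distributional version supplied by the theorem into the operator version stated here.

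More concretely, for the first equivalence I will invoke the preceding theorem to pass from ``$f(0) = f'(0)$ for all $\varphi \in C_c^\infty(0,a)$'' to the corresponding distributional relation between $F_0$ and its weak derivative $F_0'$ on $(0,a)$. Since $F_0 \in \mathscr{H}_F$ (it is the reproducing kernel at the endpoint $0$), this relation places $F_0'$ back in $\mathscr{H}_F$; by Lemma~\ref{lem:Dadj} we conclude $F_0 \in dom(D_F^*)$ and $D_F^*(F_0) = -F_0'$, which after substituting the distributional identity collapses to $D_F^*(F_0) = F_0$. The converse reads the same chain of equivalences in reverse.

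The second equivalence is obtained by running the same argument with $F_a(x) = F(a-x)$ in place of $F_0$. The boundary condition $f(a) = -f'(a)$ carries the opposite sign from $f(0) = f'(0)$, and after the analogous substitution through Lemma~\ref{lem:Dadj} this sign propagates to give $D_F^*(F_a) = -F_a$ rather than $+F_a$.

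The main subtle point is that writing $D_F^*(F_0) = F_0$ presupposes $F_0 \in dom(D_F^*)$, a nontrivial condition in general, since not every element of $\mathscr{H}_F$ has its weak derivative in $\mathscr{H}_F$. This potential obstacle is dissolved by the preceding theorem itself: the distributional identity it furnishes expresses $F_0'$ as a scalar multiple of $F_0 \in \mathscr{H}_F$, so membership in $dom(D_F^*)$ is automatic. The same remark disposes of $F_a$, and no separate regularity argument is needed.
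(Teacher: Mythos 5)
Your overall route---compose the preceding theorem with the identity $D_{F}^{*}h=-h'$ from Lemma~\lemref{Dadj}---is exactly what the paper's placement of this corollary suggests (the paper recalls $D_{F}^{*}(h)=-h'$ immediately before stating it and offers no separate proof), and it does work. But there is a sign trap in the ``bookkeeping step'' that your write-up does not address, and a literal execution of the substitution you describe produces the \emph{opposite} of the stated conclusions. The symbol $F_{0}'$ in the preceding theorem denotes $F'(0-\cdot)$, i.e.\ the endpoint kernel built from the distributional derivative of $F$; it is \emph{not} the weak derivative of the function $y\mapsto F_{0}(y)=F(0-y)$ that enters Lemma~\lemref{Dadj}. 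By the chain rule these two objects differ by a sign: $\tfrac{d}{dy}F_{0}(y)=-F'(0-y)$. If you read the theorem's conclusion ``$F_{0}=F_{0}'$'' as a statement about the weak derivative of $F_{0}$ and plug it into $D_{F}^{*}F_{0}=-F_{0}'$, you get $D_{F}^{*}F_{0}=-F_{0}$, not $+F_{0}$; likewise the second equivalence would come out as $+F_{a}$ instead of $-F_{a}$. The correct chain is: $f(0)=f'(0)$ for all $\varphi$ $\Leftrightarrow$ $\int\varphi F_{0}=\int\varphi'F_{0}$ $\Leftrightarrow$ $\tfrac{d}{dy}F_{0}=-F_{0}$ weakly $\Leftrightarrow$ $D_{F}^{*}F_{0}=-\tfrac{d}{dy}F_{0}=F_{0}$, and analogously $f(a)=-f'(a)$ for all $\varphi$ $\Leftrightarrow$ $\tfrac{d}{dy}F_{a}=F_{a}$ $\Leftrightarrow$ $D_{F}^{*}F_{a}=-F_{a}$. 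Your remark that domain membership is automatic (because the identity exhibits the weak derivative as a scalar multiple of an element of $\mathscr{H}_{F}$) is correct and worth keeping.

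For comparison, the paper's own proof of essentially this statement appears in Lemma~\lemref{defF}, and it avoids the distributional detour entirely: the reproducing property gives $f(0)=\langle F_{0},F_{\varphi}\rangle_{\mathscr{H}_{F}}$ and $f'(0)=\langle F_{0},F_{\varphi'}\rangle_{\mathscr{H}_{F}}=\langle F_{0},D_{F}F_{\varphi}\rangle_{\mathscr{H}_{F}}$, so the condition ``$f(0)=f'(0)$ for all $\varphi$'' is verbatim the defining condition for $F_{0}\in dom(D_{F}^{*})$ with $D_{F}^{*}F_{0}=F_{0}$, by density of $\{F_{\varphi}\}$. That argument has no sign ambiguity; if you keep your route, you should add the one line of chain-rule accounting above.
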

\begin{lem}
\label{lem:defF}The following are equivalent:
\begin{enumerate}
\item \label{enu:F1}$F_{\varphi}'\left(0\right)=F_{\varphi}\left(0\right)$.
\item \label{enu:F2}$F_{0}\in dom\left(D_{F}^{*}\right)$ and $D_{F}^{*}\left(F_{0}\right)=F_{0}$.
\item \label{enu:F3}$F_{0}\left(x\right)=\mbox{const}\cdot e^{-x}$.
\end{enumerate}
\end{lem}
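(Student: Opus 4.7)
The plan is to establish the chain (\ref{enu:F2})$\Leftrightarrow$(\ref{enu:F3}) by a direct application of \lemref{Dadj}, and then to obtain (\ref{enu:F1})$\Leftrightarrow$(\ref{enu:F2}) by unpacking the definition of the adjoint $D_F^*$ against the dense set $\{F_\varphi:\varphi\in C_c^\infty(0,a)\}$ (\lemref{dense}), combined with the reproducing property of $F_0$ and the identity $F_\varphi'(0)=F_{\varphi'}(0)$ coming from the Corollary that follows \lemref{TFvarphi}.

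For (\ref{enu:F2})$\Leftrightarrow$(\ref{enu:F3}), I would invoke \lemref{Dadj} to identify the domain of $D_F^*$: a continuous function $\xi\in\mathscr{H}_F$ is in $\mathrm{dom}(D_F^*)$ iff its distributional derivative $\xi'$ lies in $\mathscr{H}_F$, in which case $D_F^*\xi=-\xi'$. Applied to $\xi=F_0$, the equation $D_F^*F_0=F_0$ becomes the first-order ODE $F_0'=-F_0$ on $(0,a)$; its general solution is $F_0(x)=c\,e^{-x}$. Conversely, if $F_0(x)=c\,e^{-x}$ on $[0,a]$, then both $F_0$ and $F_0'=-F_0$ lie in $\mathscr{H}_F$, so $F_0\in\mathrm{dom}(D_F^*)$ with $D_F^*F_0=-F_0'=F_0$, yielding (\ref{enu:F2}).

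For (\ref{enu:F1})$\Leftrightarrow$(\ref{enu:F2}), I would use the reproducing property $\langle F_0,h\rangle_{\mathscr{H}_F}=h(0)$, which gives, for every $\varphi\in C_c^\infty(0,a)$,
\begin{equation*}
\langle F_\varphi,F_0\rangle_{\mathscr{H}_F}=\overline{F_\varphi(0)},\qquad \langle D_FF_\varphi,F_0\rangle_{\mathscr{H}_F}=\langle F_{\varphi'},F_0\rangle_{\mathscr{H}_F}=\overline{F_{\varphi'}(0)}.
\end{equation*}
The Corollary after \lemref{TFvarphi} (obtained by differentiating under the integral, or equivalently integrating by parts with $\varphi$ compactly supported in $(0,a)$ so that boundary terms vanish) yields $F_\varphi'(x)=F_{\varphi'}(x)$ for all $x\in(0,a)$, and by continuity at $x=0$. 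Because $\mathrm{dom}(D_F)=\{F_\varphi:\varphi\in C_c^\infty(0,a)\}$ is dense in $\mathscr{H}_F$ by \lemref{dense}, the assertion ``$F_0\in\mathrm{dom}(D_F^*)$ with $D_F^*F_0=F_0$'' is equivalent to $\langle D_FF_\varphi,F_0\rangle_{\mathscr{H}_F}=\langle F_\varphi,F_0\rangle_{\mathscr{H}_F}$ for all $\varphi$, which after taking conjugates reads $F_\varphi'(0)=F_\varphi(0)$ for all $\varphi\in C_c^\infty(0,a)$; this is precisely (\ref{enu:F1}).

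The only subtle step is the identification $F_{\varphi'}(0)=F_\varphi'(0)$, which needs the distinction between the pointwise derivative of the function $F_\varphi$ and the image under $T_F$ of $\varphi'$. Since $\varphi\in C_c^\infty(0,a)$ has compact support strictly inside $(0,a)$, the integration by parts yielding this identity produces no boundary contributions even at the endpoint $x=0$, so the identification is clean. All other steps are routine applications of the reproducing property and of \lemref{Dadj}.
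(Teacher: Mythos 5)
Your argument is correct and follows essentially the same route as the paper: the equivalence (1)$\Leftrightarrow$(2) is obtained by combining the identity $F_{\varphi}'=F_{\varphi'}$ with the reproducing property $\left\langle F_{0},\cdot\right\rangle _{\mathscr{H}_{F}}$ and the density of $\left\{ F_{\varphi}\right\}$, while (2)$\Leftrightarrow$(3) comes from $D_{F}^{*}\xi=-\xi'$ in \lemref{Dadj}. You merely spell out the steps (boundedness of the adjoint functional, continuity at the endpoint $x=0$, and the converse direction of (3)$\Rightarrow$(2)) that the paper leaves implicit.
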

\begin{proof}
Note that 
\[
F_{\varphi}'\left(x\right)=\int_{0}^{a}\varphi'\left(x-y\right)F\left(y\right)dy,\;\forall\varphi\in C_{c}^{\infty}\left(0,a\right).
\]
Hence (\ref{enu:F1}) $\Longleftrightarrow$
\[
\left\langle F_{0},F_{\varphi'}\right\rangle _{\mathscr{H}_{F}}=\left\langle F_{0},F_{\varphi}\right\rangle _{\mathscr{H}_{F}}\Longleftrightarrow F_{0}\in dom\left(D_{F}^{*}\right)
\]
note $F_{\varphi'}=D_{F}\left(F_{\varphi}\right)$; and
\begin{align*}
\left\langle D_{F}^{*}\left(F_{0}\right),F_{\varphi}\right\rangle _{\mathscr{H}_{F}} & =\left\langle F_{0},F_{\varphi}\right\rangle _{\mathscr{H}_{F}},\;\forall\varphi\in C_{c}^{\infty}\left(0,a\right)\\
 & \Updownarrow\\
F_{0}\in dom\left(D_{F}^{*}\right),\; & \mbox{and}\; D_{F}^{*}\left(F_{0}\right)=F_{0},\;\mbox{which is \ensuremath{\left(\ref{enu:F2}\right)}.}
\end{align*}
But we always have (\ref{enu:F2})$\Leftrightarrow$(\ref{enu:F3}).\end{proof}
\begin{rem}
If $F\left(x\right)=e^{-\left|x\right|}$, $\left|x\right|<1$, we
have $F_{0}=e^{-x}$ and $D_{F}^{*}F_{0}=F_{0}$. But for $F\left(x\right)=1-\left|x\right|$,
$\left|x\right|<\frac{1}{2}$; $F_{0}=1-x$, $0\leq x\leq\frac{1}{2}$,
and $D_{F}^{*}\left(F_{0}\right)=1=$ constant function 1. See figures
\ref{fig:exp}-\ref{fig:abs}.
\end{rem}
\begin{figure}[H]
\includegraphics[scale=0.35]{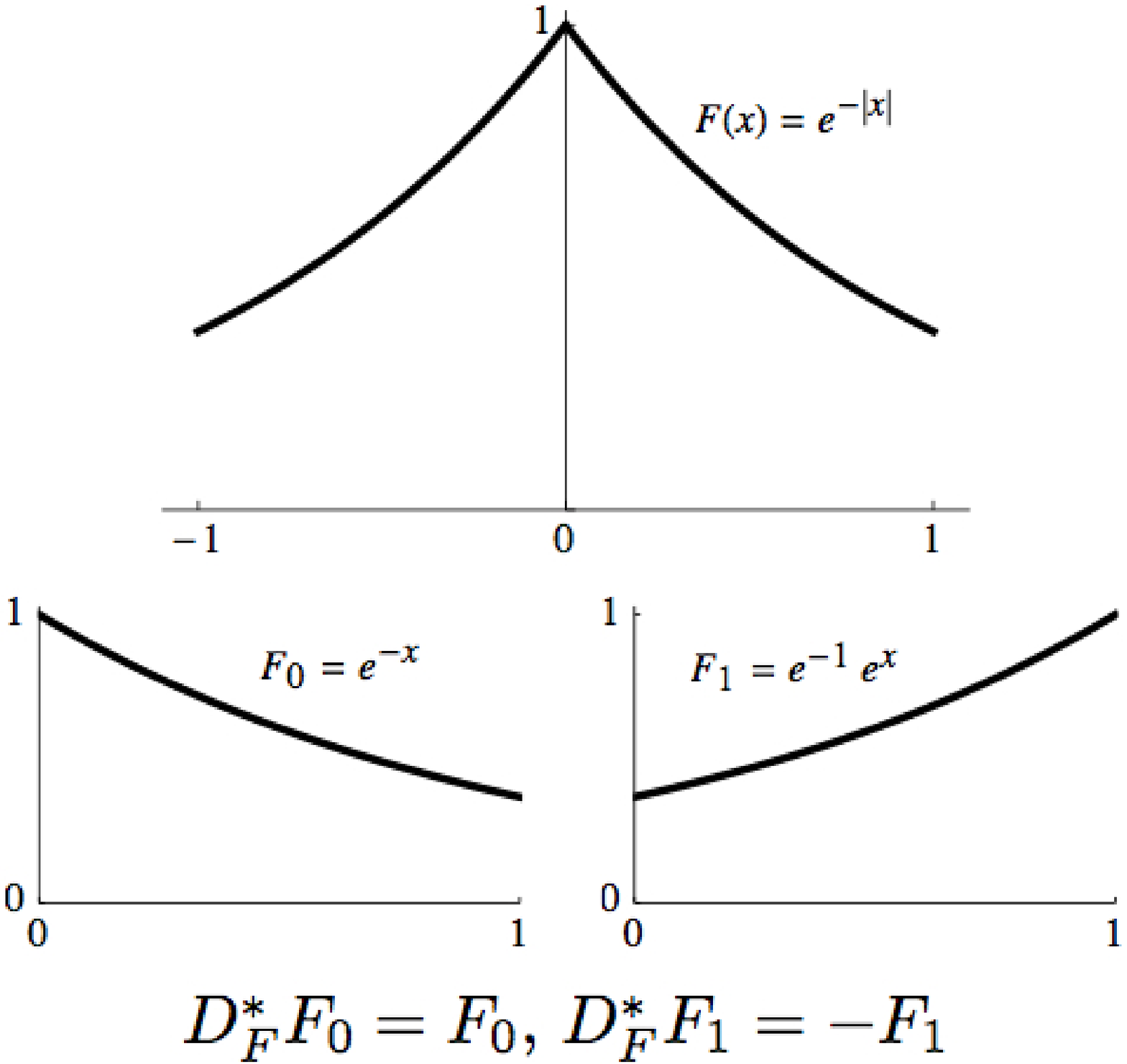}

\protect\caption{\label{fig:exp}$F\left(x\right)=e^{-\left|x\right|}$, $x\in\left(-1,1\right)$}

\end{figure}
\begin{figure}
\includegraphics[scale=0.35]{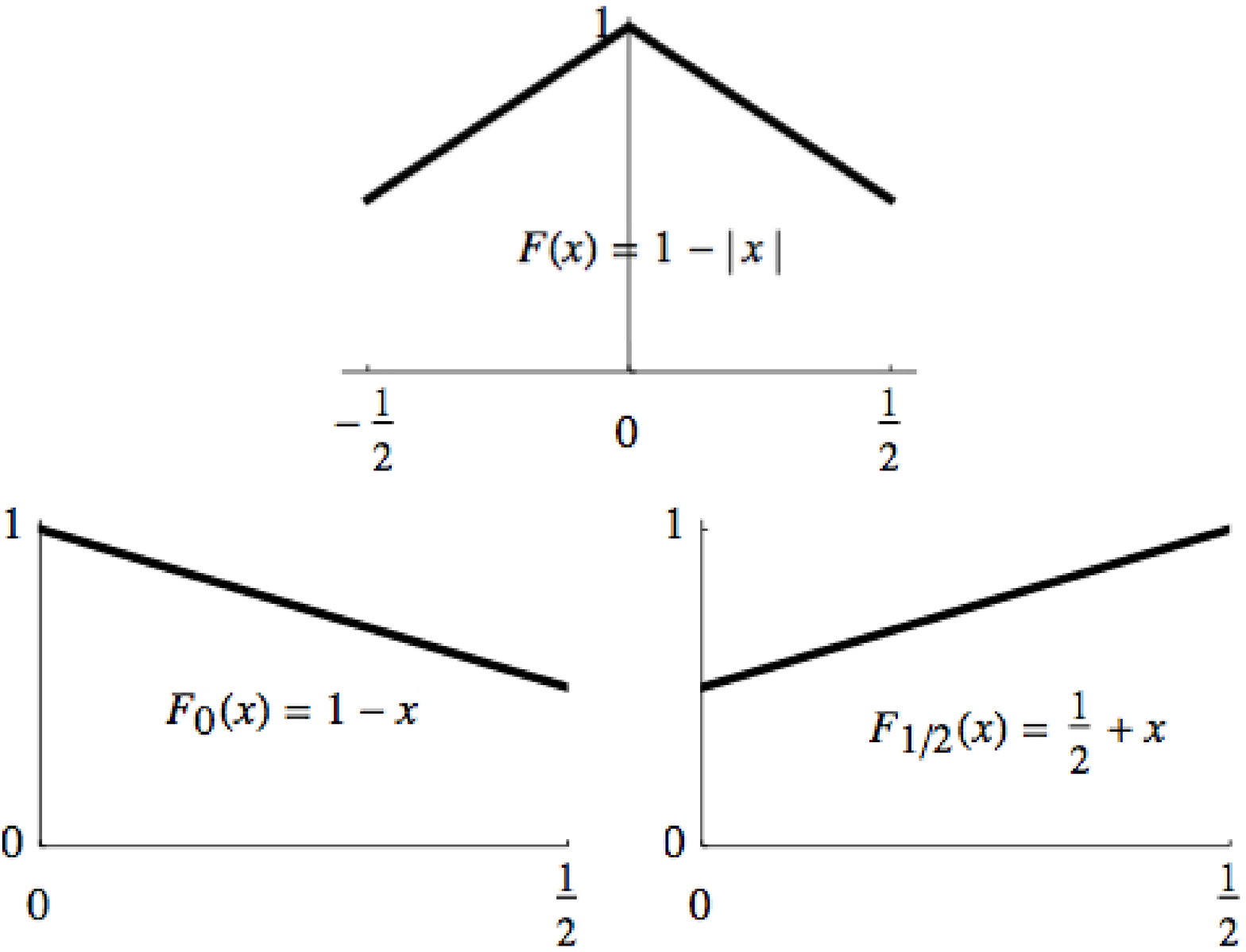}

\protect\caption{\label{fig:abs}$F\left(x\right)=1-\left|x\right|$, $x\in\left(-\frac{1}{2},\frac{1}{2}\right)$}
\end{figure}

For the conclusions in the examples below, compare with \lemref{defF}.
\begin{example}
\label{ex:absm}Let $F\left(x\right)=1-\left|x\right|$, $\left|x\right|<\frac{1}{2}$.
Set 
\begin{align*}
f\left(x\right) & =F_{\varphi}\left(x\right)=\int_{0}^{\frac{1}{2}}\varphi\left(y\right)F\left(x-y\right)dy\\
 & =\int_{0}^{x}\varphi\left(y\right)\left(1-x+y\right)dy+\int_{x}^{\frac{1}{2}}\varphi\left(y\right)\left(1-y+x\right)dy.
\end{align*}
Then, 
\begin{align*}
f'\left(x\right) & =-\int_{0}^{x}\varphi\left(y\right)dy+\int_{x}^{\frac{1}{2}}\varphi\left(y\right)dy\\
f''\left(x\right) & =-2\varphi\left(x\right);
\end{align*}
so 
\[
\varphi=T_{F}^{-1}f=-\tfrac{1}{2}\left(\tfrac{d}{dx}\right)^{2}f.
\]
For the boundary conditions, we have
\begin{align*}
f\left(0\right) & =\int_{0}^{\frac{1}{2}}\varphi\left(y\right)\left(1-y\right)dy,\\
f'\left(0\right) & =\int_{0}^{\frac{1}{2}}\varphi\left(y\right)dy,\\
f\left(\tfrac{1}{2}\right) & =\int_{0}^{\frac{1}{2}}\varphi\left(y\right)\left(\tfrac{1}{2}+y\right)dy,\\
f'\left(\tfrac{1}{2}\right) & =-\int_{0}^{\frac{1}{2}}\varphi\left(y\right)dy\neq\pm f\left(\tfrac{1}{2}\right),
\end{align*}
where $f\left(0\right)-f'\left(0\right)=-\int_{0}^{\frac{1}{2}}y\varphi\left(y\right)dy\neq0$.
\end{example}

\subsection{The Spectra of the s.a. extensions $A_{\theta}\supset-iD_{F}$}

Let $F\left(x\right)=e^{-\left|x\right|}$, $\left|x\right|<1$. Define
$D_{F}$$\left(F_{\varphi}\right)=F_{\varphi'}$ as before, where
\begin{align*}
F_{\varphi}\left(x\right) & =\int_{0}^{1}\varphi\left(y\right)F\left(x-y\right)dy\\
 & =\int_{0}^{1}\varphi\left(y\right)e^{-\left|x-y\right|}dy,\;\forall\varphi\in C_{c}^{\infty}\left(0,1\right).
\end{align*}
And let $\mathscr{H}_{F}$ be the RKHS of $F$. 
\begin{lem}
\label{lem:spInner}For all $\varphi\in C_{c}^{\infty}\left(0,1\right)$,
and all $h,h''\in\mathscr{H}_{F}$, we have 
\begin{equation}
\left\langle F_{\varphi},h\right\rangle _{\mathscr{H}_{F}}=\left\langle F_{\varphi},\tfrac{1}{2}\left(h-h''\right)\right\rangle _{2}-\tfrac{1}{2}\left[W\right]_{0}^{1}\label{eq:sp1}
\end{equation}
where 
\begin{equation}
W=\det\begin{bmatrix}h & F_{\varphi}\\
h' & F_{\varphi'}
\end{bmatrix}.\label{eq:sp1-2}
\end{equation}
Setting $l:=F_{\varphi}$, we have 
\begin{equation}
\left[W\right]_{0}^{1}=-\overline{l}\left(1\right)\left(h\left(1\right)+h'\left(1\right)\right)-\overline{l}\left(0\right)\left(h\left(0\right)-h'\left(0\right)\right).\label{eq:sp1-3}
\end{equation}
\end{lem}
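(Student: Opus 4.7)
The plan is to reduce the $\mathscr{H}_F$-inner product to an $L^2$-integral via the reproducing identity, replace $\varphi$ by a second-order differential expression in $l:=F_\varphi$, and then integrate by parts twice, collecting the boundary contribution in the Wronskian-type form (\ref{eq:sp1-3}).

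First, by \lemref{TFvarphi}\,(\ref{enu:F1-2}) (equation (\ref{eq:F1-1})), one has
\begin{equation*}
\left\langle F_\varphi,h\right\rangle_{\mathscr{H}_F}=\int_{0}^{1}\overline{\varphi(x)}\,h(x)\,dx,\qquad\forall h\in\mathscr{H}_F.
\end{equation*}
Next, by \lemref{Tf} (i.e.\ (\ref{eq:u-0})), $\varphi=T_F^{-1}l=\tfrac{1}{2}(l-l'')$, so $\overline\varphi=\tfrac{1}{2}(\overline l-\overline{l''})$. Substituting gives
\begin{equation*}
\left\langle F_\varphi,h\right\rangle_{\mathscr{H}_F}=\int_{0}^{1}\tfrac{1}{2}\bigl(\overline{l}-\overline{l''}\bigr)\,h\,dx.
\end{equation*}

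Second, I subtract the target term $\langle F_\varphi,\tfrac{1}{2}(h-h'')\rangle_2=\int_0^1\overline{l}\cdot\tfrac{1}{2}(h-h'')\,dx$: the pieces $\tfrac12\overline l\,h$ cancel, leaving
\begin{equation*}
\left\langle F_\varphi,h\right\rangle_{\mathscr{H}_F}-\left\langle F_\varphi,\tfrac{1}{2}(h-h'')\right\rangle_2=\tfrac{1}{2}\int_0^1\!\bigl(\overline{l}\,h''-\overline{l''}\,h\bigr)\,dx.
\end{equation*}
A standard Green's identity (integration by parts twice; this is legitimate because $l=F_\varphi\in C^\infty[0,1]$ as one sees from the explicit formula in the proof of \lemref{vot}, and $h,h''\in\mathscr{H}_F$ give $h\in C^1[0,1]$ by the remarks following \lemref{domD}) yields
\begin{equation*}
\int_0^1\!\bigl(\overline{l}\,h''-\overline{l''}\,h\bigr)\,dx=\bigl[\overline{l}\,h'-\overline{l'}\,h\bigr]_0^1.
\end{equation*}
Thus
\begin{equation*}
\left\langle F_\varphi,h\right\rangle_{\mathscr{H}_F}=\left\langle F_\varphi,\tfrac{1}{2}(h-h'')\right\rangle_2+\tfrac{1}{2}\bigl[\overline{l}\,h'-\overline{l'}\,h\bigr]_0^1,
\end{equation*}
which, read as a $2\times2$ determinant with rows $(h,F_\varphi)$ and $(h',F_{\varphi'})$, is exactly (\ref{eq:sp1}) with $W$ as in (\ref{eq:sp1-2}).

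Finally, to obtain (\ref{eq:sp1-3}), I invoke the boundary identities for $l=F_\varphi$ recorded in \lemref{Tf}, namely
\begin{equation*}
l(0)=l'(0),\qquad l(1)=-l'(1).
\end{equation*}
Plugging $l'(0)=l(0)$ and $l'(1)=-l(1)$ into $[\,\overline{l'}\,h-\overline{l}\,h'\,]_0^1=-[W]_0^1$ collapses the four-term expression at $x=0,1$ into the two factored terms
\begin{equation*}
-\overline{l}(1)\bigl(h(1)+h'(1)\bigr)-\overline{l}(0)\bigl(h(0)-h'(0)\bigr),
\end{equation*}
which is (\ref{eq:sp1-3}). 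The only real bookkeeping obstacle is keeping the conjugates, signs, and evaluation points straight across the two integrations by parts; everything else is routine once the reproducing identity and the Green's function identity $\tfrac{1}{2}(l-l'')=\varphi$ are in hand.
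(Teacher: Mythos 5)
Your proposal is correct and follows essentially the same route as the paper's own proof: reduce $\left\langle F_{\varphi},h\right\rangle _{\mathscr{H}_{F}}$ to $\int_{0}^{1}\overline{\varphi}\,h\,dx$ by the reproducing identity, substitute $\varphi=\tfrac{1}{2}\left(l-l''\right)$ from \lemref{Tf}, integrate by parts twice to produce the Wronskian boundary term, and factor it using the endpoint conditions $l\left(0\right)=l'\left(0\right)$, $l\left(1\right)=-l'\left(1\right)$ from \corref{dev1}. The only blemish is a sign slip in your final paragraph, where you write $\left[\overline{l'}h-\overline{l}h'\right]_{0}^{1}=-\left[W\right]_{0}^{1}$ although your own preceding display identifies that quantity with $+\left[W\right]_{0}^{1}$; the collapsed two-term expression you obtain is indeed $\left[W\right]_{0}^{1}$, i.e.\ (\ref{eq:sp1-3}), so the conclusion stands.
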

\begin{proof}
Note 
\begin{align*}
\left\langle F_{\varphi},h\right\rangle _{\mathscr{H}_{F}} & =\int_{0}^{1}\varphi\left(x\right)h\left(x\right)dx\quad(\mbox{reproducing property})\\
 & =\left\langle \tfrac{1}{2}\left(I-\left(\tfrac{d}{dx}\right)^{2}\right)F_{\varphi},h\right\rangle _{2}\\
 & =\left\langle F_{\varphi},\tfrac{1}{2}\left(h-h''\right)\right\rangle _{2}-\tfrac{1}{2}\left[W\right]_{0}^{1}.
\end{align*}
Set $l:=F_{\varphi}\in\mathscr{H}_{F}$, $\varphi\in C_{c}^{\infty}\left(0,1\right)$.
Recall the boundary condition in \corref{dev1}:
\[
l\left(0\right)-l'\left(0\right)=l\left(1\right)+l'\left(1\right)=0.
\]
Then 
\begin{align*}
\left[W\right]_{0}^{1} & =\left(\overline{l'}h-\overline{l}h'\right)\left(1\right)-\left(\overline{l'}h-\overline{l}h'\right)\left(0\right)\\
 & =-\overline{l}\left(1\right)h\left(1\right)-\overline{l}\left(1\right)h'\left(1\right)-\overline{l}\left(0\right)h\left(0\right)+\overline{l}\left(0\right)h'\left(0\right)\\
 & =-\overline{l}\left(1\right)\left(h\left(1\right)+h'\left(1\right)\right)-\overline{l}\left(0\right)\left(h\left(0\right)-h'\left(0\right)\right)
\end{align*}
which is (\ref{eq:sp1-3}).\end{proof}
\begin{cor}
\label{cor:elambda}$e_{\lambda}\in\mathscr{H}_{F}$, $\forall\lambda\in\mathbb{R}$.\end{cor}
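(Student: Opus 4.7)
The plan is to invoke the characterization lemma stated just before Lemma \ref{lem:domD}, which asserts that for $F(x)=e^{-|x|}$ on $(-1,1)$, any continuous function $h$ on $[0,1]$ with $h'\in L^{2}(0,1)$ belongs to $\mathscr{H}_F$. Taking $h=e_\lambda$, continuity on $[0,1]$ is immediate since $e_\lambda$ is smooth; and the distributional derivative $(e_\lambda)'(x)=i\lambda e^{i\lambda x}$ is bounded pointwise by $|\lambda|$, hence lies in $L^{2}(0,1)$. Both hypotheses being verified, the lemma yields $e_\lambda\in\mathscr{H}_F$ for every $\lambda\in\mathbb{R}$.

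Since this is essentially a one-line deduction, there is no real obstacle; the substantive work was already carried out in proving the characterization lemma, via the Mercer operator identity $T_F^{-1}\supset\tfrac{1}{2}(I-(d/dx)^{2})$ and the resulting $H^1$-type bound $\|T_F^{-1/2}h\|_2^{2}\le b(\|h\|_2^{2}+\|h'\|_2^{2})$. One only needs to note that the constants $\|e_\lambda\|_2=1$ and $\|e_\lambda'\|_2=|\lambda|$ are finite, so the constant $A_\lambda$ appearing in the estimate (\ref{eq:bdd2}) of \thmref{HF} is $A_\lambda=b(1+\lambda^{2})$ with $b$ independent of $\lambda$.

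If a more self-contained argument were preferred, the alternative plan would be to verify the boundedness condition of \thmref{HF} directly: produce a constant $A_\lambda$ with
\[
\Bigl|\int_0^{1}\varphi(y)e^{i\lambda y}\,dy\Bigr|^{2}\le A_\lambda\,\|F_\varphi\|_{\mathscr{H}_F}^{2},\qquad\forall\varphi\in C_c^{\infty}(0,1).
\]
One reads off the candidate inner product from \lemref{spInner} applied formally with $h=e_\lambda$ (so that $h-h''=(1+\lambda^{2})e_\lambda$), which gives
\[
\langle F_\varphi,e_\lambda\rangle_{\mathscr{H}_F}=\tfrac{1+\lambda^{2}}{2}\!\int_0^{1}\!\overline{\varphi(x)}\,e^{i\lambda x}\,dx-\tfrac{1}{2}[W]_0^{1},
\]
with $[W]_0^{1}$ as in (\ref{eq:sp1-3}). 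Each term on the right is controlled by $\|F_\varphi\|_{\mathscr{H}_F}$: the bulk term via $\|F_\varphi\|_2\le\|F_\varphi\|_{\mathscr{H}_F}$ (using that $T_F$ is bounded on $L^{2}(0,1)$), and the boundary term via the reproducing inequality $|F_\varphi(j)|=|\langle F_j,F_\varphi\rangle_{\mathscr{H}_F}|\le\|F_j\|_{\mathscr{H}_F}\|F_\varphi\|_{\mathscr{H}_F}$ for $j=0,1$, where by \lemref{dev1} $\|F_0\|_{\mathscr{H}_F}=\|F_1\|_{\mathscr{H}_F}=1$. Either route produces the conclusion, and the first is preferable for brevity.
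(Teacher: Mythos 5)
Your primary argument is correct, but it follows a genuinely different route from the paper's. You invoke the unlabelled lemma immediately preceding \lemref{domD} --- the $H^{1}$-type characterization of $\mathscr{H}_{F}$ for $F=e^{-\left|x\right|}$, namely that a continuous $h$ on $\left[0,1\right]$ with $h'\in L^{2}\left(0,1\right)$ lies in $\mathscr{H}_{F}$ --- and simply observe that $e_{\lambda}$ is smooth with $\left\Vert e_{\lambda}'\right\Vert _{2}=\left|\lambda\right|<\infty$. Since that lemma occurs earlier in the paper than \corref{elambda}, the deduction is non-circular and really is a one-liner, with $A_{\lambda}=b\left(1+\lambda^{2}\right)$ as you say. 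The paper instead verifies the criterion (\ref{eq:bdd2}) of \thmref{HF} directly: it uses the Wronskian identity of \lemref{spInner} to split $\int_{0}^{1}\varphi\, e_{\lambda}$ into a bulk term $\tfrac{1}{2}\left(1+\lambda^{2}\right)\left\langle F_{\varphi},e_{\lambda}\right\rangle _{2}$, controlled through the Mercer operator via $\Vert T_{F}^{1/2}e_{\lambda}\Vert_{2}^{2}\leq\lambda_{1}\leq1$, and boundary terms $l\left(0\right),l\left(1\right)$, controlled by the reproducing property together with $\left\Vert F_{0}\right\Vert _{\mathscr{H}_{F}}=\left\Vert F_{1}\right\Vert _{\mathscr{H}_{F}}=1$ from \lemref{dev1}. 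What the paper's longer route buys is the explicit formula for $\left\langle F_{\varphi},e_{\lambda}\right\rangle _{\mathscr{H}_{F}}$, which is reused immediately in \corref{HFinner2} and in the computation $\left\Vert e_{\lambda}\right\Vert _{\mathscr{H}_{F}}^{2}=\frac{\lambda^{2}+3}{2}$ of \corref{elambda1}; your route is more economical for the bare membership statement but yields no such quantitative by-product. Your sketched alternative is essentially the paper's proof; note only that in your display the bulk term should read $\tfrac{1+\lambda^{2}}{2}\left\langle F_{\varphi},e_{\lambda}\right\rangle _{2}$ (the $L^{2}$ pairing against $F_{\varphi}=T_{F}\varphi$, not against $\varphi$), which is exactly what your subsequent bound via $\left\Vert F_{\varphi}\right\Vert _{2}$ presumes.
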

\begin{proof}
By \thmref{HF}, we need the following estimate: $\exists C<\infty$
s.t. 
\begin{equation}
\left|\int_{0}^{1}\varphi\left(x\right)e_{\lambda}\left(x\right)dx\right|^{2}\leq C\left\Vert F_{\varphi}\right\Vert _{\mathscr{H}_{F}}^{2}.\label{eq:sp1-5}
\end{equation}
But
\begin{align*}
\int_{0}^{1}\varphi\left(x\right)e_{\lambda}\left(x\right)dx & =\left\langle \tfrac{1}{2}\left(I-\left(\tfrac{d}{dx}\right)^{2}\right)F_{\varphi},e_{\lambda}\right\rangle _{2}\\
 & =\left\langle F_{\varphi},\tfrac{1}{2}\left(e_{\lambda}-e_{\lambda}''\right)\right\rangle _{2}-\frac{1}{2}\left[W\right]_{0}^{1}\\
 & =\tfrac{1}{2}\left(1+\lambda^{2}\right)\left\langle F_{\varphi},e_{\lambda}\right\rangle _{2}-\tfrac{1}{2}\left(-l\left(1\right)\left(1+i\lambda\right)e^{i\lambda}-l\left(0\right)\left(1-i\lambda\right)\right);
\end{align*}
see (\ref{eq:sp1})-(\ref{eq:sp1-3}). Here, $l:=F_{\varphi}$. 

It suffices to show

(i) $\exists C_{1}<\infty$ s.t. 
\[
\left|l\left(0\right)\right|^{2}\mbox{ and }\left|l\left(1\right)\right|^{2}\leq C_{1}\left\Vert F_{\varphi}\right\Vert _{\mathscr{H}_{F}}^{2}.
\]

(ii) $\exists C_{2}<\infty$ s.t. 
\[
\left|\left\langle F_{\varphi},e_{\lambda}\right\rangle _{2}\right|^{2}\leq C_{2}\left\Vert F_{\varphi}\right\Vert _{\mathscr{H}_{F}}^{2}.
\]

For (i), note that 
\begin{align*}
\left|l\left(0\right)\right| & =\left|\left\langle F_{0},l\right\rangle _{\mathscr{H}_{F}}\right|\leq\left\Vert F_{0}\right\Vert _{\mathscr{H}_{F}}\left\Vert l\right\Vert _{\mathscr{H}_{F}}=\left\Vert F_{0}\right\Vert _{\mathscr{H}_{F}}\left\Vert F_{\varphi}\right\Vert _{\mathscr{H}_{F}}\\
\left|l\left(1\right)\right| & =\left|\left\langle F_{1},l\right\rangle _{\mathscr{H}_{F}}\right|\leq\left\Vert F_{1}\right\Vert _{\mathscr{H}_{F}}\left\Vert l\right\Vert _{\mathscr{H}_{F}}=\left\Vert F_{1}\right\Vert _{\mathscr{H}_{F}}\left\Vert F_{\varphi}\right\Vert _{\mathscr{H}_{F}}
\end{align*}
and we have 
\begin{align*}
\left\Vert F_{0}\right\Vert _{\mathscr{H}_{F}} & =\left\Vert F_{1}\right\Vert _{\mathscr{H}_{F}}=1\\
\left\Vert l\right\Vert _{\mathscr{H}_{F}}^{2} & =\left\Vert F_{\varphi}\right\Vert _{\mathscr{H}_{F}}^{2}=\left\Vert T_{F}\varphi\right\Vert _{2}^{2}\leq\lambda_{1}^{2}\left\Vert \varphi\right\Vert _{2}^{2}<\infty
\end{align*}
where $\lambda_{1}$ is the top eigenvalue of the Mercer operator
$T_{F}$ (\lemref{mer-1}). 

For (ii), 
\begin{eqnarray*}
\left|\left\langle F_{\varphi},e_{\lambda}\right\rangle _{2}\right|^{2} & = & \left|\left\langle T_{F}\varphi,e_{\lambda}\right\rangle _{2}\right|^{2}\\
 & = & \left|\left\langle T_{F}^{1/2}\varphi,T_{F}^{1/2}e_{\lambda}\right\rangle _{2}\right|^{2}\\
 & \leq & \left\Vert T_{F}^{1/2}\varphi\right\Vert _{2}^{2}\left\Vert T_{F}^{1/2}e_{\lambda}\right\Vert _{2}^{2}\;\left(\mbox{by Cauchy-Schwarz}\right)\\
 & = & \left\langle \varphi,T_{F}\varphi\right\rangle _{2}\left\Vert T_{F}^{1/2}e_{\lambda}\right\Vert _{2}^{2}\\
 & \leq & \left\Vert F_{\varphi}\right\Vert _{\mathscr{H}_{F}}^{2}\left\Vert e_{\lambda}\right\Vert _{2}^{2}=\left\Vert F_{\varphi}\right\Vert _{\mathscr{H}_{F}}^{2};
\end{eqnarray*}
where we used the fact that $\left\Vert T_{F}^{1/2}e_{\lambda}\right\Vert _{2}^{2}\leq\lambda_{1}\left\Vert e_{\lambda}\right\Vert _{2}^{2}\leq1$,
since $\lambda_{1}<1$ = the right endpoint of the interval $\left[0,1\right]$
(see \lemref{mer-1}), and $\left\Vert e_{\lambda}\right\Vert _{2}=1$.

Therefore, the corollary follows.\end{proof}
\begin{cor}
\label{cor:HFinner2}For all $\lambda\in\mathbb{R}$, and all $F_{\varphi}$,
$\varphi\in C_{c}^{\infty}\left(0,1\right)$, we have 
\begin{align}
\left\langle F_{\varphi},e_{\lambda}\right\rangle _{\mathscr{H}_{F}} & =\tfrac{1}{2}\left(1+\lambda^{2}\right)\left\langle F_{\varphi},e_{\lambda}\right\rangle _{2}\label{eq:sp1-4}\\
 & \quad+\tfrac{1}{2}\left(\overline{l}\left(1\right)\left(1+i\lambda\right)e^{i\lambda}+\overline{l}\left(0\right)\left(1-i\lambda\right)\right).\nonumber 
\end{align}
\end{cor}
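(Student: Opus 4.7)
The plan is to apply \lemref{spInner} with the choice $h := e_\lambda$, which lies in $\mathscr{H}_F$ by \corref{elambda}. Since $e_\lambda(x) = e^{i\lambda x}$ is smooth and satisfies $e_\lambda'' = -\lambda^2 e_\lambda$, the second derivative is a scalar multiple of $e_\lambda$ and therefore also belongs to $\mathscr{H}_F$; this verifies the hypothesis $h, h'' \in \mathscr{H}_F$ of \lemref{spInner}.

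With this choice, the ``interior'' term on the right of \eqref{eq:sp1} evaluates to
$$\bigl\langle F_\varphi,\, \tfrac{1}{2}(e_\lambda - e_\lambda'') \bigr\rangle_2 = \tfrac{1}{2}(1+\lambda^2)\,\langle F_\varphi, e_\lambda \rangle_2,$$
which yields the first summand on the right of \eqref{eq:sp1-4}. For the boundary contribution I would substitute the explicit values
$$e_\lambda(0) = 1,\quad e_\lambda'(0) = i\lambda,\quad e_\lambda(1) = e^{i\lambda},\quad e_\lambda'(1) = i\lambda\, e^{i\lambda}$$
into formula \eqref{eq:sp1-3} to obtain
$$[W]_0^1 = -\overline{l}(1)(1+i\lambda)\, e^{i\lambda} - \overline{l}(0)(1-i\lambda),$$
so that $-\tfrac{1}{2}[W]_0^1$ reproduces precisely the second summand of \eqref{eq:sp1-4}. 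Assembling the two pieces according to \eqref{eq:sp1} will then give the claimed identity.

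There is no real obstacle here: the whole argument is a direct evaluation once \lemref{spInner} and \corref{elambda} are in hand, and the a priori bound underlying \corref{elambda} is exactly what makes the pairing $\langle F_\varphi, e_\lambda\rangle_{\mathscr{H}_F}$ meaningful in the first place. The only point that will require a bit of care is bookkeeping the signs and complex conjugates in the Wronskian boundary term, so that the result matches the sign convention displayed in \eqref{eq:sp1-4}.
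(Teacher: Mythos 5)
Your proposal is correct and follows essentially the same route as the paper: apply \lemref{spInner} with $h=e_{\lambda}$, use $e_{\lambda}-e_{\lambda}''=(1+\lambda^{2})e_{\lambda}$ for the interior term, and substitute the boundary values of $e_{\lambda}$ into \eqref{eq:sp1-3} for the Wronskian term. Your extra remarks that $e_{\lambda}\in\mathscr{H}_{F}$ by \corref{elambda} and that $e_{\lambda}''=-\lambda^{2}e_{\lambda}\in\mathscr{H}_{F}$ merely make explicit hypotheses the paper's proof leaves tacit.
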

\begin{proof}
By \lemref{spInner}, 
\[
\left\langle F_{\varphi},e_{\lambda}\right\rangle _{\mathscr{H}_{F}}=\left\langle F_{\varphi},\tfrac{1}{2}\left(e_{\lambda}-e_{\lambda}''\right)\right\rangle _{2}-\tfrac{1}{2}\left[W\right]_{0}^{1}.
\]
where 
\[
\tfrac{1}{2}\left(e_{\lambda}-e_{\lambda}''\right)=\tfrac{1}{2}\left(1+\lambda^{2}\right)e_{\lambda};\;\mbox{and}
\]
\[
\left[W\right]_{0}^{1}\overset{\left(\ref{eq:sp1-3}\right)}{=}-\overline{l}\left(1\right)\left(1+i\lambda\right)e^{i\lambda}-\overline{l}\left(0\right)\left(1-i\lambda\right),\; l:=F_{\varphi}.
\]
\end{proof}
\begin{lem}
For all $F_{\varphi}$, $\varphi\in C_{c}^{\infty}\left(0,1\right)$,
and all $\lambda\in\mathbb{R}$, 
\begin{equation}
\left\langle F_{\varphi},e_{\lambda}\right\rangle _{\mathscr{H}_{F}}=\left\langle \varphi,e_{\lambda}\right\rangle _{2}.\label{eq:sp2}
\end{equation}
In particular, set $\lambda=0$, we get 
\begin{align*}
\left\langle F_{\varphi},\mathbf{1}\right\rangle _{\mathscr{H}_{F}} & =\int_{0}^{1}\varphi\left(x\right)dx=\frac{1}{2}\int_{0}^{1}\left(F_{\varphi}-F_{\varphi}''\right)\left(x\right)dx\\
 & =\frac{1}{2}\left(\left\langle F_{\varphi},\mathbf{1}\right\rangle _{2}-\left\langle F_{\varphi}'',\mathbf{1}\right\rangle _{2}\right)\\
 & \leq C\left\Vert F_{\varphi}\right\Vert _{\mathscr{H}}
\end{align*}
\end{lem}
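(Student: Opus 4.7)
The plan is to derive the identity \eqref{eq:sp2} as a direct consequence of Lemma~\ref{lem:TFvarphi}(\ref{enu:F1-2}), once one knows that $e_\lambda$ is a legitimate test vector in $\mathscr{H}_F$. That latter fact is exactly Corollary~\ref{cor:elambda}: $e_\lambda\in\mathscr{H}_F$ for every $\lambda\in\mathbb{R}$. With this in hand, I simply substitute $\xi=e_\lambda$ into
\[
\langle F_\varphi,\xi\rangle_{\mathscr{H}_F}=\int_0^1\overline{\varphi(x)}\,\xi(x)\,dx,\qquad\forall\xi\in\mathscr{H}_F,\;\varphi\in C_c^\infty(0,1),
\]
to obtain
\[
\langle F_\varphi,e_\lambda\rangle_{\mathscr{H}_F}=\int_0^1\overline{\varphi(x)}\,e^{i\lambda x}\,dx=\langle\varphi,e_\lambda\rangle_2,
\]
which is \eqref{eq:sp2}. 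There is essentially no obstacle here; the only non-trivial ingredient is Corollary~\ref{cor:elambda}, already proved.

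For the supplementary $\lambda=0$ computation, set $\lambda=0$ so that $e_0=\mathbf{1}\in\mathscr{H}_F$, and read off the first equality $\langle F_\varphi,\mathbf{1}\rangle_{\mathscr{H}_F}=\int_0^1\varphi(x)\,dx$ directly from \eqref{eq:sp2}. The middle equality is then an application of Lemma~\ref{lem:Tf}: since $\varphi=T_F^{-1}F_\varphi=\tfrac{1}{2}(F_\varphi-F_\varphi'')$, we get $\int_0^1\varphi(x)\,dx=\tfrac{1}{2}\int_0^1(F_\varphi-F_\varphi'')(x)\,dx$. The final inequality $\le C\|F_\varphi\|_{\mathscr{H}_F}$ is just Cauchy--Schwarz in $\mathscr{H}_F$ applied to $\langle F_\varphi,\mathbf{1}\rangle_{\mathscr{H}_F}$, with $C:=\|\mathbf{1}\|_{\mathscr{H}_F}$ (finite by Corollary~\ref{cor:elambda}).

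The only delicate point worth flagging is a consistency check with Corollary~\ref{cor:HFinner2}, which provides a second, boundary-corrected expression for the same inner product $\langle F_\varphi,e_\lambda\rangle_{\mathscr{H}_F}$. The two formulas must coincide; equating them reduces to
\[
\langle\varphi,e_\lambda\rangle_2=\tfrac{1}{2}(1+\lambda^2)\langle F_\varphi,e_\lambda\rangle_2+\tfrac{1}{2}\bigl(\overline{l}(1)(1+i\lambda)e^{i\lambda}+\overline{l}(0)(1-i\lambda)\bigr),
\]
with $l=F_\varphi$. This is not an obstruction but rather a verification: using Corollary~\ref{cor:dev1}, $l(0)=\int_0^1 e^{-y}\varphi(y)\,dy$ and $l(1)=e^{-1}\int_0^1 e^{y}\varphi(y)\,dy$, together with the identity $\tfrac{1}{2}(1-\tfrac{d^2}{dx^2})e_\lambda=\tfrac{1}{2}(1+\lambda^2)e_\lambda$, the boundary terms assemble exactly into $\langle\varphi,e_\lambda\rangle_2$ after two integrations by parts. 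This both confirms the identity \eqref{eq:sp2} and shows that the apparently more complicated Corollary~\ref{cor:HFinner2} collapses to it.

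Thus the overall proof is short: cite Corollary~\ref{cor:elambda} to place $e_\lambda$ in $\mathscr{H}_F$, invoke Lemma~\ref{lem:TFvarphi}(\ref{enu:F1-2}) for the reproducing identity, and for $\lambda=0$ use Lemma~\ref{lem:Tf} plus Cauchy--Schwarz. The main conceptual point — rather than the main technical obstacle — is recognizing that the Mercer/RKHS pairing with characters simplifies to the standard $L^2$ pairing, a fact that will be exploited later in the construction of Fourier-type orthonormal bases for $\mathscr{H}_F$.
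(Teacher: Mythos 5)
Your proposal is correct and follows essentially the same route as the paper: the identity \eqref{eq:sp2} is just the pairing $\left\langle F_{\varphi},\xi\right\rangle _{\mathscr{H}_{F}}=\int_{0}^{1}\overline{\varphi}\,\xi$ of Lemma~\ref{lem:TFvarphi}(\ref{enu:F1-2}) applied to $\xi=e_{\lambda}$, which is legitimate by Corollary~\ref{cor:elambda}. The only (immaterial) difference is in the final estimate, where the paper bounds $\int_{0}^{1}F_{\varphi}''=-F_{\varphi}(1)-F_{\varphi}(0)$ via the boundary values while you apply Cauchy--Schwarz with $C=\left\Vert \mathbf{1}\right\Vert _{\mathscr{H}_{F}}$; both are valid, and your consistency check against Corollary~\ref{cor:HFinner2} is a harmless extra.
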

\begin{proof}
Eq. (\ref{eq:sp2}) follows from basic fact of the Mercer operator.
See \secref{mercer} for details. It suffices to note the following
estimate:
\begin{align*}
\int_{0}^{1}F''_{\varphi}\left(x\right)dx & =F'{}_{\varphi}\left(1\right)-F'_{\varphi}\left(0\right)\\
 & =-e^{-1}\int_{0}^{1}e^{y}\varphi\left(y\right)dy-\int_{0}^{1}e^{-y}\varphi\left(y\right)dy\\
 & =-F_{\varphi}\left(1\right)-F_{\varphi}\left(0\right)\leq2\left\Vert F_{\varphi}\right\Vert _{\mathscr{H}}.
\end{align*}
\end{proof}
\begin{cor}
\label{cor:elambda1}For all $\lambda\in\mathbb{R}$, 
\begin{equation}
\left\langle e_{\lambda},e_{\lambda}\right\rangle _{\mathscr{H}_{F}}=\frac{\lambda^{2}+3}{2}.\label{eq:sp3}
\end{equation}
\end{cor}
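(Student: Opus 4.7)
The plan is to derive the identity by extending Corollary \ref{cor:HFinner2} to the diagonal $\langle e_\lambda,e_\lambda\rangle_{\mathscr{H}_F}$ via a density argument, and then to simplify the resulting boundary contribution.

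First, since $e_\lambda\in\mathscr{H}_{F}$ by Corollary \ref{cor:elambda}, and since $\{F_\varphi:\varphi\in C_c^\infty(0,1)\}$ is dense in $\mathscr{H}_F$ by Lemma \ref{lem:dense}, I choose a sequence $\varphi_n\in C_c^\infty(0,1)$ with $F_{\varphi_n}\to e_\lambda$ in $\mathscr{H}_F$. The remark following Lemma \ref{lem:dense} then yields uniform convergence on $[0,1]$:
\[
\bigl|F_{\varphi_n}(x)-e_\lambda(x)\bigr|\leq F(0)\,\|F_{\varphi_n}-e_\lambda\|_{\mathscr{H}_F}\longrightarrow 0.
\]
In particular $F_{\varphi_n}(0)\to e_\lambda(0)=1$, $F_{\varphi_n}(1)\to e_\lambda(1)=e^{i\lambda}$, and also $\langle F_{\varphi_n},e_\lambda\rangle_2\to\langle e_\lambda,e_\lambda\rangle_2=1$.

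Next I apply Corollary \ref{cor:HFinner2} with $\varphi=\varphi_n$ and $l=F_{\varphi_n}$:
\[
\langle F_{\varphi_n},e_\lambda\rangle_{\mathscr{H}_F}=\tfrac{1}{2}(1+\lambda^{2})\langle F_{\varphi_n},e_\lambda\rangle_{2}+\tfrac{1}{2}\Bigl(\overline{F_{\varphi_n}(1)}(1+i\lambda)e^{i\lambda}+\overline{F_{\varphi_n}(0)}(1-i\lambda)\Bigr).
\]
The left side converges to $\langle e_\lambda,e_\lambda\rangle_{\mathscr{H}_F}$ by continuity of the $\mathscr{H}_F$-inner product; on the right, the limits established above give
\[
\langle e_\lambda,e_\lambda\rangle_{\mathscr{H}_F}=\tfrac{1}{2}(1+\lambda^{2})+\tfrac{1}{2}\Bigl(e^{-i\lambda}(1+i\lambda)e^{i\lambda}+(1-i\lambda)\Bigr)=\tfrac{1}{2}(1+\lambda^{2})+1=\tfrac{\lambda^{2}+3}{2}.
\]

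The only delicate point is making sure the boundary values $F_{\varphi_n}(0)$ and $F_{\varphi_n}(1)$ really do converge to $e_\lambda(0)$ and $e_\lambda(1)$; this is precisely what the uniform-on-$[0,1]$ convergence above guarantees. Every other step is continuity of inner products or algebraic simplification, and the phase cancellation $e^{-i\lambda}e^{i\lambda}=1$ collapses the two boundary terms to the constant $1$.
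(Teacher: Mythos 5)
Your proof is correct and follows essentially the same route as the paper: approximate $e_{\lambda}$ by $F_{\varphi_{n}}$ in $\mathscr{H}_{F}$, apply Corollary \ref{cor:HFinner2}, and pass to the limit, with the phase cancellation $e^{-i\lambda}e^{i\lambda}=1$ collapsing the boundary terms. The only (cosmetic) difference is that you justify convergence of the boundary values $F_{\varphi_{n}}(0)$, $F_{\varphi_{n}}(1)$ by the uniform convergence coming from the reproducing property, whereas the paper appeals to the \emph{a priori} estimates from the proof of Corollary \ref{cor:elambda}; both justifications are valid.
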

\begin{proof}
By \corref{HFinner2}, we see that 
\begin{align}
\left\langle F_{\varphi},e_{\lambda}\right\rangle _{\mathscr{H}_{F}} & =\frac{1}{2}\left(1+\lambda^{2}\right)\left\langle F_{\varphi},e_{\lambda}\right\rangle _{2}\nonumber \\
 & +\frac{1}{2}\left(\overline{l}\left(1\right)\left(1+i\lambda\right)e^{i\lambda}+\overline{l}\left(0\right)\left(1-i\lambda\right)\right);\; l:=F_{\varphi}.\label{eq:sp4-1}
\end{align}
Since $\left\{ F_{\varphi}:\varphi\in C_{c}^{\infty}\left(0,1\right)\right\} $
is dense in $\mathscr{H}_{F}$, $\exists F_{\varphi_{n}}\rightarrow e_{\lambda}$
in $\mathscr{H}_{F}$, so that
\begin{align*}
\left\langle F_{\varphi_{n}},e_{\lambda}\right\rangle _{\mathscr{H}_{F}}\rightarrow & \left\langle e_{\lambda},e_{\lambda}\right\rangle _{\mathscr{H}_{F}}\\
= & \frac{1}{2}\left(1+\lambda^{2}\right)+\frac{1}{2}\left(e^{-i\lambda}\left(1+i\lambda\right)e^{i\lambda}+\left(1-i\lambda\right)\right)\\
= & \frac{1}{2}\left(1+\lambda^{2}\right)+1=\frac{\lambda^{2}+3}{2}.
\end{align*}

The approximation is justified since all the terms in the RHS of (\ref{eq:sp4-1})
satisfy the estimate $\left|\cdots\right|^{2}\leq C\left\Vert F_{\varphi}\right\Vert _{\mathscr{H}_{F}}^{2}$.
See the proof of \corref{elambda} for details.
\end{proof}
Note \lemref{spInner} is equivalent to the following:
\begin{cor}
\label{cor:expinner}For all $h\in\mathscr{H}_{F}$, and all $k\in dom\left(T_{F}^{-1}\right)=\left\{ F_{\varphi}:\varphi\in C_{c}^{\infty}\left(0,1\right)\right\} $,
we have 
\begin{equation}
\left\langle h,k\right\rangle _{\mathscr{H}}=\frac{1}{2}\left(\left\langle h,k\right\rangle _{0}+\left\langle h',k'\right\rangle _{0}\right)+\frac{1}{2}\left(\overline{h\left(0\right)}k\left(0\right)+\overline{h\left(1\right)}k\left(1\right)\right)\label{eq:HFinner1-1}
\end{equation}
and eq. (\ref{eq:HFinner1-1}) extends to all $k\in\mathscr{H}_{F}$,
since $dom\left(T_{F}^{-1}\right)$ is dense in $\mathscr{H}_{F}$.\end{cor}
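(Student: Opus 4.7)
The plan is to first prove the identity for $k=F_\varphi$ with $\varphi\in C_c^\infty(0,1)$ by a direct integration by parts, and then extend to all $k\in\mathscr{H}_F$ by density together with continuity of every term on the right-hand side. The identity is essentially Lemma \ref{lem:spInner} with the roles of $h$ and $k$ interchanged; the cleanest route is to redo the calculation from scratch using the Mercer-type inversion $T_F^{-1}=\tfrac{1}{2}(I-(d/dx)^2)$ obtained in Lemma \ref{lem:Tf}.

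Concretely, for $k=F_\varphi$ the reproducing/integral representation of Lemma \ref{lem:TFvarphi} gives
\[
\langle k,h\rangle_{\mathscr{H}_F}=\langle F_\varphi,h\rangle_{\mathscr{H}_F}=\int_0^1\overline{\varphi(x)}\,h(x)\,dx,
\]
and by Lemma \ref{lem:Tf} we may substitute $\varphi=T_F^{-1}k=\tfrac{1}{2}(k-k'')$. Thus
\[
\langle k,h\rangle_{\mathscr{H}_F}=\tfrac{1}{2}\int_0^1\overline{k(x)}\,h(x)\,dx-\tfrac{1}{2}\int_0^1\overline{k''(x)}\,h(x)\,dx.
\]
Next I would integrate the second integral by parts, producing $\tfrac{1}{2}\langle k',h'\rangle_2$ plus a boundary term $-\tfrac{1}{2}[\overline{k'}h]_0^1$. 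The boundary conditions from Corollary \ref{cor:dev1}, namely $k'(0)=k(0)$ and $k'(1)=-k(1)$, collapse the boundary term to $\tfrac{1}{2}\bigl(\overline{k(0)}h(0)+\overline{k(1)}h(1)\bigr)$. Taking complex conjugates (using the convention that $\langle\cdot,\cdot\rangle_{\mathscr{H}_F}$ is conjugate linear in the first slot) yields the desired identity for $h\in\mathscr{H}_F$ and $k\in\mathrm{dom}(T_F^{-1})$.

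For the density extension, I would verify that, for $h$ fixed, each of the four functionals $k\mapsto\langle h,k\rangle_2$, $k\mapsto\langle h',k'\rangle_2$, $k\mapsto k(0)$, $k\mapsto k(1)$ is continuous on $\mathscr{H}_F$. The boundary evaluations are continuous because $k(0)=\langle F_0,k\rangle_{\mathscr{H}_F}$ and $k(1)=\langle F_1,k\rangle_{\mathscr{H}_F}$ with $F_0,F_1\in\mathscr{H}_F$ by Lemma \ref{lem:dev1}. The $L^2$-pairing is continuous because the Mercer identity $\|\cdot\|_2\le\|T_F\|^{1/2}\|\cdot\|_{\mathscr{H}_F}$ (Corollary \ref{cor:merinn}) shows $\mathscr{H}_F\hookrightarrow L^2(0,1)$ continuously. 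Since $\{F_\varphi:\varphi\in C_c^\infty(0,1)\}$ is dense in $\mathscr{H}_F$ by Lemma \ref{lem:dense}, the identity persists for arbitrary $k\in\mathscr{H}_F$.

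The main potential obstacle is the third term, $\langle h',k'\rangle_2$: one must check that $k\mapsto k'$ is bounded from $\mathscr{H}_F$ into $L^2(0,1)$, so that the bilinear form is continuous in $k$. This is exactly the content of the a priori estimate implicit in Lemma \ref{lem:domD} (and the unnumbered lemma just preceding it), which characterizes $\mathscr{H}_F$ for $F(x)=e^{-|x|}$ as the space of continuous $h$ with $h'\in L^2(0,1)$, with the graph norm controlled by $\|\cdot\|_{\mathscr{H}_F}$. Once this estimate is invoked, the density argument closes routinely and the displayed identity holds for every $k\in\mathscr{H}_F$.
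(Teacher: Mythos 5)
Your proof is correct and is essentially the paper's intended argument: the paper presents this corollary as a rearrangement of Lemma \ref{lem:spInner}, whose proof is exactly your computation (reproducing property, the substitution $\varphi=\tfrac{1}{2}(k-k'')$ from Lemma \ref{lem:Tf}, integration by parts, and the boundary conditions of Corollary \ref{cor:dev1}), the only cosmetic difference being that you integrate by parts once to land directly on the symmetric form rather than twice to get the Wronskian form and then rearrange. The density step is also fine; note that the bound $\left\Vert k'\right\Vert _{2}^{2}\leq2\left\Vert k\right\Vert _{\mathscr{H}_{F}}^{2}$ needed there follows at once from your identity itself with $h=k$ on the dense domain, so you need not appeal to Lemma \ref{lem:domD}.
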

\begin{example}
Take $h=k=e_{\lambda}$, $\lambda\in\mathbb{R}$, then (\ref{eq:HFinner1-1})
gives 
\[
\left\langle e_{\lambda},e_{\lambda}\right\rangle _{\mathscr{H}}=\frac{1}{2}\left(1+\lambda^{2}\right)+\frac{1}{2}\left(1+1\right)=\frac{\lambda^{2}+3}{2}
\]
as in (\ref{eq:sp3}).\end{example}
\begin{cor}
\label{cor:exporg}Let $A_{\theta}\supset-iD$ be any selfadjoint
extension in $\mathscr{H}_{F}$. If $\lambda,\mu\in spect\left(A_{\theta}\right)$,
s.t. $\lambda\neq\mu$, then $\left\langle e_{\lambda},e_{\mu}\right\rangle _{\mathscr{H}_{F}}=0$. \end{cor}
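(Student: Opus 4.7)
The plan is to invoke the standard spectral-theoretic fact that eigenvectors of a self-adjoint operator attached to distinct real eigenvalues are automatically orthogonal, and to check carefully that all the hypotheses are in place in our setting.

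First, I would record what has already been done. By Corollary \ref{cor:spdiscrete} (together with Corollaries \ref{cor:ptspec} and \ref{cor:spext}), every $\lambda \in \operatorname{spec}(A_\theta)$ is in fact an eigenvalue, with one-dimensional eigenspace spanned by $e_\lambda(x)=e^{i\lambda x}$, and $e_\lambda \in \operatorname{dom}(A_\theta) \subset \mathscr{H}_F$. Moreover $A_\theta \supset -iD_F$ acts on $e_\lambda$ as $A_\theta e_\lambda = \lambda\, e_\lambda$, because formally $-iD_F = -i\frac{d}{dx}$ and $-i\frac{d}{dx}e^{i\lambda x}=\lambda e^{i\lambda x}$; this identification is valid in $\mathscr{H}_F$ since, by Lemma~\ref{lem:Dadj}, $D_F^*$ acts on members of its domain as $-d/dx$ in the distributional sense, and $A_\theta \subseteq -D_F^*$ (extended with the appropriate boundary condition of Proposition \ref{prop:dev1}).

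Next, since $A_\theta=A_\theta^*$ is self-adjoint, its spectrum is real. Given $\lambda,\mu \in \operatorname{spec}(A_\theta)$ with $\lambda\neq\mu$, I would compute
\begin{equation*}
\lambda\,\langle e_\lambda,e_\mu\rangle_{\mathscr{H}_F}
   = \langle A_\theta e_\lambda,\,e_\mu\rangle_{\mathscr{H}_F}
   = \langle e_\lambda,\,A_\theta e_\mu\rangle_{\mathscr{H}_F}
   = \mu\,\langle e_\lambda,e_\mu\rangle_{\mathscr{H}_F},
\end{equation*}
using $\overline{\lambda}=\lambda$ with the convention that $\langle\cdot,\cdot\rangle_{\mathscr{H}_F}$ is conjugate-linear in the first argument. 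Subtracting gives $(\lambda-\mu)\langle e_\lambda,e_\mu\rangle_{\mathscr{H}_F}=0$, and since $\lambda\neq\mu$, the inner product must vanish.

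There is no real obstacle here; the only nontrivial point is making sure that $e_\lambda\in\operatorname{dom}(A_\theta)$ (not merely in $\operatorname{dom}(A_\theta^*)=\operatorname{dom}(A_\theta)$ since $A_\theta$ is self-adjoint, but concretely that $e_\lambda$ satisfies the boundary condition \eqref{eq:dev-bd}). This is precisely the content of Corollary~\ref{cor:spext}: membership of $\lambda$ in the point spectrum is equivalent to $e_\lambda$ lying in $\operatorname{dom}(A_\theta)$, so the self-adjointness argument applies to both $e_\lambda$ and $e_\mu$ simultaneously. As a sanity check one can alternatively verify the orthogonality by direct computation using the explicit inner-product formula of Corollary~\ref{cor:expinner} applied to $h=e_\lambda$, $k=e_\mu$, combined with the transcendental eigenvalue equation \eqref{eq:dev8-1}; the boundary terms $\tfrac{1}{2}(1+e^{i(\mu-\lambda)})$ and the bulk $L^2$-integral combine to zero exactly when both $\lambda$ and $\mu$ satisfy \eqref{eq:dev9}. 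But for a clean proof, the abstract spectral argument is the shortest path.
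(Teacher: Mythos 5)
Your argument is correct, but it is not the route the paper takes for this particular corollary. The paper proves Corollary \ref{cor:exporg} by direct computation: it applies the explicit inner-product formula (\ref{eq:HFinner1-1}) of Corollary \ref{cor:expinner} to $h=e_{\lambda}$, $k=e_{\mu}$, evaluates $\left\langle e_{\lambda},e_{\mu}\right\rangle _{2}=\frac{e^{i\left(\mu-\lambda\right)}-1}{i\left(\mu-\lambda\right)}$, substitutes the eigenvalue relation $e^{i\lambda}=e^{i\theta}\frac{1-i\lambda}{1+i\lambda}$ from (\ref{eq:dev8-1}), and checks that the bulk term and the boundary term $1+e^{i\left(\mu-\lambda\right)}$ cancel exactly --- precisely the computation you relegate to a ``sanity check.'' Your main argument, abstract orthogonality of eigenvectors of a selfadjoint operator belonging to distinct eigenvalues, is in fact the one the paper itself uses for the essentially identical unlabeled corollary at the end of Section \ref{sub:saext}, and the hypotheses you verify are all in place: $e_{\lambda}\in dom\left(A_{\theta}\right)$ by Corollary \ref{cor:spext}, pure point spectrum with $\ker\left(A_{\theta}-\lambda I\right)=\mathbb{C}e_{\lambda}$ by Corollary \ref{cor:spdiscrete}, and $A_{\theta}e_{\lambda}=\lambda e_{\lambda}$ via Lemma \ref{lem:Dadj} (modulo a harmless slip: the adjoint inclusion for the \emph{selfadjoint} extension reads $A_{\theta}\subseteq\left(-iD_{F}\right)^{*}=iD_{F}^{*}$, acting as $h\mapsto-ih'$, rather than $-D_{F}^{*}$). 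What the paper's computational proof buys, beyond re-deriving orthogonality, is an independent consistency check of formula (\ref{eq:HFinner1-1}) against the transcendental spectral condition (\ref{eq:dev9}), which feeds directly into the expansion used in Corollary \ref{cor:Fext} and Lemma \ref{lem:expmu}; your abstract argument buys brevity and applies verbatim to any continuous p.d.\ $F$ with indices $\left(1,1\right)$, not only $F\left(x\right)=e^{-\left|x\right|}$.
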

\begin{proof}
It follows from (\ref{eq:HFinner1-1}) that 
\begin{align}
2\left\langle e_{\lambda},e_{\mu}\right\rangle _{\mathscr{H}} & =\left\langle e_{\lambda},e_{\mu}\right\rangle _{0}+\lambda\mu\left\langle e_{\lambda},e_{\mu}\right\rangle _{0}+\left(1+e^{i\left(\mu-\lambda\right)}\right)\nonumber \\
 & =\left(1+\lambda\mu\right)\left\langle e_{\lambda},e_{\mu}\right\rangle _{0}+\left(1+e^{i\left(\mu-\lambda\right)}\right)\nonumber \\
 & =\left(1+\lambda\mu\right)\frac{e^{i\left(\mu-\lambda\right)}-1}{i\left(\mu-\lambda\right)}+\left(1+e^{i\left(\mu-\lambda\right)}\right)\label{eq:HFinner-1-2}
\end{align}
By \corref{spext}, eq. (\ref{eq:dev8-1}), we have 
\[
e^{i\lambda}=\frac{1-i\lambda}{1+i\lambda}e^{i\theta},\quad e^{i\mu}=\frac{1-i\mu}{1+i\mu}e^{i\theta}
\]
and so 
\[
e^{i\left(\mu-\lambda\right)}=\frac{\left(1-i\mu\right)\left(1+i\lambda\right)}{\left(1+i\mu\right)\left(1-i\lambda\right)}.
\]
Substitute this into (\ref{eq:HFinner-1-2}) yields 
\[
2\left\langle e_{\lambda},e_{\mu}\right\rangle _{\mathscr{H}}=\frac{-2\left(1+\lambda\mu\right)}{\left(1+i\mu\right)\left(1-i\lambda\right)}+\frac{2\left(1+\lambda\mu\right)}{\left(1+i\mu\right)\left(1-i\lambda\right)}=0.
\]
\end{proof}
\begin{cor}
\label{cor:Fext}Let $F\left(x\right)=e^{-\left|x\right|}$, $\left|x\right|<1$.
Let $D_{F}\left(F_{\varphi}\right)=F_{\varphi'}$, $\forall\varphi\in C_{c}^{\infty}\left(0,1\right)$,
and $A_{\theta}\supset-iD_{F}$ be a selfadjoint extension  in $\mathscr{H}_{F}$.
Set $e_{\lambda}\left(x\right)=e^{i\lambda x}$, and 
\begin{equation}
\Lambda_{\theta}:=spect\left(A_{\theta}\right)\left(=\mbox{discrete subset in }\mathbb{R}\mbox{ by Cor. }\ref{cor:spdiscrete}\right)\label{eq:sp5}
\end{equation}
Then 
\begin{equation}
\widetilde{F}_{\theta}\left(x\right)=\sum_{\lambda\in\Lambda_{\theta}}\frac{2}{\lambda^{2}+3}e_{\lambda}\left(x\right),\;\forall x\in\mathbb{R}\label{eq:sp6}
\end{equation}
is a continuous p.d. extension of $F$ to the real line. Note that
both sides in eq. (\ref{eq:sp6}) depend on the choice of $\theta$.
\end{cor}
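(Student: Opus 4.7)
The plan is to express $\widetilde F_\theta(t)$ as the matrix coefficient $\langle F_0, U_\theta(t)F_0\rangle_{\mathscr H_F}$ of the unitary one-parameter group $U_\theta(t):=e^{itA_\theta}$, and then diagonalize this coefficient using the spectral information collected in Corollaries~\ref{cor:spext}--\ref{cor:elambda1}. The fact that such a matrix coefficient is automatically a continuous positive definite extension of $F$ to $\mathbb{R}$ is precisely the $Ext_1$ construction recorded just before \corref{ptspec}: since $A_\theta\supseteq -iD_F$ and $D_FF_\varphi=F_{\varphi'}$, the approximate-identity argument of \lemref{dense} yields $U_\theta(t)F_0 = F_{-t}$ on the dense subspace for small $t$, and then the reproducing property gives $\langle F_0, U_\theta(t)F_0\rangle_{\mathscr H_F} = F(t)$ for $|t|<1$. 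Thus the real content is to prove the \emph{formula} \eqref{eq:sp6}.

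The diagonalization assembles as follows. By \corref{spdiscrete}, $A_\theta$ is selfadjoint with purely atomic, simple spectrum $\Lambda_\theta$; the eigenvectors are $e_\lambda(x)=e^{i\lambda x}$ (\corref{ptspec}, and these lie in $\mathscr H_F$ by \corref{elambda}). Distinct eigenvectors are orthogonal in $\mathscr H_F$ by \corref{exporg}, and $\|e_\lambda\|^2_{\mathscr H_F}=(\lambda^2+3)/2$ by \corref{elambda1}. The spectral theorem for a selfadjoint operator with pure point spectrum then provides that $\{\phi_\lambda\}_{\lambda\in\Lambda_\theta}$, with $\phi_\lambda:=\sqrt{2/(\lambda^2+3)}\,e_\lambda$, is an orthonormal basis of $\mathscr H_F$.

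The remaining calculation is short. By the reproducing property, $\langle F_0, e_\lambda\rangle_{\mathscr H_F}=e_\lambda(0)=1$, hence $\langle \phi_\lambda, F_0\rangle_{\mathscr H_F}=\sqrt{2/(\lambda^2+3)}$. Using $U_\theta(t)e_\lambda=e^{it\lambda}e_\lambda$ together with Parseval,
\begin{equation*}
\widetilde F_\theta(t) \;=\; \langle F_0, U_\theta(t)F_0\rangle_{\mathscr H_F} \;=\; \sum_{\lambda\in\Lambda_\theta} e^{it\lambda}\,\bigl|\langle \phi_\lambda, F_0\rangle_{\mathscr H_F}\bigr|^2 \;=\; \sum_{\lambda\in\Lambda_\theta}\frac{2}{\lambda^2+3}\, e^{it\lambda},
\end{equation*}
which is \eqref{eq:sp6}. (Setting $t=0$ provides a self-consistency check: Parseval gives $1=F(0)=\|F_0\|^2_{\mathscr H_F}=\sum 2/(\lambda^2+3)$; absolute convergence of the series on all of $\mathbb R$ is ensured by the asymptotics $\lambda_n(\theta)\sim 2\pi n$ coming from \eqref{eq:dev9}.)

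The step I expect to be most delicate, and the one I would write out with care, is the identification $U_\theta(t)F_0=F_{-t}$ for $|t|<1$, since $A_\theta$ is only an \emph{extension} of $-iD_F$, not its closure. I would handle this by fixing the approximate identity $\varphi_{n,x}$ from \lemref{dense}, noting that $U_\theta(t)F_{\varphi_{n,0}} = F_{\varphi_{n,0}(\cdot-t)}$ on the dense domain (this uses only the translation identity $D_FF_\varphi=F_{\varphi'}$ and boundedness of $U_\theta(t)$), and then passing to the limit in $\mathscr H_F$ via \eqref{eq:approx}. After that, the series representation and the extension property are both immediate consequences of the spectral decomposition above.
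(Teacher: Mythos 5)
Your proof is correct, and it rests on the same computational core as the paper's: the orthonormal basis $\left\{ \sqrt{2/(\lambda^{2}+3)}\,e_{\lambda}\right\} _{\lambda\in\Lambda_{\theta}}$ of $\mathscr{H}_{F}$ (Corollaries \ref{cor:spdiscrete}, \ref{cor:exporg}, \ref{cor:elambda1}) together with $\left\langle e_{\lambda},F_{0}\right\rangle _{\mathscr{H}_{F}}=\overline{e_{\lambda}\left(0\right)}=1$. Where you genuinely differ is in how the two claims (``extends $F$'' and ``positive definite on $\mathbb{R}$'') are obtained. You route everything through the matrix coefficient $\left\langle F_{0},U_{\theta}\left(t\right)F_{0}\right\rangle _{\mathscr{H}_{F}}$, which makes positive definiteness automatic from unitarity but forces you to prove the translation identity $U_{\theta}\left(t\right)F_{0}=F_{-t}$ for $\left|t\right|<1$ --- the step you rightly flag as delicate, since $\varphi_{n,0}\left(\cdot-t\right)$ leaves $C_{c}^{\infty}\left(0,1\right)$ for one sign of $t$ and the identity $U_{\theta}\left(t\right)F_{\varphi}=F_{\varphi\left(\cdot+t\right)}$ holds only while supports stay interior; this is exactly the argument the paper sketches around (\ref{eq:Fext}), so it is admissible, but it needs the care you promise. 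The paper's own proof avoids it entirely: it expands $F_{0}$ in the ONB, uses that $\mathscr{H}_{F}$-convergence implies uniform convergence on $\left[0,1\right]$ (reproducing kernel), so that $F\left(x\right)=\sum_{\lambda}\frac{2}{\lambda^{2}+3}e^{i\lambda x}$ pointwise there (hence on $\left[-1,1\right]$ by Hermitian symmetry and evenness of $F$), and then observes that the right-hand side is defined for all $x\in\mathbb{R}$ and is positive definite by Bochner, being the Fourier transform of the positive finite atomic measure $\sum_{\lambda}\frac{2}{\lambda^{2}+3}\delta_{\lambda}$ of total mass $\left\Vert F_{0}\right\Vert _{\mathscr{H}_{F}}^{2}=1$ (your own consistency check). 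In short, your argument buys an explicit link to the $Ext_{1}$ machinery of \secref{types} at the price of the boundary-translation lemma; the paper's buys brevity by reading the extension off the pointwise ONB expansion. Both are sound.
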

The type 1 extensions are indexed by $\theta\in[0,2\pi)$ where $\Lambda_{\theta}$
is given in (\ref{eq:sp5}), see also (\ref{eq:dev9}) in \corref{spext}.
\begin{cor}[Sampling property of the set $\Lambda_{\theta}$ ]
Let $F\left(x\right)=e^{-\left|x\right|}$ in $\left|x\right|<1$,
$\mathscr{H}_{F}$, $\theta$, and $\Lambda_{\theta}$ be as above.
Let $T_{F}$ be the corresponding Mercer operator. Then for all $\varphi\in L^{2}\left(0,1\right)$,
we have 
\[
\left(T_{F}\varphi\right)\left(x\right)=2\sum_{\lambda\in\Lambda_{\theta}}\frac{\widehat{\varphi}\left(\lambda\right)}{\lambda^{2}+3}e^{i\lambda x},\;\mbox{for all }x\in\left(0,1\right).
\]
\end{cor}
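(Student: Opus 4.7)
The plan is to expand $T_F\varphi = F_\varphi$ in an orthogonal basis of $\mathscr{H}_F$ consisting of the exponentials $\{e_\lambda : \lambda \in \Lambda_\theta\}$. Three ingredients are already in hand: by \corref{exporg} these exponentials are mutually orthogonal in $\mathscr{H}_F$; by \corref{elambda1} one has $\|e_\lambda\|_{\mathscr{H}_F}^2 = (\lambda^2+3)/2$; and by \corref{spdiscrete} the selfadjoint extension $A_\theta \supset -iD_F$ has purely atomic spectrum $\Lambda_\theta$ with simple eigenvalues and eigenvectors exactly the $e_\lambda$. The spectral theorem for $A_\theta$ on $\mathscr{H}_F$ then upgrades this orthogonal system to an orthogonal basis, so every $h \in \mathscr{H}_F$ admits a Parseval expansion in the $e_\lambda$.

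For $\varphi \in C_c^\infty(0,1)$, writing the expansion of $F_\varphi = T_F\varphi \in \mathscr{H}_F$ gives
\[
T_F\varphi \;=\; \sum_{\lambda \in \Lambda_\theta} \frac{\langle e_\lambda, F_\varphi\rangle_{\mathscr{H}_F}}{\|e_\lambda\|_{\mathscr{H}_F}^2}\, e_\lambda.
\]
The identity \eqref{eq:sp2}, namely $\langle F_\varphi, e_\lambda\rangle_{\mathscr{H}_F} = \langle \varphi, e_\lambda\rangle_{L^2(0,1)}$, together with the normalization $\|e_\lambda\|_{\mathscr{H}_F}^2 = (\lambda^2+3)/2$, turns the coefficients into $2\widehat{\varphi}(\lambda)/(\lambda^2+3)$, where $\widehat{\varphi}(\lambda) := \int_0^1 \varphi(y) e^{-i\lambda y}\,dy$. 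Since convergence in $\mathscr{H}_F$ implies uniform convergence on $[0,1]$ via the reproducing bound $|h(x)| \le \|F(\cdot - x)\|_{\mathscr{H}_F}\|h\|_{\mathscr{H}_F} = \|h\|_{\mathscr{H}_F}$, the expansion holds pointwise for every $x \in (0,1)$, which is the asserted formula on the dense subspace.

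To reach arbitrary $\varphi \in L^2(0,1)$, I would extend by density: $T_F$ is bounded (in fact trace class) on $L^2(0,1)$ by \lemref{mer-1}, so $\varphi_n \to \varphi$ in $L^2$ yields $T_F\varphi_n \to T_F\varphi$ in $\mathscr{H}_F$ (using \corref{merinn}, which gives $\|T_F\varphi\|_{\mathscr{H}_F}^2 = \langle \varphi, T_F\varphi\rangle_2$), and each coefficient $\widehat{\varphi_n}(\lambda) \to \widehat{\varphi}(\lambda)$. Passing to the limit on both sides, together with uniform convergence from the RKHS estimate, yields the formula for all $\varphi \in L^2(0,1)$.

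The main obstacle is establishing that $\{e_\lambda : \lambda \in \Lambda_\theta\}$ is complete in $\mathscr{H}_F$, rather than merely orthogonal. The cleanest route is to invoke the spectral theorem for the selfadjoint operator $A_\theta$: since its spectrum is discrete and each eigenspace is one-dimensional (\corref{spdiscrete}), the eigenvectors span $\mathscr{H}_F$. Concretely, if some $h \in \mathscr{H}_F$ satisfied $\langle e_\lambda, h\rangle_{\mathscr{H}_F} = 0$ for all $\lambda \in \Lambda_\theta$, then $h$ would lie in the orthogonal complement of every spectral projection of $A_\theta$, hence $h = 0$. Everything else is routine manipulation of the reproducing kernel and the Mercer identity.
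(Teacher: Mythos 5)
Your argument is correct, and it reaches the identity by a different (though closely related) route than the paper. The paper's proof is a one-line deduction from \corref{Fext}: it substitutes the kernel expansion $F(x-y)=\sum_{\lambda\in\Lambda_{\theta}}\tfrac{2}{\lambda^{2}+3}e^{i\lambda(x-y)}$ into the Mercer integral $\int_{0}^{1}\varphi(y)F(x-y)\,dy$ and integrates term by term, which produces the factor $\widehat{\varphi}(\lambda)$ at once. You instead apply the Parseval expansion directly to the element $T_{F}\varphi=F_{\varphi}\in\mathscr{H}_{F}$ in the orthogonal system $\left\{ e_{\lambda}\right\}$, computing the coefficients from the Mercer identity (\ref{eq:sp2}) together with $\left\Vert e_{\lambda}\right\Vert _{\mathscr{H}_{F}}^{2}=\tfrac{\lambda^{2}+3}{2}$ from \corref{elambda1}. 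Both arguments rest on the same facts (orthogonality from \corref{exporg}, completeness of $\left\{ e_{\lambda}\right\}$, discreteness of $\Lambda_{\theta}$ from \corref{spdiscrete}), but yours has two merits: it makes explicit the completeness step, via the spectral theorem for $A_{\theta}$, which the paper asserts without proof in \corref{exponb}; and it handles the passage from $C_{c}^{\infty}(0,1)$ to all of $L^{2}(0,1)$ by a clean density argument using $\left\Vert F_{\varphi}\right\Vert _{\mathscr{H}_{F}}^{2}=\left\langle \varphi,T_{F}\varphi\right\rangle _{2}$, whereas the paper's term-by-term integration tacitly requires a dominated-convergence justification (available since $\sum_{\lambda\in\Lambda_{\theta}}(\lambda^{2}+3)^{-1}<\infty$) that is never spelled out. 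One bookkeeping remark: with the paper's convention that inner products are conjugate-linear in the first slot, the coefficient is $\left\langle e_{\lambda},F_{\varphi}\right\rangle _{\mathscr{H}_{F}}/\left\Vert e_{\lambda}\right\Vert _{\mathscr{H}_{F}}^{2}$, and $\left\langle e_{\lambda},F_{\varphi}\right\rangle _{\mathscr{H}_{F}}=\overline{\left\langle \varphi,e_{\lambda}\right\rangle _{2}}=\int_{0}^{1}\varphi(y)e^{-i\lambda y}\,dy$, which matches your definition of $\widehat{\varphi}(\lambda)$; so your computation is consistent.
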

\begin{proof}
This is immediate from \corref{Fext}.\end{proof}
\begin{rem}
Note that the system $\left\{ e_{\lambda}\:|\:\lambda\in\Lambda_{\theta}\right\} $
is orthogonal in $\mathscr{H}_{F}$, but \uline{not} in $L^{2}\left(0,1\right)$.\end{rem}
\begin{proof}
We see in \subref{saext} that $A_{\theta}$ has pure atomic spectrum.
By (\ref{cor:elambda1}), the set $\left\{ \sqrt{\frac{2}{\lambda^{2}+3}}e_{\lambda}:\lambda\in\Lambda_{\theta}\right\} $
is an ONB in $\mathscr{H}_{F}$. Hence, for $F=F_{0}=e^{-\left|x\right|}$,
we have the corresponding p.d. extension: 
\begin{align}
F_{\theta}\left(x\right) & =\sum_{\lambda\in\Lambda_{\theta}}\frac{1}{\left\Vert e_{\lambda}\right\Vert _{\mathscr{H}_{F}}^{2}}\left\langle e_{\lambda},F\right\rangle _{\mathscr{H}_{F}}e_{\lambda}\left(x\right)\nonumber \\
 & =\sum_{\lambda\in\Lambda_{\theta}}\frac{2}{\lambda^{2}+3}e_{\lambda}\left(x\right),\;\forall x\in\left[0,1\right].\label{eq:sp4-2}
\end{align}
where $\left\langle e_{\lambda},F\right\rangle _{\mathscr{H}_{F}}=\overline{e_{\lambda}\left(0\right)}=1$
by the reproducing property. But the RHS of (\ref{eq:sp4-2}) extends
to $\mathbb{R}$. See \figref{expExt}.
\end{proof}
\begin{figure}[H]
\includegraphics[scale=0.6]{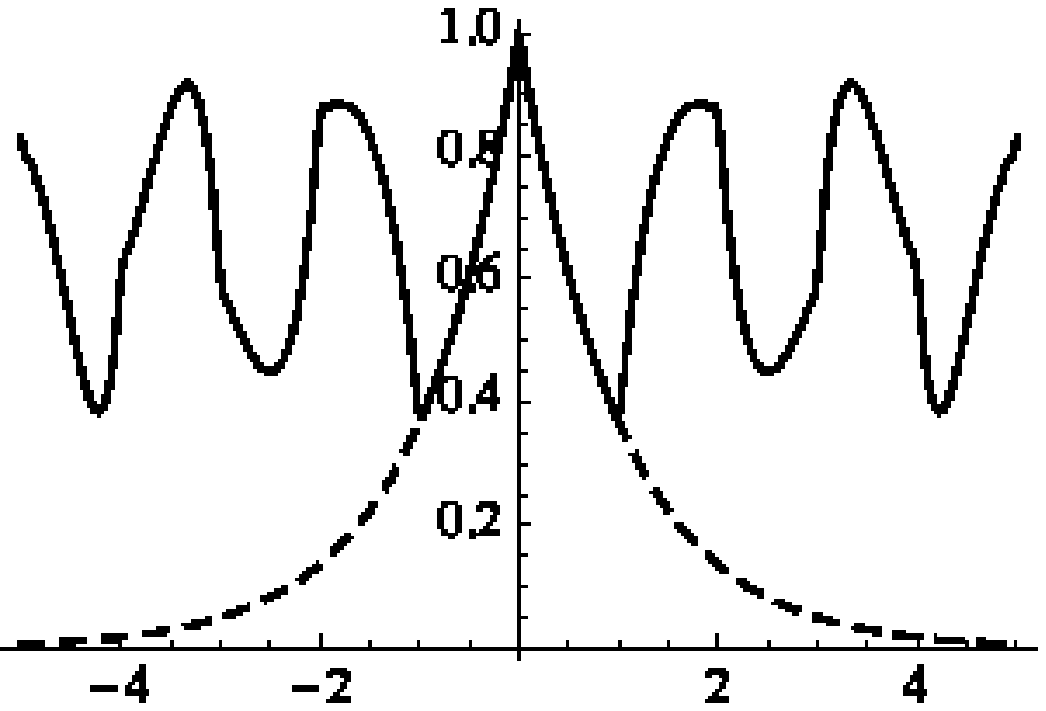}

\protect\caption{\label{fig:expExt}$\theta=0$. A type 1 continuous p.d. extension
of $F\left(x\right)=e^{-\left|x\right|}\big|_{\left[-1,1\right]}$
in $\mathscr{H}_{F}$.}

\end{figure}

\begin{cor}
\label{cor:exponb}Let $F\left(x\right)=e^{-\left|x\right|}$ in $\left(-1,1\right)$,
and let $\mathscr{H}_{F}$ be the RKHS. Let $\theta\in[0,2\pi)$,
and let $\Lambda_{\theta}$ be as above; then $\left\{ \sqrt{\frac{2}{\lambda^{2}+3}}e_{\lambda}\:|\:\lambda\in\Lambda_{\theta}\right\} $
is an ONB in $\mathscr{H}_{F}$.\end{cor}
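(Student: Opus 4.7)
The plan is to identify $\{\sqrt{2/(\lambda^{2}+3)}\,e_\lambda : \lambda\in\Lambda_\theta\}$ as the normalized eigenvector system of the self-adjoint operator $A_\theta$ on $\mathscr{H}_F$, and then invoke the spectral theorem. First, by \corref{elambda} every exponential $e_\lambda$ with $\lambda\in\mathbb{R}$ lies in $\mathscr{H}_F$; and by \corref{spext}, the set $\Lambda_\theta=\mathrm{spect}(A_\theta)$ consists precisely of those $\lambda$ for which $e_\lambda\in\mathrm{dom}(A_\theta)$ and $A_\theta e_\lambda=\lambda e_\lambda$. So each member of the proposed system is an honest eigenvector of $A_\theta$.

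Next, orthogonality is immediate from the self-adjointness of $A_\theta$ (eigenvectors corresponding to distinct real eigenvalues are orthogonal), and was verified explicitly by the direct computation in \corref{exporg}. The normalization $\|e_\lambda\|_{\mathscr{H}_F}^{2}=(\lambda^{2}+3)/2$ is the content of \corref{elambda1}, so the coefficients $\sqrt{2/(\lambda^{2}+3)}$ are exactly what is needed to make the system orthonormal.

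The main obstacle is completeness: one must show that the closed linear span of $\{e_\lambda : \lambda\in\Lambda_\theta\}$ is all of $\mathscr{H}_F$. Here I would invoke \corref{spdiscrete}, which states that $A_\theta$, as a self-adjoint extension of $-iD_F$ in $\mathscr{H}_F$, has purely atomic spectrum with every eigenvalue simple. The spectral theorem for self-adjoint operators with pure point spectrum then yields the orthogonal decomposition $\mathscr{H}_F=\bigoplus_{\lambda\in\Lambda_\theta}\ker(A_\theta-\lambda I)$, and combined with simplicity of each eigenvalue this forces the normalized eigenvectors to form an ONB. If one prefers not to use pure point spectrum as a black box, a slightly more hands-on route is to show that $A_\theta$ has compact resolvent: the Mercer operator $T_F$ is trace class by \lemref{mer-1} and is related via $T_F^{-1}\supset\tfrac12(I-(d/dx)^{2})$ to a second-order elliptic operator that controls $A_\theta^{2}$ on its core. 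Compactness of $(A_\theta-zI)^{-1}$ for $z\notin\mathbb{R}$ then gives a discrete spectrum with an eigenbasis, and combining the three steps above yields the claimed ONB.
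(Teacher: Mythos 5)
Your proposal is correct and follows essentially the same route as the paper, which does not give \corref{exponb} a standalone proof but asserts it inside the proof of the preceding extension formula by citing the pure atomic spectrum of $A_{\theta}$ together with the normalization $\left\Vert e_{\lambda}\right\Vert _{\mathscr{H}_{F}}^{2}=\frac{\lambda^{2}+3}{2}$ from \corref{elambda1} and the orthogonality from \corref{exporg}. Your write-up simply makes explicit the same three ingredients (membership and eigenvector identification, orthonormality, completeness via the spectral theorem for an operator with simple pure point spectrum), and your optional compact-resolvent remark is a reasonable way to justify the discreteness that the paper's \corref{spdiscrete} treats somewhat tersely.
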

\begin{lem}
\label{lem:lcg-Bochner}Let $G$ be a locally compact abelian group.
There is a bijective correspondence between all continuous p.d. extensions
$\tilde{F}$ to $G$ of the given p.d. function $F$ on $\Omega-\Omega$,
on the one hand; and all Borel probability measures $\mu$ on $\widehat{G}$,
on the other, i.e., all $\mu\in\mathscr{M}(\widehat{G})$ s.t.
\begin{equation}
F\left(x\right)=\widehat{\mu}\left(x\right),\:\forall x\in\Omega-\Omega,\;\mbox{where}\label{eq:lcg-bochner}
\end{equation}
\[
\widehat{\mu}\left(x\right)=\int_{\widehat{G}}\lambda\left(x\right)d\mu\left(\lambda\right)=\int_{\widehat{G}}\left\langle \lambda,x\right\rangle d\mu\left(\lambda\right),\:\forall x\in G.
\]
Moreover, 
\[
V:\mathscr{H}_{F}\ni F_{\varphi}\longmapsto\widehat{\varphi}\in L^{2}\left(\mu\right)
\]
extends by density to an isometry from $\mathscr{H}_{F}$ to $L^{2}\left(\mu\right)$,
and 
\[
V^{*}f=\left(fd\mu\right)^{\vee}\in\mathscr{H}_{F},\;\mbox{where}
\]
\[
\left(fd\mu\right)^{\vee}:=\int_{\widehat{G}}\left\langle \lambda,x\right\rangle f\left(\lambda\right)d\mu\left(\lambda\right),\;\forall f\in L^{2}\left(\mu\right).
\]
\end{lem}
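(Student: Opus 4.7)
The plan is to reduce the assertion to the classical Bochner theorem for locally compact abelian groups and then build the isometry $V$ on the dense subspace $\{F_\varphi:\varphi\in C_c(\Omega)\}$ by a Fubini/Fourier computation.

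First I would establish the bijection in (\ref{eq:lcg-bochner}). By Bochner's theorem on an LCA group $G$, every continuous positive definite function $\widetilde F$ on $G$ with $\widetilde F(e)=1$ has the form $\widetilde F=\widehat\mu$ for a unique Borel probability measure $\mu$ on $\widehat G$, and conversely every such $\widehat\mu$ is continuous and p.d. Restricting this correspondence to those $\mu$ whose Fourier transform agrees with $F$ on $\Omega^{-1}\cdot\Omega$ gives the claimed bijection between $\{\widetilde F\text{ continuous p.d.\ extensions of }F\}$ and $\{\mu\in\mathscr{M}(\widehat G):\widehat\mu|_{\Omega^{-1}\Omega}=F\}$.

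Next, to construct the isometry, define $V$ on the dense subspace by $V(F_\varphi):=\widehat\varphi$ for $\varphi\in C_c(\Omega)$, where $\widehat\varphi(\lambda)=\int_G\overline{\langle\lambda,x\rangle}\varphi(x)\,dx$. The key calculation uses $F(x^{-1}y)=\widehat\mu(x^{-1}y)=\int_{\widehat G}\overline{\langle\lambda,x\rangle}\langle\lambda,y\rangle\,d\mu(\lambda)$ and Fubini:
\begin{align*}
\|F_\varphi\|_{\mathscr{H}_F}^{2}
&=\int_\Omega\!\int_\Omega\overline{\varphi(x)}\varphi(y)F(x^{-1}y)\,dx\,dy\\
&=\int_{\widehat G}\Big|\int_G \overline{\langle\lambda,x\rangle}\varphi(x)\,dx\Big|^{2}d\mu(\lambda)
=\|\widehat\varphi\|_{L^{2}(\mu)}^{2}.
\end{align*}
Polarization then yields $\langle F_\varphi,F_\psi\rangle_{\mathscr{H}_F}=\langle\widehat\varphi,\widehat\psi\rangle_{L^{2}(\mu)}$. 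Since $\{F_\varphi\}$ is dense in $\mathscr{H}_F$ by \lemref{dense}, $V$ extends uniquely by continuity to a linear isometry $V:\mathscr{H}_F\to L^{2}(\mu)$.

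Finally, for the formula for $V^{*}$, I would take $f\in L^{2}(\mu)$ and test against $F_\varphi$: using the formula $\langle F_\varphi,\xi\rangle_{\mathscr{H}_F}=\int_\Omega\overline{\varphi(x)}\xi(x)\,dx$ from \lemref{TFvarphi}, one computes
\[
\langle F_\varphi,V^{*}f\rangle_{\mathscr{H}_F}
=\langle VF_\varphi,f\rangle_{L^{2}(\mu)}
=\int_{\widehat G}\overline{\widehat\varphi(\lambda)}f(\lambda)\,d\mu(\lambda)
=\int_\Omega\overline{\varphi(x)}\,(fd\mu)^{\vee}(x)\,dx,
\]
after a Fubini swap, which identifies $V^{*}f=(fd\mu)^{\vee}$ pointwise (as an element of $\mathscr{H}_F$, by \thmref{HF} applied to $(fd\mu)^{\vee}$). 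The main obstacle I anticipate is keeping the complex-conjugation conventions straight in the Fourier/Fubini step and verifying that $(fd\mu)^{\vee}$ genuinely lands in $\mathscr{H}_F$; this requires checking the \emph{a priori} bound (\ref{eq:bdd2}) of \thmref{HF} for $\xi=(fd\mu)^{\vee}$, which follows from Cauchy--Schwarz in $L^{2}(\mu)$ combined with the isometry already proved on $\{F_\varphi\}$.
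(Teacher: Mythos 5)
Your proposal is correct and follows exactly the route the paper intends: the paper's own proof consists of the single remark that the lemma is ``an immediate application of Bochner's characterization'' (deferring details to the cited reference), and your argument is precisely the standard expansion of that remark --- Bochner's theorem for the bijection, the Fubini factorization $F(x^{-1}y)=\int_{\widehat G}\overline{\langle\lambda,x\rangle}\langle\lambda,y\rangle\,d\mu(\lambda)$ for the isometry on the dense span of the $F_{\varphi}$, and the duality computation against $F_{\varphi}$ (via \lemref{TFvarphi} and \thmref{HF}) for $V^{*}f=(f\,d\mu)^{\vee}$. No gaps; your chosen sign convention for $\widehat\varphi$ is the one consistent with the paper's conjugate-linear-in-the-first-variable inner product and with the stated formula for $(f\,d\mu)^{\vee}$.
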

\begin{proof}
This is an immediate application of Bochner's characterization of
the continuous positive definite functions on locally compact abelian
groups. See \cite{JPT14}.\end{proof}
\begin{defn}
Set 
\[
Ext\left(F\right)=\left\{ \mu\in\mathscr{M}(\widehat{G})\:\Big|\: s.t.\:(\ref{eq:lcg-bochner})\mbox{ holds}\right\} .
\]
(See \secref{types} for type 1 and 2 extensions.)\end{defn}
\begin{cor}
In the case $F=e^{-\left|x\right|}$, $\left|x\right|<1$, let $A_{\theta}\supset-iD$
be any one of the s.a. extensions. Let $\widetilde{F}_{\theta}\left(x\right)$
be the continuous p.d. extension in (\ref{eq:sp6}). 
\begin{enumerate}
\item Then
\[
\widetilde{F}_{\theta}\left(x\right)=\int_{-\infty}^{\infty}e^{i\lambda x}d\mu_{\theta}\left(\lambda\right).
\]

\item And
\[
\mu_{\theta}\left(\cdot\right):=\sum_{\lambda\in\Lambda_{\theta}}\frac{2\delta_{\lambda}}{\lambda^{2}+3},\:\mbox{where }\mbox{\ensuremath{\delta_{\lambda}}= Dirac point mass at \ensuremath{\lambda}.}
\]

\item And $\mu_{\theta}$ is an extreme point in $Ext(F)$.
\end{enumerate}
\end{cor}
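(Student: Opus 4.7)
Corollary \ref{cor:Fext} already gives $\widetilde{F}_\theta(x) = \sum_{\lambda \in \Lambda_\theta} \frac{2}{\lambda^2+3}\, e^{i\lambda x}$, which I recognize as the Fourier transform $\widehat{\mu_\theta}(x)$ of the atomic measure $\mu_\theta := \sum_{\lambda \in \Lambda_\theta} \frac{2}{\lambda^2+3}\delta_\lambda$. Evaluating at $x = 0$ yields total mass $\widetilde{F}_\theta(0) = F(0) = 1$, so $\mu_\theta$ is a Borel probability measure on $\mathbb{R}$; and since $\widehat{\mu_\theta} = \widetilde{F}_\theta$ extends $F$ from $(-1,1)$, we have $\mu_\theta \in Ext(F)$. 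This disposes of (1) and (2) at once.

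\textbf{Part (3).} The plan for extremality is to exploit the orthonormal basis supplied by Corollary \ref{cor:exponb}. Suppose $\mu_\theta = t\mu_1 + (1-t)\mu_2$ with $t \in (0,1)$ and $\mu_1, \mu_2 \in Ext(F)$. Absolute continuity against $\mu_\theta$ confines each $\mu_i$ to the discrete set $\Lambda_\theta$: write $\mu_i = \sum_{\lambda \in \Lambda_\theta} c_i(\lambda)\delta_\lambda$ with $c_i(\lambda) \ge 0$ and $\sum_\lambda c_i(\lambda) = 1$. The key quantitative input I would extract from the convex decomposition is the pointwise Radon--Nikodym bound $c_i(\lambda) \le \frac{1}{\min(t,1-t)} \cdot \frac{2}{\lambda^2+3}$. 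Combined with $\|e_\lambda\|^2_{\mathscr{H}_F} = (\lambda^2+3)/2$ (Corollary \ref{cor:elambda1}) and the linear growth of the eigenvalues in Corollary \ref{cor:spext}, this yields $\sum_\lambda c_i(\lambda)^2 \|e_\lambda\|^2_{\mathscr{H}_F} < \infty$, so the series $\eta_i := \sum_{\lambda \in \Lambda_\theta} c_i(\lambda)\, e_\lambda$ converges in $\mathscr{H}_F$-norm.

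Uniform convergence of the same series on $(-1,1)$ then identifies $\eta_i(x) = \widehat{\mu_i}(x) = F(x)$ pointwise there, forcing $\eta_i = F$ as elements of $\mathscr{H}_F$. On the other hand, expanding $F \in \mathscr{H}_F$ against the ONB $\{\sqrt{2/(\lambda^2+3)}\,e_\lambda\}_{\lambda \in \Lambda_\theta}$, the reproducing identity $\langle e_\lambda, F\rangle_{\mathscr{H}_F} = \overline{e_\lambda(0)} = 1$ produces the unique expansion $F = \sum_\lambda \frac{2}{\lambda^2+3}\, e_\lambda$. Uniqueness of ONB coefficients now forces $c_i(\lambda) = 2/(\lambda^2+3)$ for every $\lambda \in \Lambda_\theta$, giving $\mu_1 = \mu_2 = \mu_\theta$, which is extremality. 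The main obstacle I anticipate is precisely the $\mathscr{H}_F$-norm convergence step, since the pointwise Bochner identity on $(-1,1)$ is too weak on its own to match ONB coefficients; the decisive input that unlocks the argument is the Radon--Nikodym bound $c_i(\lambda) = O(1/(\lambda^2+3))$ coming directly from the hypothesis that $\mu_\theta$ dominates $t\mu_i$.
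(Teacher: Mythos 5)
Your parts (1) and (2) coincide with what the paper does: read off \corref{Fext} and recognize the series as $\widehat{\mu_\theta}$ for the atomic measure, with total mass $\widetilde{F}_\theta(0)=F(0)=1$. For part (3), however, the paper offers no argument at all beyond the citation ``follows from Corollaries \ref{cor:Fext}, \ref{cor:exponb}, and Lemma \ref{lem:lcg-Bochner}'', whereas you supply a complete, self-contained extremality proof, and it is correct: from $\mu_\theta=t\mu_1+(1-t)\mu_2$ you get $t\mu_i\leq\mu_\theta$, hence $\mu_i$ is concentrated on $\Lambda_\theta$ with atoms $c_i(\lambda)\leq t^{-1}\cdot 2/(\lambda^2+3)$; since the solutions of (\ref{eq:dev9}) grow like $2\pi n$, the weighted square-sum $\sum_\lambda c_i(\lambda)^2\Vert e_\lambda\Vert_{\mathscr{H}_F}^2\lesssim\sum_\lambda(\lambda^2+3)^{-1}$ is finite, the orthogonal series $\sum_\lambda c_i(\lambda)e_\lambda$ converges in $\mathscr{H}_F$, norm convergence in an RKHS forces agreement with the pointwise (absolutely convergent) sum $\widehat{\mu_i}=F$ on $(0,1)$, and uniqueness of coefficients against the ONB of \corref{exponb} together with $\langle e_\lambda,F_0\rangle_{\mathscr{H}_F}=1$ pins down $c_i(\lambda)=2/(\lambda^2+3)$. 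Your diagnosis that the Radon--Nikodym bound is the decisive input is exactly right: the pointwise identity $\sum_\lambda c_i(\lambda)e^{i\lambda x}=F(x)$ alone would require a uniqueness theorem for nonharmonic Fourier series, which is not available, while the domination bound converts the problem into coefficient matching in a Hilbert space. What the paper's implicit route (via Lemma \ref{lem:lcg-Bochner} and the general Krein-type correspondence between type 1 extensions and extreme points) buys is brevity and generality; what your argument buys is an explicit, checkable proof that uses only the quantitative data of this example ($\Vert e_\lambda\Vert_{\mathscr{H}_F}^2=(\lambda^2+3)/2$ and the spectral asymptotics). Two cosmetic points: the identification of $\eta_i$ with $F$ should be stated on $(0,1)$ (or $[0,1]$), where elements of $\mathscr{H}_F$ live, rather than on $(-1,1)$; and it is worth saying explicitly that positivity of $\mu_1,\mu_2$ is what turns the convex decomposition into the domination $t\mu_1\leq\mu_\theta$.
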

\begin{proof}
Follows from the argument in Corollaries \ref{cor:Fext}, \ref{cor:exponb},
and Lemma \ref{lem:lcg-Bochner}.\end{proof}
\begin{thm}
\label{thm:ez}Let $\mathscr{H}_{F}$ be the RKHS for $F\left(x\right)=e^{-\left|x\right|}$,
$x\in\left(-1,1\right)$. Then, for all complex numbers $z$, the
function $e_{z}\left(x\right)=e^{zx}$ is in $\mathscr{H}_{F}$.\end{thm}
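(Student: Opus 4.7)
The plan is to reduce the claim to the estimate in \thmref{HF}: it suffices to produce, for each complex $z$, a finite constant $A(z)$ such that
\[
\left|\int_{0}^{1}\varphi(x)\,e^{zx}\,dx\right|^{2}\leq A(z)\,\left\Vert F_{\varphi}\right\Vert _{\mathscr{H}_{F}}^{2},\quad\forall\varphi\in C_{c}^{\infty}(0,1).
\]
This is exactly the complex-parameter analogue of what is proved in \corref{elambda}, and the strategy will be to run that argument with $e_{\lambda}$ replaced by $e_{z}$; since $e_{z}''=z^{2}e_{z}$ holds for every complex $z$, no real structural change is needed.

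First, I would invoke \lemref{Tf}, which gives $\varphi=\tfrac{1}{2}(F_{\varphi}-F_{\varphi}'')$ on $(0,1)$, together with the boundary identities $F_{\varphi}(0)=F_{\varphi}'(0)$ and $F_{\varphi}(1)=-F_{\varphi}'(1)$ from \corref{dev1}. Two integrations by parts against $e^{zx}$ then yield the key identity
\[
2\int_{0}^{1}\varphi(x)\,e^{zx}\,dx=(1-z^{2})\int_{0}^{1}F_{\varphi}(x)\,e^{zx}\,dx+(1+z)\,F_{\varphi}(1)\,e^{z}+(1-z)\,F_{\varphi}(0),
\]
where the boundary contribution collapses to the displayed form precisely because of the Dirichlet-type relations satisfied by $F_{\varphi}$ at the endpoints.

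Second, I would bound the three terms on the right individually in the $\mathscr{H}_{F}$-norm. For the boundary terms, the reproducing property together with $\left\Vert F_{0}\right\Vert _{\mathscr{H}_{F}}=\left\Vert F_{1}\right\Vert _{\mathscr{H}_{F}}=1$ (\lemref{dev1}) gives $|F_{\varphi}(0)|,|F_{\varphi}(1)|\leq\left\Vert F_{\varphi}\right\Vert _{\mathscr{H}_{F}}$. For the integral term I would use the uniform bound
\[
\left|F_{\varphi}(x)\right|=\left|\left\langle F(\cdot-x),F_{\varphi}\right\rangle _{\mathscr{H}_{F}}\right|\leq F(0)^{1/2}\left\Vert F_{\varphi}\right\Vert _{\mathscr{H}_{F}}=\left\Vert F_{\varphi}\right\Vert _{\mathscr{H}_{F}},
\]
so that
\[
\left|\int_{0}^{1}F_{\varphi}(x)\,e^{zx}\,dx\right|\leq\left\Vert F_{\varphi}\right\Vert _{\mathscr{H}_{F}}\int_{0}^{1}e^{(\operatorname{Re}z)x}\,dx.
\]
Combining these three estimates gives $A(z)$ depending only on $z$, finishing the argument by \thmref{HF}.

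There is no genuine obstacle here: the only thing to track is that, unlike the real case $z=i\lambda$ of \corref{elambda} where $|e^{i\lambda x}|=1$, for general complex $z$ the factor $e^{(\operatorname{Re}z)x}$ contributes a constant $\int_{0}^{1}e^{(\operatorname{Re}z)x}\,dx$ that must be absorbed into $A(z)$. Since this constant is finite for every $z\in\mathbb{C}$, the conclusion $e_{z}\in\mathscr{H}_{F}$ follows for all complex $z$, as desired.
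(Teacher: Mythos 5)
Your proof is correct, but it takes a genuinely different route from the paper. The paper splits into two cases: for purely imaginary $z$ it cites \corref{elambda}, and for $\operatorname{Re}z\neq 0$ it invokes von Neumann's theory --- since $D_{F}$ has indices $\left(1,1\right)$, the defect number $\dim\ker\left(D_{F}^{*}-zI\right)$ equals $1$ throughout each open half-plane, and since $D_{F}^{*}h=-h'$ forces any such eigenvector to be a multiple of an exponential, that exponential must already lie in $\mathscr{H}_{F}$. You instead give a single, uniform \emph{a priori} estimate: using $\varphi=\tfrac{1}{2}\left(F_{\varphi}-F_{\varphi}''\right)$ from \lemref{Tf}, the boundary relations $F_{\varphi}\left(0\right)=F_{\varphi}'\left(0\right)$ and $F_{\varphi}\left(1\right)=-F_{\varphi}'\left(1\right)$ from \corref{dev1}, and two integrations by parts, you obtain the identity
\begin{equation*}
2\int_{0}^{1}\varphi\left(x\right)e^{zx}\,dx=\left(1-z^{2}\right)\int_{0}^{1}F_{\varphi}\left(x\right)e^{zx}\,dx+\left(1+z\right)F_{\varphi}\left(1\right)e^{z}+\left(1-z\right)F_{\varphi}\left(0\right),
\end{equation*}
which I have checked is exactly right, and then each term is controlled by $\left\Vert F_{\varphi}\right\Vert _{\mathscr{H}_{F}}$ via the reproducing property, so \thmref{HF} applies. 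This is essentially the argument of \corref{elambda} pushed to complex $z$, with the extra factor $\int_{0}^{1}e^{\left(\operatorname{Re}z\right)x}dx$ absorbed into the constant $A\left(z\right)$. What your approach buys: it is self-contained, needs no case split between real and nonreal parts of $z$, and produces an explicit bound on $\left\Vert e_{z}\right\Vert _{\mathscr{H}_{F}}$; it does, however, lean on the specific Green's-function identity for $F\left(x\right)=e^{-\left|x\right|}$. What the paper's approach buys: brevity given the operator-theoretic machinery already in place, and a template that works whenever one merely knows the indices are $\left(1,1\right)$, without an explicit elliptic factorization of $T_{F}^{-1}$. One cosmetic remark: the endpoint relations you use are Robin-type (mixing $h$ and $h'$), not Dirichlet-type, but this does not affect the argument.
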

\begin{proof}
First, if $z$ is purely imaginary, the function $e_{z}$ is in $\mathscr{H}_{F}$
by Coroll. \ref{cor:elambda}. From the lemmas in \secref{exp}, we
know the defect vectors for the associated skew Hermitian operator
$D_{F}$. The operator $D_{F}$ has indices $\left(1,1\right)$ in
$\mathscr{H}_{F}$. By von Neumann \cite{DS88b}, we know that $e_{z}$
solves $D_{F}^{*}e_{z}=z\, e_{z}$ whenever $z$ has non-zero real
part. Since the indices are $\left(1,1\right)$, the solution for
each $z$ must be in $\mathscr{H}_{F}$. 
\end{proof}
Below, we consider the converse of \corref{Fmu} for the special case
where $F\left(x\right)=e^{-\left|x\right|}$, $\left|x\right|<1$. 
\begin{lem}
\label{lem:expmu}Let $F\left(x\right)=e^{-\left|x\right|}$, $\left|x\right|<1$,
and let $\mathscr{H}_{F}$ be the corresponding RKHS. Then, for all
$\lambda\in\mathbb{R}$, we have 
\begin{gather}
e_{\lambda}\left(x\right)=F_{\mu_{\lambda}}\left(x\right)=\int_{0}^{1}F\left(x-y\right)d\mu_{\lambda}\left(y\right),\;\forall x\in\left(0,1\right);\;\mbox{where for }\lambda\:\mbox{fixed}\label{eq:expmu1}\\
d\mu_{\lambda}\left(y\right):=\frac{1}{2}\left(1+\lambda^{2}\right)e_{\lambda}\left(y\right)dy+\frac{1}{2}\left(\left(1-i\lambda\right)\delta_{0}+\left(1+i\lambda\right)e^{-i\lambda}\delta_{1}\right).\label{eq:expmu2}
\end{gather}
\end{lem}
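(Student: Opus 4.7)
The plan is to verify the identity by direct computation, drawing on the fact that $F(x)=e^{-|x|}$ is (up to a factor of $\tfrac12$) the Green's function of the elliptic operator $L=\tfrac12\bigl(I-(d/dx)^2\bigr)$ on $\mathbb R$; see Lemma~\ref{lem:Tf}. The absolutely continuous piece $\tfrac12(1+\lambda^2)e_\lambda(y)\,dy$ is simply $L e_\lambda$ in the interior of $(0,1)$, and the two Dirac masses at $y=0,1$ are the boundary distributions needed to offset the jumps created when $e^{i\lambda y}$ is restricted to the compact interval $[0,1]$. Since $\mu_\lambda$ has bounded total variation on $[0,1]$, Corollary~\ref{cor:Fmu} already guarantees $F_{\mu_\lambda}\in\mathscr H_F$ and that it is continuous on $[0,1]$; so it suffices to establish $F_{\mu_\lambda}(x)=e^{i\lambda x}$ pointwise on $(0,1)$.

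First I would evaluate the absolutely continuous part. Split $\int_0^1 F(x-y)e^{i\lambda y}\,dy$ at $y=x$ into the two elementary exponential integrals $\int_0^x e^{-(x-y)}e^{i\lambda y}\,dy$ and $\int_x^1 e^{x-y}e^{i\lambda y}\,dy$, evaluate each in closed form, and combine over the common denominator $(1+i\lambda)(1-i\lambda)=1+\lambda^2$. Multiplied by $\tfrac12(1+\lambda^2)$, this produces $e^{i\lambda x}$ together with two boundary exponentials proportional to $e^{-x}$ and $e^{x-1}$. Next I would add the atomic contributions, using $F(x-0)=e^{-x}$ and $F(x-1)=e^{x-1}$ for $x\in(0,1)$; the coefficients $\tfrac12(1-i\lambda)$ at $\delta_0$ and the one at $\delta_1$ are calibrated so that the residual boundary exponentials cancel, leaving only $e^{i\lambda x}$.

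As a complementary check, one may pair both sides against the dense family $\{F_\varphi:\varphi\in C_c^\infty(0,1)\}\subset\mathscr H_F$: the identity $\langle F_\varphi,F_{\mu_\lambda}\rangle_{\mathscr H_F}=\int_0^1\overline{F_\varphi(y)}\,d\mu_\lambda(y)$ on the left should match term-by-term the three-term expansion of $\langle F_\varphi,e_\lambda\rangle_{\mathscr H_F}$ supplied by Corollary~\ref{cor:HFinner2} on the right, which reverse-engineers the formula for $\mu_\lambda$. The main obstacle is careful bookkeeping of the boundary coefficients---signs, complex conjugates, and the piecewise values of $F$ near the endpoints---rather than any analytic subtlety; once these are tabulated the cancellation is immediate.
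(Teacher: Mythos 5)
Your strategy is sound and genuinely different from the paper's. The paper does not compute the integral $\int_0^1 F(x-y)\,d\mu_\lambda(y)$ directly; instead it expands $F(x)=\sum_n \|e_n\|_{\mathscr{H}_F}^{-2}e_n(x)$ in the orthogonal system $\{e_{\lambda_n}\}$ attached to a selfadjoint extension $A_\theta$, integrates term by term, identifies each bracket with $\langle e_n,e_\lambda\rangle_{\mathscr{H}_F}$ via \corref{HFinner2}, and resums to get $e_\lambda$. Your primary route (split the integral at $y=x$, evaluate the two exponential integrals, and cancel the residual $e^{-x}$ and $e^{x-1}$ terms against the atoms) is more elementary and self-contained: it needs only calculus plus \corref{Fmu} for membership in $\mathscr{H}_F$, and it avoids any appeal to the ONB machinery. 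Your secondary check, pairing against $F_\varphi$ and matching against \corref{HFinner2}, is essentially the paper's argument in dual form, so you have both proofs in hand.

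One concrete point you should not gloss over: when you actually carry out the cancellation, the interior integral contributes
\[
\tfrac{1}{2}(1+\lambda^{2})\int_{0}^{1}e^{-|x-y|}e^{i\lambda y}\,dy
=e^{i\lambda x}-\tfrac{1}{2}(1-i\lambda)e^{-x}-\tfrac{1}{2}(1+i\lambda)e^{i\lambda}\,e^{x-1},
\]
so the atom at $y=1$ must carry the weight $\tfrac{1}{2}(1+i\lambda)e^{+i\lambda}$, not $e^{-i\lambda}$ as printed in (\ref{eq:expmu2}); with the printed sign the computation leaves an uncancelled term $\tfrac{1}{2}(1+i\lambda)(e^{-i\lambda}-e^{i\lambda})e^{x-1}$. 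The corrected exponent is also what the duality $\int_0^1\overline{F_\varphi(y)}\,d\mu_\lambda(y)=\langle F_\varphi,e_\lambda\rangle_{\mathscr{H}_F}$ forces, since the boundary term in \corref{HFinner2} is $\overline{l}(1)(1+i\lambda)e^{i\lambda}$ (and that version is the one consistent with $\|e_\lambda\|_{\mathscr{H}_F}^{2}=(\lambda^{2}+3)/2$ in \corref{elambda1}). So your "careful bookkeeping" caveat is exactly where the work lies: done correctly, your argument proves the lemma with the exponent $e^{+i\lambda}$ at $\delta_1$, and in the process detects a sign typo that the paper's ONB-resummation proof silently absorbs.
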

\begin{proof}
Note by \corref{elambda}, $e_{\lambda}\in\mathscr{H}_{F}$ for all
$\lambda\in\mathbb{R}$. 

Let $A\supset-iD$ be any of the s.a. extensions, with $spect\left(A\right)=\left\{ \lambda_{n}\:\big|\: n\in\mathbb{Z}\right\} $;
and let $\left\{ e_{\lambda_{n}}\:\big|\: n\in\mathbb{Z}\right\} $
be the corresponding eigenfunctions. Set $e_{n}\left(x\right):=e_{\lambda_{n}}\left(x\right)=e^{i\lambda_{n}x}$,
$x\in\left(0,1\right)$. Thus, $\left\{ e_{n}/\left\Vert e_{n}\right\Vert _{\mathscr{H}_{F}}\right\} $
is an ONB in $\mathscr{H}_{F}$. 

Using the reproducing property, we have 
\begin{equation}
F\left(x\right)=\sum_{n}\frac{1}{\left\Vert e_{n}\right\Vert _{\mathscr{H}}^{2}}\left\langle e_{n},F_{0}\right\rangle e_{n}\left(x\right)=\sum_{n\in\mathbb{Z}}\frac{1}{\left\Vert e_{n}\right\Vert _{\mathscr{H}}^{2}}e_{n}\left(x\right),\;\forall x\in\left(0,1\right).\label{eq:Fexpand}
\end{equation}
Note in (\ref{eq:Fexpand}), we used $\left\langle e_{n},F_{0}\right\rangle _{\mathscr{H}_{F}}=\overline{e_{n}\left(0\right)}=1.$

Now, let $d\mu_{\lambda}$ be as in the statement of the lemma. Then,
\begin{eqnarray*}
\int_{0}^{1}F\left(x-y\right)d\mu_{\lambda}\left(y\right) & \overset{\text{by }\left(\ref{eq:Fexpand}\right)}{=} & \sum_{n}\frac{1}{\left\Vert e_{n}\right\Vert _{\mathscr{H}}^{2}}\int_{0}^{1}\overline{e_{n}\left(y\right)}d\mu\left(y\right)\\
 & = & \sum_{n}\frac{1}{\left\Vert e_{n}\right\Vert _{\mathscr{H}}^{2}}e_{n}\left(x\right)\Bigg[\frac{1}{2}\left(1+\lambda^{2}\right)\int_{0}^{1}\overline{e_{n}\left(y\right)}e_{\lambda}\left(y\right)dy\\
 &  & +\frac{1}{2}\left(\left(1-i\lambda\right)\overline{e_{n}\left(0\right)}+\left(1+i\lambda\right)e^{-i\lambda}\overline{e_{n}\left(1\right)}\right)\Bigg]\\
 & = & \sum_{n}\frac{1}{\left\Vert e_{n}\right\Vert _{\mathscr{H}}^{2}}e_{n}\left(x\right)\Bigg[\frac{1}{2}\left\langle e_{n},e_{\lambda}-e_{\lambda}''\right\rangle _{2}\\
 &  & +\frac{1}{2}\left(\left(1-i\lambda\right)\overline{e_{n}\left(0\right)}+\left(1+i\lambda\right)e^{-i\lambda}\overline{e_{n}\left(1\right)}\right)\Bigg]\\
 & \overset{\text{Cor.}\ref{cor:HFinner2}}{=} & \sum_{n}\frac{1}{\left\Vert e_{n}\right\Vert _{\mathscr{H}}^{2}}e_{n}\left(x\right)\left\langle e_{n},e_{\lambda}\right\rangle _{\mathscr{H}_{F}}\\
 & = & e_{\lambda}\left(x\right)
\end{eqnarray*}
thus (\ref{eq:expmu1})-(\ref{eq:expmu2}) hold.\end{proof}
\begin{cor}
\label{cor:expmu}For $F\left(x\right)=e^{-\left|x\right|}$, $\left|x\right|<1$,
every $h\in\mathscr{H}_{F}$ takes the form $h=F_{\mu_{h}}$, where
$\mu_{h}$ is a complex Borel measure in $\left[0,1\right]$. Specifically,
\begin{gather}
h\left(x\right)=\int_{0}^{1}e^{-\left|x-y\right|}d\mu_{h}\left(y\right),\;\mbox{where }\label{eq:expmu3}\\
d\mu_{h}\left(y\right):=\sum_{n}\frac{1}{\left\Vert e_{n}\right\Vert _{\mathscr{H}}^{2}}\left\langle e_{n},h\right\rangle _{\mathscr{H}}d\mu_{n}\left(y\right).\label{eq:expmu4}
\end{gather}
\end{cor}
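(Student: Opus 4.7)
The plan is to expand an arbitrary $h\in\mathscr{H}_F$ in the orthogonal basis of exponentials guaranteed by Corollary \ref{cor:exponb}, substitute the integral representation of Lemma \ref{lem:expmu} into each basis term, and then interchange summation with the integration against $F(x-\cdot)$. Concretely, index the atoms of $\mathrm{spect}(A_\theta)=\Lambda_\theta$ as $\{\lambda_n\}_{n\in\mathbb{Z}}$, write $e_n:=e_{\lambda_n}$, and recall from Corollary \ref{cor:elambda1} that $\|e_n\|_{\mathscr{H}_F}^2=(\lambda_n^2+3)/2$, so that $\{e_n/\|e_n\|_{\mathscr{H}_F}\}$ is an ONB. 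Setting $c_n:=\langle e_n,h\rangle_{\mathscr{H}_F}/\|e_n\|_{\mathscr{H}_F}^2$, Parseval gives $h=\sum_n c_n e_n$ in $\mathscr{H}_F$, with $\sum_n|c_n|^2\|e_n\|_{\mathscr{H}_F}^2=\|h\|_{\mathscr{H}_F}^2<\infty$.

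Next, for each $n$ Lemma \ref{lem:expmu} supplies a complex Borel measure $\mu_n$ on $[0,1]$ with $e_n(x)=\int_0^1 F(x-y)\,d\mu_n(y)$. By linearity of $\mu\mapsto F_\mu$, the partial sum $h_N:=\sum_{|n|\leq N} c_n e_n$ equals $F_{\sigma_N}$ where $\sigma_N:=\sum_{|n|\leq N}c_n\mu_n$ is a finite complex measure. Because convergence in $\mathscr{H}_F$ yields uniform convergence on $[0,1]$ via the reproducing estimate
\[
|h(x)-h_N(x)|=\bigl|\langle F_x,h-h_N\rangle_{\mathscr{H}_F}\bigr|\leq F(0)^{1/2}\|h-h_N\|_{\mathscr{H}_F},
\]
we obtain $h(x)=\lim_{N\to\infty}\int_0^1 F(x-y)\,d\sigma_N(y)$ uniformly in $x\in[0,1]$. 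Defining $\mu_h$ via (\ref{eq:expmu4}) as the formal limit of $\sigma_N$ then produces the integral representation (\ref{eq:expmu3}).

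The main obstacle is to give $\mu_h$ a legitimate meaning as a complex Borel measure of finite variation. Decomposing $c_n\mu_n$ into its absolutely continuous part $c_n(1+\lambda_n^2)/2\cdot e_{\lambda_n}(y)\,dy$ and its atomic part at $0$ and $1$, the atomic coefficients are controlled by Cauchy--Schwarz (using $|c_n|\cdot|1\pm i\lambda_n|\leq\bigl(|c_n|^2(\lambda_n^2+3)\bigr)^{1/2}$ and $\ell^2$-summability), but the densities need not converge in $L^1(0,1)$ for general $h\in\mathscr{H}_F$: one has $|c_n|(1+\lambda_n^2)\sim|c_n|\lambda_n^2$, and only the $\ell^2$-weighted sum $\sum|c_n|^2\lambda_n^2$ is finite. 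I would handle this by reading $\mu_h$ in the weak-$*$/distributional sense: what is required for (\ref{eq:expmu3}) is only that the pairing of $\mu_h$ with the continuous bounded test function $y\mapsto F(x-y)$ converges for each $x$, and this follows from the uniform convergence $h_N\to h$. Alternatively, for $h\in\mathrm{dom}(T_F^{-1})$ the series defines a genuine measure with $L^1$ density $\tfrac{1}{2}(h-h'')$ plus point masses $\tfrac{1}{2}(h(0)-h'(0))\delta_0+\tfrac{1}{2}(h(1)+h'(1))\delta_1$ (the Green's identity corresponding to $T_F^{-1}=\tfrac{1}{2}(I-(d/dx)^2)$), and the general case follows by density from this dense subspace together with continuity of both sides of (\ref{eq:expmu3}) in the $\mathscr{H}_F$-norm.
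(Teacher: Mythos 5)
Your proposal is correct and follows essentially the same route as the paper: expand $h$ in the ONB $\left\{ e_{n}/\left\Vert e_{n}\right\Vert _{\mathscr{H}_{F}}\right\} $, substitute the representation $e_{n}=F_{\mu_{n}}$ from Lemma \ref{lem:expmu}, and interchange the sum with the integral against $F\left(x-\cdot\right)$. You are in fact more careful than the paper at the interchange step --- the paper simply writes ``(Fubini)'' and only acknowledges in a following remark that $\mu_{h}$ may fail to have finite total variation, whereas your weak-$*$ reading of $\mu_{h}$ (justified by the uniform convergence $h_{N}\rightarrow h$ coming from the reproducing estimate) and the alternative density argument via $dom\left(T_{F}^{-1}\right)$ address exactly the gap the paper leaves open.
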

\begin{rem}
Note the measure $d\mu_{h}$ in the corollary does not necessarily
have finite total variation. From (\ref{eq:expmu4}), we conclude
that 
\[
\left|\mu_{h}\right|<\infty\Longleftrightarrow\sum_{n}\frac{1}{\left\Vert e_{n}\right\Vert _{\mathscr{H}}^{2}}\left|\left\langle e_{n},h\right\rangle _{\mathscr{H}}\right|\left|\mu_{n}\right|<\infty;
\]
where $\left|\mu_{n}\right|$ = the total variation of the measure
$\mu_{n}$, see (\ref{eq:expmu2}).\end{rem}
\begin{proof}[Proof of \corref{expmu} ]
By (\ref{lem:expmu}), for all $e_{n}$, $n\in\mathbb{Z}$, $\exists\mu_{n}$
s.t. 
\[
e_{n}\left(x\right)=\int_{0}^{1}F\left(x-y\right)d\mu_{n}\left(y\right);\;\mbox{where}
\]
\begin{equation}
d\mu_{n}\left(y\right)=\frac{1}{2}\left(1+\lambda_{n}^{2}\right)e_{\lambda_{n}}\left(y\right)dy+\frac{1}{2}\left(\left(1-i\lambda_{n}\right)\delta_{0}+\left(1+i\lambda_{n}\right)e^{-i\lambda_{n}}\delta_{1}\right)\label{eq:mun}
\end{equation}
for all $n\in\mathbb{Z}$. 

It follows that, for all $h\in\mathscr{H}_{F}$, we have 
\begin{align*}
h\left(x\right) & =\sum_{n}\frac{1}{\left\Vert e_{n}\right\Vert _{\mathscr{H}}^{2}}\left\langle e_{n},h\right\rangle _{\mathscr{H}}e_{n}\left(x\right)\\
 & =\sum_{n}\frac{1}{\left\Vert e_{n}\right\Vert _{\mathscr{H}}^{2}}\left\langle e_{n},h\right\rangle _{\mathscr{H}}\int_{0}^{1}F\left(x-y\right)d\mu_{n}\\
 & \overset{\left(\text{Fubini}\right)}{=}\int_{0}^{1}F\left(x-y\right)\sum_{n}\frac{1}{\left\Vert e_{n}\right\Vert _{\mathscr{H}}^{2}}\left\langle e_{n},h\right\rangle _{\mathscr{H}}d\mu_{n}\left(y\right);
\end{align*}
i.e., we set $d\mu_{h}\left(y\right)$ as in (\ref{eq:expmu4}). \end{proof}
\begin{rem}
The total variation of the measure $\mu_{n}$ in $\left[0,1\right]$
is $\frac{1}{2}\left(1+\lambda_{n}^{2}\right)+\sqrt{1+\lambda_{n}^{2}}$,
$n\in\mathbb{Z}$.
\end{rem}

\subsection{\label{sub:type2ext}Examples of Type 2 Extensions}

For $F=e^{-\left|x\right|}$, $x\in\left(-1,1\right)$, consider the
following family of extensions:
\[
G_{r}\left(x\right):=\begin{cases}
e^{-1}e^{r\left(1-x\right)} & x\geq1\\
e^{-\left|x\right|} & -1<x<1\\
e^{-1}e^{r\left(1+x\right)} & x\leq-1
\end{cases}
\]
For $r\in\left[0,1\right]$, this family of extensions are type 2.
See \figref{gr}.

The Fourier transform of $G_{r}$ is 
\begin{align*}
\widehat{G_{r}}\left(\lambda\right) & =\int_{-\infty}^{\infty}G_{r}\left(x\right)e^{-i\lambda x}dx\\
 & =\frac{2e\cdot\left(\lambda^{2}+r^{2}\right)+2(r-1)\left(\cos(\lambda)\left(\lambda^{2}-r\right)+\lambda(r+1)\sin(\lambda)\right)}{e\cdot\left(\lambda^{2}+1\right)\left(\lambda^{2}+r^{2}\right)};
\end{align*}
and so
\[
G_{r}\left(x\right)=\int_{-\infty}^{\infty}e^{i\lambda x}\frac{1}{2\pi}\widehat{G_{r}}\left(\lambda\right)d\lambda.
\]
Set 
\begin{align*}
\widehat{g_{r}}\left(\lambda\right):= & \frac{1}{2\pi}\widehat{G_{r}}\left(\lambda\right)
\end{align*}
then we get 
\[
e^{-\left|x\right|}=\int_{-\infty}^{\infty}e^{i\lambda x}\widehat{g_{r}}\left(\lambda\right)d\lambda,\quad\forall x\in\left(-1,1\right).
\]
Indeed, 
\[
d\mu_{r}\left(\lambda\right):=\widehat{g_{r}}\left(\lambda\right)d\lambda,\; r\in\left[0,1\right]
\]
is a family of probability measures that extends $e^{-\left|x\right|}\Big|_{\left(-1,1\right)}$. 

\begin{figure}[H]
\includegraphics[scale=0.45]{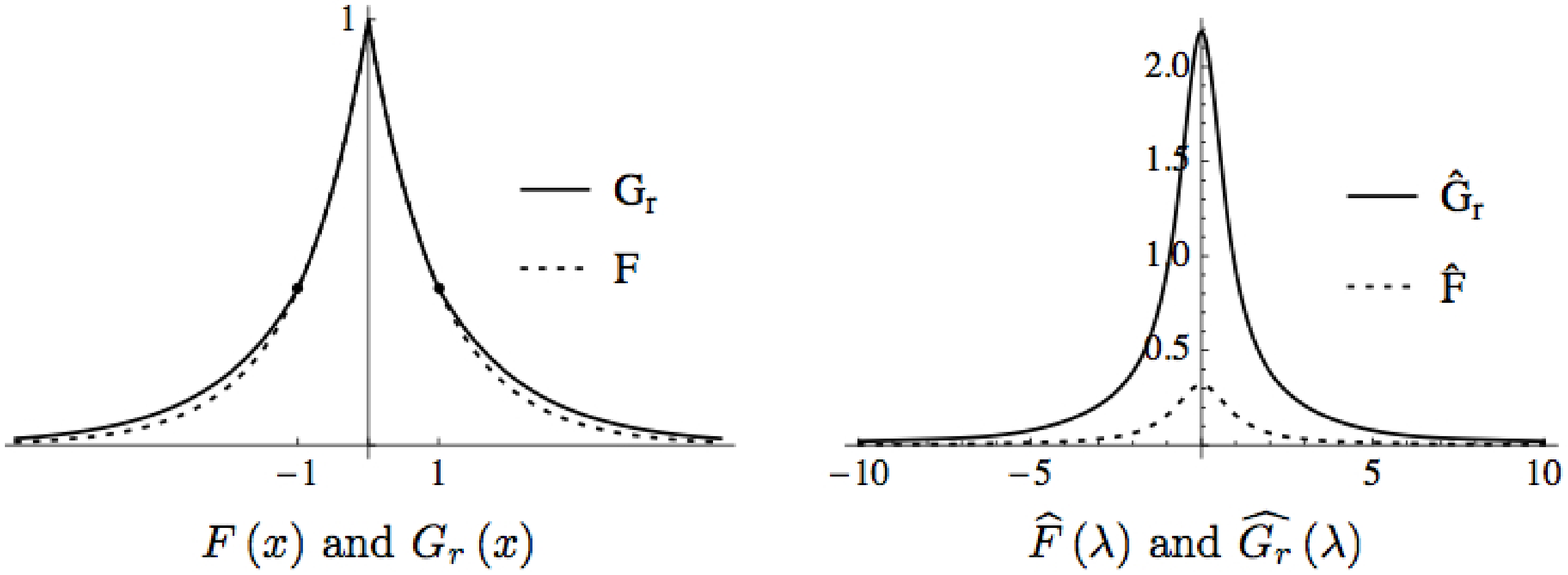}

\protect\caption{\label{fig:gr}$G_{r}$ extensions (type 2), $r=0.8$}
\end{figure}

\begin{example}
Considering $F\left(x\right)=e^{-\left|x\right|}$ for $\left|x\right|<1$;
then the trivial p.d. extension $\widetilde{F}\left(x\right):=e^{-\left|x\right|}$,
defined for \uline{all} $x\in\mathbb{R}$, is a type 2 extension.
\end{example}

\subsection{The Unitary Groups}

Starting with a pair $\left(\Omega,F\right)$, where $F$ is a prescribed
continuous positive definite function defined on $\Omega=\left(-a,a\right)$,
we study the family of selfadjoint extensions $\left\{ A_{\theta}\right\} $
of the canonical Hermitian operator $D^{\left(F\right)}$ in $\mathscr{H}_{F}$.
We show that the corresponding family of unitary one-parameter group
$U^{\left(F,\theta\right)}\left(t\right)$, $t\in\mathbb{R}$, acting
in $\mathscr{H}_{F}$ and generated by $A_{\theta}$ has an explicit
translation representation. It takes an explicit form and involving
systems of boundary conditions, a system defined on functions $f$
in $\mathscr{H}_{F}$, and involving both $f$ and the distribution
derivative $f'$ at the two endpoints $x=0$, and $x=a$.

We see from \propref{dev1} the unitary one-parameter group $\left\{ U^{\theta}\left(t\right)\right\} _{t\in\mathbb{R}}$
generated by $A_{\theta}$ must be translation acting on continuous
functions $h$ on $\mathbb{R}$ modulo
\begin{equation}
h\left(1\right)+h'\left(1\right)=e^{i\theta}\left(h\left(0\right)-h'\left(0\right)\right).\label{eq:u-2-1}
\end{equation}
But we must therefore look at extensions from $\left[0,1\right]$
to $\mathbb{R}$ of the function $h$ s.t. $h:\mathbb{R}\rightarrow\mathbb{C}$,
$h\big|_{\left[0,1\right]}\in\mathscr{H}_{F}$ and satisfies (\ref{eq:u-2-1}).
See \figref{ind0}.

\begin{figure}[H]
\includegraphics[scale=0.45]{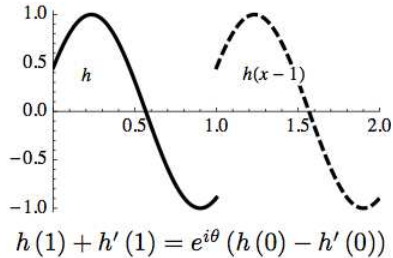}

\protect\caption{\label{fig:ind0}Extensions preserving the boundary condition (\ref{eq:u-2-1}). }

\end{figure}

The extended function, also denoted $h$, satisfies 
\begin{equation}
h\left(x+1\right)+h'\left(x+1\right)=e^{i\theta}\left(h\left(x\right)-h'\left(x\right)\right),\;\forall x\in\mathbb{R}\label{eq:u-2-2}
\end{equation}
and further $h,h'\in\mathscr{H}_{F}$; see \lemref{domD}. And the
functions in (\ref{eq:u-2-2}) are invariant under $U^{\theta}\left(t\right):h\mapsto h\left(\cdot+t\right)$,
$\forall t\in\mathbb{R}$. See \figref{indreprep}. Note this is an
induced representation, see \cite{Ors79}.

\begin{figure}[H]
\includegraphics[scale=0.45]{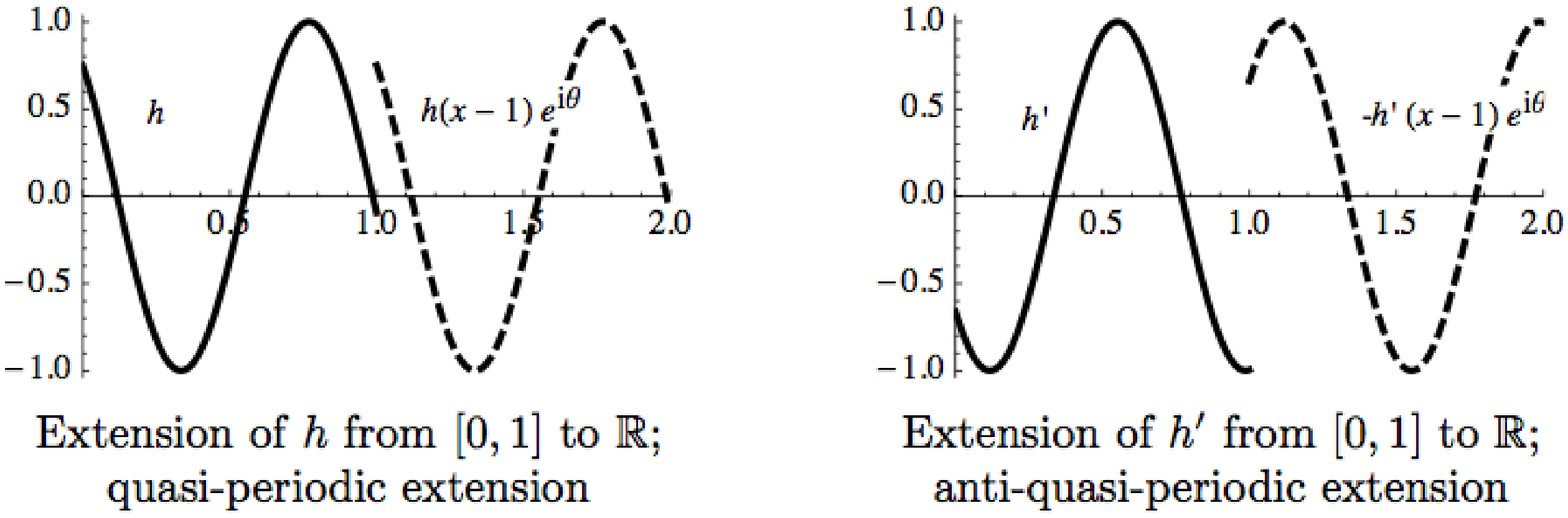}

\protect\caption{\label{fig:indreprep}$U^{\theta}\left(t\right):h\mapsto h\left(x+t\right)$,
$\forall t\in\mathbb{R}$, modulo condition (\ref{eq:u-2-1}). }
\end{figure}

\begin{example}
\label{ex:ddx}It is useful to look at the classical example, $\frac{d}{dx}$
acting on $L^{2}\left(0,1\right)$ with domain $C_{c}^{\infty}\left(0,1\right)$.
The selfadjoint extensions are parameterized by $e^{i\theta}$. Let
$U^{\theta}\left(t\right)=e^{itA_{\theta}}$, $t\in\mathbb{R}$, as
before. 

Note the following representation 
\[
\pi:C\left(\mathbb{R}/\mathbb{Z}\right)\longrightarrow\mathscr{H}^{\left(\theta\right)},\quad\pi\left(\varphi\right)f:=\varphi f
\]
satisfies the covariance relation 
\[
U^{\left(\theta\right)}\left(t\right)\pi\left(\varphi\right)U^{\left(\theta\right)}\left(-t\right)=\pi\left(\varphi\left(\cdot+t\right)\right),\;\forall t\in\mathbb{R}.
\]
Therefore, by the theorem of imprimitivity, there exists a unitary
representation $L$ of $\mathbb{R}/\mathbb{Z}$ on $\mathscr{H}_{F}$,
such that $U^{\left(\theta\right)}\simeq ind_{\mathbb{Z}}^{\mathbb{R}}\left(L\right)$.

Now, let $L\left(n\right):=e^{in\theta}$, $n\in\mathbb{Z}$, be a
one-dimensional representation of $\mathbb{Z}$. The unitary one-parameter
group $\left\{ T^{\left(\theta\right)}\left(t\right)\right\} _{t\in\mathbb{R}}$
is induced from $L$, i.e., 
\[
T^{\left(\theta\right)}=ind_{\mathbb{Z}}^{\mathbb{R}}\left(e^{i\theta}\right)
\]
acting on 
\begin{gather*}
\mathscr{H}_{ind}^{\left(\theta\right)}=\left\{ f:\mathbb{R}\rightarrow\mathbb{C},\; f\left(x+1\right)=e^{i\theta}f\left(x\right)\right\} ,\;\mbox{where}\\
\left\Vert f\right\Vert _{\mathscr{H}_{ind}^{\left(\theta\right)}}^{2}=\int_{0}^{1}\left|f\left(x\right)\right|^{2}dx=\int_{\mathbb{R}/\mathbb{Z}}\left|f\left(x\right)\right|^{2}dx,\;\forall f\in\mathscr{H}_{ind}^{\left(\theta\right)};
\end{gather*}
such that 
\[
(T^{\left(\theta\right)}\left(t\right))f\left(x\right)=f\left(x+t\right),\;\forall t\in\mathbb{R}.
\]

\end{example}
Let 
\[
W^{\left(\theta\right)}:\mathscr{H}_{ind}^{\left(\theta\right)}\longrightarrow L^{2}\left(0,1\right),\quad(W^{\left(\theta\right)}f)\left(x\right)=f\left(x\mod\mathbb{Z}\right)
\]
then
\[
U^{\left(\theta\right)}\left(t\right)=W^{\left(\theta\right)}U_{ind}^{\left(\theta\right)}\left(t\right)W^{\left(\theta\right)*}
\]
i.e., we have the following commutative diagram:
\[
\xymatrix{\mathscr{H}_{ind}^{\left(\theta\right)}\ar[r]^{W^{\left(\theta\right)}}\ar[d]_{T_{t}^{\left(\theta\right)}=U_{ind}^{\left(\theta\right)}\left(t\right)} & L^{2}\left(0,1\right)\ar[d]^{U^{\left(\theta\right)}\left(t\right)=e^{itA_{\theta}}}\\
\mathscr{H}_{ind}^{\left(\theta\right)}\ar[r]_{W^{\left(\theta\right)}} & L^{2}\left(0,1\right)
}
\]

Return to $F\left(x\right)=e^{-\left|x\right|}$, $\left|x\right|<1$.

Consider the following two subspaces of $\mathscr{H}_{F}$, 
\begin{align*}
\mathscr{D}_{0} & :=dom\left(D_{F}\right)=\left\{ F_{\varphi}:\varphi\in C_{c}^{\infty}\left(0,1\right)\right\} \\
\mathscr{D}_{\theta} & :=dom\left(A_{\theta}\right)=\left\{ f+c\left(e^{-x}+e^{i\theta}e^{x-1}\right):f\in\mathscr{D}_{0},c\in\mathbb{C}\right\} \\
 & \:=\left\{ h\in D_{F}^{*}:h\left(1\right)+h'\left(1\right)=e^{i\theta}\left(h\left(0\right)-h'\left(0\right)\right)\right\} 
\end{align*}
and note $\mathscr{D}_{0}\subset\mathscr{D}_{\theta}$.
\begin{defn}
Let $L$ be a subspace in $\mathscr{H}_{F}$ (e.g., $L=\mathscr{D}_{0}$
or $\mathscr{D}_{\theta}$), and let 
\[
Mul\left(L\right)=\left\{ f\:\big|\: fh\in L,\:\forall h\in L\right\} 
\]
where $fh$ is point-wise product, i.e., $\left(fh\right)\left(x\right)=f\left(x\right)h\left(x\right)$.
(Actually $f$ must be a multiplier of $\mathscr{H}_{F}$ as well,
i.e., we must have $f$ satisfy that $fh\in\mathscr{H}_{F}$, $\forall h\in\mathscr{H}_{F}$.)\end{defn}
\begin{thm}
We have
\begin{align}
Mul\left(\mathscr{D}_{0}\right) & =\left\{ f\: s.t.\: f'\left(0\right)=f'\left(1\right)=0\right\} ;\;\mbox{and}\label{eq:mul1}\\
Mul\left(\mathscr{D}_{\theta}\right) & =\left\{ f\in Mul\left(\mathscr{D}_{0}\right)\; s.t.\: f\left(0\right)=f\left(1\right)=0\right\} .\label{eq:mul2}
\end{align}

\end{thm}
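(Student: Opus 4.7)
My plan is to exploit the fact that both $\mathscr{D}_0$ and $\mathscr{D}_\theta$ are characterized (among sufficiently regular functions) by explicit endpoint boundary conditions, and to combine these with the Leibniz rule $(fh)'=f'h+fh'$. The essential inputs are Corollary \ref{cor:dev1}, which gives $h(0)=h'(0)$ and $h(1)=-h'(1)$ for every $h=F_\varphi\in\mathscr{D}_0$, and Proposition \ref{prop:dev1}, which gives the twisted boundary condition $h(1)+h'(1)=e^{i\theta}(h(0)-h'(0))$ for $h\in\mathscr{D}_\theta$. Throughout, $f$ is tacitly assumed smooth and to be a multiplier of $\mathscr{H}_F$, as noted in the parenthetical after the definition.

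For the necessity direction in \eqref{eq:mul1}, take any $h=F_\varphi\in\mathscr{D}_0$ and compute, using $h(0)=h'(0)$,
\[
(fh)(0)-(fh)'(0)=f(0)h(0)-f'(0)h(0)-f(0)h'(0)=-f'(0)h(0).
\]
Since $fh\in\mathscr{D}_0$ by hypothesis, the left-hand side vanishes; and because $h(0)=\int_0^1\varphi(y)e^{-y}\,dy$ can be chosen nonzero, we conclude $f'(0)=0$. The symmetric computation at $x=1$, using $h(1)=-h'(1)$, yields $(fh)(1)+(fh)'(1)=f'(1)h(1)$, hence $f'(1)=0$. Sufficiency goes by reversing these identities: if $f'(0)=f'(1)=0$, the boundary conditions characterizing $\mathscr{D}_0$ are preserved under multiplication by $f$, and Lemma \ref{lem:Tf} lets us recover a corresponding $\psi=\tfrac12(fh-(fh)'')\in C_c^\infty(0,1)$ so that $fh=F_\psi\in\mathscr{D}_0$; the key point here is that $h=F_\varphi$ is exponential on $[0,\min\mathrm{supp}(\varphi))$ and $[\max\mathrm{supp}(\varphi),1]$, so the vanishing of $f'$ at the endpoints makes $\psi$ vanish in a neighborhood of $\{0,1\}$.

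For \eqref{eq:mul2}, the inclusion $\mathscr{D}_0\subset\mathscr{D}_\theta$ (and the fact that every $h\in\mathscr{D}_0$ makes both sides of the $\theta$ boundary condition vanish) implies that any $f\in Mul(\mathscr{D}_\theta)$ automatically lies in $Mul(\mathscr{D}_0)$, so we already have $f'(0)=f'(1)=0$. To extract the additional condition $f(0)=f(1)=0$, test multiplication on the defect combination $\xi_\theta(x):=e^{-x}+e^{i\theta}e^{x-1}\in\mathscr{D}_\theta$. By definition of $\mathscr{D}_\theta$, the product admits a unique decomposition $f\xi_\theta=h_0+c\,\xi_\theta$ with $h_0\in\mathscr{D}_0$ and $c\in\mathbb{C}$. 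Writing $h_0=(f-c)\xi_\theta$ and applying the $\mathscr{D}_0$ boundary conditions to $h_0$, using
\[
\xi_\theta(0)-\xi_\theta'(0)=2,\qquad \xi_\theta(1)+\xi_\theta'(1)=2e^{i\theta},
\]
together with $f'(0)=f'(1)=0$, yields the two identities $c=f(0)$ and $c=f(1)$. Finally, the genuine membership $h_0\in\mathscr{D}_0=\{F_\psi:\psi\in C_c^\infty(0,1)\}$ — not just satisfaction of the boundary conditions — forces the associated test function $\psi=-\tfrac12(f''\xi_\theta+2f'\xi_\theta')$ derived from $h_0-h_0''$ (using $\xi_\theta''=\xi_\theta$) to vanish near both endpoints, which, after substituting $f'(0)=f'(1)=0$ and using $\xi_\theta(0),\xi_\theta(1)\ne 0$, compels the constant $c$ to be $0$, i.e.\ $f(0)=f(1)=0$.

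The step I expect to be the genuine obstacle is the sufficiency in \eqref{eq:mul2}: one must show that the conjunction $f'(0)=f'(1)=f(0)=f(1)=0$ is enough to conclude $fh\in\mathscr{D}_\theta$ for every $h=h_0+d\xi_\theta\in\mathscr{D}_\theta$. The part $fh_0\in\mathscr{D}_0\subset\mathscr{D}_\theta$ follows from \eqref{eq:mul1}, but handling the $d\,f\xi_\theta$ piece requires verifying that $f\xi_\theta$ itself lies in $\mathscr{D}_0$, which reduces — via the formula for $\psi$ above — to a support condition on derivatives of $f$ near the endpoints. This is the delicate interior-versus-boundary interplay and is where the implicit smoothness and $\mathscr{H}_F$-multiplier hypotheses on $f$ must be used carefully.
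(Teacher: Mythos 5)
Your treatment of (\ref{eq:mul1}) is essentially the paper's: both arguments apply the Leibniz rule to the endpoint identities $h\left(0\right)=h'\left(0\right)$, $h\left(1\right)=-h'\left(1\right)$ from Corollary \ref{cor:dev1} and read off $f'\left(0\right)=f'\left(1\right)=0$. Like the paper, you tacitly identify membership in $\mathscr{D}_{0}$ with satisfaction of these two boundary conditions (your sketch of the sufficiency direction, in which the vanishing of $f'$ at the endpoints is claimed to make $\psi$ vanish \emph{in a neighborhood} of $\left\{ 0,1\right\}$, actually only gives $\psi\left(0\right)=-\tfrac{1}{2}f''\left(0\right)F_{\varphi}\left(0\right)$ etc., so it proves no more than the paper does); this part is acceptable at the paper's own level of rigor.

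The genuine gap is in your argument for (\ref{eq:mul2}), at the step meant to upgrade $f\left(0\right)=f\left(1\right)$ to $f\left(0\right)=f\left(1\right)=0$. Your boundary computation on $h_{0}=\left(f-c\right)\xi_{\theta}$ correctly yields $c=f\left(0\right)$ and $c=f\left(1\right)$, hence $f\left(0\right)=f\left(1\right)$ --- and this is exactly where the paper's own proof stops: its displayed chain of equivalences terminates at $\varphi\left(1\right)=\varphi\left(0\right)$, not at $\varphi\left(0\right)=\varphi\left(1\right)=0$. Your attempt to extract the additional vanishing from the compact-support requirement on $\psi=-\tfrac{1}{2}\left(f''\xi_{\theta}+2f'\xi_{\theta}'\right)$ cannot succeed: that expression involves only derivatives of $f$, so the constant $c$ (equivalently the common value $f\left(0\right)=f\left(1\right)$) has been differentiated away, and no support condition on $\psi$ can constrain it. The concluding sentence of that paragraph is therefore a non sequitur. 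Indeed, the constant function $f\equiv\mathbf{1}$ maps $\mathscr{D}_{\theta}$ to itself trivially yet has $f\left(0\right)=f\left(1\right)=1$, so the condition $f\left(0\right)=f\left(1\right)=0$ in (\ref{eq:mul2}) cannot be necessary; what the boundary analysis (yours and the paper's alike) actually characterizes is $\left\{ f\in Mul\left(\mathscr{D}_{0}\right):f\left(0\right)=f\left(1\right)\right\}$, consistent with the paper's subsequent corollary which explicitly recalls ``$\varphi\left(0\right)=\varphi\left(1\right)$.'' In short, your proof reproduces the paper's argument up to the point where the paper's proof and the paper's stated theorem part ways, and then inserts an invalid step to bridge that discrepancy.
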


\begin{proof}
~

1. Assume $h\in\mathscr{D}_{0}$, i.e., $h\left(0\right)-h'\left(0\right)=h\left(1\right)+h'\left(1\right)=0$.
Then, 
\[
\left\{ \begin{split}\left(\varphi h\right)\left(1\right)+\left(\varphi h\right)'\left(1\right) & =0\\
\left(\varphi h\right)\left(0\right)-\left(\varphi h\right)'\left(0\right) & =0
\end{split}
\right\} \Longleftrightarrow\left\{ \begin{split}\varphi'\left(1\right)h\left(1\right) & =0\\
\varphi'\left(0\right)h\left(0\right) & =0
\end{split}
\right\} ,\;\forall h\in\mathscr{D}_{0};
\]
this gives (\ref{eq:mul1}).

2. Suppose $\varphi\in Mul\left(\mathscr{D}_{0}\right)$, then $\varphi\in Mul\left(\mathscr{D}_{\theta}\right)$
$\Longleftrightarrow$ $\varphi\left(0\right)=\varphi\left(1\right)$.
In fact, by assumption, 
\[
\left(\varphi h\right)'\left(b\right)=\varphi\left(b\right)h'\left(b\right),\;\forall b\in\left\{ 0,1\right\} 
\]
and $\varphi h\in Mul\left(\mathscr{D}_{\theta}\right)$ $\Longleftrightarrow$
\begin{eqnarray*}
\left(\varphi h\right)'\left(1\right)+\left(\varphi h\right)'\left(1\right) & = & e^{i\theta}\left(\left(\varphi h\right)\left(x\right)-\left(\varphi h\right)'\left(0\right)\right),\;\forall h\in Mul\left(\mathscr{D}_{\theta}\right)\\
 & \Updownarrow\\
\varphi\left(1\right)\left[h\left(1\right)+h'\left(1\right)\right] & = & e^{i\theta}\varphi\left(0\right)\left[h\left(0\right)-h'\left(0\right)\right],\;\forall h\in\in Mul\left(\mathscr{D}_{\theta}\right)\\
 & \Updownarrow\\
\varphi\left(1\right) & = & \varphi\left(0\right)
\end{eqnarray*}

\begin{claim*}
$\mathscr{A}=\left\{ \varphi:\varphi'\left(0\right)=\varphi'\left(1\right)=0\right\} $
(i.e., $Mul\left(\mathscr{D}_{0}\right)$) is an algebra. 

In fact, $\left(\varphi\psi\right)'\left(b\right)=\varphi'\left(b\right)\psi\left(b\right)+\varphi\left(b\right)\psi'\left(b\right)=0$,
if $\varphi$ and $\psi$ are in $\mathscr{A}$, $b\in\left\{ 0,1\right\} $.
So $\mathscr{A}$ is an algebra. 
\end{claim*}
\end{proof}
\begin{cor}
If $\varphi\in Mul\left(\mathscr{D}_{\theta}\right)$ then 
\[
U_{\theta}\left(t\right)M\left(\varphi\right)U_{\theta}\left(-t\right)=M\left(\varphi\left(\cdot+t\right)\right),\;\forall t\in\mathbb{R}
\]
where $\varphi\in Mul$ i.e., $h\longrightarrow\varphi h$ is bounded
in $\mathscr{H}_{F}$.
\end{cor}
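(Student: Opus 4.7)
The plan is to prove the covariance identity by invoking the translation representation of $\{U_\theta(t)\}_{t\in\mathbb{R}}$ established just above the statement. Recall that elements of $\mathscr{D}_\theta\subset\mathscr{H}_F$ are characterized by the boundary condition (\ref{eq:u-2-1}), and the action of $U_\theta(t)$ corresponds (after extending such $h$ from $[0,1]$ to all of $\mathbb{R}$ via (\ref{eq:u-2-2})) to the translation $h\mapsto h(\cdot+t)$, in full analogy with the induced-representation picture of \exref{ddx}.

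First I would verify that multiplication by $\varphi$ is a bounded operator $M(\varphi)$ on $\mathscr{H}_F$ that preserves $\mathscr{D}_\theta$; this is built into the definition of $Mul(\mathscr{D}_\theta)$. By the preceding theorem, $\varphi\in Mul(\mathscr{D}_\theta)$ is characterized by $\varphi'(0)=\varphi'(1)=0$ and $\varphi(0)=\varphi(1)$, so $\varphi$ admits a canonical extension to $\mathbb{R}$ satisfying the periodicity-type boundary rule compatible with (\ref{eq:u-2-2}). Consequently, for each $t\in\mathbb{R}$, the translate $\varphi(\cdot+t)$ again belongs to $Mul(\mathscr{D}_\theta)$, so that the right-hand side $M(\varphi(\cdot+t))$ is a well-defined bounded operator on $\mathscr{H}_F$.

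Next, on the dense core $\mathscr{D}_\theta$, I would perform the pointwise computation: fix $h\in\mathscr{D}_\theta$ and, using the translation representation, extend $h$ to all of $\mathbb{R}$ by (\ref{eq:u-2-2}) so that $(U_\theta(-t)h)(x)=h(x-t)$. Then, for $x\in[0,1]$,
\begin{align*}
\bigl(U_\theta(t)M(\varphi)U_\theta(-t)h\bigr)(x)
&=\bigl(M(\varphi)U_\theta(-t)h\bigr)(x+t)\\
&=\varphi(x+t)\,h\bigl((x+t)-t\bigr)\\
&=\varphi(x+t)\,h(x)=\bigl(M(\varphi(\cdot+t))h\bigr)(x),
\end{align*}
where $\varphi$ is evaluated using the extension specified above. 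Since $\mathscr{D}_\theta$ is dense in $\mathscr{H}_F$ and all operators appearing are bounded, the identity extends to the whole Hilbert space by continuity.

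The main obstacle is justifying the translation representation rigorously, i.e., that the prescription ``extend $h$ to $\mathbb{R}$ by (\ref{eq:u-2-2}), translate, and restrict'' reproduces the unitary group $U_\theta(t)=e^{itA_\theta}$ obtained via the spectral theorem from the selfadjoint extension $A_\theta$. This is precisely the content of the imprimitivity/induced-representation identification $U_\theta\simeq\mathrm{ind}_{\mathbb{Z}}^{\mathbb{R}}(e^{i\theta})$ sketched in \exref{ddx}; once this identification is in hand, the covariance relation reduces to the elementary fact that translation intertwines multiplication operators, $\tau_t M(\varphi)\tau_{-t}=M(\varphi(\cdot+t))$.
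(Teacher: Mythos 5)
The paper states this corollary without proof, so there is nothing to compare line by line; your outline (conjugate the multiplication operator through the translation representation of $U_{\theta}$, compute pointwise on the dense core $\mathscr{D}_{\theta}$, extend by boundedness) is clearly the intended route, in analogy with the covariance relation of \exref{ddx}. However, two steps in your write-up do not hold up as stated. First, the claim that $\varphi\left(\cdot+t\right)$ again belongs to $Mul\left(\mathscr{D}_{\theta}\right)$ is false in general: the characterizations (\ref{eq:mul1})--(\ref{eq:mul2}) impose the endpoint conditions $\varphi'\left(0\right)=\varphi'\left(1\right)=0$ (plus a condition on $\varphi\left(0\right),\varphi\left(1\right)$), and these are not translation invariant. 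For instance $\varphi\left(x\right)=\sin^{2}\left(\pi x\right)$ satisfies all of them, while its periodic translate by $t=\tfrac{1}{4}$ has derivative $\pi\neq0$ at $x=0$. This is not cosmetic: since $U_{\theta}\left(\pm t\right)$ preserve $dom\left(A_{\theta}\right)=\mathscr{D}_{\theta}$ and $M\left(\varphi\right)$ does by hypothesis, the left-hand side preserves $\mathscr{D}_{\theta}$; if it were literally multiplication by the periodic translate, that translate would have to lie in $Mul\left(\mathscr{D}_{\theta}\right)$ for every $t$, forcing $\varphi'\equiv0$. So either $M\left(\varphi\left(\cdot+t\right)\right)$ must be read merely as a bounded multiplication operator on $\mathscr{H}_{F}$ (with the translate taken in a sense other than the naive periodic one), or the range of $t$ must be restricted; your proof does not confront this.

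Second, the chain $\bigl(U_{\theta}\left(t\right)M\left(\varphi\right)U_{\theta}\left(-t\right)h\bigr)\left(x\right)=\varphi\left(x+t\right)h\left(x\right)$ silently evaluates the extension of the product $\varphi\cdot\left(U_{\theta}\left(-t\right)h\right)$ at points $x+t$ outside $\left[0,1\right]$. The extension rule (\ref{eq:u-2-2}) involves both $h$ and $h'$, so it is not automatic that the extension of a product is the product of the extensions, i.e., that $\widetilde{\varphi k}=\tilde{\varphi}\,\tilde{k}$ beyond the unit interval; this is precisely where the boundary conditions defining $Mul\left(\mathscr{D}_{\theta}\right)$ must enter, and it is the step that carries the actual content of the corollary. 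For $x+t\in\left[0,1\right]$ your computation is correct and requires no multiplier hypothesis at all; the work is in crossing the endpoints, and that work is missing. Finally, as you yourself note, the identification of $U_{\theta}\left(t\right)=e^{itA_{\theta}}$ with the translate-and-restrict prescription is only sketched in the paper; a complete argument would establish it, for example from the eigenfunction expansion $U_{\theta}\left(t\right)e_{\lambda}=e^{i\lambda t}e_{\lambda}=e_{\lambda}\left(\cdot+t\right)$ over $\lambda\in\Lambda_{\theta}$.
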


\begin{cor}
If $\varphi\in\mathscr{A}_{1}\cap\mathscr{A}_{2}$, then the action
of $M\left(\varphi\right)$ on the boundary values is as follows
\[
\begin{pmatrix}h\left(0\right)-h'\left(0\right)\\
h\left(1\right)+h'\left(1\right)
\end{pmatrix}\longmapsto\varphi\left(0\right)\begin{pmatrix}h\left(0\right)-h'\left(0\right)\\
h\left(1\right)+h'\left(1\right)
\end{pmatrix}
\]
(Recall $\varphi\left(0\right)=\varphi\left(1\right)$.)
\end{cor}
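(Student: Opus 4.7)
The plan is entirely computational: unpack the two hypotheses that come with $\varphi\in\mathscr{A}_{1}\cap\mathscr{A}_{2}$, and then evaluate $(\varphi h)(b)\pm(\varphi h)'(b)$ at the two endpoints $b=0,1$ by the Leibniz rule.

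First I would record what the two multiplier memberships deliver at the endpoints. From the characterization (\ref{eq:mul1}) of $Mul(\mathscr{D}_{0})$, membership in $\mathscr{A}_{1}$ gives $\varphi'(0)=\varphi'(1)=0$. The proof of (\ref{eq:mul2}) moreover identifies the extra equivalence $\varphi\in Mul(\mathscr{D}_{\theta})\Longleftrightarrow\varphi(0)=\varphi(1)$ (this is exactly the reminder parenthetical in the statement of the corollary), so the intersection hypothesis hands us both sets of boundary data simultaneously.

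With these conditions in hand, the second step is the routine boundary computation. Applying Leibniz at $x=b\in\{0,1\}$ to $(\varphi h)'$, the vanishing of $\varphi'$ at the endpoints kills the cross term, leaving $(\varphi h)'(b)=\varphi(b)h'(b)$. Substituting into $(\varphi h)(0)-(\varphi h)'(0)$ and $(\varphi h)(1)+(\varphi h)'(1)$ then factors out $\varphi(0)$ and $\varphi(1)$ respectively, and the equality $\varphi(0)=\varphi(1)$ finally identifies the two prefactors as a single common scalar $\varphi(0)$, which is precisely the matrix identity asserted.

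I do not anticipate any real obstacle; the substance already lives in the preceding multiplier theorem, and only a two-line product-rule computation remains. The one point meriting a moment's care is to read $h$ in a domain where the classical endpoint values of both $h$ and $h'$ make sense, so that the boundary expressions are literal complex numbers rather than distributional symbols; the natural choice is $h\in dom(D_{F}^{*})$ as characterized in \lemref{domD}, which is already the ambient domain in which $\mathscr{D}_{0}$ and $\mathscr{D}_{\theta}$ sit.
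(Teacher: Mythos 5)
Your proposal is correct and is essentially the argument the paper intends: the corollary is left as an immediate consequence of the preceding multiplier theorem, whose proof already records the key identity $\left(\varphi h\right)'\left(b\right)=\varphi\left(b\right)h'\left(b\right)$ for $b\in\left\{ 0,1\right\} $ (from $\varphi'\left(0\right)=\varphi'\left(1\right)=0$) together with $\varphi\left(0\right)=\varphi\left(1\right)$, after which the two boundary expressions factor exactly as you describe. Your closing remark about working in $dom\left(D_{F}^{*}\right)$ so that the endpoint values of $h$ and $h'$ are honest numbers is a sensible precision that the paper leaves implicit.
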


\subsection{Harmonic ONBs in the RKHS $\mathscr{H}_{F}$: Complex Exponentials}

Below we give some explicit results on orthonormal bases consisting
of complex exponentials. For this purpose, let $\Lambda_{\theta}=\mbox{spectrum}\left(A_{\theta}\right)$
in our example $F\left(x\right)=e^{-\left|x\right|}$, $\left|x\right|<1$,
i.e., (see \corref{spext}) 
\begin{equation}
\Lambda_{\theta}=\left\{ e^{i\lambda x}\:\Big|\:\lambda\in\mathbb{R},\mbox{ s.t. }\lambda=\theta+\tan^{-1}\left(\frac{2\lambda}{\lambda^{2}-1}\right)+2n\pi,\; n\in\mathbb{Z}\right\} .\label{eq:hb1}
\end{equation}

This covers, \emph{mutatis mutandis}, the spectral picture for other
examples of positive definite functions $F$, specified only on a
fixed finite interval. And in each such case, if the indices of $D_{F}$
are $\left(1,1\right)$, we then get, for every fixed value of $\theta$,
existence of systems of orthonormal bases $\left\{ e_{\lambda}\:|\:\lambda\in\Lambda_{\theta}\right\} $
in the corresponding RKHS $\mathscr{H}_{F}$.

\textbf{Conclusion:} Corollary \ref{cor:exponb}. While in classical
Fourier analysis, the Fourier frequencies typically are sampled on
a suitable arithmetic progression of points on the real line, this
is not the case for our present case of harmonic bases in $\mathscr{H}_{F}$.

\section{Elliptic Positive Definite Functions $F$}

The study of locally defined positive definite continuous functions
$F$ (i.e., defined on some fixed bounded connected subset in $\mathbb{R}^{d}$)
entails boundary value problems for, and extensions of, constant coefficient,
strictly elliptic partial differential operators. This is explored
in the present section. To state these connections, we will need a
few basic facts about Sobolev spaces, which are first reviewed briefly.

\subsection{A characterization of $\mathscr{H}_{F}$ for elliptic $F$ in terms
of the first Sobolev space of $\left(0,a\right)$}
\begin{defn}
Let $F:\left(-a,a\right)\rightarrow\mathbb{C}$ be a continuous positive
definite function defined on a finite interval $\left(-a,a\right)$,
$a>0$ fixed. We say that $F$ is \textbf{\emph{\uline{elliptic}}}
iff the first Sobolev space $H_{1}\left(0,a\right)$ is contained
in $\mathscr{H}_{F}$; i.e., if every continuous function on $\left[0,a\right]$
such that $h'\in L^{2}\left(0,a\right)$ is in $\mathscr{H}_{F}$. 

Equivalently, there exists a second order elliptic operator $\mbox{\ensuremath{\mathscr{D}}}$,
$T_{F}^{-1}\supset\mathscr{D}$, and a finite constant $C>0$ s.t.
\begin{equation}
\left\langle h,T_{F}^{-1}h\right\rangle _{L^{2}\left(0,a\right)}\leq C\int_{0}^{a}\left(\left|h\left(x\right)\right|^{2}+\left|h'\left(x\right)\right|^{2}\right)dx,\;\forall h\in dom(T_{F}^{-1/2}).\label{eq:e-1}
\end{equation}
\end{defn}
\begin{rem}
$T_{F}^{-1}$ is one of the selfadjoint extensions of $\mathscr{D}\big|_{C_{c}^{\infty}\left(0,a\right)}$,
as a Hermitian operator in $L^{2}\left(0,a\right)$.
\end{rem}
The following examples of $F$ are elliptic. 
\begin{example}
$F\left(x\right)=e^{-\left|x\right|}$, $-1<x<1$. A direct computation
shows 
\[
\left(T_{F}\varphi\right)''=T_{F}\varphi-2\varphi\Longleftrightarrow\varphi=\tfrac{1}{2}\left(1-\left(\tfrac{d}{dx}\right)^{2}\right)T_{F}\varphi\Longrightarrow T_{F}^{-1}\supset\underset{=:\mathscr{D}}{\underbrace{\tfrac{1}{2}\left(1-\left(\tfrac{d}{dx}\right)^{2}\right)}}
\]
For details, see \lemref{Tf}.
\end{example}

\begin{example}
$F\left(x\right)=1-\left|x\right|$, $-\frac{1}{2}<x<\frac{1}{2}$.
In this case, we have 
\[
\left(T_{F}\varphi\right)''=-2\varphi\Longleftrightarrow\varphi=-\tfrac{1}{2}\left(\tfrac{d}{dx}\right)^{2}T_{F}\varphi\Longrightarrow T_{F}^{-1}\supset\underset{=:\mathscr{D}}{\underbrace{-\tfrac{1}{2}\left(\tfrac{d}{dx}\right)^{2}}}
\]
See \exref{absm}.
\end{example}
In the following, we give a spectral representation.
\begin{rem}
The operators 
\[
\mathscr{D}_{1}:=\tfrac{1}{2}\left(1-\left(\tfrac{d}{dx}\right)^{2}\right),\;\mathscr{D}_{2}:=-\tfrac{1}{2}\left(\tfrac{d}{dx}\right)^{2}
\]
are not selfadjoint in $L^{2}\left(0,a\right)$, $a=1$ for $\mathscr{D}_{1}$,
and $a=\frac{1}{2}$ for $\mathscr{D}_{2}$; but they have selfadjoint
extensions. $T_{F}^{-1}$ is one of these selfadjoint extensions.
Also note that $T_{F}$ is positive definite, hence $T_{F}^{-1}$
is also positive definite.\end{rem}
\begin{proof}[Proof of $T_{F}$ p.d. $\Longrightarrow$ $T_{F}^{-1}$ p.d]
 On $L^{2}\left(0,a\right)\ominus\ker\left(T_{F}\right)$, we have
\begin{equation}
T_{F}=\sum_{k=1}^{\infty}\lambda_{k}P_{k}
\end{equation}
where $\left\{ P_{k}\right\} _{k\in\mathbb{N}}$ are the spectral
projections of $T_{F}$, and $\lambda_{k}>0$, for all $k\in\mathbb{N}$,
and 
\begin{equation}
\sum_{k}\lambda_{k}=\mbox{trace}\left(T_{F}\right)=a.
\end{equation}
So 
\begin{equation}
T_{F}^{-1}=\sum_{k}\lambda_{k}^{-1}P_{k}
\end{equation}
is well-defined, and 
\[
\left\langle f,T_{F}^{-1}f\right\rangle _{2}=\sum_{k}\lambda_{k}^{-1}\left\Vert P_{k}f\right\Vert _{2}^{2}\geq0,\;\forall f\in dom\left(T_{F}^{-1}\right).
\]
Note 
\begin{equation}
dom\left(T_{F}^{-1}\right)=\left\{ f:\sum_{k}\lambda_{k}^{-2}\left\Vert P_{k}f\right\Vert _{2}^{2}<\infty\right\} 
\end{equation}
by the spectral theorem.\end{proof}
\begin{thm}
If $F:\left(-a,a\right)\rightarrow\mathbb{C}$ is continuous, positive
definite, and elliptic, then 
\begin{equation}
\left\{ h:\int_{0}^{1}\left(\left|h\left(x\right)\right|^{2}+\left|h'\left(x\right)\right|^{2}\right)dx<\infty\right\} \subseteq\mathscr{H}_{F}.
\end{equation}
 \end{thm}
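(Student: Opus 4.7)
I will verify the quantitative criterion of \thmref{HF}: given $h \in H_1(0,a)$, it suffices to exhibit a finite constant $A = A(h)$ such that
\[
\bigg|\int_0^a \overline{\varphi(y)}\,h(y)\,dy\bigg|^2 \;\leq\; A\,\|F_\varphi\|_{\mathscr{H}_F}^2, \quad \forall\,\varphi\in C_c^\infty(0,a),
\]
after which \thmref{HF} produces $h \in \mathscr{H}_F$ automatically.

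The execution rests on the Mercer-operator identity of \corref{merinn}, which gives $\|F_\varphi\|_{\mathscr{H}_F}^2 = \langle \varphi, T_F\varphi\rangle_{L^2(0,a)} = \|T_F^{1/2}\varphi\|_{L^2}^2$. Using the spectral decomposition of the positive compact operator $T_F$ (trace class by \lemref{mer-1}), one has $\langle \varphi, h\rangle_{L^2(0,a)} = \langle T_F^{1/2}\varphi,\, T_F^{-1/2}h\rangle_{L^2(0,a)}$ whenever $h \in \mathrm{dom}(T_F^{-1/2})$; Cauchy--Schwarz then yields
\[
\big|\langle \varphi, h\rangle_{L^2(0,a)}\big|^2 \;\leq\; \|F_\varphi\|_{\mathscr{H}_F}^2 \cdot \langle h, T_F^{-1}h\rangle_{L^2(0,a)}.
\]
The ellipticity hypothesis (\ref{eq:e-1}) bounds the second factor by $C\|h\|_{H_1}^2 < \infty$, so the constant $A := C\|h\|_{H_1}^2$ does the job, and \thmref{HF} concludes $h \in \mathscr{H}_F$.

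\textbf{Main obstacle.} The only nontrivial point is justifying $h \in \mathrm{dom}(T_F^{-1/2})$, for otherwise the Cauchy--Schwarz step is only formal. Since $T_F$ is compact, positive, selfadjoint with eigenvalues $\lambda_k > 0$ and spectral projections $P_k$, the form domain is characterised by $\sum_k \lambda_k^{-1}\|P_k h\|_{L^2}^2 < \infty$. I would approximate a given $h \in H_1(0,a)$ by $h_n \in C_c^\infty(0,a)$ converging to $h$ in the $H_1$-norm; each $h_n$ lies in $\mathrm{dom}(T_F^{-1})$, so (\ref{eq:e-1}) applies verbatim and delivers $\|T_F^{-1/2}h_n\|_{L^2}^2 \leq C\|h_n\|_{H_1}^2$, uniformly bounded in $n$. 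Closedness of the selfadjoint operator $T_F^{-1/2}$ together with weak $L^2$-compactness of the bounded sequence $\{T_F^{-1/2}h_n\}$ then forces $h \in \mathrm{dom}(T_F^{-1/2})$, preserving the bound $\|T_F^{-1/2}h\|_{L^2}^2 \leq C\|h\|_{H_1}^2$ in the limit. With this in hand the Cauchy--Schwarz calculation above is fully rigorous and the theorem follows.
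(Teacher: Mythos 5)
Your proposal is correct and follows essentially the same route as the paper's own proof: verify the criterion of \thmref{HF} by writing $\left\langle h,\varphi\right\rangle _{2}=\left\langle T_{F}^{-1/2}h,T_{F}^{1/2}\varphi\right\rangle _{2}$, applying Cauchy--Schwarz, and bounding $\Vert T_{F}^{-1/2}h\Vert _{2}^{2}$ by the ellipticity estimate (\ref{eq:e-1}). The only difference is that you explicitly justify $h\in dom(T_{F}^{-1/2})$ by approximating in $H_{1}$ and using closedness of $T_{F}^{-1/2}$ --- a detail the paper passes over silently --- which is a welcome tightening rather than a change of method.
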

\begin{proof}
We must prove that if 
\begin{equation}
\int_{0}^{a}\left(\left|h\left(x\right)\right|^{2}+\left|h'\left(x\right)\right|^{2}\right)dx<\infty,
\end{equation}
then $\exists C<\infty$ s.t. 
\begin{equation}
\left|\int_{0}^{a}h\left(x\right)\varphi\left(x\right)dx\right|^{2}\leq C\left\Vert F_{\varphi}\right\Vert _{\mathscr{H}_{F}}^{2},\;\forall\varphi\in C_{c}^{\infty}\left(0,a\right).
\end{equation}
But we have that 
\begin{equation}
F_{\varphi}=T_{F}\varphi,\;\forall\varphi\in C_{c}^{\infty}\left(0,a\right)\subset L^{2}\left(0,a\right)
\end{equation}
where $T_{F}$ is the Mercer operator. Now, 
\[
\left\langle h,\varphi\right\rangle _{2}=\left\langle h,T_{F}^{-1}T_{F}\varphi\right\rangle _{2}
\]
and if $h,h'\in L^{2}\left(0,a\right)$, then $\exists C<\infty$
s.t.
\[
\left\Vert T_{F}^{-1/2}h\right\Vert _{2}^{2}\underset{\left(\ref{eq:e-1}\right)}{\leq}C\int_{0}^{a}\left(\left|h\left(x\right)\right|^{2}+\left|h'\left(x\right)\right|^{2}\right)dx,\;\forall h\in\underset{\text{Soblev}_{1}}{\underbrace{H_{1}\left(0,a\right)}}
\]
and so 
\begin{eqnarray*}
\left|\left\langle h,\varphi\right\rangle _{2}\right|^{2} & = & \left|\left\langle h,T_{F}^{-1}T_{F}\varphi\right\rangle _{2}\right|^{2}\\
 & = & \left|\left\langle T_{F}^{-1/2}h,T_{F}^{1/2}\varphi\right\rangle _{2}\right|^{2}\\
 & \leq & \left\Vert T_{F}^{-1/2}h\right\Vert _{2}^{2}\left\Vert T_{F}^{1/2}\varphi\right\Vert _{2}^{2}\\
 & \leq & C\left(\int_{0}^{a}\left|h\right|^{2}+\left|h'\right|^{2}\right)\left\langle \varphi,T_{F}\varphi\right\rangle _{2}\\
 & = & \mbox{const}\left\Vert F_{\varphi}\right\Vert _{\mathscr{H}_{F}}^{2}.
\end{eqnarray*}

\end{proof}

\subsection{If $F$ is elliptic then the operator $D_{F}$ has indices $\left(1,1\right)$}
\begin{cor}
Let $F:\left(-a,a\right)\rightarrow\mathbb{C}$ be an elliptic continuous
p.d. function and set $D_{F}\left(F_{\varphi}\right)=F_{\varphi'}$,
$\varphi\in C_{c}^{\infty}\left(0,a\right)$; then $D_{F}$ has deficiency
indices $\left(1,1\right)$ as a skew Hermitian operator in the RKHS
$\mathscr{H}_{F}$.
\end{cor}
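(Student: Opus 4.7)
The plan is to show that the two deficiency subspaces $DEF^{\pm}$ are each one-dimensional, by exhibiting explicit non-zero defect vectors and then invoking the symmetry between $DEF^{+}$ and $DEF^{-}$ that forces equal indices.

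First, recall from \lemref{DF} that $D_{F}$ is a densely defined skew-Hermitian operator in $\mathscr{H}_{F}$, and from \corref{defg} that any $\xi_{\pm}\in DEF^{\pm}$ must be of the form $\xi_{\pm}(x)=c\,e^{\mp x}$ for some constant $c\in\mathbb{C}$. Thus the dimension of each deficiency space is at most one. It remains only to check that the candidate vectors
\[
\xi_{+}(x)=e^{-x},\qquad \xi_{-}(x)=e^{x},\qquad x\in[0,a],
\]
actually lie in $\mathrm{dom}(D_{F}^{*})$ and satisfy $D_{F}^{*}\xi_{\pm}=\pm\xi_{\pm}$.

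By \lemref{Dadj}, a continuous $\xi$ on $[0,a]$ is in $\mathrm{dom}(D_{F}^{*})$ iff both $\xi$ and its distributional derivative $\xi'$ belong to $\mathscr{H}_{F}$, in which case $D_{F}^{*}\xi=-\xi'$. The ellipticity hypothesis provides exactly the tool we need: since $H_{1}(0,a)\subseteq\mathscr{H}_{F}$, and since $e^{\pm x}$ is smooth on $[0,a]$ (so both the function and its derivative are bounded, hence in $L^{2}(0,a)$), we conclude $e^{-x},e^{x}\in H_{1}(0,a)\subseteq\mathscr{H}_{F}$, and likewise their derivatives $-e^{-x},e^{x}\in\mathscr{H}_{F}$. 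Hence $\xi_{\pm}\in\mathrm{dom}(D_{F}^{*})$, and a direct computation using $D_{F}^{*}=-\tfrac{d}{dx}$ gives
\[
D_{F}^{*}\xi_{+}=-\tfrac{d}{dx}e^{-x}=e^{-x}=\xi_{+},\qquad D_{F}^{*}\xi_{-}=-\tfrac{d}{dx}e^{x}=-e^{x}=-\xi_{-}.
\]
Therefore $\dim DEF^{+}=\dim DEF^{-}=1$, so the deficiency indices of $D_{F}$ are $(1,1)$.

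The only step requiring care is the verification that the candidate defect vectors truly lie in $\mathscr{H}_{F}$, and this is precisely where the elliptic hypothesis is used in an essential way. Without ellipticity, one has the \emph{a priori} bound from \thmref{HF} to check, but with ellipticity one reduces directly to the Sobolev inclusion $H_{1}(0,a)\subseteq\mathscr{H}_{F}$, which makes the inclusion $e^{\pm x}\in\mathscr{H}_{F}$ automatic. (An alternative route is to observe that $D_{F}$ commutes with complex conjugation on $\mathscr{H}_{F}$, which implements an antiunitary bijection between $DEF^{+}$ and $DEF^{-}$; together with the upper bound of one from \corref{defg}, this gives the equality of indices, and then one only needs to produce a single non-zero defect vector.)
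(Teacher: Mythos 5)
Your proof is correct and takes essentially the same route the paper intends: the corollary is meant to follow from the immediately preceding theorem (ellipticity gives $H_{1}\left(0,a\right)\subseteq\mathscr{H}_{F}$, so the smooth candidates $e^{\mp x}$ and their derivatives lie in $\mathscr{H}_{F}$) combined with \corref{defg} and \lemref{Dadj}, which is exactly what you wrote. The paper leaves this deduction implicit and only works it out explicitly for the special case $F\left(x\right)=1-\left|x\right|$; your write-up supplies the general argument cleanly.
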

Below, we give a more direct argument for why $D_{F}$ in $\mathscr{H}_{F}$
has deficiency indices $\left(1,1\right)$ in the case where $F\left(x\right)=1-\left|x\right|$,
$-\frac{1}{2}<x<\frac{1}{2}$. 

Recall $\mathscr{H}_{F}$ consists of continuous function on $\left[0,\frac{1}{2}\right]$,
so in particular, certain restrictions of continuous functions of
$\mathbb{R}$. 
\begin{lem}
\label{lem:abs}Let $F\left(x\right)=1-\left|x\right|$, $\left|x\right|<\frac{1}{2}$,
then a continuous function $h$ on $\left[0,\frac{1}{2}\right]$ is
in $\mathscr{H}_{F}$ iff $h'\in L^{2}\left(0,\frac{1}{2}\right)$
where $h'=$ the distributional derivative.\end{lem}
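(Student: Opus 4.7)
The plan is to prove the two implications separately, using an explicit quadratic-form identity available for this specific $F$.

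\textbf{Sufficiency} ($h' \in L^2 \Rightarrow h \in \mathscr{H}_F$): The preceding example already shows that $T_F^{-1} \supset -\tfrac{1}{2}\left(\tfrac{d}{dx}\right)^2$ on $C_c^\infty(0,\tfrac{1}{2})$, so $F(x) = 1-|x|$ is elliptic in the sense of the definition above. The preceding theorem then gives $H_1(0,\tfrac{1}{2}) \subset \mathscr{H}_F$, which is exactly the sufficiency half.

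\textbf{Necessity} ($h \in \mathscr{H}_F \Rightarrow h' \in L^2$): For this direction I would first establish, for every $\varphi \in C_c^\infty(0,\tfrac{1}{2})$, the identity
\begin{equation*}
\|F_\varphi\|_{\mathscr{H}_F}^2 \;=\; \tfrac{1}{2}\int_0^{1/2}|(F_\varphi)'(x)|^2\,dx \;+\; \tfrac{3}{4}|(F_\varphi)'(0)|^2.
\end{equation*}
To derive it, set $f := F_\varphi = T_F\varphi$. From \exref{absm} one has $\varphi = -\tfrac{1}{2}f''$ together with the boundary relations $f'(\tfrac{1}{2}) = -f'(0)$ and $f(0)+f(\tfrac{1}{2}) = \tfrac{3}{2}f'(0)$. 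Then
\begin{equation*}
\|F_\varphi\|_{\mathscr{H}_F}^2 \;=\; \langle \varphi, T_F\varphi\rangle_{L^2(0,1/2)} \;=\; -\tfrac{1}{2}\int_0^{1/2}\overline{f''(x)}\,f(x)\,dx,
\end{equation*}
and a single integration by parts converts the right-hand side into $\tfrac{1}{2}\|f'\|_{L^2}^2 - \tfrac{1}{2}[\overline{f'}f]_0^{1/2}$; the boundary relations above collapse $[\overline{f'}f]_0^{1/2}$ to $-\tfrac{3}{2}|f'(0)|^2$, producing the displayed identity.

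With the identity in hand, for $h \in \mathscr{H}_F$ pick a sequence $\varphi_n \in C_c^\infty(0,\tfrac{1}{2})$ with $F_{\varphi_n} \to h$ in $\mathscr{H}_F$ (available by \lemref{dense}). Applied to $\varphi_n-\varphi_m$, the identity shows that $\{(F_{\varphi_n})'\}$ is Cauchy in $L^2(0,\tfrac{1}{2})$, hence converges to some $g \in L^2$. By the reproducing-property estimate used throughout the section, $F_{\varphi_n} \to h$ also uniformly on $[0,\tfrac{1}{2}]$, so $h$ is continuous and $h' = g$ in the sense of distributions, yielding $h' \in L^2(0,\tfrac{1}{2})$ as required.

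The main obstacle is the derivation of the quadratic-form identity: one must feed the explicit endpoint data from \exref{absm} into the integration by parts and rely on the crucial cancellation $f(0)+f(\tfrac{1}{2}) = \tfrac{3}{2}f'(0)$, which makes the boundary contribution equal to the \emph{nonnegative} quantity $\tfrac{3}{4}|f'(0)|^2$ rather than a sign-indefinite expression. Without this cancellation the right-hand side would fail to be a genuine squared seminorm, and the Cauchy-sequence argument in $L^2$ would break down.
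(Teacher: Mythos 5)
Your proof is correct, and it is worth comparing with what the paper actually writes. For the sufficiency direction the two arguments are essentially the same computation: the paper writes $\left\langle h,\varphi\right\rangle _{2}=\langle T_{F}^{-1/2}h,T_{F}^{1/2}\varphi\rangle_{2}$, applies Cauchy--Schwarz, and invokes \thmref{HF}/Riesz, while you package the identical estimate as an appeal to the general elliptic theorem; one small caveat is that ellipticity in the sense of the definition is the quantitative bound (\ref{eq:e-1}), not merely the containment $T_{F}^{-1}\supset-\tfrac{1}{2}(\tfrac{d}{dx})^{2}$, so strictly you still owe the inequality $\Vert T_{F}^{-1/2}f\Vert _{2}^{2}\leq C(\Vert f\Vert _{2}^{2}+\Vert f'\Vert _{2}^{2})$ --- which your own identity delivers once $|f'(0)|$ is controlled via $f(0)+f(\tfrac{1}{2})=\tfrac{3}{2}f'(0)$ and the sup-norm bound on $[0,\tfrac{1}{2}]$. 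The real divergence is in the necessity direction: the paper proves only the implication $h'\in L^{2}\Rightarrow h\in\mathscr{H}_{F}$ and disposes of the converse in a one-line remark ("the proof shows the converse as well") without supplying the argument. You supply it, via the identity
\[
\left\Vert F_{\varphi}\right\Vert _{\mathscr{H}_{F}}^{2}=\tfrac{1}{2}\left\Vert (F_{\varphi})'\right\Vert _{L^{2}}^{2}+\tfrac{3}{4}\left|(F_{\varphi})'(0)\right|^{2},
\]
whose derivation from \exref{absm} checks out (the boundary relations $f'(\tfrac{1}{2})=-f'(0)$ and $f(0)+f(\tfrac{1}{2})=\tfrac{3}{2}f'(0)$ do collapse $[\overline{f'}f]_{0}^{1/2}$ to $-\tfrac{3}{2}|f'(0)|^{2}$), and then a standard completion argument using \lemref{dense} and uniform convergence. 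This identity is precisely the $F=1-\left|x\right|$ analogue of the paper's \corref{expinner} for $e^{-\left|x\right|}$, and it buys something the paper's written proof does not: a genuine two-sided comparison of $\left\Vert \cdot\right\Vert _{\mathscr{H}_{F}}$ with the $H_{1}$-seminorm, which makes the "only if" half honest rather than asserted. Incidentally, your identity also shows that the paper's intermediate claim $\Vert T_{F}^{-1/2}h\Vert _{2}^{2}\leq\tfrac{1}{2}\Vert h'\Vert _{2}^{2}$ is slightly too optimistic as stated, since the nonnegative boundary term $\tfrac{3}{4}|h'(0)|^{2}$ must be carried along; this does not affect the conclusion but your version is the more careful one.
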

\begin{proof}
Let $T_{F}$ be the Mercer operator, i.e., $T_{F}:L^{2}\left(0,\frac{1}{2}\right)\rightarrow L^{2}\left(0,\frac{1}{2}\right)$,
and 
\[
\left(T_{F}\varphi\right)\left(x\right)=\int_{0}^{\frac{1}{2}}\varphi\left(y\right)F\left(x-y\right)dy,\;\forall\varphi\in C_{c}^{\infty}\left(0,\tfrac{1}{2}\right).
\]
Recall $T_{F}$ is bounded, positive definite, selfadjoint, and trace
class, $trace\left(T_{F}\right)=\frac{1}{2}$. Hence $T_{F}^{1/2}$
and $T_{F}^{-1}$ are well-defined, but $T_{F}^{-1}$ is unbounded.
We showed, in \exref{absm}, that $T_{F}^{-1}$ is a selfadjoint extension
of $-\frac{1}{2}\left(\frac{d}{dx}\right)^{2}$ with dense domain
$C_{c}^{\infty}\left(0,\frac{1}{2}\right)$ in $L^{2}\left(0,\frac{1}{2}\right)$. 

Now assume $h'\in L^{2}\left(0,\frac{1}{2}\right)$. Then for all
$\varphi\in C_{c}^{\infty}\left(0,\frac{1}{2}\right)$, we have 
\begin{align*}
\left|\int_{0}^{\frac{1}{2}}\overline{h\left(x\right)}\varphi\left(x\right)dx\right|^{2} & =\left|\left\langle h,T_{F}^{-1}T_{F}\varphi\right\rangle _{L^{2}\left(0,\frac{1}{2}\right)}\right|^{2}\\
 & =\Big|\langle T_{F}^{-1/2}h,\underset{T_{F}^{1/2}}{\underbrace{T_{F}^{-1/2}T_{F}}\varphi}\rangle_{L^{2}\left(0,\frac{1}{2}\right)}\Big|^{2}\\
 & \leq\underset{=:C}{\underbrace{\frac{1}{2}\left\Vert h'\right\Vert _{L^{2}\left(0,\frac{1}{2}\right)}^{2}}}\left\Vert T_{F}^{1/2}\varphi\right\Vert _{L^{2}\left(0,\frac{1}{2}\right)}^{2}\\
 & =C\left\langle \varphi,T_{F}\varphi\right\rangle _{L^{2}\left(0,\frac{1}{2}\right)}=C\left\Vert T_{F}\varphi\right\Vert _{\mathscr{H}_{F}}^{2}.
\end{align*}
As a result (application of Riesz' theorem to $\mathscr{H}_{F}$),
we conclude that the function $h$ represents a unique element in
$\mathscr{H}_{F}$; i.e., we proved that
\[
h'\in L^{2}\left(0,\tfrac{1}{2}\right)\Longrightarrow h\in\mathscr{H}_{F}.
\]
\end{proof}
\begin{cor}
Let $F\left(x\right)=1-\left|x\right|$, $\left|x\right|<\frac{1}{2}$.
The two functions $e^{\pm x}\big|_{\left[0,\frac{1}{2}\right]}$ are
in $\mathscr{H}_{F}$. In particular, $D_{F}$ has deficiency $\left(1,1\right)$. \end{cor}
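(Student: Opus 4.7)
The plan is to combine the characterization of $\mathscr{H}_{F}$ just established in \lemref{abs} with the general description of the deficiency subspaces in \corref{defg}.

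First, I would verify directly that $e^{\pm x}\big|_{[0,1/2]}$ lies in $\mathscr{H}_{F}$. Both functions are smooth, hence continuous on the closed interval $[0,\tfrac{1}{2}]$, and their distributional derivatives $\pm e^{\pm x}$ are continuous and uniformly bounded on $[0,\tfrac{1}{2}]$, so in particular they belong to $L^{2}(0,\tfrac{1}{2})$. By \lemref{abs}, this is precisely the criterion for membership in $\mathscr{H}_{F}$ when $F(x)=1-|x|$ on $(-\tfrac12,\tfrac12)$. Therefore $e^{\pm x}\big|_{[0,1/2]}\in\mathscr{H}_{F}$, which is the first assertion.

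Next, for the deficiency claim, I would invoke \corref{defg}: any $\xi\in DEF^{\pm}$ must satisfy the distributional ODE $\xi'=\mp\xi$ on $(0,\tfrac12)$, whose solution space is one-dimensional and spanned by $e^{\mp x}$. Combined with the first step this gives
\[
DEF^{+}=\mathbb{C}\cdot e^{-x}\big|_{[0,1/2]},\qquad DEF^{-}=\mathbb{C}\cdot e^{x}\big|_{[0,1/2]},
\]
both one-dimensional subspaces of $\mathscr{H}_{F}$. Hence $\dim DEF^{+}=\dim DEF^{-}=1$, i.e. $D_{F}$ has deficiency indices $(1,1)$ in $\mathscr{H}_{F}$.

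There is no serious obstacle here: \lemref{abs} already does the essential analytic work (the Sobolev-type inclusion $H_{1}(0,\tfrac12)\subseteq\mathscr{H}_{F}$), and the corollary provides the abstract ODE characterization of the defect vectors. The only thing one must be careful about is that the candidate defect vectors $e^{\mp x}$ actually admit continuous extensions to the closed interval $[0,\tfrac12]$ with $L^{2}$ derivative --- which is immediate since they are entire functions. Thus the existence of proper skew-adjoint extensions $A_{\theta}\supset D_{F}$, parameterized by $\theta\in[0,2\pi)$ via von Neumann's formulas, follows exactly as in the exponential case treated in \subref{saext}.
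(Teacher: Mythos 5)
Your proposal is correct and follows exactly the paper's route: the paper's proof is the one-line ``An application of \lemref{abs}'', with the implicit reasoning being precisely what you spell out --- $e^{\pm x}$ are continuous on $\left[0,\tfrac{1}{2}\right]$ with derivatives in $L^{2}\left(0,\tfrac{1}{2}\right)$, hence lie in $\mathscr{H}_{F}$ by \lemref{abs}, and \corref{defg} pins each deficiency space down to the one-dimensional span of $e^{\mp x}$, giving indices $\left(1,1\right)$. No gaps; your sign conventions for $DEF^{\pm}$ also match the paper's.
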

\begin{proof}
An application of \lemref{abs}.\end{proof}
\begin{rem}
The proof of \lemref{abs} shows that the converse implication holds
as well: If a continuous function $h$ on $\left[0,\frac{1}{2}\right]$
is in $\mathscr{H}_{F}$, then it follows that its distributional
derivative $h'$ is in $L^{2}\left(0,\frac{1}{2}\right)$. 

The argument applies to every positive definite continuous function
$F$ such that the corresponding Mercer operator $T_{F}$ has $T_{F}^{-1}$
be an extension of a second order elliptic differential operator.

BELOW WE PROVE THE FOLLOWING ASSERTIONS:
\end{rem}

\subsection{If $F$ is elliptic then its distribution derivative has a Dirac
discontinuity at $x=0$}

\subsection{If $F$ is elliptic then there is an associated system of linear
conditions for the kernel functions for $\mathscr{H}_{F}$ at the
two endpoints $0$ and $a$}

\subsection{Two examples $F=e^{-\left|x\right|}$, and $F=1-\left|x\right|$,
on the respective intervals, are elliptic}

\subsection{Translation representation for the unitary one-parameter groups $U(t)$
in $\mathscr{H}_{F}$ }
\begin{cor}
If $F:\left(-a,a\right)\rightarrow\mathbb{C}$ is an elliptic continuous
positive definite function, $0<a<\infty$; then the dense domain $\left\{ F_{\varphi}\right\} _{\varphi\in C_{c}^{\infty}\left(0,a\right)}$,
i.e, $dom\left(D_{F}\right)\subset\mathscr{H}_{F}$, is given by a
rank-2 subspace of $\mathbb{C}^{4}$ in the form 
\[
\left(h\left(0\right),h'\left(0\right),h\left(a\right),h'\left(a\right)\right).
\]
The unitary one-parameter group $\left\{ U^{\left(\theta\right)}\left(t\right)\right\} _{t\in\mathbb{R}}$
in $\mathscr{H}_{F}$ generated by skew-adjoint extensions of $D_{F}$
(in $\mathscr{H}_{F}$) are determined by a pair of boundary conditions:
\[
h\in\mathscr{H}_{F}\longleftrightarrow\left\{ h,h'\right\} \subset L^{2}\left(0,a\right)\oplus L^{2}\left(0,a\right).
\]
When $\theta$ is fixed, then there is a closed subspace $\mathscr{L}^{\left(\theta\right)}$
in $L^{2}\left(0,a\right)\oplus L^{2}\left(0,a\right)$ such that
\[
h\longmapsto U^{\left(\theta\right)}\left(t\right)h\;\mbox{in }\mathscr{H}_{F}
\]
is equivalent to a translation representation 
\[
\left(T_{t}^{\theta},S_{t}^{\theta}\right)\::\:\begin{Bmatrix}h\longmapsto T_{t}^{\left(\theta\right)}h\left(x\right)=h\left(x+t\right)\\
h'\longmapsto S_{t}^{\left(\theta\right)}h'\left(x\right)=h'\left(x+t\right)
\end{Bmatrix}\;\mbox{in }\mathscr{L}^{\left(\theta\right)},\;\forall s,t\in\mathbb{R}.
\]
 \end{cor}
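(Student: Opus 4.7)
My plan has three ingredients: characterize $dom(D_F)$ by two linear boundary relations, identify the skew-adjoint extensions $A_\theta$ by one further ``twisted Floquet'' relation, and realize $U^{(\theta)}(t)$ as a genuine translation on the resulting quasi-periodic extension space --- mirroring Example~\ref{ex:ddx}. For the rank-$2$ claim I use ellipticity: there is a constant-coefficient second-order elliptic $\mathscr{D}$ with $T_F^{-1}\supset\mathscr{D}$, so for $h=F_\varphi=T_F\varphi$ with $\varphi\in C_c^\infty(0,a)$ one has $\mathscr{D}h=\varphi$, supported inside $(0,a)$. Pairing this against the RKHS kernel functions $F_0,F_a$ via $\langle F_b,h\rangle_{\mathscr{H}_F}=h(b)$ for $b\in\{0,a\}$ and applying Green's identity for $\mathscr{D}$ --- exactly the computation of Corollary~\ref{cor:dev1} --- yields one linear identity among $(h(0),h'(0))$ and another among $(h(a),h'(a))$. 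These cut the image of $h\mapsto(h(0),h'(0),h(a),h'(a))$ on $dom(D_F)$ down to a $2$-dimensional subspace of $\mathbb{C}^4$.

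Next I lift this to the skew-adjoint extensions. By the preceding corollary, $D_F$ has indices $(1,1)$, so von Neumann's theory parameterizes skew-adjoint extensions by $\theta\in[0,2\pi)$. Combining Lemma~\ref{lem:domD} (which gives $dom(D_F^*)=\{h\in\mathscr{H}_F:h'\in\mathscr{H}_F\}$, $D_F^*h=-h'$) with a Proposition~\ref{prop:dev1}-style boundary computation on the defect vectors shows that $dom(A_\theta)$ is cut out of $dom(D_F^*)$ by a single linear relation of the form
\[
h(a)+h'(a)=e^{i\theta}\bigl(h(0)-h'(0)\bigr)
\]
(with coefficients adjusted by $\mathscr{D}$ in the general case). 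Ellipticity combined with the Lemma~\ref{lem:abs}-type reasoning then realizes $\mathscr{H}_F$ as the Sobolev-type space in which $h,h'\in L^2(0,a)$, delivering the stated bijection $h\leftrightarrow(h,h')$.

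To build the translation picture, define $\mathscr{L}^{(\theta)}$ as the closed subspace of $L^2(0,a)\oplus L^2(0,a)$ consisting of the restrictions $(\tilde{h}\big|_{[0,a]},\tilde{h}'\big|_{[0,a]})$ of locally-$H_1$ functions $\tilde{h}:\mathbb{R}\to\mathbb{C}$ satisfying
\[
\tilde{h}(x+a)+\tilde{h}'(x+a)=e^{i\theta}\bigl(\tilde{h}(x)-\tilde{h}'(x)\bigr),\qquad x\in\mathbb{R}.
\]
Translation $(T_t^{(\theta)},S_t^{(\theta)}):(h,h')\mapsto(h(\cdot+t),h'(\cdot+t))$ preserves this rule, hence acts on $\mathscr{L}^{(\theta)}$. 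Intertwining with $U^{(\theta)}(t)$ on $\mathscr{H}_F$ is then verified on the orthogonal basis of eigenfunctions $\{e_\lambda\}_{\lambda\in\Lambda_\theta}$ from Corollaries~\ref{cor:spdiscrete} and \ref{cor:exporg}: each $e_\lambda(x)=e^{i\lambda x}$ obeys the Floquet rule precisely because $\lambda\in\Lambda_\theta$ (Corollary~\ref{cor:spext}), and both $U^{(\theta)}(t)$ and $(T_t^{(\theta)},S_t^{(\theta)})$ send $e_\lambda$ to $e^{i\lambda t}e_\lambda$.

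The main obstacle will be the isometric norm-matching. The $\mathscr{H}_F$-inner product is $\langle h,T_F^{-1}h\rangle_{L^2(0,a)}$ by Corollary~\ref{cor:merinn}, whereas the natural norm on $\mathscr{L}^{(\theta)}$ is an $L^2\oplus L^2$-norm on one period. Making the intertwiner isometric requires weighting the pair $(h,h')$ so that on the diagonalizing basis the weight recovers $\|e_\lambda\|_{\mathscr{H}_F}^2=\tfrac{1}{2}(\lambda^2+3)$ (Corollary~\ref{cor:elambda1}) --- or the analogous eigenvalue formula for the general elliptic $\mathscr{D}$. This is the Mackey induced-representation identification $U^{(\theta)}\simeq\mathrm{ind}_{a\mathbb{Z}}^{\mathbb{R}}(e^{i\theta})$ implicit in Example~\ref{ex:ddx}, and is the step demanding the most care.
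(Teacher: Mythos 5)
The paper states this corollary without any proof of its own---it is offered as a summary of \subref{saext}, \propref{dev1}, \corref{dev1}, \exref{ddx} and the elliptic characterization of $\mathscr{H}_{F}$---and your plan reconstructs exactly that intended argument: the two endpoint relations from Green's identity giving the rank-$2$ boundary image, the single twisted relation cutting out $dom(A_{\theta})$, and the Floquet/induced-representation realization of $U^{(\theta)}(t)$ as translation checked on the eigenbasis $\{e_{\lambda}\}_{\lambda\in\Lambda_{\theta}}$. The one step you flag as open, the isometric matching of norms, is resolved by the identity of \corref{expinner}: the $\mathscr{H}_{F}$ inner product equals $\tfrac{1}{2}\left(\left\langle h,k\right\rangle _{2}+\left\langle h',k'\right\rangle _{2}\right)$ plus a rank-two boundary form, so the map $h\mapsto\left(h,h'\right)$ is a bounded bijection onto $\mathscr{L}^{\left(\theta\right)}$ and becomes unitary once $\mathscr{L}^{\left(\theta\right)}$ carries this renormalized inner product; with that amendment your argument is complete and consistent with what the paper intends.
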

\begin{example}[Greens-Gauss-Stokes]
 Recall the elliptic operator $-\frac{1}{2}\left(\frac{d}{dx}\right)^{2}$
in $L^{2}\left(0,\frac{1}{2}\right)$ is defined initially on $C_{c}^{\infty}\left(0,\frac{1}{2}\right)$;
it is densely defined and Hermitian, but not selfadjoint. To get selfadjoint
extensions, we look for dense subspace $\mathscr{L}\subset L^{2}\left(0,\frac{1}{2}\right)$
s.t.
\[
\int_{0}^{\frac{1}{2}}h''\left(x\right)k\left(x\right)\, dx-\int_{0}^{\frac{1}{2}}h\left(x\right)k''\left(x\right)\, dx=\left[h'k-hk'\right]_{bd}\equiv0,\;\forall h,k\in\mathscr{L};
\]
where $\left[F\right]_{bd}:=F\left[0\right]-F\left[1/2\right]$, applied
to $F=h'k-hk'$, $\forall h,k\in\mathscr{L}$. 

Now fix $F=1-\left|x\right|$, $\left|x\right|<\frac{1}{2}$, consider
the Mercer operator
\[
\left(T_{F}\varphi\right)\left(x\right)=\int_{0}^{\frac{1}{2}}\varphi\left(y\right)F\left(x-y\right)dy=\int_{0}^{\frac{1}{2}}\varphi\left(y\right)\left(1-\left|x-y\right|\right)dy
\]
and set $\mathscr{L}=\mathscr{L}_{F}=\left\{ T_{F}\varphi\:\big|\:\varphi\in C_{c}^{\infty}\left(0,\frac{1}{2}\right)\right\} $.
We may restrict to real-valued functions since $F$ is real-valued.
Set $h=T_{F}\varphi$, and $k=T_{F}\psi$. Then,
\begin{align*}
h\left(0\right) & =\int_{0}^{\frac{1}{2}}\left(1-y\right)\varphi\left(y\right)dy & h'\left(0\right) & =\int_{0}^{\frac{1}{2}}\varphi\left(y\right)dy\\
h\left(\tfrac{1}{2}\right) & =\int_{0}^{\frac{1}{2}}\left(\tfrac{1}{2}+y\right)\varphi\left(y\right)dy & h'\left(\tfrac{1}{2}\right) & =-\int_{0}^{\frac{1}{2}}\varphi\left(y\right)dy
\end{align*}
 And
\[
\left[h'k-hk'\right]_{bd}=\tfrac{3}{2}\left(\int_{0}^{\frac{1}{2}}\varphi\int_{0}^{\frac{1}{2}}\psi-\int_{0}^{\frac{1}{2}}\varphi\int_{0}^{\frac{1}{2}}\psi\right)=0.
\]
Hence $T_{F}^{-1}$ is the unique selfadjoint extension of $-\frac{1}{2}\left(\frac{d}{dx}\right)^{2}\big|_{C_{c}^{\infty}\left(0,\frac{1}{2}\right)}$
in $L^{2}\left(0,\frac{1}{2}\right)$ corresponding to the boundary
conditions:
\begin{equation}
\begin{split}\begin{cases}
h'\left(0\right)+h'\left(\tfrac{1}{2}\right)=0\\
h\left(0\right)+h\left(\tfrac{1}{2}\right)=\tfrac{3}{2}h'\left(0\right)
\end{cases}\end{split}
\label{eq:absbd}
\end{equation}
Suppose $T_{F}h=\lambda h$, then $-\frac{1}{2}\left(\lambda h\right)''=h$,
and $h$ satisfies (\ref{eq:absbd}). Let $k^{2}=-\tfrac{2}{\lambda}$,
we have $h=Ae^{ikx}+B^{-ikx}$, such that 
\begin{align*}
\left(1+e^{ik/2}\right)A-\left(1+e^{-ik/2}\right)B & =0\\
A\left(1+e^{ik/2}-\tfrac{3}{2}ik\right)+\left(1+e^{-ik/2}+\tfrac{3}{2}ik\right)B & =0
\end{align*}
Setting the determinant of the coefficient matrix to zero, we then
get
\begin{equation}
\tan\left(k/2\right)=\frac{4}{3k}.\label{eq:abssp}
\end{equation}

\end{example}
Recall that $P$ is a positive polynomial if $P\left(\xi\right)=\overline{Q\left(\xi\right)}Q\left(\xi\right)$,
$\forall\xi\in\mathbb{R}$. (In one variable, every $P\geq0$ is a
square; in higher dimensions, sums of squares.) 
\begin{lem}
Fix $a>0$. Let $P$ be a positive polynomial (in one variable), and
set 
\[
\mathscr{D}^{\left(P\right)}=P\left(\tfrac{1}{i}\tfrac{d}{dx}\right)\;\mbox{on }C_{c}^{\infty}\left(0,a\right)\subset L^{2}\left(0,a\right)
\]
and pick a fundamental solution $F=F^{\left(P\right)}$, so 
\[
\mathscr{D}^{\left(P\right)}T_{F^{\left(P\right)}}\varphi=\varphi,\;\forall\varphi\in C_{c}^{\infty}\left(0,a\right).
\]
Then, $F^{\left(P\right)}$ is positive definite, and is extendible. \end{lem}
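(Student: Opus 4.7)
The plan is to identify $F^{(P)}$ with the inverse Fourier transform of the non-negative function $1/P$ on $\mathbb{R}$, and then apply Bochner's theorem (in the form stated in \lemref{lcg-Bochner}). Since $P$ is a positive polynomial, we have $P(\xi) = \overline{Q(\xi)}Q(\xi) = |Q(\xi)|^{2}\ge 0$ for every $\xi\in\mathbb{R}$. For clarity I will assume $P>0$ strictly on $\mathbb{R}$ and $\deg P\ge 2$; the marginal cases (real zeros, low degree) require a regularization that I will describe at the end.

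Under the convention $\widehat{f}(\xi)=\int e^{-i\xi x}f(x)\,dx$, the constant-coefficient differential operator $\mathscr{D}^{(P)}=P\bigl(\tfrac{1}{i}\tfrac{d}{dx}\bigr)$ becomes multiplication by $P(\xi)$ on the Fourier side. Since $P>0$ with $\deg P\ge 2$, the function $\xi\mapsto 1/P(\xi)$ lies in $L^{1}(\mathbb{R})$, so the formula
\[
F^{(P)}(x) \;:=\; \frac{1}{2\pi}\int_{-\infty}^{\infty} e^{ix\xi}\,\frac{d\xi}{P(\xi)},\qquad x\in\mathbb{R},
\]
defines a bounded continuous function on $\mathbb{R}$. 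A standard argument with tempered distributions (Fourier-transforming the identity $P(\xi)\cdot \widehat{F^{(P)}}(\xi) = 1$) shows that $\mathscr{D}^{(P)}F^{(P)}=\delta_{0}$, so
$\mathscr{D}^{(P)}T_{F^{(P)}}\varphi=\varphi$ for every $\varphi\in C_{c}^{\infty}(0,a)$, confirming that $F^{(P)}$ is indeed a fundamental solution of the type claimed.

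Now set $d\mu(\xi):=\frac{1}{2\pi P(\xi)}\,d\xi$. This is a finite, positive Borel measure on $\mathbb{R}$ whose inverse Fourier transform, in the sense of \lemref{lcg-Bochner}, equals $F^{(P)}$. By Bochner's theorem, $F^{(P)}$ is therefore a continuous positive definite function on all of $\mathbb{R}$. Explicitly, for any finite system $\{c_{j}\}\subset\mathbb{C}$ and $\{x_{j}\}\subset\mathbb{R}$,
\[
\sum_{i,j}\overline{c_{i}}\,c_{j}\,F^{(P)}(x_{j}-x_{i}) \;=\; \int_{\mathbb{R}} \Bigl|\sum_{j} c_{j} e^{ix_{j}\xi}\Bigr|^{2} d\mu(\xi) \;\ge\; 0,
\]
which in particular gives positive definiteness of the restriction of $F^{(P)}$ to the difference set $(0,a)-(0,a)=(-a,a)$. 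Extendibility is then automatic: $F^{(P)}$, viewed as a continuous p.d.\ function on the whole real line, is itself a continuous p.d.\ extension of $F^{(P)}\big|_{(-a,a)}$, so $F^{(P)}\in Ext(F^{(P)}\big|_{(-a,a)})$.

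The main obstacle is the case when $P$ has real zeros (or when $\deg P\le 1$), since then $1/P\notin L^{1}(\mathbb{R})$ and the naive inversion formula for $F^{(P)}$ diverges. In that regime, the fundamental solution is only determined modulo the finite-dimensional kernel of $\mathscr{D}^{(P)}$, which is spanned by exponentials $e^{i\xi_{0} x}$ indexed by the real zeros $\xi_{0}$ of $P$. One must then choose a particular fundamental solution whose associated spectral distribution is a positive measure of finite total mass---concretely, adding suitable multiples of the homogeneous exponentials so that $\widehat{F^{(P)}}$ becomes $d\xi/P(\xi)$ on $\{P>0\}$ together with nonnegative Dirac point masses at each real zero of $P$. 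Once such a choice is fixed, the Bochner argument of the previous paragraph applies verbatim and yields both positive definiteness and extendibility of $F^{(P)}$.
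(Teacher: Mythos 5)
The paper states this lemma without proof (it is followed immediately by the two examples $F_{2}=1-\left|x\right|$ and $F_{3}=e^{-\left|x\right|}$), so there is no argument of the authors' to compare against; your proposal has to stand on its own.

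For the non-degenerate case ($P>0$ on all of $\mathbb{R}$, $\deg P\geq2$) your argument is correct and is surely the intended one: $1/P\in L^{1}\left(\mathbb{R}\right)$, the measure $d\mu=\frac{d\xi}{2\pi P\left(\xi\right)}$ is finite and positive, Bochner gives positive definiteness of $F^{\left(P\right)}=\widehat{\mu}$ on all of $\mathbb{R}$, the convolution identity $\mathscr{D}^{\left(P\right)}\left(\varphi\ast F^{\left(P\right)}\right)=\varphi\ast\delta_{0}$ verifies the fundamental-solution property, and extendibility is automatic. (One caveat worth recording: the conclusion holds for \emph{a} fundamental solution, not for an arbitrary one, since adding elements of $\ker\mathscr{D}^{\left(P\right)}$ generally destroys positive definiteness; your construction of a specific $F^{\left(P\right)}$ is the right reading of the statement.)

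The genuine gap is in the degenerate case, and it cannot be dismissed as marginal: the paper's own example $F_{2}\left(x\right)=1-\left|x\right|$ with $\mathscr{D}_{2}=-\frac{1}{2}\left(\frac{d}{dx}\right)^{2}$, i.e.\ $P\left(\xi\right)=\frac{1}{2}\xi^{2}$, is exactly this case. Your proposed repair --- adjust $F^{\left(P\right)}$ by homogeneous exponentials so that its spectral distribution becomes $d\xi/P\left(\xi\right)$ on $\left\{ P>0\right\} $ plus nonnegative point masses at the real zeros --- cannot work: since $P=\overline{Q}Q$, every real zero of $P$ has even multiplicity $\geq2$, so $1/P$ has a non-locally-integrable singularity there, and no addition of Dirac masses produces a finite positive measure. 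What actually happens in this case is qualitatively different: the p.d.\ extension of $F^{\left(P\right)}\big|_{\left(-a,a\right)}$ is \emph{not} the fundamental solution itself but a different function agreeing with it only on $\left(-a,a\right)$ (for the tent function, $\left(1-\left|x\right|\right)_{+}$ with spectral measure $\left(\frac{\sin\pi\lambda}{\pi\lambda}\right)^{2}d\lambda$, not $d\lambda/P$), and the admissible choice of fundamental solution depends on $a$ (e.g.\ $c-\frac{1}{2}\left|x\right|$ is p.d.\ on $\left(-a,a\right)$ only for $c\geq a/2$). So the degenerate case needs a separate argument --- for instance a direct verification for each real zero, or an approximation $P+\epsilon\downarrow P$ with a compactness argument on the resulting measures restricted to $\left(-a,a\right)$ --- none of which is supplied by your sketch.
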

\begin{example}
Let 
\begin{align*}
\mathscr{D}_{2} & =-\tfrac{1}{2}\left(\tfrac{d}{dx}\right)^{2}\big|_{C_{c}^{\infty}\left(0,\frac{1}{2}\right)}\mbox{ in }L^{2}\left(0,\tfrac{1}{2}\right)\\
\mathscr{D}_{3} & =\tfrac{1}{2}\left(I-\left(\tfrac{d}{dx}\right)^{2}\right)\big|_{C_{c}^{\infty}\left(0,1\right)}\mbox{ in }L^{2}\left(0,1\right)
\end{align*}
with the corresponding fundamental solutions
\begin{align*}
F_{2}\left(x\right) & =1-\left|x\right|\\
F_{3}\left(x\right) & =e^{-\left|x\right|};
\end{align*}
i.e., 
\[
\left\{ \begin{split}\mathscr{D}_{2}F_{2}\left(\cdot-x\right)=\delta_{x}\\
\mathscr{D}_{3}F_{3}\left(\cdot-x\right)=\delta_{x}
\end{split}
\right\} \Longleftrightarrow\left\{ \begin{split}\mathscr{D}_{2}T_{F_{2}}\varphi=\varphi,\;\forall\varphi\in C_{c}^{\infty}\left(0,\tfrac{1}{2}\right)\\
\mathscr{D}_{3}T_{F_{3}}\varphi=\varphi,\;\forall\varphi\in C_{c}^{\infty}\left(0,1\right)
\end{split}
\right\} 
\]
\end{example}
\begin{cor}
Let $F=F^{\left(P\right)}$, $P$ is positive, then
\[
T_{F^{\left(P\right)}}:L^{2}\left(0,a\right)\rightarrow L^{2}\left(0,a\right),\;\mbox{by}
\]
\[
\left(T_{F^{\left(P\right)}}\varphi\right)\left(x\right)=\int_{0}^{a}\varphi\left(y\right)F^{\left(P\right)}\left(x-y\right)dy
\]
is positive definite, selfadjoint, and trace class. Moreover, 
\[
T_{F^{\left(P\right)}}^{-1}\supset P\left(\tfrac{1}{i}\tfrac{d}{dx}\right)\big|_{C_{c}^{\infty}\left(0,a\right)}.
\]
Setting 
\[
h=T_{F^{\left(P\right)}}\varphi,\;\mbox{and }k=T_{F^{\left(P\right)}}\psi,
\]
then 
\[
W=\overline{Q\left(\tfrac{1}{i}\tfrac{d}{dx}\right)h}\, k-\overline{h}\, Q\left(\tfrac{1}{i}\tfrac{d}{dx}\right)k
\]
satisfying 
\[
W\left(0\right)=W\left(a\right)
\]
by Greens-Gauss-Stokes principle.\end{cor}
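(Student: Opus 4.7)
The corollary asserts (i) that $T_{F^{\left(P\right)}}$ is positive, self-adjoint, and trace class in $L^{2}\left(0,a\right)$; (ii) the operator inclusion $T_{F^{\left(P\right)}}^{-1}\supset P\left(\tfrac{1}{i}\tfrac{d}{dx}\right)\big|_{C_{c}^{\infty}\left(0,a\right)}$; and (iii) the boundary identity $W\left(0\right)=W\left(a\right)$ with $W=\overline{Q\left(\tfrac{1}{i}\tfrac{d}{dx}\right)h}\,k-\overline{h}\,Q\left(\tfrac{1}{i}\tfrac{d}{dx}\right)k$. Item (i) is immediate: the preceding lemma furnishes $F^{\left(P\right)}$ as a continuous positive definite function on $\left(-a,a\right)$, and then \lemref{mer-1} (Mercer's theorem) supplies the four listed properties. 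For (ii), write $\mathscr{D}:=P\left(\tfrac{1}{i}\tfrac{d}{dx}\right)$ and combine the preceding lemma's identity $\mathscr{D}T_{F^{\left(P\right)}}\varphi=\varphi$ for $\varphi\in C_{c}^{\infty}\left(0,a\right)$ with two auxiliary facts: first, the kernel $F^{\left(P\right)}\left(x-y\right)$ is Hermitian symmetric, so $T_{F^{\left(P\right)}}$ is symmetric on $L^{2}\left(0,a\right)$; second, $P$ has real coefficients (as $P=\overline{Q}Q$ in the polynomial sense yields a real-coefficient product), so $\mathscr{D}$ is formally symmetric. Then for any $\phi,\varphi\in C_{c}^{\infty}\left(0,a\right)$,
\[
\langle\phi,T_{F^{\left(P\right)}}\mathscr{D}\varphi\rangle_{2}=\langle T_{F^{\left(P\right)}}\phi,\mathscr{D}\varphi\rangle_{2}=\langle\mathscr{D}T_{F^{\left(P\right)}}\phi,\varphi\rangle_{2}=\langle\phi,\varphi\rangle_{2},
\]
where the middle equality is integration by parts with vanishing boundary contribution because $\varphi$ has compact support in $\left(0,a\right)$. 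Density of $C_{c}^{\infty}\left(0,a\right)$ in $L^{2}\left(0,a\right)$ then forces $T_{F^{\left(P\right)}}\mathscr{D}\varphi=\varphi$, so $\varphi\in\mathrm{range}(T_{F^{\left(P\right)}})=\mathrm{dom}(T_{F^{\left(P\right)}}^{-1})$ and $T_{F^{\left(P\right)}}^{-1}\varphi=\mathscr{D}\varphi$, which is exactly (ii).

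For (iii), I apply Green's identity to the factorization $\mathscr{D}=Q\left(\tfrac{1}{i}\tfrac{d}{dx}\right)^{*}Q\left(\tfrac{1}{i}\tfrac{d}{dx}\right)$—valid because $\tfrac{1}{i}\tfrac{d}{dx}$ is formally self-adjoint, making the formal $L^{2}$-adjoint of $Q\left(\tfrac{1}{i}\tfrac{d}{dx}\right)$ precisely $\overline{Q}\left(\tfrac{1}{i}\tfrac{d}{dx}\right)$. The resulting Greens--Gauss--Stokes identity reads
\[
\langle\mathscr{D}h,k\rangle_{2}-\langle h,\mathscr{D}k\rangle_{2}=\left[W\right]_{0}^{a}.
\]
Meanwhile, using $\mathscr{D}h=\varphi$ and $\mathscr{D}k=\psi$ from (ii), together with symmetry of $T_{F^{\left(P\right)}}$,
\[
\langle\mathscr{D}h,k\rangle_{2}=\langle\varphi,T_{F^{\left(P\right)}}\psi\rangle_{2}=\langle T_{F^{\left(P\right)}}\varphi,\psi\rangle_{2}=\langle h,\psi\rangle_{2}=\langle h,\mathscr{D}k\rangle_{2},
\]
so the left-hand side vanishes, $\left[W\right]_{0}^{a}=0$, and $W\left(0\right)=W\left(a\right)$ as claimed.

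The main obstacle is to verify cleanly that Green's formula for $\mathscr{D}=Q^{*}Q$ yields exactly the ``half-Wronskian'' $\left[\overline{Qh}\,k-\overline{h}\,Qk\right]_{0}^{a}$, with no residual higher-derivative remainders. For $Q$ of degree one—which covers the two running examples of the paper, namely $F_{2}=1-\left|x\right|$ (treated with $Q\left(\xi\right)=-i\xi/\sqrt{2}$) and $F_{3}=e^{-\left|x\right|}$ (treated with $Q\left(\xi\right)=\left(1-i\xi\right)/\sqrt{2}$)—a single integration by parts produces this form directly with the correct sign, and the remainder of the argument is routine bookkeeping; I already verified explicitly for the second example that $\overline{Qh}\,k-\overline{h}\,Qk$ reduces to $\overline{h}\,k'-\overline{h'}\,k$, the standard Wronskian for $I-D^{2}$. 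For higher-degree $Q$ the boundary bilinear form of Green's identity picks up additional terms in higher derivatives of $h$ and $k$, and $W$ must be read as this full boundary bilinear form; the mechanism—symmetry of the Mercer operator $T_{F^{\left(P\right)}}$ forcing the Green's boundary contribution to vanish—is unchanged, so the conclusion $W\left(0\right)=W\left(a\right)$ still holds, interpreted appropriately.
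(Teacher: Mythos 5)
The paper states this corollary with no proof at all --- the only justification it offers is the phrase ``by Greens-Gauss-Stokes principle'' embedded in the statement --- so there is no official argument to compare against; your proposal supplies a complete one, built from exactly the ingredients the paper points to. Part (i) via the preceding lemma plus \lemref{mer-1} is right. For (ii) you correctly identify that the graph inclusion $T_{F^{(P)}}^{-1}\supset P(\tfrac{1}{i}\tfrac{d}{dx})\big|_{C_{c}^{\infty}(0,a)}$ amounts to $T_{F^{(P)}}\mathscr{D}\varphi=\varphi$, and your derivation of this from the lemma's identity $\mathscr{D}T_{F^{(P)}}\varphi=\varphi$ --- using symmetry of $T_{F^{(P)}}$ (Hermitian kernel) and formal symmetry of $\mathscr{D}$ (real coefficients, since $P=\overline{Q}Q\geq0$ on $\mathbb{R}$), with all boundary terms killed by the compact support of $\varphi$ --- is sound and in fact more careful than what the paper does in its two worked examples, where only $\mathscr{D}T_{F}\varphi=\varphi$ is verified. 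For (iii), the mechanism you isolate, namely
\[
\left\langle \mathscr{D}h,k\right\rangle _{2}-\left\langle h,\mathscr{D}k\right\rangle _{2}=\left\langle \varphi,T_{F^{(P)}}\psi\right\rangle _{2}-\left\langle T_{F^{(P)}}\varphi,\psi\right\rangle _{2}=0
\]
by symmetry of the Mercer operator, forcing the Green boundary form to vanish, is the right one and replaces the paper's explicit endpoint computations by a structural argument. Your closing caveat is also accurate: for $\deg Q>1$ the literal expression $\overline{Qh}\,k-\overline{h}\,Qk$ does not exhaust the Green boundary bilinear form of $Q^{*}Q$ (and even for $\deg Q=1$ it matches only up to normalizing $Q$ by a unimodular constant); this is an imprecision in the corollary as stated rather than a gap in your proof, and reading $W$ as the full boundary form, as you propose, is the correct repair.
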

\begin{example}
Let $F_{1}\left(x\right)=1-\left|x\right|$ in the interval $\left(-\frac{1}{2},\frac{1}{2}\right)$,
and let $F_{1}'$ and $F_{1}''$ be the corresponding distribution
derivatives. Let $H$ bet the Heaviside function
\[
H\left(x\right)=\begin{cases}
0 & \mbox{if }x<0\\
1 & \mbox{if }x\geq0
\end{cases}
\]
then
\begin{equation}
\begin{cases}
F_{1}'=1-2H,\;\mbox{and} & \mbox{}\\
F_{1}''=-2\delta_{0}.
\end{cases}\label{eq:absdev}
\end{equation}
\end{example}
\begin{proof}
This is immediate from elementary Schwartz to distribution calculus. \end{proof}
\begin{example}
Let $F_{2}\left(x\right)=e^{-\left|x\right|}$ in the interval $\left(-1,1\right)$;
then for the respective distributional derivatives $F_{2}'$ and $F_{2}''$
we have
\begin{equation}
\begin{cases}
F_{2}'\left(x\right)=\left(1-2H\left(x\right)\right)e^{-\left|x\right|},\:\mbox{and}\\
F_{2}''\left(x\right)=F_{2}\left(x\right)-2\delta\left(x-0\right).
\end{cases}\label{eq:expdev}
\end{equation}
\end{example}
\begin{prop}
Let $F:\left(-a,a\right)\rightarrow\mathbb{C}$ be continuous and
positive definite. Suppose $F$ is elliptic with $F=F^{\left(P\right)}$
and 
\begin{equation}
P\left(\tfrac{1}{i}\tfrac{d}{dx}\right)T_{F}\varphi=\varphi,\;\forall\varphi\in C_{c}^{\infty}\left(0,a\right);\label{eq:e2-1}
\end{equation}
then 
\begin{equation}
\left(P\left(\tfrac{1}{i}\tfrac{d}{dx}\right)F\right)\left(x\right)=\delta\left(x-0\right);\label{eq:e2-2}
\end{equation}
in particular $F'$ is discontinuous at $x=0$. \end{prop}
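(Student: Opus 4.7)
The plan is to recognize that $T_F$ is convolution with $F$ and then exploit that the constant coefficient differential operator $L := P\!\left(\tfrac{1}{i}\tfrac{d}{dx}\right)$ commutes with convolution. Writing $(T_F\varphi)(x) = (F*\varphi)(x) = \int_0^a F(x-y)\varphi(y)\,dy$ for $\varphi \in C_c^\infty(0,a)$ and $x \in (0,a)$, the hypothesis \eqref{eq:e2-1} becomes $L(F*\varphi) = \varphi$ on $(0,a)$. Since $L$ has constant coefficients, it commutes with convolution in the distributional sense, giving $(LF)*\varphi = \varphi$ on $(0,a)$, where $LF$ is computed as a distribution on $(-a,a)$.

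Next I would extract the distributional identity $LF = \delta_0$. Fix an interior point $x_0 \in (0,a)$, say $x_0 = a/2$. Then the identity $((LF)*\varphi)(x_0) = \varphi(x_0)$ reads $\langle LF,\, \varphi(x_0-\cdot)\rangle = \varphi(x_0)$. Setting $\psi(y):=\varphi(x_0-y)$, as $\varphi$ ranges over $C_c^\infty(0,a)$ the test function $\psi$ ranges over $C_c^\infty(x_0-a,\,x_0)$, and the right-hand side becomes $\psi(0)$. Thus $\langle LF,\psi\rangle = \psi(0)$ for every $\psi$ in a dense class of test functions with support in $(-a,a)$; varying $x_0$ over $(0,a)$ covers the whole interval. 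This is exactly the identity $LF = \delta_0$ in $\mathcal{D}'(-a,a)$, which is the content of \eqref{eq:e2-2}.

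For the final assertion on the discontinuity of $F'$, observe that since $P$ is a positive polynomial in one real variable, $P$ has even degree $n\ge 2$. Write $LF = \sum_{k=0}^{n} a_k (-i)^k F^{(k)}$ in the sense of distributions, with $a_n \neq 0$. Because $F$ is continuous, the terms $a_k (-i)^k F^{(k)}$ for $k \le n-2$ are at worst distributions of order $\le n-2$, while the order-$n$ Dirac mass on the right forces a genuine singularity in the top pair. In particular, for the elliptic examples considered ($n=2$), the distributional derivative $F''$ must contain $-2\delta_0$ after rescaling, which forces $F'$ to have a jump at $x=0$; this recovers exactly \eqref{eq:absdev} and \eqref{eq:expdev}.

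The only subtlety I foresee is the justification that $L$ commutes with $F*\varphi$ when $F$ is only defined on $(-a,a)$ and its \emph{a priori} regularity is unknown; but this is purely formal, because one can test against any $\psi \in C_c^\infty(x_0-a,x_0)$ and transfer all derivatives from $F$ onto $\varphi$ inside the integral, so no genuine regularity of $F$ is needed beyond its being a distribution on $(-a,a)$. Thus the argument is essentially the standard fact that a right convolution inverse for a constant coefficient operator on test functions must be a fundamental solution.
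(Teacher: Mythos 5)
Your argument is correct and follows essentially the same route as the paper's own proof: identify $T_F\varphi$ with the convolution $F*\varphi$, use that the constant-coefficient operator $P\bigl(\tfrac{1}{i}\tfrac{d}{dx}\bigr)$ commutes with convolution to conclude that $F$ is a fundamental solution, and then read off the singularity of the derivatives of $F$ at $0$ from the leading term. You simply spell out two steps the paper leaves implicit — the localization argument extracting $LF=\delta_0$ from the tested identity, and the $n=2$ jump analysis — so no further comment is needed.
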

\begin{proof}
Given $F$ as above, let $P=P\left(\tfrac{1}{i}\tfrac{d}{dx}\right)$
be the corresponding elliptic PDO. It has an even-degree, say $2m$,
leading term $\pm\left(\frac{d}{dx}\right)^{2m}$. Since $T_{F}\varphi=\varphi\ast F$,
$\forall\varphi\in C_{c}^{\infty}\left(0,a\right)$, we get 
\[
\varphi\ast\delta_{0}=\varphi=\varphi\ast P\left(\tfrac{1}{i}\tfrac{d}{dx}\right)F;
\]
and therefore 
\[
\delta_{0}=P\left(\tfrac{1}{i}\tfrac{d}{dx}\right)F=\cdots\pm F^{\left(2m\right)}
\]
from which the assertion follows.\end{proof}
\begin{rem}
\textbf{\uline{Higher dimensions.}} The extension from $\mathbb{R}$
to $\mathbb{R}^{n}$ when $n>1$ is subtle for a number of reasons:

(i) Rather than just a single Hermitian (symmetric) operator $D^{\left(F\right)}:F_{\varphi}\longmapsto\frac{1}{i}F_{\varphi'}$
in $\mathscr{H}_{F}$, we will need to consider partial derivatives
$\frac{\partial}{\partial x_{k}}$, $k=1,\ldots,n$, where $n>1$;
so a system of unbounded operators:
\[
D^{\left(F\right)}:F_{\varphi}\longmapsto F_{\frac{\partial\varphi}{\partial x_{k}}},
\]
defined for $\varphi\in C_{c}^{\infty}\left(\Omega\right)$, where
$\Omega\subset\mathbb{R}^{n}$ is a given open domain. 

(ii) Even for the example $n=1$, of $F\left(x\right)=e^{-\left|x\right|}$
in $\left|x\right|<1$; passing to high dimensions yields difficulties
as follows: If $x=\left(x_{1},\ldots,x_{n}\right)$, $\lambda=\left(\lambda_{1},\ldots,\lambda_{n}\right)$,
and $\left|x\right|=\left(\sum_{k=1}^{n}x_{k}^{2}\right)^{1/2}$;
then 
\[
e^{-\left|x\right|}=\mathscr{F}_{\lambda}\left(2^{n}\pi^{\frac{n-1}{2}}\Gamma\left(\frac{n+1}{2}\right)\frac{1}{\left(1+\left|\lambda\right|^{2}\right)^{\frac{n+1}{2}}}\right)\left(x\right)
\]
where $\mathscr{F}$ denotes Fourier transform in $\mathbb{R}^{n}$,
i.e.,
\[
e^{-\left|x\right|}=\int_{\mathbb{R}^{n}}2^{n}\pi^{\frac{n-1}{2}}\Gamma\left(\frac{n+1}{2}\right)\frac{e^{i\lambda x}}{\left(1+\left|\lambda\right|^{2}\right)^{\frac{n+1}{2}}}d\lambda_{1}\cdots d\lambda_{n}.
\]

(iii) If $F=e^{-\left|x\right|}$ is on open neighborhood $\Omega\subset\mathbb{R}^{n}$,
and $\triangle=\sum_{k=1}^{n}\left(\frac{\partial}{\partial x_{k}}\right)^{2}$
is the Laplacian, it follows from (ii) that 
\[
\frac{1}{2^{n}}\left(1-\triangle\right)^{\frac{n+1}{2}}F=\delta_{0}
\]
taken in the sense of Schwartz distribution; equivalently
\[
\frac{1}{2^{n}}\left(I-\triangle\right)^{\frac{n+1}{2}}T_{F}\varphi=\varphi,\;\forall\varphi\in C_{c}^{\infty}\left(\Omega\right),
\]
and where $T_{F}$ is an $n$-dimension Mercer operator
\[
\left(T_{F}\varphi\right)\left(x\right)=\int_{\Omega}\varphi\left(y\right)F\left(x-y\right)dy,\;\forall\varphi\in C_{c}^{\infty}\left(\Omega\right).
\]

Note as before, $\mathscr{H}_{F}$ is the RKHS obtained by completion
of 
\[
\left\Vert T_{F}\varphi\right\Vert _{\mathscr{H}_{F}}^{2}:=\int_{\Omega}\int_{\Omega}\overline{\varphi\left(x\right)}\varphi\left(y\right)F\left(x-y\right)dxdy.
\]
\end{rem}
\begin{thm}
\label{thm:nd}Let $\Omega\subset\mathbb{R}^{n}$ be open and connected
and let $F:\Omega-\Omega\rightarrow\mathbb{C}$ be positive definite.
Suppose there is a $\mu\in\mathscr{M}\left(\mathbb{R}^{n}\right)$,
$\mu\in Ext\left(F\right)$ such that $\mu$ has compact support in
$\mathbb{R}^{n}$; then all the $n$ operators
\begin{equation}
D_{k}^{\left(F\right)}:F_{\varphi}\longmapsto\frac{1}{i}F_{\frac{\partial\varphi}{\partial x_{k}}},\; k=1,\ldots,n\label{eq:nd1}
\end{equation}
are bounded in $\mathscr{H}_{F}$. \end{thm}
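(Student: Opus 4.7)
The plan is to transport the problem from $\mathscr{H}_{F}$ over to $L^{2}\left(\mu\right)$ via the canonical Bochner isometry, and to show that, under this identification, each $D_{k}^{\left(F\right)}$ becomes the bounded multiplication operator by the coordinate function $\lambda_{k}$. Since $\mu$ is assumed to have compact support in $\mathbb{R}^{n}$, the multipliers $\lambda\mapsto\lambda_{k}$ are essentially bounded on $\operatorname{supp}\left(\mu\right)$, and boundedness of $D_{k}^{\left(F\right)}$ will follow.

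More precisely, because $\mu\in Ext\left(F\right)$, we have
\[
F\left(x-y\right)=\int_{\mathbb{R}^{n}}e^{i\lambda\cdot\left(x-y\right)}d\mu\left(\lambda\right),\quad x,y\in\Omega,
\]
so that, for any $\varphi\in C_{c}^{\infty}\left(\Omega\right)$,
\[
\left\Vert F_{\varphi}\right\Vert _{\mathscr{H}_{F}}^{2}=\int_{\Omega}\int_{\Omega}\overline{\varphi\left(x\right)}\varphi\left(y\right)F\left(x-y\right)dxdy=\int_{\mathbb{R}^{n}}\bigl|\widehat{\varphi}\left(\lambda\right)\bigr|^{2}d\mu\left(\lambda\right),
\]
where $\widehat{\varphi}\left(\lambda\right):=\int_{\Omega}\varphi\left(y\right)e^{-i\lambda\cdot y}dy$. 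This is the content of Lemma \ref{lem:lcg-Bochner}, applied in $G=\mathbb{R}^{n}$, and it shows that the map $V:F_{\varphi}\longmapsto\widehat{\varphi}$ extends by density to an isometry $V:\mathscr{H}_{F}\hookrightarrow L^{2}\left(\mu\right)$.

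The next step is to compute $V\,D_{k}^{\left(F\right)}\,V^{*}$ on the dense range of $V$ coming from $\left\{ F_{\varphi}:\varphi\in C_{c}^{\infty}\left(\Omega\right)\right\} $. Integration by parts gives $\widehat{\partial\varphi/\partial x_{k}}\left(\lambda\right)=i\lambda_{k}\widehat{\varphi}\left(\lambda\right)$, so by the definition of $D_{k}^{\left(F\right)}$ in (\ref{eq:nd1}),
\[
V\bigl(D_{k}^{\left(F\right)}F_{\varphi}\bigr)\left(\lambda\right)=\tfrac{1}{i}\widehat{\partial\varphi/\partial x_{k}}\left(\lambda\right)=\lambda_{k}\widehat{\varphi}\left(\lambda\right)=\lambda_{k}\bigl(VF_{\varphi}\bigr)\left(\lambda\right).
\]
Thus, under $V$, the operator $D_{k}^{\left(F\right)}$ is intertwined with multiplication by the $k$-th coordinate function $M_{\lambda_{k}}$ on $L^{2}\left(\mu\right)$.

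By hypothesis $K:=\operatorname{supp}\left(\mu\right)$ is compact in $\mathbb{R}^{n}$, so $M_{k}:=\sup_{\lambda\in K}\left|\lambda_{k}\right|<\infty$, and multiplication by $\lambda_{k}$ is a bounded selfadjoint operator on $L^{2}\left(\mu\right)$ of norm $\le M_{k}$. Combining with the isometric property of $V$,
\[
\bigl\Vert D_{k}^{\left(F\right)}F_{\varphi}\bigr\Vert _{\mathscr{H}_{F}}=\bigl\Vert M_{\lambda_{k}}VF_{\varphi}\bigr\Vert _{L^{2}\left(\mu\right)}\le M_{k}\bigl\Vert VF_{\varphi}\bigr\Vert _{L^{2}\left(\mu\right)}=M_{k}\left\Vert F_{\varphi}\right\Vert _{\mathscr{H}_{F}}
\]
for every $\varphi\in C_{c}^{\infty}\left(\Omega\right)$. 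Since $\operatorname{dom}\bigl(D_{k}^{\left(F\right)}\bigr)=\left\{ F_{\varphi}:\varphi\in C_{c}^{\infty}\left(\Omega\right)\right\} $ is dense in $\mathscr{H}_{F}$ (Lemma \ref{lem:dense}), $D_{k}^{\left(F\right)}$ extends uniquely to a bounded operator on $\mathscr{H}_{F}$ with norm at most $M_{k}$, completing the proof. The only delicate point in this scheme is the careful identification of $V$ as an isometry (not necessarily surjective) and the verification that the derivative $\partial_{k}$ on test functions transfers, through $V$, to multiplication by $\lambda_{k}$ on the (possibly proper) subspace $V\mathscr{H}_{F}\subset L^{2}\left(\mu\right)$; these are straightforward once one fixes the Fourier sign convention dictated by (\ref{eq:ex-1-5})--(\ref{eq:ext-1-6-7}).
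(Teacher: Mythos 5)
Your proof is correct and follows essentially the same route as the paper: both rest on the Parseval identity $\left\Vert F_{\varphi}\right\Vert _{\mathscr{H}_{F}}^{2}=\int_{\mathbb{R}^{n}}|\widehat{\varphi}\left(\lambda\right)|^{2}d\mu\left(\lambda\right)$ (the paper's Lemma \ref{lem:nd1}) together with the compactness of $\mathrm{supp}\left(\mu\right)$. The only cosmetic difference is that you estimate $\Vert D_{k}^{\left(F\right)}F_{\varphi}\Vert _{\mathscr{H}_{F}}$ directly by realizing $D_{k}^{\left(F\right)}$ as multiplication by $\lambda_{k}$ under the isometry $V$, whereas the paper bounds the quadratic form $|\langle F_{\varphi},D_{k}^{\left(F\right)}F_{\varphi}\rangle _{\mathscr{H}_{F}}|$ and invokes symmetry.
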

\begin{proof}
We need the following:
\begin{lem}
\label{lem:nd1}Let $F$, $\Omega$, $\mathscr{H}_{F}$, and $\mu\in Ext\left(F\right)$
be as stated in the theorem; then 
\begin{equation}
\left\Vert F_{\varphi}\right\Vert _{\mathscr{H}_{F}}^{2}=\int_{\mathbb{R}^{n}}\left|\widehat{\varphi}\left(\lambda\right)\right|^{2}d\mu\left(\lambda\right)\label{eq:nd2}
\end{equation}
where $\lambda=\left(\lambda_{1},\ldots,\lambda_{n}\right)\in\mathbb{R}^{n}$,
and $\widehat{\varphi}=$ the $\mathbb{R}^{n}$-Fourier transform,
and $\varphi\in C_{c}\left(\Omega\right)$. \end{lem}
\begin{proof}
Let $F$, $\mathscr{H}_{F}$, $\mu$, and $\varphi\in C_{c}\left(\Omega\right)$
be as stated in the lemma, then 
\begin{eqnarray*}
\left(\mbox{LHS}\right)_{\left(\ref{eq:nd2}\right)} & = & \int_{\Omega}\int_{\Omega}\overline{\varphi\left(x\right)}\varphi\left(y\right)F\left(x-y\right)dxdy\\
 & = & \int_{\Omega}\int_{\Omega}\overline{\varphi\left(x\right)}\varphi\left(y\right)\int_{\mathbb{R}^{n}}e^{i\left(x-y\right)\lambda}d\mu\left(\lambda\right)dxdy\;\left(\mbox{since }\mu\in Ext\left(F\right)\right)\\
 & \underset{\left(\text{by Fubini}\right)}{=} & \int_{\mathbb{R}^{n}}\left|\int_{\Omega}\varphi\left(x\right)e^{-ix\lambda}dx\right|^{2}d\mu\left(\lambda\right)\\
 & = & \int_{\mathbb{R}^{n}}\left|\widehat{\varphi}\left(\lambda\right)\right|^{2}d\mu\left(\lambda\right)=\left(\mbox{RHS}\right)_{\left(\ref{eq:nd2}\right)}
\end{eqnarray*}

\end{proof}
\begin{flushleft}
\emph{\uline{Proof of \mbox{\thmref{nd}} continued}} 
\par\end{flushleft}

Now let $\varphi\in C_{c}^{\infty}\left(\Omega\right)$, then by the
lemma, we have
\begin{align*}
\left|\left\langle F_{\varphi},D_{k}^{\left(F\right)}F_{\varphi}\right\rangle _{\mathscr{H}_{F}}\right| & =\left|\int_{\mathbb{R}^{n}}\lambda_{k}\left|\widehat{\varphi}\left(\lambda\right)\right|^{2}d\mu\left(\lambda\right)\right|\\
 & \leq\mbox{diam}\left(\mbox{suppt}\left(\mu\right)\right)\int_{\mathbb{R}^{n}}\left|\widehat{\varphi}\left(\lambda\right)\right|^{2}d\mu\left(\lambda\right)\\
 & =\mbox{diam}\left(\mbox{suppt}\left(\mu\right)\right)\left\Vert F_{\varphi}\right\Vert _{\mathscr{H}_{F}}^{2}\;(\mbox{by lemma }\ref{lem:nd1})
\end{align*}
where 
\begin{align*}
 & \mbox{diam}\left(\mbox{suppt}\left(\mu\right)\right)\\
= & \inf\left\{ k\in\mathbb{R}_{+}\:\big|\:\mbox{suppt\ensuremath{\left(\mu\right)\subset\left\{  \lambda:\left|\lambda\right|\leq k\right\} } }\right\} <\infty.
\end{align*}
\end{proof}
\begin{cor}
The functions 
\[
F_{k}\left(x\right)=\left(\frac{\sin\pi x}{\pi x}\right)^{k},\; k=1,2,\ldots
\]
used in the theory of B-splines, and in Shannon-sampling have this
property: That is, if $F_{k}$ is defined on an interval $\left(-a,a\right)$,
then the corresponding Hermitian symmetric operators $D^{\left(F_{k}\right)}$
in $\mathscr{H}_{F}$ are all bounded, in fact
\begin{equation}
\left\Vert D^{\left(F_{k}\right)}\left(u\right)\right\Vert _{\mathscr{H}_{F}}\leq\frac{k}{2}\left\Vert u\right\Vert _{\mathscr{H}_{F}}\label{eq:sd3}
\end{equation}
holds for all $u\in\mathscr{H}_{F}$. \end{cor}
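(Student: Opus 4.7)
The plan is to reduce this to a direct application of \thmref{nd}, which gives boundedness of $D^{(F)}$ whenever the given partially defined p.d. function $F$ admits a compactly supported extension measure $\mu\in Ext(F)$. So the work is to exhibit, for each of the Shannon/B-spline kernels $F_k$, an explicit compactly supported $\mu_k\in\mathscr{M}(\mathbb{R})$ with $\widehat{\mu}_k = F_k$, and then to compute $\sup_{\lambda\in\mathrm{supp}(\mu_k)}|\lambda|$.

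First, for $k=1$, a direct integration shows
\[
F_1(x) \;=\; \frac{\sin\pi x}{\pi x} \;=\; \int_{-\pi}^{\pi} e^{i\lambda x}\,\frac{d\lambda}{2\pi},
\]
so $F_1=\widehat{\mu}_1$ with $d\mu_1 = \tfrac{1}{2\pi}\chi_{[-\pi,\pi]}(\lambda)\,d\lambda$ a probability measure supported on the compact interval $[-\pi,\pi]$. Since pointwise multiplication of characters corresponds to convolution of spectral measures, $F_k = F_1^k = \widehat{\mu_k}$ with $\mu_k := \mu_1^{*k}$, the $k$-fold convolution. A trivial support computation gives $\mathrm{supp}(\mu_k)\subset[-k\pi,k\pi]$, and $\mu_k$ is a probability measure (being a convolution of probability measures), which automatically makes each $F_k$ continuous and positive definite on all of $\mathbb{R}$.

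Second, given $F_k$ restricted to any interval $(-a,a)$, the measure $\mu_k$ lies in $Ext(F_k)$ and has compact support. \thmref{nd} therefore applies and gives boundedness of the Hermitian operator $D^{(F_k)}$ on $\mathscr{H}_{F_k}$. To extract the explicit constant in (\ref{eq:sd3}), I would use \lemref{nd1}: for $\varphi\in C_c^\infty(0,a)$,
\[
\bigl\langle F_\varphi,\,D^{(F_k)}F_\varphi\bigr\rangle_{\mathscr{H}_{F_k}} \;=\; \int_{\mathbb{R}} \lambda\,|\widehat{\varphi}(\lambda)|^{2}\,d\mu_k(\lambda),
\]
together with $\|F_\varphi\|_{\mathscr{H}_{F_k}}^{2}=\int|\widehat{\varphi}|^{2}d\mu_k$ from the same lemma. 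Polarization then gives $\|D^{(F_k)}u\|_{\mathscr{H}_{F_k}}\le M_k\|u\|_{\mathscr{H}_{F_k}}$ with $M_k := \mathrm{ess\,sup}_{\lambda\in\mathrm{supp}(\mu_k)}|\lambda|$, and the explicit bound follows from the location of $\mathrm{supp}(\mu_k)$.

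The main obstacle is bookkeeping rather than substance: the factor $k/2$ stated in (\ref{eq:sd3}) depends on the Fourier normalization used. With the convention $F(x)=\int e^{i\lambda x}d\mu(\lambda)$ adopted in the paper the bound that falls out of the above argument is $M_k = k\pi$, while the normalized-frequency convention $\widehat{f}(\xi)=\int f(x)e^{-2\pi i\xi x}dx$ places $\mathrm{supp}(\mu_k)$ in $[-k/2,k/2]$, matching (\ref{eq:sd3}). Either way the structural content — compact spectral support, hence boundedness with norm equal to the spectral radius of the multiplication operator $M_\lambda$ on $L^{2}(\mu_k)$ — is the same, and everything reduces to \thmref{nd} and \lemref{nd1}.
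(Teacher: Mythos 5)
Your proposal is correct and follows essentially the same route as the paper: the paper's own proof also writes $F_{k}=\left(B\ast\cdots\ast B\right)^{\wedge}$ with $B=\chi_{\left(-\frac{1}{2},\frac{1}{2}\right)}$ and reads off the bound from the support $\left[-\frac{k}{2},\frac{k}{2}\right]$ of the $k$-fold convolution via \thmref{nd} and \lemref{nd1}, exactly as you do. Your closing remark about the Fourier normalization is apt --- the constant $\frac{k}{2}$ in (\ref{eq:sd3}) indeed presupposes the $e^{2\pi i\lambda x}$ convention, which is not the one used in \lemref{nd1} --- but this is a discrepancy in the paper, not a gap in your argument.
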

\begin{proof}
Fix $k$, then $D^{\left(F_{k}\right)}$ is defined on its dense domain
by 
\[
D^{\left(F_{k}\right)}\left(T_{F}\varphi\right)=\frac{1}{i}T_{F_{k}}\varphi',\;\forall\varphi\in C_{c}^{\infty}\left(0,a\right).
\]
The bound $k/2$ on the RHS in (\ref{eq:sd3}) arises as follows:
Let $B=\chi_{\left(-\frac{1}{2},\frac{1}{2}\right)}$, then 
\[
F_{k}\left(x\right)=\left(\frac{\sin\pi x}{\pi x}\right)^{k}=\underset{\mbox{k times}}{\underbrace{\left(B\ast\cdots\ast B\right)}}^{\wedge}.
\]
\end{proof}
\begin{rem}
Note that the operator bound $k/2$ on the RHS in (\ref{eq:sd3})
is independent on the size of the interval $\left(-a,a\right)$ where
$F_{k}$ is specified.
\end{rem}

\section{Extension from finite or countably infinite subsets}

In this section, we consider extension of positive definite functions
defined, initially, only on some fixed finite or countably infinite
subset of $\mathbb{R}$, or of $\mathbb{R}^{n}$.

An understanding of this problem is of use in a variety of optimization
problems, for example, in the determination of suitable families of
splines (from numerical analysis) involving a choice of a penalty
term.

In sampling theory, and in the theory of splines it is of interest
to extend positive definite functions $F$ defined on discrete subsets
of $\mathbb{R}$, finite, or infinite. 

Below we consider this problem: Let $S\subset\mathbb{R}$ be a countably
discrete subset and let 
\begin{equation}
F:S-S\longrightarrow\mathbb{C}\label{eq:s1}
\end{equation}
be a positive definite function, defined on $S-S=\left\{ x-y\:\big|\: x,y\in S\right\} $,
i.e., we have for all finite summation, $\left\{ c_{k}\right\} \subset\mathbb{C}^{\mbox{finite}}$,
$\left\{ s_{k}\right\} \subset S$, 
\begin{equation}
\sum_{j}\sum_{k}\overline{c_{j}}c_{k}F\left(s_{j}-s_{k}\right)\geq0,\label{eq:s2}
\end{equation}
and let $\mathscr{H}_{F}$ denote the corresponding reproducing kernel
Hilbert space (of functions on $S$) w.r.t. 
\begin{equation}
\left\langle F\left(\cdot-s\right),F\left(\cdot-t\right)\right\rangle _{\mathscr{H}_{F}}=F\left(s-t\right),\;\forall s,t\in S.\label{eq:s3}
\end{equation}

\begin{thm}
Let $S$ and $F$ be as above, i.e., $F$ is a fixed positive definite
function defined on $S-S$, and for $s\in S$, set $F\left(\cdot-s\right)=:F_{s}$.
Then $F$ has a continuous positive definite extension to all of $\mathbb{R}$
if and only if there is a finite Borel measure $\mu$ on $\mathbb{R}$
such that the assignment $W=W_{\left(\mu\right)}$, $W\left(F_{s}\right)=e^{is\cdot}$,
$s\in S$, extends to an isometric linear operator
\begin{equation}
\widetilde{W}:\mathscr{H}_{F}\longrightarrow L^{2}\left(\mathbb{R},\mu\right).\label{eq:s4}
\end{equation}
\end{thm}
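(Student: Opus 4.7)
The statement is essentially Bochner's theorem recast in the language of the RKHS $\mathscr{H}_F$. Two ingredients do most of the work: (i) by construction, $\mathrm{span}\{F_s:s\in S\}$ is dense in $\mathscr{H}_F$ with inner product $\langle F_s,F_t\rangle_{\mathscr{H}_F}=F(s-t)$ as in (\ref{eq:ip-discrete}); and (ii) Bochner's theorem (\lemref{lcg-Bochner}) gives a bijection between continuous positive definite functions on $\mathbb{R}$ and finite positive Borel measures on $\mathbb{R}$ via the Fourier transform $\mu\mapsto\widehat\mu$.

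For the direction ($\Rightarrow$), I would start from a continuous positive definite extension $\widetilde F$ of $F$ and apply Bochner to produce the measure $\mu$ with $\widetilde F=\widehat\mu$. Then I would define $W(F_s):=e^{is\cdot}\in L^2(\mathbb{R},\mu)$ and extend by linearity on $\mathrm{span}\{F_s\}$. The crucial calculation is the sesquilinear identity
\[
\Big\|\sum_j c_jF_{s_j}\Big\|_{\mathscr{H}_F}^2=\sum_{j,k}\overline{c_j}c_k\,F(s_j-s_k)=\sum_{j,k}\overline{c_j}c_k\int_{\mathbb{R}} e^{i(s_j-s_k)\lambda}\,d\mu(\lambda)=\Big\|\sum_j c_je^{is_j\cdot}\Big\|_{L^2(\mu)}^2,
\]
where the last step folds the double sum into a single Fourier integrand (the precise sign in the exponent is fixed by matching the Fourier convention chosen for $\widehat\mu$). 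This identity simultaneously certifies well-definedness of $W$ on finite combinations and exhibits it as an isometry, so $W$ extends uniquely by continuity to the desired $\widetilde W:\mathscr{H}_F\to L^2(\mathbb{R},\mu)$.

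For the direction ($\Leftarrow$), given $\mu$ and the isometry $\widetilde W$, I would set $\widetilde F(x):=\widehat\mu(x)$; since $\mu$ is a finite positive Borel measure, $\widetilde F$ is automatically continuous and positive definite on $\mathbb{R}$. Running the sesquilinear identity above in reverse, the isometry hypothesis yields
\[
\sum_{j,k}\overline{c_j}c_k\,F(s_j-s_k)=\|\widetilde W\xi\|_{L^2(\mu)}^2=\sum_{j,k}\overline{c_j}c_k\,\widetilde F(s_j-s_k)
\]
for every finite system $\{(c_j,s_j)\}\subset\mathbb{C}\times S$, with $\xi=\sum_jc_jF_{s_j}$. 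Polarization then forces $F(s-t)=\widetilde F(s-t)$ for all $s,t\in S$, so $\widetilde F$ extends $F$ on $S-S$.

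\emph{Main obstacle.} There is no deep obstruction: the theorem is a dictionary translation between Bochner's theorem and the RKHS construction of $\mathscr{H}_F$. The one point that requires care is that $W$ is \emph{a priori} defined only on the generators $F_s$, so one must verify it is consistent on (equivalence classes of) finite linear combinations—precisely what the inner-product identity displayed above guarantees. A secondary, purely notational care point is reconciling the sign of the Fourier phase in the prescription $F_s\mapsto e^{is\cdot}$ with the convention chosen for $\widehat\mu$ in (\ref{eq:ext-1-4}).
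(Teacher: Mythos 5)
Your proposal is correct and follows essentially the same route as the paper: both directions rest on Bochner's theorem together with the quadratic-form identity $\bigl\Vert\sum_{k}c_{k}F_{s_{k}}\bigr\Vert_{\mathscr{H}_{F}}^{2}=\int_{\mathbb{R}}\bigl|\sum_{k}c_{k}e^{is_{k}\lambda}\bigr|^{2}d\mu\left(\lambda\right)$, which simultaneously gives well-definedness and isometry of $W$ in one direction and, read backwards with polarization, forces $F=\widehat{\mu}$ on $S-S$ in the other. Your explicit remarks on consistency of $W$ on finite linear combinations and on the Fourier sign convention are the only points where you are slightly more careful than the paper's own write-up.
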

\begin{proof}
\textbf{Step 1.} Suppose $F$ has a continuous positive definite extension,
say $G$, to $\mathbb{R}$, as 
\begin{equation}
F\left(x\right)=G\left(x\right),\;\forall s\in S-S;\label{eq:s5}
\end{equation}
and let $\mu$ be the corresponding Borel measure (which exists by
Bochner's theorem) such that
\[
G\left(x\right)=G_{\mu}\left(x\right)=\int_{\mathbb{R}}e^{-ix\lambda}d\mu\left(\lambda\right),\; x\in\mathbb{R}.
\]
Now we use (\ref{eq:s3}) to compute the $\mathscr{H}_{F}$-norm of
\begin{equation}
\sum_{k}c_{k}F_{s_{k}}\label{eq:s6}
\end{equation}
for any finite sum, $\left\{ c_{k}\right\} \subset\mathbb{C}$, $\left\{ s_{k}\right\} \subset S$,
where the summation in (\ref{eq:s6}) is assumed finite. We have
\begin{eqnarray*}
\left\Vert \sum_{k}c_{k}F_{s_{k}}\right\Vert _{\mathscr{H}_{F}}^{2} & = & \sum_{j}\sum_{k}\overline{c_{j}}c_{k}F\left(s_{j}-s_{k}\right)\\
 & \underset{\left(\text{by }\ref{eq:s5}\right)}{=} & \sum_{j}\sum_{k}\overline{c_{j}}c_{k}G_{\mu}\left(s_{j}-s_{k}\right)\\
 & \underset{\left(\text{by }\ref{eq:s6}\right)}{=} & \sum_{j}\sum_{k}\overline{c_{j}}c_{k}\int_{\mathbb{R}}e^{i\left(s_{j}-s_{k}\right)\lambda}d\mu\left(\lambda\right)\\
 & = & \int_{\mathbb{R}}\left|\sum_{k}c_{k}e^{is_{k}\lambda}\right|^{2}d\mu\left(\lambda\right).
\end{eqnarray*}
Hence, introducing $W=W_{\mu}$ as in (\ref{eq:s4}), 
\begin{equation}
W_{\mu}\left(F_{s}\right):=e^{is\cdot}\label{eq:s7}
\end{equation}
as a function on $\mathbb{R}$, it follows that $W_{\mu}$ extends
by linearity and norm-closure $\mathscr{H}_{F}\longrightarrow L^{2}\left(\mathbb{R},\mu\right)$
to yield a well-defined isometry from $\mathscr{H}_{F}$ into $L^{2}\left(\mathbb{R},\mu\right)$
with the stated properties.

\textbf{Step 2.} The converse implication is an immediate consequence
of the argument above: If an isometry $W_{\mu}$ exists as in the
statement of the theorem, see (\ref{eq:s7}), then a measure $\mu$
exists having the stated properties. Then set $G=G_{\mu}$, $L=$
the positive definite function from (\ref{eq:s6}). We claim that
(\ref{eq:s5}) holds, i.e., that 
\begin{equation}
F\left(s_{1}-s_{2}\right)=G_{\mu}\left(s_{1}-s_{2}\right),\:\forall s_{1},s_{2}\in S.\label{eq:s8}
\end{equation}
Let $c_{1},c_{2}\in\mathbb{C}$, then 
\begin{equation}
\sum_{j}\sum_{k}\overline{c_{j}}c_{k}F\left(s_{j}-s_{k}\right)=\int_{\mathbb{R}}\left|\sum_{j}c_{j}e^{is_{j}\lambda}\right|^{2}d\mu\left(\lambda\right)\label{eq:s9}
\end{equation}
holds.

But both sides in (\ref{eq:s9}) is a quadratic from on $\mathbb{C}^{2}$.
Since the respective quadratic forms on the two sides in (\ref{eq:s9})
agree, it follows that the extensions must agree, i.e., 
\begin{align*}
F\left(s_{j}-s_{k}\right) & =\int_{\mathbb{R}}e^{-is_{j}\lambda}e^{is_{k}\lambda}d\mu\left(\lambda\right)\\
 & =\int_{\mathbb{R}}e^{-i\left(s_{j}-s_{k}\right)\lambda}d\mu\left(\lambda\right)=G_{\mu}\left(s_{j}-s_{k}\right)
\end{align*}
which is the desired conclusion (\ref{eq:s8}).
\end{proof}

\section{Some ONBs in $\mathscr{H}_{F}$}

In this section we give a \textquotedblleft divided-differences\textquotedblright{}
algorithm for constructing particular orthonormal bases (ONB) in the
RKHSs $\mathscr{H}_{F}$ built from a fixed continuous p.d. function
$F$ in a bounded interval $J$. Our ONB-algorithm is based on operations
on a dyadic multiresolution in the interval $J$. Our applications
include B-splines.

Fix $a>0$. Let $F:\left(-a,a\right)\rightarrow\mathbb{C}$ be a continuous
positive definite (p.d.) function, and assume $F\left(0\right)=1$.
We construct an orthonormal basis in the corresponding reproducing
kernel Hilbert space (RKHS) $\mathscr{H}_{F}$. To simplify the notations,
we assume $F$ is real-valued. 

Recall that, by continuity, $F$ extends uniquely to the endpoints
$x=\pm a$. Set 
\begin{equation}
F_{x}\left(y\right)=F\left(x-y\right),\;\forall x,y\in\left[0,a\right].\label{eq:gs-def}
\end{equation}

\begin{prop}
Fix $F:\left(-a,a\right)\rightarrow\mathbb{C}$ p.d., continuous.
Let $\mathscr{H}_{F}$ be the corresponding RKHS. If $S\subset\left[0,a\right]$
is a dense subset, then $\left\{ F_{s}\right\} _{s\in S}$ is total,
i.e., the $\mathscr{H}_{F}$-closed subspace of $\left\{ F_{s}\right\} _{s\in S}$
is $\mathscr{H}_{F}$. \end{prop}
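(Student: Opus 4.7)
The plan is to reduce the totality statement to a one-line application of the reproducing property combined with the continuity of elements of $\mathscr{H}_F$. To show that $\{F_s\}_{s\in S}$ spans a dense subspace of $\mathscr{H}_F$, it suffices to verify that its orthogonal complement in $\mathscr{H}_F$ is trivial, and then invoke the projection theorem.

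First I would fix an arbitrary $\xi\in\mathscr{H}_F$ satisfying $\langle F_s,\xi\rangle_{\mathscr{H}_F}=0$ for every $s\in S$. By the reproducing property
\[
\langle F(\cdot-x),\xi\rangle_{\mathscr{H}_F}=\xi(x),\qquad \forall x\in\Omega,
\]
recorded after \lemref{RKHS-def-by-integral} as a direct consequence of (\ref{eq:ip-discrete}), the assumed orthogonality translates into the pointwise vanishing condition $\xi(s)=0$ for every $s\in S$.

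Next I would invoke the structural property of $\mathscr{H}_F$ listed in the same discussion, namely that every element of $\mathscr{H}_F$ is a continuous function on $\Omega-\Omega$ and, in particular, extends continuously to the closed interval $[0,a]$. Thus $\xi$ is a continuous function on $[0,a]$ which vanishes on the dense subset $S\subset[0,a]$, and hence $\xi\equiv 0$ on $[0,a]$; consequently $\xi=0$ as an element of $\mathscr{H}_F$. This shows $\{F_s\}_{s\in S}^{\perp}=\{0\}$, so the closed linear span of $\{F_s\}_{s\in S}$ coincides with $\mathscr{H}_F$.

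The argument is essentially routine; the only point meriting care---and not really an obstacle---is to use continuity of the representative of $\xi$ on the closed interval (rather than an $L^2$-style almost-everywhere identification), which is exactly what the RKHS structure of $\mathscr{H}_F$ provides.
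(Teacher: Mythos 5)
Your argument is correct and follows essentially the same route as the paper: use the reproducing property to convert orthogonality to $\{F_s\}_{s\in S}$ into pointwise vanishing on $S$, then use continuity of elements of $\mathscr{H}_F$ on $[0,a]$ together with density of $S$ to conclude the orthogonal complement is trivial. The paper merely adds one explicit verification (that pointwise vanishing forces $h\perp F_\varphi$ for all $\varphi\in C_c^\infty(0,a)$, hence $h=0$ by density), which your appeal to the RKHS identification of elements with their function representatives covers equally well.
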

\begin{proof}
We must show that if $h\in\mathscr{H}_{F}$ and $\left\langle F_{s},h\right\rangle =0$,
$\forall s\in S$ $\Longrightarrow$ $h=0$. 

Note $\left\langle F_{s},h\right\rangle =h\left(s\right)=0$, $\forall s\in S$.
Since $h$ is uniformly continuous on $\left[0,a\right]$ by general
theory, it follows that $h=0$ on $\left[0,a\right]$ point-wise.
Then, 
\[
0=\left\langle h,F_{\varphi}\right\rangle _{\mathscr{H}_{F}}=\int_{0}^{1}\overline{h\left(x\right)}\varphi\left(x\right)dx=0;
\]
i.e., $h\perp\left\{ F_{\varphi}\right\} _{\varphi\in C_{c}^{\infty}\left(0,a\right)}$
$\Longrightarrow$ $h=0$ in $\mathscr{H}_{F}$.\end{proof}
\begin{lem}
\label{lem:gm1}Let $F:\left(-a,a\right)\rightarrow\mathbb{R}$ be
a continuous p.d. function, s.t. $F\left(0\right)=1$. Define $F_{x}\left(y\right):=F\left(x-y\right)$,
for all $x,y\in\left[0,a\right]$. Let 
\begin{align}
RA_{2}^{+} & :=\left\{ 0,1,\frac{1}{2},\frac{1}{4},\frac{3}{4},\frac{1}{8},\frac{3}{8},\frac{5}{8},\frac{7}{8},\frac{1}{16}\ldots,\frac{k}{2^{n}},\ldots\right\} \;\mbox{where}\label{eq:RA}\\
S\left(n\right) & :=\left\{ k:k=1,3,\dots,\;\mbox{\ensuremath{\left(odd\right)}}<2^{n}\right\} ,\; n\geq1.\label{eq:onbsn}
\end{align}
Suppose the set $\left\{ F_{x}\:\big|\: x\in aRA_{2}^{+}\right\} $
is linearly independent, then we have an ONB as follows: 
\begin{align*}
h_{0} & :=F_{0}\\
h_{1} & :=\frac{1}{\sqrt{1-F^{2}\left(a\right)}}\left(F_{a}-F\left(a\right)F_{0}\right)\\
h_{n,k} & :=\sqrt{\frac{1+F\left(\frac{a}{2^{n-1}}\right)}{1+F\left(\frac{a}{2^{n-1}}\right)-2F^{2}\left(\frac{a}{2^{n}}\right)}}\left(F_{\frac{k\, a}{2^{n}}}-\frac{F\left(\frac{a}{2^{n}}\right)}{1+F\left(\frac{a}{2^{n-1}}\right)}\left(F_{\frac{\left(k-1\right)a}{2^{n}}}+F_{\frac{\left(k+1\right)a}{2^{n}}}\right)\right)
\end{align*}
for all $k\in S\left(n\right)$ (eq. (\ref{eq:RA})), and $n=1,2,\ldots$;
see \tabref{abs2}.\end{lem}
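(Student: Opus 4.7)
The plan is to recognize the formulas for $h_0$, $h_1$, and $h_{n,k}$ as the Gram--Schmidt orthonormalization of the ordered kernel family $\{F_x:x\in a\cdot RA_2^+\}$, enumerated by dyadic level. The preceding proposition already supplies totality of this family in $\mathscr{H}_F$ (since $aRA_2^+$ is dense in $[0,a]$), and linear independence is assumed, so it suffices to verify that the stated vectors form an orthonormal system and that the partial spans at each level agree with the spans of the corresponding kernels.

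First I would dispatch the two base cases by direct reproducing-property calculations: $\|h_0\|_{\mathscr{H}_F}^2=\langle F_0,F_0\rangle_{\mathscr{H}_F}=F(0)=1$, and from $\langle F_0,F_a\rangle_{\mathscr{H}_F}=F(a)$ one gets $\|F_a-F(a)F_0\|_{\mathscr{H}_F}^2=1-F(a)^2$, matching the $h_1$ normalization. For each $h_{n,k}$, I would compute the orthogonal projection of $F_{ka/2^n}$ onto the two-dimensional subspace spanned by its immediate dyadic neighbors $F_{(k-1)a/2^n}$ and $F_{(k+1)a/2^n}$. The kernel identities $\langle F_{ka/2^n},F_{(k\pm1)a/2^n}\rangle_{\mathscr{H}_F}=F(a/2^n)$ and $\langle F_{(k-1)a/2^n},F_{(k+1)a/2^n}\rangle_{\mathscr{H}_F}=F(a/2^{n-1})$ force the symmetric projection coefficient to satisfy $F(a/2^n)=\alpha(1+F(a/2^{n-1}))$, producing exactly the $\alpha$ appearing in the statement, and a short three-term expansion yields the squared residual $(1+F(a/2^{n-1})-2F(a/2^n)^2)/(1+F(a/2^{n-1}))$, which is the inverse square of the normalization factor.

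The main obstacle is to show that the residual $F_{ka/2^n}-\alpha(F_{(k-1)a/2^n}+F_{(k+1)a/2^n})$ is orthogonal not only to its two immediate neighbors but to \emph{every} prior basis vector, i.e.\ to $F_{ja/2^m}$ for all $m\le n$ and $j$ outside $\{k-1,k+1\}$. This reduces, by pairwise $\mathscr{H}_F$-inner-product computations, to the discrete mean-value identity
\begin{equation*}
F(u)\;=\;\frac{F(h)}{1+F(2h)}\bigl(F(u-h)+F(u+h)\bigr),\qquad h=a/2^n,
\end{equation*}
required for every displacement $u=(ka/2^n)-(ja/2^m)$ of magnitude at least $2h$. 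For the two concrete examples treated in the paper this is immediate: for $F(x)=e^{-|x|}$ one uses $e^{-h}(e^{h}+e^{-h})=1+e^{-2h}$ together with $e^{-|u\pm h|}=e^{\mp h\,\mathrm{sgn}(u)}e^{-|u|}$ valid when $|u|\ge h$; for $F(x)=1-|x|$ one uses the affine averaging $(1-|u-h|)+(1-|u+h|)=2(1-|u|)$ valid when $|u|\ge h$. More generally, this identity is characteristic of those p.d.\ kernels for which $T_F^{-1}$ extends a constant-coefficient elliptic differential operator, tying the construction directly to the elliptic framework of the preceding section; this is the nontrivial input, and once it is in hand a short induction on $n$ upgrades Gram--Schmidt orthogonality against the two immediate neighbors to orthogonality against all prior basis vectors.

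Completeness then drops out for free: the same pairwise computations show that at each level $n$ the linear span of $\{h_0,h_1\}\cup\{h_{m,k}:m\le n,\;k\in S(m)\}$ coincides with $\mathrm{span}\{F_{ja/2^m}:m\le n,\;0\le j\le 2^m\}$, and the union over $n$ is dense in $\mathscr{H}_F$ by the preceding proposition, so the orthonormal system is indeed an ONB.
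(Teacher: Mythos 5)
Your overall strategy --- Gram--Schmidt applied to the dyadically ordered kernel family, with the reproducing property $\langle F_x,F_y\rangle_{\mathscr{H}_F}=F(x-y)$ turning every inner product into a value of $F$ --- is exactly the paper's approach; the paper's own proof is essentially one sentence (``applying the Gram--Schmidt process to the ordering \eqref{eq:RA} yields the desired ONB,'' plus the check $\|F_0\|_{\mathscr{H}_F}=1$). You go considerably further, and in doing so you put your finger on the real mathematical content: the displayed $h_{n,k}$ subtracts only the two immediate dyadic neighbours, whereas generic Gram--Schmidt produces a correction involving \emph{all} previously processed kernels. Your ``discrete mean-value identity''
\[
F(u)=\frac{F(h)}{1+F(2h)}\bigl(F(u-h)+F(u+h)\bigr),\qquad |u|\ge h,\ h=a/2^n,
\]
is precisely what is needed for the three-term formula to coincide with the Gram--Schmidt output, and you verify it for the two examples the paper actually uses ($e^{-|x|}$ and $1-|x|$). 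That is a correct and valuable observation which the paper's proof glosses over entirely.

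The gap is that this identity is \emph{not} a consequence of the stated hypotheses (continuity, positive definiteness, $F(0)=1$, linear independence), so neither your argument nor the paper's establishes the lemma in the generality claimed. For instance $F(x)=\tfrac12 e^{-|x|}+\tfrac12 e^{-2|x|}$ is continuous and p.d.\ with $F(0)=1$, yet with $a=1$, $u=3/4$, $h=1/4$ one finds $F(3/4)\approx 0.3477$ while $\tfrac{F(1/4)}{1+F(1/2)}\bigl(F(1)+F(1/2)\bigr)\approx 0.3441$; hence $\langle F_a,h_{2,1}\rangle_{\mathscr{H}_F}\neq 0$, so $h_{2,1}\not\perp h_1$ and the displayed vectors are not an orthonormal system. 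Your remark that the identity is ``characteristic'' of the second-order elliptic case is the right intuition (it is a three-term recurrence reflecting a second-order ODE satisfied by $F$ away from the origin), but it is asserted rather than proved, and the concluding induction --- including same-level orthogonality such as $h_{2,1}\perp h_{2,3}$, which share the neighbour $F_{a/2}$ --- is only sketched. So: same route as the paper, considerably more honest about where the work lies, but the key identity must either be added as a hypothesis or proved for the class of $F$ to which the lemma is meant to apply; as written, your proof (like the paper's) is complete only for the two concrete kernels.
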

\begin{rem}
In the examples we use in the present paper the assumption of linear
independence is satisfied, i.e., the set $\left\{ F_{x}\:\big|\: x\in aRA_{2}^{+}\right\} $
is linearly independent in $\mathscr{H}_{F}$.\end{rem}
\begin{proof}[Proof of \lemref{gm1}]
Applying the Gram-Schmidt process to (\ref{eq:RA}) yields the desired
ONB. Note $F_{0}$ is a unit vector in $\mathscr{H}_{F}$, since $\left\langle F_{0},F_{0}\right\rangle _{\mathscr{H}_{F}}=F\left(0-0\right)=F\left(0\right)=1$. \end{proof}
\begin{rem}
\label{rem:diagmt}Let $RA_{2}^{+}$ and $S\left(n\right)$ be as
in \lemref{gm1}. The Gram-Schmidt process represents a transformation
\[
\left\{ F_{x}\:\big|\: x\in aRA_{2}^{+}\right\} \longmapsto h_{x},\; x=\frac{ak}{2^{n}}
\]
by an $\infty\times\infty$ bounded matrix of \emph{tridiagonal} form:

\[
\left[
\begin{array}{c|ccccccc
cccc} 
  & 0 & 1 & \frac{1}{2} & \frac{1}{4} & \frac{3}{4} 
 & \frac{1}{8} & \frac{3}{8} & \frac{5}{8} & \frac{7}{8}
 & \cdots \\ 
\hline
0 &* &* &* &* & &* & & \\
1 &  &* &* & &* & & & &* \\
\frac{1}{2} & & &* &* &* & &* &* \\
\frac{1}{4} & & & &* & &* &* & \\
\frac{3}{4} & & & & &* & & &* &*\\
\frac{1}{8} & & &\makebox(0,0){\text{\huge0}} & & &* & & \\
\frac{3}{8} & & & & & & &* & \\
\frac{5}{8} & & & & & & & &* \\
\frac{7}{8} & & & & & & & & &*\\
\vdots & & & & & & & & & &\ddots \\

\end{array}
\right]
\]\\

Hermitian \emph{tridiagonal} matrices are called Jacobi matrices.
Jacobi matrices define Hermitian operators in $l^{2}$, which automatically
must have indices $(0,0)$ or $(1,1)$. For background on Jacobi-matrices,
banded matrices, and more general \textquotedblleft sparse\textquotedblright{}
infinite by infinite matrices, we refer to \cite{Akh65}. For their
use in physics, see \cite{Jor77}.
\end{rem}

\begin{rem}
Our algorithm for the ONB in $\mathscr{H}_{F}$ uses an analogue of
\textquotedblleft divided differences\textquotedblright{} from numerical
analysis \cite{Gau13}, as well as the Gram-Schmidt algorithm used
in the theory of orthogonal polynomials \cite{Akh65}, and more generally
orthogonal functions. There, one expresses the Gram-Schmidt algorithm
with the use of suitable tri-diagonal infinite by infinite matrices,
so a band of numbers down the infinite diagonal, and zeroes off the
band.

Our present matrix is analogous (see Remark \ref{rem:diagmt}): It
is sparse, but with a slightly different sparsely-pattern.

For other uses of tri-diagonal infinite by infinite matrices in classical
moment problems, see \cite{Akh65}. \end{rem}
\begin{cor}
\label{cor:gsonb1}Any $f\in\mathscr{H}_{F}$ can be expanded as 
\[
f\left(x\right)=c_{0}h_{0}\left(x\right)+c_{1}h_{1}\left(x\right)+\sum_{n=1}^{\infty}\sum_{k\in S\left(n\right)}c_{n,k}h_{n,k}\left(x\right),\;\forall x\in\left[0,a\right];\mbox{ where}
\]
\begin{align*}
c_{0} & =f\left(0\right)\\
c_{1} & =\frac{1}{\sqrt{1-F^{2}\left(a\right)}}\left(f\left(a\right)-F\left(a\right)f\left(0\right)\right)\\
c_{n,k} & =\sqrt{\frac{1+F\left(\frac{a}{2^{n-1}}\right)}{1+F\left(\frac{a}{2^{n-1}}\right)-2F^{2}\left(\frac{a}{2^{n}}\right)}}\Bigg[f\left(\frac{k\, a}{2^{n}}\right)-\\
 & \quad\frac{F\left(\frac{a}{2^{n}}\right)}{1+F\left(\frac{a}{2^{n-1}}\right)}\left(f\left(\frac{\left(k-1\right)a}{2^{n}}\right)+f\left(\frac{\left(k+1\right)a}{2^{n}}\right)\right)\Bigg].
\end{align*}
Moreover, 
\begin{equation}
\left\Vert f\right\Vert _{\mathscr{H}_{F}}^{2}=\left|c_{0}\right|^{2}+\left|c_{1}\right|^{2}+\sum_{n=1}^{\infty}\sum_{k\in s\left(n\right)}\left|c_{n,k}\right|^{2}.\label{eq:gs3}
\end{equation}
Note, $S\left(n\right)$, $n=2,3,\ldots$, is as in \lemref{gm1},
eq. (\ref{eq:onbsn}).\end{cor}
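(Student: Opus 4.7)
The plan is to invoke directly the ONB character of the system $\{h_{0},h_{1}\}\cup\{h_{n,k}:n\geq 1,\;k\in S(n)\}$ established in \lemref{gm1}, together with the reproducing property of $\mathscr{H}_{F}$. Since the system is an ONB, every $f\in\mathscr{H}_{F}$ admits the unique orthogonal expansion
\[
f=\langle h_{0},f\rangle_{\mathscr{H}_{F}}h_{0}+\langle h_{1},f\rangle_{\mathscr{H}_{F}}h_{1}+\sum_{n=1}^{\infty}\sum_{k\in S(n)}\langle h_{n,k},f\rangle_{\mathscr{H}_{F}}h_{n,k},
\]
with convergence in $\mathscr{H}_{F}$-norm, and Parseval's identity gives (\ref{eq:gs3}) automatically once the coefficient formulas are verified.

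To compute the coefficients $c_{0},c_{1},c_{n,k}$, the key input is the reproducing property
\[
\langle F_{x},f\rangle_{\mathscr{H}_{F}}=f(x),\qquad\forall x\in[0,a],\;f\in\mathscr{H}_{F},
\]
which is \uline{conjugate-linear} in the first slot (per the convention fixed earlier in the paper). First I would set $c_{0}:=\langle h_{0},f\rangle_{\mathscr{H}_{F}}=\langle F_{0},f\rangle_{\mathscr{H}_{F}}=f(0)$. Next, since $h_{1}=\tfrac{1}{\sqrt{1-F(a)^{2}}}(F_{a}-F(a)F_{0})$ and $F$ is real-valued, the scalar coefficients pull out unchanged under conjugation, and the reproducing property yields
\[
c_{1}=\langle h_{1},f\rangle_{\mathscr{H}_{F}}=\frac{1}{\sqrt{1-F(a)^{2}}}\bigl(f(a)-F(a)f(0)\bigr).
\]
The same mechanism applied to the explicit formula for $h_{n,k}$ in \lemref{gm1} produces $c_{n,k}$: each term $F_{k a/2^{n}}$, $F_{(k-1)a/2^{n}}$, $F_{(k\pm 1)a/2^{n}}$ inside $h_{n,k}$ pairs with $f$ via the reproducing property to give the corresponding values $f(ka/2^{n})$, $f((k-1)a/2^{n})$, $f((k+1)a/2^{n})$, with the real normalization constant reproduced verbatim.

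There is essentially no obstacle beyond bookkeeping: the only subtlety is making sure that the conjugate-linearity convention in the first slot of $\langle\cdot,\cdot\rangle_{\mathscr{H}_{F}}$ does not disturb the scalar coefficients, and here it does not, because $F$ is assumed real-valued so that all coefficients appearing in the Gram--Schmidt output of \lemref{gm1} are real. Finally, Parseval for the ONB gives
\[
\|f\|_{\mathscr{H}_{F}}^{2}=|c_{0}|^{2}+|c_{1}|^{2}+\sum_{n=1}^{\infty}\sum_{k\in S(n)}|c_{n,k}|^{2},
\]
which is (\ref{eq:gs3}). Completeness of the expansion and convergence in $\mathscr{H}_{F}$-norm are immediate consequences of the ONB property; the stated pointwise identity at each $x\in[0,a]$ then follows from the continuous embedding $\mathscr{H}_{F}\hookrightarrow C[0,a]$ noted after \lemref{dense} (norm convergence in $\mathscr{H}_{F}$ implies uniform convergence).
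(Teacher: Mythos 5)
Your proposal is correct and follows essentially the same route as the paper: the paper's own proof simply cites the reproducing property (to identify the coefficients $\langle h_{0},f\rangle$, $\langle h_{1},f\rangle$, $\langle h_{n,k},f\rangle$ with the stated point-evaluation expressions) and the fact that $\mathscr{H}_{F}$ consists of continuous functions on $[0,a]$, with Parseval for the ONB of \lemref{gm1} giving (\ref{eq:gs3}). Your write-up just makes these steps explicit.
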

\begin{proof}
This follows from the reproducing property in $\mathscr{H}_{F}$.
Also note $\mathscr{H}_{F}$ consists of continuous functions on $\left[0,a\right]$.\end{proof}
\begin{cor}
\label{cor:gsonb2}Let $f$ be any continuous on $\left[0,a\right]$,
then: 
\begin{equation}
f\in\mathscr{H}_{F}\Longleftrightarrow\left|c_{0}\right|^{2}+\left|c_{1}\right|^{2}+\sum_{n=1}^{\infty}\sum_{k\in s\left(n\right)}\left|c_{n,k}\right|^{2}<\infty;\label{eq:gs4}
\end{equation}
where the coefficients (depend on $f$) are given in \corref{gsonb1}. \end{cor}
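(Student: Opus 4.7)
The plan is to reduce this to the fact, already established in Lemma \ref{lem:gm1} and Corollary \ref{cor:gsonb1}, that $\{h_0, h_1\} \cup \{h_{n,k} : n \geq 1, k \in S(n)\}$ is an ONB for $\mathscr{H}_F$. The forward implication $(\Rightarrow)$ is immediate: if $f \in \mathscr{H}_F$, then by the reproducing property we have $c_0 = \langle h_0, f\rangle_{\mathscr{H}_F} = f(0)$, and similarly the explicit coefficient formulas of Corollary \ref{cor:gsonb1} agree with the Hilbert-space inner products $\langle h_1, f\rangle_{\mathscr{H}_F}$ and $\langle h_{n,k}, f\rangle_{\mathscr{H}_F}$, since each $h_\bullet$ is a linear combination of kernel vectors $F_x$ at dyadic points $x \in aRA_2^{+}$. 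Parseval's identity then yields (\ref{eq:gs3}), and in particular the right-hand side of (\ref{eq:gs4}) is finite.

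For the reverse implication $(\Leftarrow)$, assume $f$ is continuous on $[0,a]$ and the coefficients $c_0, c_1, c_{n,k}$ built from the pointwise values of $f$ at dyadic rationals satisfy the summability condition. Define
\[
g := c_0 h_0 + c_1 h_1 + \sum_{n=1}^{\infty}\sum_{k\in S(n)} c_{n,k} h_{n,k},
\]
which converges in the $\mathscr{H}_F$-norm by the orthonormality of the $h_\bullet$ and the assumed summability; hence $g \in \mathscr{H}_F$. The task is now to prove $f = g$ on $[0,a]$. Since $g \in \mathscr{H}_F$, its own ONB expansion coefficients are $\langle h_\bullet, g\rangle_{\mathscr{H}_F}$; but by the forward direction applied to $g$, these equal the formulas of Corollary \ref{cor:gsonb1} evaluated on the pointwise values of $g$ at dyadic rationals. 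By construction, they also equal the same formulas evaluated on $f$.

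The remaining step is an induction on the dyadic level $n$ showing $f(ak/2^n) = g(ak/2^n)$. The base case $n = 0$: from $c_0(f) = c_0(g)$ we get $f(0) = g(0)$; then $c_1(f) = c_1(g)$ combined with $f(0) = g(0)$ yields $f(a) = g(a)$. For the inductive step, the formula for $c_{n,k}$ expresses $f(ak/2^n)$ as an explicit linear combination of $c_{n,k}$ together with values of $f$ at the two neighboring dyadic points of level $\leq n-1$; by the induction hypothesis those neighboring values agree for $f$ and $g$, and $c_{n,k}(f) = c_{n,k}(g)$, so $f(ak/2^n) = g(ak/2^n)$. Since dyadic rationals are dense in $[0,a]$ and both $f$ and $g$ are continuous on $[0,a]$ (continuity of $g$ being inherited from membership in $\mathscr{H}_F$), it follows that $f = g$ everywhere, so $f \in \mathscr{H}_F$.

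The main (and essentially only) subtlety will be verifying that the tridiagonal/divided-difference coefficient formulas are genuinely invertible in the inductive sense described above, i.e.\ that once the lower-level dyadic values are known to match, the formula for $c_{n,k}$ uniquely determines the value at the new midpoint $ak/2^n$. This is immediate from the explicit expression in Corollary \ref{cor:gsonb1}, whose leading term is $f(ak/2^n)$ with a nonzero coefficient (the normalizing square root), provided the linear-independence hypothesis of Lemma \ref{lem:gm1} holds; hence no extra analytic input is required beyond what the lemma already supplies.
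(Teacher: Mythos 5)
Your proof is correct, and it takes the route the paper evidently intends (the paper's own proof is just the word ``Immediate''): the forward direction is Parseval via Corollary \ref{cor:gsonb1}, and the reverse direction identifies $f$ with the $\mathscr{H}_{F}$-sum $g$ of the series. Your dyadic induction recovering $f\left(ka/2^{n}\right)=g\left(ka/2^{n}\right)$ from the invertibility of the tridiagonal coefficient formulas (the leading coefficient being nonzero precisely because of the linear-independence hypothesis of Lemma \ref{lem:gm1}) is exactly the detail the paper suppresses, and it is the part that is not literally immediate; your write-up supplies it correctly.
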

\begin{proof}
Immediate.\end{proof}
\begin{example}
\label{ex:abs}Consider $F\left(x\right)=1-\left|x\right|$, $x\in\left(-\frac{1}{2},\frac{1}{2}\right)$,
and let $\mathscr{H}_{F}$ be the corresponding RKHS. Following the
construction in (\ref{lem:gm1}), we get the ONB in $\mathscr{H}_{F}$;
see \tabref{abs}. Figure \ref{fig:onbAbs} below shows the first
5 functions in \tabref{abs}.

\begin{figure}[H]
\includegraphics[scale=0.6]{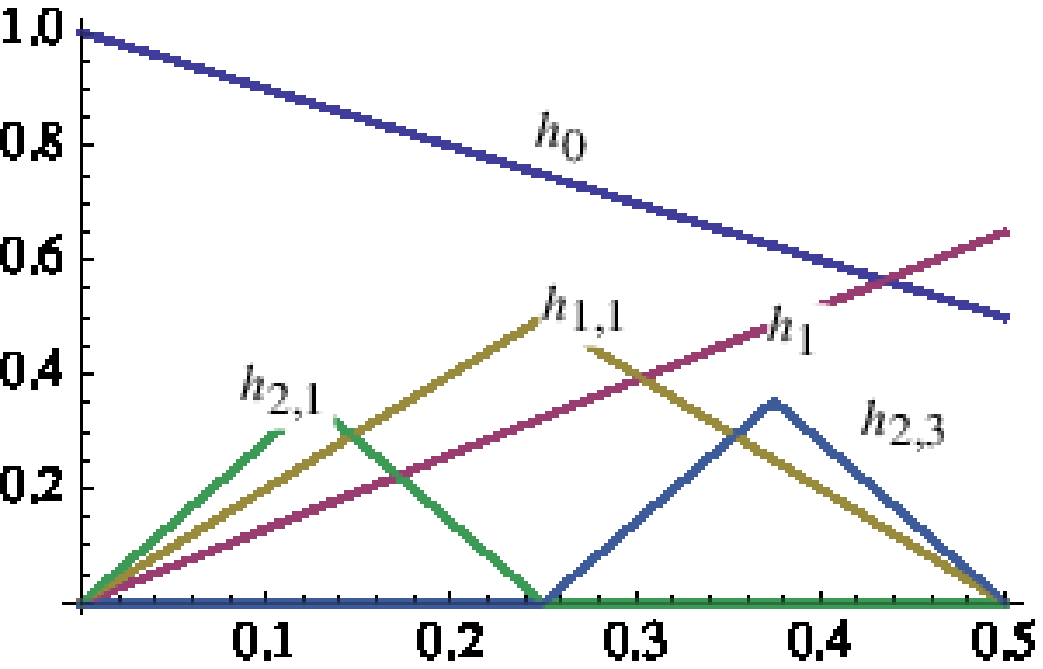}

\protect\caption{\label{fig:onbAbs}$F\left(x\right)=1-\left|x\right|$, $\left|x\right|<\frac{1}{2}$.}
\end{figure}
\end{example}
\begin{cor}
Let $F\left(x\right)=1-\left|x\right|$, $x\in\left(-\frac{1}{2},\frac{1}{2}\right)$.
Let $D_{F}\left(F_{\varphi}\right):=F_{\varphi'}$, defined on $\left\{ F_{\varphi}:\varphi\in C_{c}^{\infty}\left(0,\frac{1}{2}\right)\right\} $,
as a skew Hermitian operator in $\mathscr{H}_{F}$. Then $D_{F}$
has deficiency indices $\left(1,1\right)$ in $\mathscr{H}_{F}$. \end{cor}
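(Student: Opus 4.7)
The plan is to exhibit nonzero vectors in each of the two deficiency subspaces and then invoke the general a priori dichotomy that the indices of $D_F$ must be either $(0,0)$ or $(1,1)$ (von Neumann's conjugation trick, as the paper records just after \corref{defg}). So the whole corollary reduces to proving that both candidate defect vectors really lie in $\mathscr{H}_F$.

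First I would use \lemref{Dadj} to translate the defect equations into a distributional ODE. That lemma says $\xi\in\mathrm{dom}(D_F^*)$ iff $\xi$ and its distributional derivative $\xi'$ both lie in $\mathscr{H}_F$, and then $D_F^*\xi=-\xi'$. Hence the defect equations $D_F^*\xi_\pm=\pm\xi_\pm$ become $\xi_\pm'=\mp\xi_\pm$ on $(0,\tfrac12)$, whose only distributional solutions are the classical exponentials
\[
\xi_+(x)=c_+\,e^{-x},\qquad \xi_-(x)=c_-\,e^{x},
\]
exactly as identified in \corref{defg}.

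The substantive step is then to check that both $e^{-x}$ and $e^{x}$, restricted to $[0,\tfrac12]$, genuinely belong to $\mathscr{H}_F$. This is precisely what \lemref{abs} provides: for $F(x)=1-|x|$ on $(-\tfrac12,\tfrac12)$, a continuous function $h$ on $[0,\tfrac12]$ lies in $\mathscr{H}_F$ if and only if its distributional derivative $h'$ lies in $L^2(0,\tfrac12)$. Since $e^{\pm x}$ and their derivatives $\pm e^{\pm x}$ are $C^\infty$ on the closed interval $[0,\tfrac12]$, they are certainly bounded and hence in $L^2(0,\tfrac12)$. Therefore both $\xi_\pm$ and their derivatives $\xi_\pm'$ belong to $\mathscr{H}_F$, so $\xi_\pm\in\mathrm{dom}(D_F^*)$ and indeed $D_F^*\xi_\pm=\pm\xi_\pm$.

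Finally, having produced nonzero elements of $\ker(D_F^*-I)$ and $\ker(D_F^*+I)$, each deficiency space has dimension at least one, and combined with the $(0,0)$-or-$(1,1)$ dichotomy the indices must be $(1,1)$. The main (and essentially only) obstacle in this plan is the Sobolev-type identification of $\mathscr{H}_F$ underlying \lemref{abs}; once that is available, the Mercer/elliptic argument in that lemma reduces the conclusion to the trivial smoothness of $e^{\pm x}$.
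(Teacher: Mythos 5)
Your argument is correct, but it is not the route the paper takes for this particular corollary. The paper's own proof here is the ONB one: it expands the two defect vectors $e^{x}$ and $e^{-x}$ in the explicit Gram--Schmidt basis of \lemref{gm1} and checks \emph{numerically} that the coefficient series in (\ref{eq:gs4}) converges, quoting $\left\Vert e^{x}\right\Vert _{\mathscr{H}_{F}}^{2}=2.76598$ and $\left\Vert e^{-x}\right\Vert _{\mathscr{H}_{F}}^{2}=1.01755$, and then appeals to \thmref{moment} as corroboration. You instead reduce membership of $e^{\pm x}$ in $\mathscr{H}_{F}$ to the Sobolev-type characterization of \lemref{abs} ($h\in\mathscr{H}_{F}$ iff $h'\in L^{2}(0,\tfrac{1}{2})$), combined with \lemref{Dadj} and \corref{defg} to identify the defect vectors and bound each deficiency space by dimension one. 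This is in fact exactly the argument the paper gives for the \emph{parallel} corollary in the elliptic section (the one proved by ``An application of \lemref{abs}''), so your proof is fully consistent with the paper's framework; it only needs the forward implication of \lemref{abs}, which is the direction actually proved there, and there is no circularity since \lemref{abs} rests on the Mercer operator, not on the deficiency indices. What your route buys is rigor --- it replaces a numerical convergence check with a clean a priori estimate --- while the paper's route buys an illustration of the divided-differences ONB machinery, which is the stated purpose of that section. One small remark: you do not actually need the $(0,0)$-or-$(1,1)$ dichotomy to cap the indices at one, since \corref{defg} already shows each deficiency space is spanned by a single exponential; either closing step is fine.
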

\begin{proof}
Expand the defect vectors $e^{\pm x}$ in the ONB from \lemref{gm1},
we check numerically the series (for both functions) in (\ref{eq:gs4})
is convergent. In fact, by (\ref{eq:gs3}), we have 
\begin{align*}
\left\Vert e^{x}\right\Vert _{\mathscr{H}_{F}}^{2} & =2.76598\\
\left\Vert e^{-x}\right\Vert _{\mathscr{H}_{F}}^{2} & =1.01755.
\end{align*}
Therefore $e^{\pm x}$ are in $\mathscr{H}_{F}$, and $D_{F}$ has
indices $\left(1,1\right)$. See also \thmref{moment} below.\end{proof}
\begin{example}
For $F\left(x\right)=e^{-\left|x\right|}$, $x\in\left(-1,1\right)$,
\lemref{gm1} yields an ONB in $\mathscr{H}_{F}$; see \tabref{exp}.
Figure \ref{fig:onbExp} blow shows the first 5 functions in \tabref{exp}.
\end{example}
\begin{figure}[H]
\includegraphics[scale=0.6]{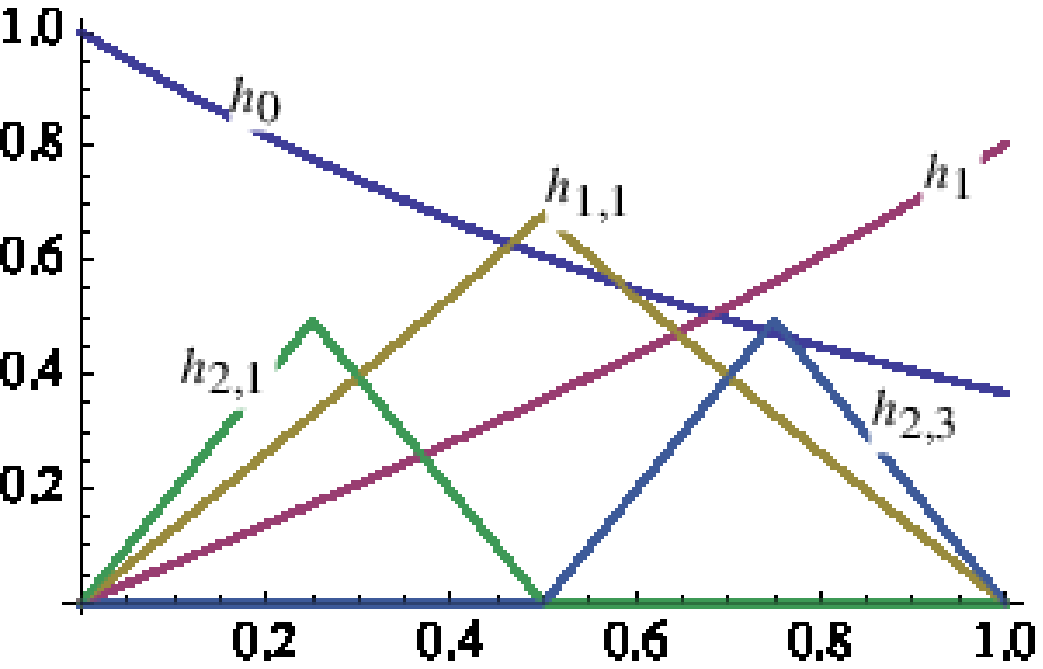}

\protect\caption{\label{fig:onbExp}$F\left(x\right)=e^{-\left|x\right|}$, $\left|x\right|<1$ }
\end{figure}

\begin{cor}
\label{cor:gsexp}Let $F=e^{-\left|x\right|}$, $\left|x\right|<1$,
and $\mathscr{H}_{F}$ be the corresponding RKHS. Define $D_{F}\left(F_{\varphi}\right):=F_{\varphi'}$,
for all $\varphi\in C_{c}^{\infty}\left(0,1\right)$ as before, so
$D_{F}$ is skew Hermitian in $\mathscr{H}_{F}$. Then 
\[
\left\Vert e^{-x}\right\Vert _{\mathscr{H}}=\left\Vert e^{x-1}\right\Vert _{\mathscr{H}}=1.
\]
\end{cor}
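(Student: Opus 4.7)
The plan is to recognize the two functions as the kernel functions at the endpoints of the interval $[0,1]$, and then invoke the reproducing property. With the convention $F_{s}(y):=F(s-y)$, one has
\[
F_{0}(x)=F(-x)=e^{-|x|}=e^{-x}\quad\text{and}\quad F_{1}(x)=F(1-x)=e^{-|1-x|}=e^{x-1}
\]
for all $x\in[0,1]$. Once it is shown that $F_{0},F_{1}\in\mathscr{H}_{F}$, the reproducing property gives
\[
\left\Vert F_{s}\right\Vert _{\mathscr{H}_{F}}^{2}=\langle F_{s},F_{s}\rangle_{\mathscr{H}_{F}}=F_{s}(s)=F(0)=1
\]
for $s\in\{0,1\}$, which is exactly the claim. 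This is essentially a restatement of \lemref{dev1}, so the content of the corollary is to confirm the result by means of the ONB machinery of this section.

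The alternative route, and the one that fits the position of the corollary in the paper, proceeds through \lemref{gm1} and \corref{gsonb1}. Apply \corref{gsonb1} to $f(x)=e^{-x}$ with $a=1$ and $F(x)=e^{-|x|}$. The key observation is that exponential multiplicativity produces a systematic cancellation: for every $n\geq1$ and every odd $k$,
\[
f\!\left(\tfrac{k}{2^{n}}\right)-\frac{F(1/2^{n})}{1+F(1/2^{n-1})}\Bigl(f\!\bigl(\tfrac{k-1}{2^{n}}\bigr)+f\!\bigl(\tfrac{k+1}{2^{n}}\bigr)\Bigr)
=e^{-k/2^{n}}\!\left[1-\frac{e^{-1/2^{n}}\bigl(e^{1/2^{n}}+e^{-1/2^{n}}\bigr)}{1+e^{-1/2^{n-1}}}\right]=0,
\]
using $e^{-1/2^{n}}e^{1/2^{n}}=1$. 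The same algebraic identity shows that $c_{1}=0$ for $f=e^{-x}$, so the only surviving coefficient is $c_{0}=f(0)=1$, and \corref{gsonb1} gives $\|e^{-x}\|_{\mathscr{H}_{F}}^{2}=|c_{0}|^{2}=1$. For $f(x)=e^{x-1}$ the same cancellation kills every $c_{n,k}$, leaving only $c_{0}=e^{-1}$ and $c_{1}=\sqrt{1-e^{-2}}$, which yields $\|e^{x-1}\|_{\mathscr{H}_{F}}^{2}=e^{-2}+(1-e^{-2})=1$.

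The main (and only) subtle point is the membership $F_{0},F_{1}\in\mathscr{H}_{F}$, since $F$ was originally given only on the open interval $(-1,1)$ and so $F_{s}$ with $s$ interior is the primary object. This is handled by extending $F$ continuously to the closed interval $[-1,1]$ (as discussed in the proof of \lemref{dev1} and in the cited \cite{JPT14}), and then verifying the estimate (\ref{eq:bdd}) of \thmref{HF} for $\xi=F_{0}$ or $\xi=F_{1}$ with constant $A=1$; this estimate is immediate from positive-definiteness of $F$. Once this membership is secured, either the one-line reproducing-property argument or the ONB-based computation completes the proof.
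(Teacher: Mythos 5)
Your proof is correct, and it is in fact stronger than the one the paper gives. The paper's own proof of \corref{gsexp} consists of expanding $e^{-x}$ and $e^{x-1}$ in the ONB of \tabref{exp} and checking \emph{numerically} that the series in (\ref{eq:gs4}) converges to $1$; the exact explanation appears only in the remark that follows, where the authors observe that $F_{0}=e^{-x}$ and $F_{1}=e^{x-1}$ are the kernel functions at the endpoints, so that $\left\Vert F_{s}\right\Vert _{\mathscr{H}_{F}}^{2}=F(0)=1$ by the reproducing property, and state explicitly that the corollary is intended as numerical evidence illustrating the ONB. You supply both halves in exact form: the reproducing-property argument (which is the paper's remark, not its proof), and a closed-form evaluation of the Gram--Schmidt coefficients in which the multiplicativity of the exponential forces $c_{n,k}=0$ for all $n\geq1$ and odd $k$ (together with $c_{1}=0$ for $e^{-x}$), so the Parseval sum collapses to $1$ identically rather than only to within numerical error. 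Your cancellation identity checks out: the bracket equals $1-\bigl(1+e^{-1/2^{n-1}}\bigr)\big/\bigl(1+e^{-1/2^{n-1}}\bigr)=0$, and the surviving coefficients give $1$ and $e^{-2}+(1-e^{-2})=1$ respectively. Your handling of the membership $F_{0},F_{1}\in\mathscr{H}_{F}$ --- continuous extension of $F$ to the closed interval and the estimate (\ref{eq:bdd}) with $A=F(0)=1$, which is the Cauchy--Schwarz inequality for the positive semidefinite kernel on the augmented point set $\{0,x_{1},\dots,x_{n}\}$ --- is also sound and addresses a point the paper delegates to \lemref{dev1}. In short, your route buys an actual proof where the paper offers a numerical verification, at essentially no extra cost.
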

\begin{proof}
Expand the functions $e^{-x}$ and $e^{x-1}$ in the ONB in \tabref{exp},
and we check numerically the series in (\ref{eq:gs4}) (for both functions)
converges to $1$.\end{proof}
\begin{rem}
In the case $F\left(x\right)=e^{-\left|x\right|}$, $x\in\left(-1,1\right)$,
the defect vectors are $F_{0}$ and $F_{1}$, i.e., the kernels themselves;
and so they both have norm 1 in $\mathscr{H}_{F}$. In fact, by the
reproducing property, 
\[
\left\Vert F_{s}\right\Vert _{\mathscr{H}_{F}}^{2}=\left\langle F_{s},F_{s}\right\rangle _{\mathscr{H}_{F}}=F\left(s-s\right)=F\left(0\right)=1,\;\forall x\in\left[0,1\right].
\]
The purpose of \corref{gsexp} is to offer a numerical evidence and
to illustrate the use of the ONB from \tabref{exp}.\end{rem}
\begin{lem}
Let $F:\left(-a,a\right)\rightarrow\mathbb{C}$ be a continuous p.d.
function, and assume $F\left(0\right)=1$. Let $S$ be any countable
subset of $\left[0,a\right]$, such that 
\[
\Lambda_{S}:=\left\{ F_{s}\:\big|\: s\in S\right\} 
\]
is linearly independent. Set 
\[
L:=\overline{span\,\Lambda_{S}}\subset\mathscr{H}_{F},
\]
and let $P_{L}$ be the projection from $\mathscr{H}_{F}$ onto $L$.
Then, for all $f\in\mathscr{H}_{F}$, 
\begin{equation}
f\left(s\right)=\left(P_{L}f\right)\left(s\right),\;\;\forall s\in S\label{eq:gs-6}
\end{equation}
i.e., $f$ and $P_{L}f$ coincide at the lattice points in $S$.\end{lem}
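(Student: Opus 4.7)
The plan is to exploit the reproducing property of $F_s$ in $\mathscr{H}_F$ together with the fact that each kernel function $F_s$, for $s \in S$, lies in the closed subspace $L$ by construction. First, I would recall that by definition of $\Lambda_S$, we have $F_s \in L$ for every $s \in S$, hence $F_s = P_L F_s$.

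Next, for arbitrary $f \in \mathscr{H}_F$, I would write the orthogonal decomposition $f = P_L f + (I - P_L) f$, where $(I - P_L) f \in L^{\perp}$. Applying the reproducing property for $\mathscr{H}_F$ at the point $s \in S \subset [0,a]$, I would compute
\begin{align*}
f(s) &= \left\langle F_s, f \right\rangle_{\mathscr{H}_F} \\
&= \left\langle F_s, P_L f \right\rangle_{\mathscr{H}_F} + \left\langle F_s, (I - P_L) f \right\rangle_{\mathscr{H}_F}.
\end{align*}
The second inner product vanishes because $F_s \in L$ while $(I - P_L) f \in L^{\perp}$. Hence $f(s) = \langle F_s, P_L f \rangle_{\mathscr{H}_F}$, and a second application of the reproducing property (valid since $P_L f \in \mathscr{H}_F$) gives $\langle F_s, P_L f \rangle_{\mathscr{H}_F} = (P_L f)(s)$, which is exactly the claimed identity.

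There is no real obstacle here; the only point requiring mild care is to observe that the reproducing identity $\langle F_s, \xi\rangle_{\mathscr{H}_F} = \xi(s)$ applies to every $\xi \in \mathscr{H}_F$ (in particular to $P_L f$), which is standard and was recorded earlier in the text right after the definition of $\mathscr{H}_F$. The hypothesis of linear independence of $\Lambda_S$ is not actually needed for the identity itself, only to make the expansion of elements of $L$ in terms of $\{F_s\}_{s\in S}$ unambiguous; the proof above works whenever $L$ is the closed span of $\{F_s : s \in S\}$.
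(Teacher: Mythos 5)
Your proof is correct and follows essentially the same route as the paper: both arguments come down to the observation that $F_{s}\in L$ together with the reproducing property, so that $\left\langle F_{s},\left(I-P_{L}\right)f\right\rangle _{\mathscr{H}_{F}}=0$ and hence $f\left(s\right)=\left\langle F_{s},P_{L}f\right\rangle _{\mathscr{H}_{F}}=\left(P_{L}f\right)\left(s\right)$. The only difference is cosmetic: the paper first runs Gram--Schmidt on $\Lambda_{S}$ to produce an ONB of $L$ before making the same final computation, a detour your argument (correctly) shows is unnecessary, as is the linear-independence hypothesis, which you rightly note is needed only to make expansions in $\left\{ F_{s}\right\} _{s\in S}$ unambiguous.
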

\begin{proof}
Pick $s_{0}\in S$. Set $h_{0}:=F_{s_{0}}$. Note $\left\Vert h_{0}\right\Vert _{\mathscr{H}_{F}}^{2}=\left\langle F_{s_{0}},F_{s_{0}}\right\rangle =F\left(s_{0}-s_{0}\right)=F\left(0\right)=1$. 

Apply Gram-Schmidt process to $\Lambda_{S}$ yields an ONB in $L$
as $\left\{ h_{k}\:\big|\: k=0,1,\ldots\right\} $. Then, 
\[
P_{L}f=\sum_{k\geq0}\left\langle h_{k},f\right\rangle _{\mathscr{H}_{F}}h_{k}=f\left(s_{0}\right)\underset{=F_{s_{0}}}{\underbrace{h_{0}}}+\sum_{k\geq1}\left\langle h_{k},f\right\rangle _{\mathscr{H}_{F}}h_{k},\;\mbox{and so}
\]
\[
\left(P_{L}f\right)\left(s_{0}\right)=\left\langle F_{s_{0}},P_{L}f\right\rangle _{\mathscr{H}_{F}}=\left\langle h_{s_{0}},P_{L}f\right\rangle _{\mathscr{H}_{F}}=f\left(s_{0}\right),
\]
which is the assertion in (\ref{eq:gs-6}).
\end{proof}
\textbf{B-Splines.} Example \ref{ex:abs} follows from the general
construction of B-splines. For background on box-splines, we refer
to \cite{MU03}. Below we illustration its connection to our extension
problem of locally defined p.d. functions.

Set $F_{1}:=\chi_{\left[-\frac{1}{2},\frac{1}{2}\right]}$, i.e.,
the indicator function on the interval $\left[-\frac{1}{2},\frac{1}{2}\right]$.
Taking Fourier transform, we see that 
\begin{align*}
F_{1}\left(x\right) & =\int_{-\infty}^{\infty}e^{i\lambda x}d\mu_{1}\left(\lambda\right),\;\mbox{where}\\
d\mu_{1}\left(\lambda\right) & :=\frac{\sin\pi\lambda}{\pi\lambda}d\lambda.
\end{align*}
For all $n\in\mathbb{Z}_{+}$, let $F_{n}:=F_{1}\ast\cdots\ast F_{1}$
be the $n$-fold convolution of $F_{1}$, and so 
\[
d\mu_{n}\left(\lambda\right)=\left(\frac{\sin\pi\lambda}{\pi\lambda}\right)^{n}d\lambda.
\]
Note that 
\[
d\mu_{2k}\left(\lambda\right)=\left(\frac{\sin\pi\lambda}{\pi\lambda}\right)^{2k}d\lambda
\]
is a positive measure in $\mathbb{R}$, so by Bochner's theorem $F_{2k}$
is positive definite. See \figref{Bspline} below.

\begin{figure}[H]
\includegraphics[scale=0.6]{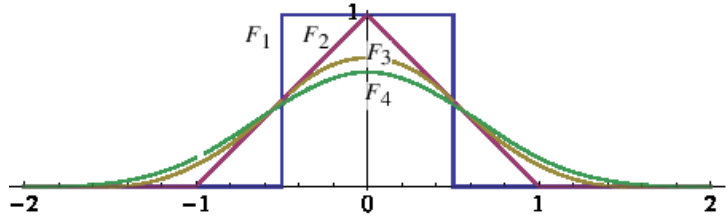}

\protect\caption{\label{fig:Bspline}$F_{1},F_{2},F_{3},F_{4}$}

\end{figure}

The functions in \figref{Bspline} are defined as 
\begin{align*}
F_{1}\left(x\right) & =\chi_{\left[-\frac{1}{2},\frac{1}{2}\right]}\left(x\right)\\
F_{2}\left(x\right) & =\left(1-\left|x\right|\right)_{+}=\max\left(0,1-\left|x\right|\right)\\
F_{3}\left(x\right) & =\begin{cases}
\frac{1}{4}\left(3-4\left|x\right|^{2}\right) & \left|x\right|<\frac{1}{2}\\
\frac{1}{8}\left(4\left|x\right|^{2}-12\left|x\right|+9\right) & \frac{1}{2}<\left|x\right|<\frac{3}{2}\\
0 & \left|x\right|\geq\frac{3}{2}
\end{cases}\\
F_{4}\left(x\right) & =\begin{cases}
\frac{1}{6}\left(3\left|x\right|^{3}-6\left|x\right|^{2}+4\right) & \left|x\right|<1\\
\frac{1}{6}\left(-\left|x\right|^{3}+6\left|x\right|^{2}-12\left|x\right|+8\right) & 1\leq\left|x\right|<2\\
0 & \left|x\right|\geq2
\end{cases}
\end{align*}

\begin{lem}
Let $k\in\mathbb{Z}_{+}$, then the support of $F_{2k}$ is in $\left[-k,k\right]$. \end{lem}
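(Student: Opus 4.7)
The plan is to reduce the claim to the standard fact that the support of a convolution is contained in the Minkowski sum of the supports of the factors: if $f,g$ are (say) continuous functions of compact support, then
\begin{equation*}
\operatorname{supp}(f*g)\;\subseteq\;\operatorname{supp}(f)+\operatorname{supp}(g).
\end{equation*}
This is immediate from the definition $(f*g)(x)=\int f(y)g(x-y)\,dy$: if $x\notin \operatorname{supp}(f)+\operatorname{supp}(g)$, then for every $y\in\operatorname{supp}(f)$ one has $x-y\notin\operatorname{supp}(g)$, so the integrand vanishes identically and $(f*g)(x)=0$. Since the complement of the Minkowski sum is open, $(f*g)$ vanishes on an open set containing every such $x$, so the support of $f*g$ lies in $\operatorname{supp}(f)+\operatorname{supp}(g)$.

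Next I would apply this inductively to the $n$-fold convolution $F_n=F_1*\cdots *F_1$. Since $\operatorname{supp}(F_1)=[-\tfrac12,\tfrac12]$, the inductive hypothesis $\operatorname{supp}(F_n)\subseteq[-\tfrac{n}{2},\tfrac{n}{2}]$ combined with the support inclusion for convolution yields
\begin{equation*}
\operatorname{supp}(F_{n+1})\;\subseteq\;[-\tfrac{n}{2},\tfrac{n}{2}]+[-\tfrac12,\tfrac12]\;=\;[-\tfrac{n+1}{2},\tfrac{n+1}{2}].
\end{equation*}
Specializing to $n=2k$ gives the desired $\operatorname{supp}(F_{2k})\subseteq[-k,k]$.

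There is no real obstacle here; the only thing worth remarking is that the Minkowski sum of two closed intervals is again a closed interval, so the inductive step produces exactly the interval $[-\tfrac{n+1}{2},\tfrac{n+1}{2}]$ with no extra closure or measure-theoretic issue (the functions $F_n$ are continuous for $n\ge 2$, and $F_1$ is bounded with compact support, so all convolutions are well defined and compactly supported). The statement for even index $n=2k$ is singled out in the paper because that is when $F_{2k}$ is positive definite by Bochner's theorem applied to the measure $(\sin\pi\lambda/\pi\lambda)^{2k}d\lambda\ge 0$; the support bound itself holds for every $n$, with the even case being the one relevant to the locally defined p.d.\ extension problem.
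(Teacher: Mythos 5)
Your proof is correct and uses the same key fact as the paper, namely $\operatorname{supp}(G\ast H)\subseteq\operatorname{supp}(G)+\operatorname{supp}(H)$; the paper simply applies it to $F_{2k}=F_{2}\ast\cdots\ast F_{2}$ ($k$ factors, each supported in $[-1,1]$) rather than inducting from $F_{1}$, which is an immaterial difference.
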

\begin{proof}
Note $F_{2}\left(x\right)=\left(1-\left|x\right|\right)_{+}=\max\left(0,1-\left|x\right|\right)$,
$x\in\mathbb{R}$, and $suppt\left(F_{2}\right)\subset\left[-1,1\right]$.
The lemma follows from the fact that $suppt\left(G\ast H\right)\subseteq suppt\left(G\right)+suppt\left(H\right)$,
for all $G,H\in C_{c}\left(\mathbb{R}\right)$. 
\end{proof}
Note the restriction of $F_{2}$ to $\left[-\frac{1}{2},\frac{1}{2}\right]$
is the p.d. function in \exref{abs}. The skew Hermitian operator
$D_{F}$ has deficiency indices $\left(1,1\right)$ in the corresponding
RKHS $\mathscr{H}_{F_{2}}$. 

However, the truncation of $F_{4}=F_{2}\ast F_{2}$ to $\left[-\frac{1}{2},\frac{1}{2}\right]$
has indices $\left(0,0\right)$. See \tabref{meas} below.

The following theorem applies to $D_{F}$ in the cases stated above: 
\begin{thm}
\label{thm:moment}Let $F:\left(-a,a\right)\rightarrow\mathbb{C}$
be a continuous p.d. function, and let $\mu\in Ext\left(F\right)$.
Define $D_{F}\left(F_{\varphi}\right)=F_{\varphi'}$ on $\left\{ F_{\varphi}:\varphi\in C_{c}^{\infty}\left(0,a\right)\right\} $,
as a skew-Hermitian operator acting in the RKHS $\mathscr{H}_{F}$.
Then
\end{thm}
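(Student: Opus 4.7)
The theorem statement as excerpted breaks off at ``Then'', so I first infer the intended conclusion from context. The label \emph{moment}, the sole additional datum $\mu\in Ext(F)$, and the preceding remark that the result ``applies to $D_{F}$ in the cases stated above'' (i.e., both the $(0,0)$ and $(1,1)$ deficiency-index B-spline examples) point to a spectral moment identity linking the iterates $D_{F}^{n}F_{\varphi}$ to the moments of $\mu$. My plan is to establish, for every $n\in\mathbb{N}_{0}$ and every $\varphi\in C_{c}^{\infty}(0,a)$,
\begin{equation}
\left\Vert D_{F}^{n}F_{\varphi}\right\Vert _{\mathscr{H}_{F}}^{2}=\int_{\mathbb{R}}\lambda^{2n}\left|\widehat{\varphi}(\lambda)\right|^{2}d\mu(\lambda),\label{eq:mom}
\end{equation}
together with its polarized version for $\langle D_{F}^{m}F_{\varphi},D_{F}^{n}F_{\psi}\rangle_{\mathscr{H}_{F}}$, and then to extract the deficiency-index consequences.

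First, I would transport the problem to $L^{2}(\mu)$. Because $\mu\in Ext(F)$ means $F(x)=\widehat{\mu}(x)$ for $x\in\Omega-\Omega$, the double-Fubini computation of Lemma \lemref{nd1} (the $d=1$ specialization of Lemma \lemref{lcg-Bochner}) yields the Parseval identity
\[
\left\Vert F_{\varphi}\right\Vert _{\mathscr{H}_{F}}^{2}=\int_{\mathbb{R}}|\widehat{\varphi}(\lambda)|^{2}d\mu(\lambda),\qquad\varphi\in C_{c}^{\infty}(0,a),
\]
so that $V:F_{\varphi}\mapsto\widehat{\varphi}$ extends by density (Lemma \lemref{dense}) to an isometry $V:\mathscr{H}_{F}\hookrightarrow L^{2}(\mu)$. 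Second, the defining rule $D_{F}F_{\varphi}=F_{\varphi'}$, combined with $\widehat{\varphi'}(\lambda)=i\lambda\,\widehat{\varphi}(\lambda)$, says exactly that $V$ intertwines $D_{F}$ with the multiplication operator $M_{i\lambda}$ on its range. Iterating gives $V(D_{F}^{n}F_{\varphi})=(i\lambda)^{n}\widehat{\varphi}$, and taking $L^{2}(\mu)$-norms produces \eqref{eq:mom}.

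Third, I would draw the deficiency-index consequences. Identity \eqref{eq:mom} identifies $F_{\varphi}$ as an analytic vector for $D_{F}$ precisely when the moment sequence $m_{n}(\varphi):=\int\lambda^{2n}|\widehat{\varphi}|^{2}d\mu$ satisfies a Carleman-type growth bound, and Nelson's analytic-vector theorem then forces $\overline{D_{F}}$ to be skew-adjoint — i.e., indices $(0,0)$ — whenever such analytic vectors exhaust a total set in $\mathscr{H}_{F}$. This matches the dichotomy highlighted just before the theorem: for $F=F_{4}|_{(-1/2,1/2)}$ the associated $d\mu=(\sin\pi\lambda/\pi\lambda)^{4}d\lambda$ has all moments and yields indices $(0,0)$, while for $F=e^{-|x|}$ and $F=F_{2}|_{(-1/2,1/2)}$ the moment sequence grows too fast, leaving room for the one-parameter family of skew-adjoint extensions $A_{\theta}$ of \subref{saext}.

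The principal obstacle is not \eqref{eq:mom} itself — ultimately Fubini-plus-Plancherel bookkeeping — but lifting Lemma \lemref{nd1} to $\mu\in Ext(F)$ \emph{without} the compact-support hypothesis imposed there. This is handled by noting that for $\varphi\in C_{c}^{\infty}(0,a)$ the Fourier transform $\widehat{\varphi}$ lies in $\mathcal{S}(\mathbb{R})$, so $\lambda^{2n}|\widehat{\varphi}(\lambda)|^{2}$ is integrable against any finite Borel measure; the interchange of integrations in the Fubini step becomes unconditional, and the isometry $V$ then extends to all of $\mathscr{H}_{F}$ by density. The subtlety worth spelling out carefully is that \eqref{eq:mom} must be \emph{independent} of the choice of $\mu\in Ext(F)$ on the left-hand side — which serves as an internal consistency check and explains why the theorem can be asserted for a single $\mu$ while its left side depends only on $F$.
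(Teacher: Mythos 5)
Your opening moves are sound and match the natural strategy: the Parseval identity $\Vert F_{\varphi}\Vert_{\mathscr{H}_{F}}^{2}=\int_{\mathbb{R}}|\widehat{\varphi}(\lambda)|^{2}d\mu(\lambda)$, the isometry $V:F_{\varphi}\mapsto\widehat{\varphi}$ into $L^{2}(\mu)$, and the intertwining of $D_{F}$ with multiplication by $i\lambda$, hence $\Vert D_{F}^{n}F_{\varphi}\Vert_{\mathscr{H}_{F}}^{2}=\int\lambda^{2n}|\widehat{\varphi}(\lambda)|^{2}d\mu(\lambda)$, are all correct (and the paper itself offers no internal argument to compare against -- its proof is the citation \cite{JPT14,Jor81}). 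But the conclusion to be proved is the displayed equivalence (\ref{eq:moment}) immediately following the word ``Then'', namely $\int_{\mathbb{R}}\lambda^{2}d\mu(\lambda)=\infty\Longleftrightarrow D_{F}$ has deficiency indices $(1,1)$; your moment identity is at best a lemma toward it, and your proposal never reaches the equivalence itself.

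The gap is the extraction step. Nelson's analytic-vector theorem gives a \emph{sufficient} condition for essential skew-adjointness involving \emph{all} moments $\int\lambda^{2n}|\widehat{\varphi}|^{2}d\mu$ together with a factorial-type growth bound in $n$; that condition is neither implied by $\int\lambda^{2}d\mu<\infty$ nor necessary for indices $(0,0)$. Concretely, take $d\mu(\lambda)=c\,d\lambda/(1+\lambda^{4})$: then $\int\lambda^{2}d\mu<\infty$, so the theorem asserts indices $(0,0)$; yet for nonzero $\varphi\in C_{c}^{\infty}(0,a)$ the transform $\widehat{\varphi}$ cannot decay exponentially on $\mathbb{R}$, the norms $\Vert D_{F}^{n}F_{\varphi}\Vert$ then outgrow $C^{n}n!$ for every $C$, the series $\sum_{n}\Vert D_{F}^{n}F_{\varphi}\Vert t^{n}/n!$ diverges for every $t>0$, and Nelson's theorem is silent. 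In the converse direction you prove nothing at all: failure of the analytic-vector condition does not produce a defect vector, so the implication $\int\lambda^{2}d\mu=\infty\Rightarrow(1,1)$ is not addressed. The route consistent with the paper's own machinery is different: by \corref{defg} the defect spaces are spanned by $e^{-x}$ and $e^{x-a}$, so the indices are $(1,1)$ exactly when $e^{-x}\big|_{[0,a]}\in\mathscr{H}_{F}$; by \thmref{HF}, transported to $L^{2}(\mu)$ via your isometry $V$, this is the boundedness of $\varphi\mapsto\int_{0}^{a}\varphi(x)e^{-x}dx$ against $\Vert\widehat{\varphi}\Vert_{L^{2}(\mu)}$, and the heart of the theorem -- entirely missing from the proposal -- is showing that this estimate holds precisely when $\int\lambda^{2}d\mu(\lambda)=\infty$.
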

\begin{equation}
\int_{\mathbb{R}}\lambda^{2}d\mu\left(\lambda\right)=\infty\Longleftrightarrow D_{F}\mbox{ has deficiency indices }\left(1,1\right).\label{eq:moment}
\end{equation}

\begin{proof}
See \cite{JPT14,Jor81} for details.
\end{proof}
\begin{table}[H]
\renewcommand{\arraystretch}{2}

\begin{tabular}{|c|c|c|c|}
\hline 
p.d. function & measure & condition (\ref{eq:moment}) & indices\tabularnewline
\hline 
$e^{-\left|x\right|}$, $\left|x\right|<1$ & $\frac{d\lambda}{\pi\left(1+\lambda^{2}\right)}$ & $\int_{\mathbb{R}}\left|\lambda\right|^{2}\frac{d\lambda}{\pi\left(1+\lambda^{2}\right)}=\infty$ & $\left(1,1\right)$\tabularnewline
\hline 
$1-\left|x\right|$, $\left|x\right|<\frac{1}{2}$ & $\left(\frac{\sin\pi\lambda}{\pi\lambda}\right)^{2}d\lambda$ & $\int_{\mathbb{R}}\left|\lambda\right|^{2}\left(\frac{\sin\pi\lambda}{\pi\lambda}\right)^{2}d\lambda=\infty$ & $\left(1,1\right)$\tabularnewline
\hline 
$F_{4}=F_{2}\ast F_{2}$ & $\left(\frac{\sin\pi\lambda}{\pi\lambda}\right)^{4}d\lambda$ & $\int_{\mathbb{R}}\left|\lambda\right|^{2}\left(\frac{\sin\pi\lambda}{\pi\lambda}\right)^{4}d\lambda<\infty$ & $\left(0,0\right)$\tabularnewline
\hline 
\end{tabular}

\renewcommand{\arraystretch}{1}

\protect\caption{\label{tab:meas}Application of Theorem \ref{thm:moment}.}
\end{table}

\begin{acknowledgement*}
The co-authors thank the following for enlightening discussions: Professors
Sergii Bezuglyi, Dorin Dutkay, Paul Muhly, Myung-Sin Song, Wayne Polyzou,
Gestur Olafsson, Robert Niedzialomski, and members in the Math Physics
seminar at the University of Iowa.
\end{acknowledgement*}
\clearpage{}

\section*{Appendix}

\begin{table}[H]
\renewcommand{\arraystretch}{2}

\begin{tabular}{|l|c|c|}
\hline 
orthogonal basis in $\mathscr{H}_{F}$ & $\left\Vert \cdot\right\Vert _{\mathscr{H}_{F}}^{2}$ & ONB\tabularnewline
\hline 
$F_{0}$ & $1$ & $h_{0}$\tabularnewline
\hline 
$F_{a}-F\left(a\right)F_{0}$ & $1-F^{2}\left(a\right)$ & $h_{1}$\tabularnewline
\hline 
$F_{a/2}-\frac{F\left(a/2\right)}{1+F\left(a\right)}F_{a}-\frac{F\left(a/2\right)}{1+F\left(a\right)}F_{0}$ & $\frac{1+F\left(a\right)-2F^{2}\left(a/2\right)}{1+F\left(a\right)}$ & $h_{1,1}$\tabularnewline
\hline 
$F_{a/4}-\frac{F\left(a/4\right)}{1+F\left(a/2\right)}F_{0}-\frac{F\left(a/4\right)}{1+F\left(a/2\right)}F_{a/2}$ & $\frac{1+F\left(\frac{a}{2}\right)-2F^{2}\left(\frac{a}{4}\right)}{1+F\left(\frac{a}{2}\right)}$ & $h_{2,1}$\tabularnewline
\hline 
$F_{3a/4}-\frac{F\left(a/4\right)}{1+F\left(a/2\right)}F_{a/2}-\frac{F\left(a/4\right)}{1+F\left(a/2\right)}F_{a}$ & $\frac{1+F\left(\frac{a}{2}\right)-2F^{2}\left(\frac{a}{4}\right)}{1+F\left(\frac{a}{2}\right)}$ & $h_{2,3}$\tabularnewline
\hline 
$F_{a/8}-\frac{F\left(a/8\right)}{1+F\left(a/4\right)}F_{0}-\frac{F\left(a/4\right)}{1+F\left(a/2\right)}F_{a/4}$ & $\frac{1+F\left(\frac{a}{4}\right)-2F^{2}\left(\frac{a}{8}\right)}{1+F\left(\frac{a}{4}\right)}$ & $h_{3,1}$\tabularnewline
\hline 
$F_{3a/8}-\frac{F\left(a/8\right)}{1+F\left(a/4\right)}F_{a/4}-\frac{F\left(a/4\right)}{1+F\left(a/2\right)}F_{a/2}$ & $\frac{1+F\left(\frac{a}{4}\right)-2F^{2}\left(\frac{a}{8}\right)}{1+F\left(\frac{a}{4}\right)}$ & $h_{3,3}$\tabularnewline
\hline 
$F_{5a/8}-\frac{F\left(a/8\right)}{1+F\left(a/4\right)}F_{a/2}-\frac{F\left(a/4\right)}{1+F\left(a/2\right)}F_{3a/4}$ & $\frac{1+F\left(\frac{a}{4}\right)-2F^{2}\left(\frac{a}{8}\right)}{1+F\left(\frac{a}{4}\right)}$ & $h_{3,5}$\tabularnewline
\hline 
$F_{7a/8}-\frac{F\left(a/8\right)}{1+F\left(a/4\right)}F_{3a/4}-\frac{F\left(a/4\right)}{1+F\left(a/2\right)}F_{a}$ & $\frac{1+F\left(\frac{a}{4}\right)-2F^{2}\left(\frac{a}{8}\right)}{1+F\left(\frac{a}{4}\right)}$ & $h_{3,7}$\tabularnewline
\hline 
$\vdots$ & $\vdots$ & \tabularnewline
\hline 
\end{tabular}

\renewcommand{\arraystretch}{1}

\protect\caption{\label{tab:abs2}$F:\left(-a,a\right)\rightarrow\mathbb{R}$, continuous,
p.d., and $F\left(0\right)=1$.}
\end{table}

\newpage{}

\begin{table}[H]
\renewcommand{\arraystretch}{1.5}

\begin{tabular}{|l|c|c|}
\hline 
orthogonal basis in $\mathscr{H}_{F}$ & $\left\Vert \cdot\right\Vert _{\mathscr{H}_{F}}^{2}$ & ONB\tabularnewline
\hline 
$F_{0}$ & $1$ & $h_{0}$\tabularnewline
\hline 
$F_{\frac{1}{2}}-\frac{1}{2}F_{0}$ & $\frac{3}{4}$ & $h_{1}$\tabularnewline
\hline 
$F_{\frac{1}{4}}-\frac{1}{2}F_{0}-\frac{1}{2}F_{\frac{1}{2}}$ & $\frac{1}{4}$ & $h_{1,1}$\tabularnewline
\hline 
$F_{\frac{1}{8}}-\frac{1}{2}F_{0}-\frac{1}{2}F_{\frac{1}{4}}$ & $\frac{1}{8}$ & $h_{2,1}$\tabularnewline
\hline 
$F_{\frac{3}{8}}-\frac{1}{2}F_{\frac{1}{4}}-\frac{1}{2}F_{\frac{1}{2}}$ & $\frac{1}{8}$ & $h_{2,3}$\tabularnewline
\hline 
$F_{\frac{1}{16}}-\frac{1}{2}F_{0}-\frac{1}{2}F_{\frac{1}{8}}$ & $\frac{1}{16}$ & $h_{3,1}$\tabularnewline
\hline 
$F_{\frac{3}{16}}-\frac{1}{2}F_{\frac{1}{8}}-\frac{1}{2}F_{\frac{1}{4}}$ & $\frac{1}{16}$ & $h_{3,3}$\tabularnewline
\hline 
$F_{\frac{5}{16}}-\frac{1}{2}F_{\frac{1}{4}}-\frac{1}{2}F_{\frac{3}{8}}$ & $\frac{1}{16}$ & $h_{3,5}$\tabularnewline
\hline 
$F_{\frac{7}{16}}-\frac{1}{2}F_{\frac{3}{8}}-\frac{1}{2}F_{\frac{1}{2}}$ & $\frac{1}{16}$ & $h_{3,7}$\tabularnewline
\hline 
$\vdots$ &  & \tabularnewline
\hline 
\end{tabular}

\renewcommand{\arraystretch}{1}

\protect\caption{\label{tab:abs}$F\left(x\right)=1-\left|x\right|$, $x\in\left(-\frac{1}{2},\frac{1}{2}\right)$}
\end{table}

\begin{table}[H]
\renewcommand{\arraystretch}{2}

\begin{tabular}{|l|c|c|}
\hline 
orthogonal basis in $\mathscr{H}_{F}$ & $\left\Vert \cdot\right\Vert _{\mathscr{H}_{F}}^{2}$ & ONB\tabularnewline
\hline 
$F_{0}$ & $1$ & $h_{0}$\tabularnewline
\hline 
$F_{1}-e^{-1}F_{0}$ & $1-e^{-2}$ & $h_{1}$\tabularnewline
\hline 
$F_{1/2}-\frac{e^{-1/2}}{1+e^{-1}}F_{0}-\frac{e^{-1/2}}{1+e^{-1}}F_{1}$ & $\frac{1-e^{-1}}{1+e^{-1}}$ & $h_{1,1}$\tabularnewline
\hline 
$F_{1/4}-\frac{e^{-1/4}}{1+e^{-1/2}}F_{0}-\frac{e^{-1/4}}{1+e^{-1/2}}F_{1/2}$ & $\frac{1-e^{-1/2}}{1+e^{-1/2}}$ & $h_{2,1}$\tabularnewline
\hline 
$F_{3/4}-\frac{e^{-1/4}}{1+e^{-1/2}}F_{1/2}-\frac{e^{-1/4}}{1+e^{-1/2}}F_{1}$ & $\frac{1-e^{-1/2}}{1+e^{-1/2}}$ & $h_{2,3}$\tabularnewline
\hline 
$F_{1/8}-\frac{e^{-1/8}}{1+e^{-1/4}}F_{0}-\frac{e^{-1/8}}{1+e^{-1/4}}F_{1/4}$ & $\frac{1-e^{-1/4}}{1+e^{-1/4}}$ & $h_{3,1}$\tabularnewline
\hline 
$F_{3/8}-\frac{e^{-1/8}}{1+e^{-1/4}}F_{1/4}-\frac{e^{-1/8}}{1+e^{-1/4}}F_{1/2}$ & $\frac{1-e^{-1/4}}{1+e^{-1/4}}$ & $h_{3,3}$\tabularnewline
\hline 
$F_{5/8}-\frac{e^{-1/8}}{1+e^{-1/4}}F_{1/2}-\frac{e^{-1/8}}{1+e^{-1/4}}F_{3/4}$ & $\frac{1-e^{-1/4}}{1+e^{-1/4}}$ & $h_{3,5}$\tabularnewline
\hline 
$F_{7/8}-\frac{e^{-1/8}}{1+e^{-1/4}}F_{3/4}-\frac{e^{-1/8}}{1+e^{-1/4}}F_{1}$ & $\frac{1-e^{-1/4}}{1+e^{-1/4}}$ & $h_{3,7}$\tabularnewline
\hline 
$\vdots$ &  & \tabularnewline
\hline 
\end{tabular}

\renewcommand{\arraystretch}{1}

\protect\caption{\label{tab:exp}$F\left(x\right)=e^{-\left|x\right|}$, $x\in\left(-1,1\right)$,}
\end{table}

\clearpage{}

\bibliographystyle{amsalpha}
\bibliography{number6}

\end{document}